\definecolor{darkgreen}{rgb}{0,0.45,0} 
\definecolor{darkred}{rgb}{0.75,0,0}
\definecolor{darkblue}{rgb}{0,0,0.6} 
\newcommand{\cosk}{\mathrm{cosk}}
\newcommand{\latch}[1]{\widehat{\ell}^{#1}}
\newcommand{\match}[1]{\widehat{m}^{#1}}
\newcommand{\leib}[1]{\mathbin{\hat{#1}}}
\newcommand{\utimes}{\underline{\times}}
\setlist{}
\newtheorem{thm}{Theorem}[subsection]
\newtheorem{lem}[thm]{Lemma}
\newtheorem{prop}[thm]{Proposition}
\newtheorem{cor}[thm]{Corollary}
\theoremstyle{definition}
\newtheorem{defn}[thm]{Definition}
\newtheorem{ex}[thm]{Example}
\theoremstyle{remark}
\newtheorem{rmk}[thm]{Remark}
\newtheorem{exc}[thm]{Exercise}
\let\c@equation\c@thm
\numberwithin{equation}{subsection}
\newcommand{\op}{\mathrm{op}}
\newcommand{\co}{\mathrm{co}}
\newcommand{\id}{\mathrm{id}}
\newcommand{\ob}{\mathrm{ob}}
\newcommand{\mor}{\mathrm{mor}}
\newcommand{\To}{\Rightarrow}
\newcommand{\ev}{\mathrm{ev}}
\newcommand{\proj}{\mathrm{proj}}
\newcommand{\incl}{\mathrm{incl}}
\newcommand{\Ran}{\mathrm{Ran}}
\newcommand{\Lan}{\mathrm{Lan}}
\newcommand{\dom}{\mathrm{dom}}
\newcommand{\cod}{\mathrm{cod}}
\newcommand{\colim}{\mathrm{colim}}
\newcommand{\wcolim}{\mathrm{wcolim}}
\newcommand{\cat}[1]{\textup{\textsf{#1}}}% for categories
\newcommand{\fun}[1]{\textup{#1}}%for functors
\newcommand{\LL}{\mathbb{L}}
\newcommand{\RR}{\mathbb{R}}
\DeclareMathAlphabet{\mathbbe}{U}{bbold}{m}{n}
\newcommand{\1}{\mathbbe{1}}
\newcommand{\2}{\mathbbe{2}}
\newcommand{\3}{\mathbbe{3}}
\newcommand{\iso}{\mathbb{I}}
\newcommand{\cA}{\mathsf{A}}
\newcommand{\cC}{\mathsf{C}}
\newcommand{\cD}{\mathsf{D}}
\newcommand{\cE}{\mathsf{E}}
\newcommand{\cF}{\mathsf{F}}
\newcommand{\cK}{\mathsf{K}}
\newcommand{\cL}{\mathsf{L}}
\newcommand{\cM}{\mathsf{M}}
\newcommand{\cN}{\mathsf{N}}
\newcommand{\cS}{\mathsf{S}}
\newcommand{\cV}{\mathsf{V}}
\newcommand{\DDelta}{\mathbbe{\Delta}}
\newcommand{\we}{\mathcal{W}}
\newcommand{\cof}{\mathcal{C}}
\newcommand{\fib}{\mathcal{F}}
\newcommand{\sA}{\mathcal{A}}
\newcommand{\sB}{\mathcal{B}}
\newcommand{\sC}{\mathcal{C}}
\newcommand{\sJ}{\mathcal{J}}
\newcommand{\sL}{\mathcal{L}}
\newcommand{\sR}{\mathcal{R}}
\newcommand{\bL}{\mathbf{L}}
\newcommand{\bR}{\mathbf{R}}
\newcommand{\gC}{\mathfrak{C}}
\newcommand{\gN}{\mathfrak{N}}
\font\maljapanese=dmjhira at 2ex % you can change this "2ex" value
\def\yo{\textrm{\maljapanese\char"48}}
\newcommand{\ho}{\mathfrak{h}}
\def\makeslashed#1#2#3#4#5{#1{\mathpalette{\sla@{#2}{#3}{#4}}{#5}}}
\def\@mathlower#1#2#3{\setbox0=\hbox{$\m@th#2#3$}\lower#1\ht0\box0}
\def\mathlower#1#2{\mathpalette{\@mathlower{#1}}{#2}}
\newcommand\dhxrightarrow[2][]{%
  \mathrel{\ooalign{$\xrightarrow[#1\mkern4mu]{#2\mkern4mu}$\cr%
  \hidewidth$\rightarrow\mkern4mu$}}
}
\newcommand\tailxrightarrow[2][]{%
  \mathrel{\ooalign{$\xrightarrow[#1\mkern4mu]{#2\mkern4mu}$\cr%
  \hidewidth$\Yright\mkern14mu$}}
}
\newcommand{\fto}{\twoheadrightarrow}
\newcommand{\cto}{\rightarrowtail}
\newcommand{\wto}{\xrightarrow{{\smash{\mathlower{0.8}{\sim}}}}}
\newcommand{\cwto}{\tailxrightarrow{{\smash{\mathlower{0.8}{\sim}}}}}
\newcommand{\fwto}{\dhxrightarrow{{\smash{\mathlower{0.8}{\sim}}}}}
\newcommand{\Map}{\mathord{\text{\normalfont{\textsf{Map}}}}}
\newcommand{\Fun}{\mathord{\text{\normalfont{\textsf{Fun}}}}}
\newcommand{\Hom}{\mathord{\text{\normalfont{\textsf{Hom}}}}}
\newcommand{\Ho}{\mathord{\text{\normalfont{\textsf{Ho}}}}}
\newcommand{\h}{\cat{h}}
\newcommand{\sk}{\mathrm{sk}}
\begin{document}

\title{Homotopical categories: from model categories to $(\infty,1)$-categories}
\author{Emily Riehl}
\date{\today} 

\thanks{The author wishes to thank Andrew Blumberg, Teena Gerhardt, and Mike Hill for putting together this volume and inviting her to contribute. Daniel Fuentes-Keuthan gave detailed comments on a draft version of this chapter, and Yu Zhang and 
Chris Kapulkin pointed out key eleventh hour typos. She was supported by the National Science Foundation via the grants DMS-1551129 and DMS-1652600.}

\address{Department of Mathematics\\Johns Hopkins University \\ 3400 N Charles Street \\ Baltimore, MD 21218}
\email{eriehl@math.jhu.edu}

\begin{abstract}  
This chapter, written for \emph{Stable categories and structured ring spectra,} edited by Andrew J.~Blumberg, Teena Gerhardt, and Michael A.~Hill, surveys the history of homotopical categories, from Gabriel and Zisman's categories of fractions to Quillen's model categories, through Dwyer and Kan's simplicial localizations and culminating in $(\infty,1)$-categories, first introduced through concrete models and later re-conceptualized in a model-independent framework. This reader is not presumed to have prior acquaintance with any of these concepts. Suggested exercises are included to fertilize intuitions and copious references point to external sources with more details. A running theme of homotopy limits and colimits is included to explain the kinds of problems homotopical categories are designed to solve as well as technical approaches to these problems.
\end{abstract}

\maketitle

\setcounter{tocdepth}{2}
\tableofcontents

\section{The history of homotopical categories}

A \emph{homotopical category} is a category equipped with some collection of morphisms traditionally called ``weak equivalences'' that somewhat resemble isomorphisms but fail to be invertible in any reasonable sense, and might in fact not even be reversible:  that is the presence of a weak equivalence $X \wto Y$ need not imply the presence of a weak equivalence $Y \wto X$. Frequently, the weak equivalences are defined as the class of morphisms in a category $\cK$ that are ``inverted by a functor'' $F \colon \cK \to \cL$, in the sense of being precisely those morphisms in $\cK$ that are sent to isomorphisms in $\cL$. For instance.
\begin{itemize}
\item Weak homotopy equivalences of spaces or spectra are those maps inverted by the homotopy group functors $\pi_* \colon \cat{Top} \to \cat{GrSet}$ or $\pi_* \colon \cat{Spectra} \to \cat{GrAb}$.
\item Quasi-isomorphisms of chain complexes are those maps inverted by the homology functor $H_* \colon \cat{Ch} \to \cat{GrAb}$;
\item Equivariant weak homotopy equivalences of $G$-spaces are those maps inverted by the homotopy functors on the fixed point subspaces for each compact subgroup of $G$.
\end{itemize}

The term used to describe the equivalence class represented by a topological space up to weak homotopy equivalence is a \emph{homotopy type}. In view of the fact that the weak homotopy equivalence relation is created by the functor $\pi_*$,  a homotopy type can loosely be thought of as a collection of ``algebraic invariants'' of the space $X$, as encoded by the homotopy groups $\pi_*X$. Homotopy types live in a category called the \emph{homotopy category of spaces}, which is related to the classical category of spaces as follows: a genuine continuous function $X \to Y$ certainly represents a map (graded homomorphism) between homotopy types. But a weak homotopy equivalence of spaces, defining an isomorphism of homotopy types, should now be regarded as formally invertible.

In their 1967 manuscript \emph{Calculus of fractions and homotopy theory}, Gabriel and Zisman \cite{GZ} formalized the construction of what they call the \emph{category of fractions} associated to any class of morphisms in any category together with an associated localization functor $\pi \colon \cK \to \cK[\we^{-1}]$ that is universal among functors with domain $\cK$ that invert the class $\we$ of weak equivalences.  This construction and its universal property is presented in \S\ref{sec:fractions}. For instance, the homotopy category of spaces arises as the category of fractions associated to the weak homotopy equivalences of spaces.

There is another classical model of the homotopy category of spaces that defines an equivalence category. The objects in this category are the \emph{CW-complexes}, spaces built by gluing disks along their boundary spheres, and the morphisms are now taken to be homotopy classes of maps. By construction the isomorphisms in this category are the homotopy equivalences of CW-complexes. Because any space is weak homotopy equivalent to a CW-complex and because Whitehead's theorem proves that the weak homotopy equivalences between CW-complexes are precisely the homotopy equivalences, it can be shown this new homotopy category is equivalent to the Gabriel-Zisman category of fractions.

Quillen introduced a formal framework to which draws attention to the essential features of these equivalent constructions. His axiomatization of an abstract ``homotopy theory'' was motivated by the following question: when does it make sense to invert a class of morphisms in a category and call the result a homotopy category, rather than simply a localization? In the introduction to his 1967 manuscript \emph{Homotopical Algebra} \cite{quillen}, Quillen reports that Kan's theorem that the homotopy theory of simplicial groups is equivalent to the homotopy theory of connected pointed spaces \cite{kan-group} suggested to Quillen that simplicial objects over a suitable category $\cA$ might form a homotopy theory analogous to classical  homotopy theory in algebraic topology. In pursing this analogy he observed
\begin{quote}
there were a large number of arguments which were formally similar to well-known ones in algebraic topology, so it was decided to define the notion of a homotopy theory in sufficient generality to cover in a uniform way the different homotopy theories encountered.
\end{quote}

Quillen named these homotopy theories \emph{model categories}, meaning ``categories of models for a homotopy theory.'' Quillen entitled his explorations ``homotopical algebra'' as they describe both a generalization of and a close analogy to  homological algebra --- in which the relationship between an abelian category and its derived category parallels the relationship between a model category and its homotopy category. We introduce Quillen's model categories and his construction of their homotopy categories as a category of ``homotopy'' classes of maps between sufficiently ``fat'' objects in \S\ref{sec:model}. A theorem of Quillen proven as Theorem \ref{thm:model-saturation} below shows that the weak equivalences in any model category are precisely those morphisms inverted by the Gabriel-Zisman localization functor to the homotopy category. In particular, in the homotopical categories that we will most frequently encounter, the weak equivalences satisfy a number of closure properties to be introduced in Definition \ref{defn:we-properties}.

To a large extent, homological algebra is motivated by the problem of constructing ``derived'' versions of functors between categories of chain complexes that fail to preserve weak equivalences. A similar question arises in Quillen's model categories. Because natural transformations can point either to or from a given functor, derived functors come with a ``handedness'': either left or right. In \S\ref{sec:derived}, we introduce dual notions of left and right Quillen functors between model categories and construct their derived functors via a slightly unusual route that demands a stricter (but in our view improved) definition of derived functors than the conventional one. In parallel, we study the additional properties borne by Quillen's original model structure on simplicial sets later axiomatized by Hovey \cite{hovey} in the notion of a monoidal or enriched model category, which derives to define monoidal structures or enrichments on the homotopy category.

These considerations also permit us to describe when two ``homotopy theories'' are equivalent. For instance, the analogy between homological and homotopical algebra is solidified by a homotopical reinterpretation of the Dold-Kan theorem as an equivalence between the homotopy theory of simplicial objects of modules and chain complexes of modules presented in Theorem \ref{thm:dold-kan}.

As an application of the theory of derived functors, in \S\ref{sec:holim} we study homotopy limits and colimits, which correct for the defect that classically defined limit and colimit constructions frequently fail to be weak equivalence invariant. We begin by observing that the homotopy category admits few strict limits. It does admit weak ones, as we see in Theorem \ref{thm:vogt-w-limits}, but their construction requires higher homotopical information which will soon become a primary focus.

By convention, a full Quillen model structure can only be borne by a category possessing all limits and colimits, and hence the homotopy limits and homotopy colimits introduced in \S\ref{sec:holim} are also guaranteed to exist. This supports the point of view that a model category is a presentation of a homotopy theory with all homotopy limits and homotopy colimits. In a series of papers \cite{DK-simplicial, DK-calculating, DK-function} published by Dwyer and Kan in 1980, Dwyer and Kan describe more general ``homotopy theories'' as \emph{simplicial localizations} of categories with weak equivalences, which augment the Gabriel-Zisman category of fractions with homotopy types of the mapping spaces between any pair of objects. The \emph{hammock localization}  construction described in \S\ref{sec:hammock} is very intuitive, allowing us to re-conceptualize the construction of the category of fractions not by ``imposing relations'' in the same dimension, but by adding maps, in the next dimension ---  ``imposing homotopy relations'' if you will.

The hammock localization defines a simplicially enriched category associated to any homotopical category. A simplicially enriched category is a non-prototypical exemplification of  the notion of an $(\infty,1)$-\emph{category}, that is, a category weakly enriched over $\infty$-groupoids or homotopy types. Model categories also equip each pair of their objects with a well-defined homotopy type of maps, and hence also present $(\infty,1)$-categories.  Before exploring the schematic notion of $(\infty,1)$-\emph{category} in a systematic way, in \S\ref{sec:quasi} we introduce the most popular model, the \emph{quasi-categories} first defined in 1973 by Boardman and Vogt \cite{BV} and further developed by Joyal \cite{joyal-quasi,joyal-theory} and Lurie \cite{lurie-topos}. 

In \S\ref{sec:models} we turn our attention to other models of $(\infty,1)$-categories, studying six models in total: quasi-categories, Segal categories, complete Segal spaces, naturally marked quasi-categories, simplicial categories, and relative categories. The last two models are strictly-defined objects, which are quite easy to define, but the model categories in which they live are poorly behaved. By contrast, the first four of these models live in model categories that have many pleasant properties, that are collected together in a new axiomatic notion of an $\infty$-\emph{cosmos}.

After introducing this abstract definition, in \S\ref{sec:indep} we see how the $\infty$-cosmos axiomatization allows us to develop the basic theory of these four models of $(\infty,1)$-categories ``model-independently,'' that is simultaneously and uniformly across these models. Specifically, we study adjunctions and equivalences between $(\infty,1)$-categories and limits and colimits in an $(\infty,1)$-category to provide points of comparison for the corresponding notions of Quillen adjunction, Quillen equivalence, and homotopy limits and colimits developed for model categories in \S\ref{sec:derived} and \S\ref{sec:holim}. A brief epilogue \S\ref{sec:epilogue} contains a few closing thoughts and anticipates future chapters in this volume.

\section{Categories of fractions and localization}\label{sec:fractions}

In one of the first textbook accounts of abstract homotopy theory \cite{GZ}, Gabriel and Zisman construct the universal category that inverts a collection of morphisms together with accompanying ``calculi-of-fractions'' techniques for calculating this categorical ``localization.'' Gabriel and Zisman prove that a class of morphisms in a category with finite colimits admits a ``calculus of left fractions'' if and only if the corresponding localization preserves them, which then implies that the category of fractions also admits finite colimits \cite[\S 1.3]{GZ}; dual results relate finite limits to their ``calculus of right fractions.'' For this reason, their calculii of fractions fail to exist in the examples of greatest interest to modern homotopy theorists, and so we decline to introduce them here, focused instead in \S\ref{ssec:GZ-cat} on the general construction of the category of fractions.

\subsection{The Gabriel--Zisman category of fractions}\label{ssec:GZ-cat}

For any class of morphisms $\we$ in a category $\cK$, the category of fractions $\cK[\we^{-1}]$ is the universal category equipped with a functor $\iota \colon \cK \to \cK[\we^{-1}]$ that inverts $\we$, in the sense of sending each morphism to an isomorphism. Its objects are the same as the objects of $\cK$ and its morphisms are finite zig-zags of morphisms in $\cK$, with all ``backwards'' arrows finite composites of arrows belonging to $\we$, modulo a few relations which convert the canonical graph morphism $\iota \colon \cK \to \cK[\we^{-1}]$ into a functor and stipulate that the ``backwards'' copies of each arrow in $\we$ define two-sided inverses to the morphisms in $\we$.

\begin{defn}[{category of fractions \cite[1.1]{GZ}}]\label{defn:cat-of-fractions} 
For any class of morphisms $\we$ in a category $\cK$, the \textbf{category of fractions} $\cK[\we^{-1}]$ is a quotient of the free category on the directed graph obtained by adding ``backwards'' copies of the morphisms in $\we$ to the underlying graph of the category $\cK$ modulo the relations:
\begin{itemize}
\item Adjacent arrows pointing ``forwards'' can be composed.
\item Forward-pointing identities may be removed.
\item Adjacent pairs of zig-zags \[ \begin{tikzcd} x \arrow[r, "s"] &y &x\arrow[l, "s"'] \end{tikzcd}\quad \mathrm{or} \quad \begin{tikzcd} y&x\arrow[l, "s"'] \arrow[r, "s"] & y \end{tikzcd}\] indexed by any $s \in \we$ can be removed.\footnote{It follows that adjacent arrows in $\we$ pointing ``backwards'' can also be composed whenever their composite in $\cK$ also lies in $\we$.}
\end{itemize}
The image of the functor $\iota \colon \cK \to \cK[\we^{-1}]$ is comprised of those morphisms that can be represented by unary zig-zags pointing ``forwards.''
 \end{defn}

The following proposition expresses the 2-categorical universal property of the category of fractions construction in terms of categories $\Fun(\cK,\cM)$ of functors and natural transformations:

\begin{prop}[{the universal property of localization\cite[1.2]{GZ}}]\label{prop:fractions-UP} For any category $\cM$, restriction along $\iota$ defines a fully faithful embedding
\[
\begin{tikzcd}[row sep=tiny]
\Fun(\cK[\we^{-1}],\cM) \arrow[dr, "\cong"'] \arrow[rr, hook, "-\circ \iota"] & & \Fun(\cK,\cM) \\ &  \underset{\we \mapsto \cong}{\Fun}(\cK, \cM) \arrow[ur, hook]
\end{tikzcd}
\]
defining an isomorphism $\Fun(\cK[\we^{-1}],\cM) \cong \underset{\we \mapsto \cong}{\Fun}(\cK, \cM)$ of categories onto its essential image, the full subcategory  spanned by those functors that invert $\we$.
\end{prop}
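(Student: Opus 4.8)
The plan is to verify three things about the functor $-\circ\iota$: that it lands in the full subcategory $\underset{\we\mapsto\cong}{\Fun}(\cK,\cM)$, that it is injective on objects and fully faithful, and that its image on objects is all of that subcategory. Since $\underset{\we\mapsto\cong}{\Fun}(\cK,\cM)$ is a \emph{full} subcategory of $\Fun(\cK,\cM)$, and is closed under isomorphism (a conjugate of an isomorphism is an isomorphism), these facts assemble into the displayed factorization and exhibit the top map as an isomorphism onto its essential image. The first point is immediate from Definition \ref{defn:cat-of-fractions}: $\iota$ inverts $\we$, hence so does $G\circ\iota$ for every functor $G\colon \cK[\we^{-1}]\to\cM$, and a natural transformation between two such composites is in particular a morphism of $\Fun(\cK,\cM)$.

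For injectivity on objects and faithfulness I would use the generators-and-relations description of $\cK[\we^{-1}]$. That category has the same objects as $\cK$, with $\iota$ the identity on objects, so a functor out of $\cK[\we^{-1}]$ is pinned down on objects by its composite with $\iota$. On morphisms, every arrow of $\cK[\we^{-1}]$ is a finite composite of arrows $\iota f$ with $f\in\mor\cK$ together with backward copies of morphisms of $\we$; the value of a functor $G$ on the backward copy of $s\in\we$ is forced to equal $G(\iota s)^{-1}$ by the zig-zag relations, so $G$ is determined by $G\circ\iota$. Likewise, since $\cK[\we^{-1}]$ and $\cK$ share the same objects, a natural transformation $G\To H$ is determined by its whiskering with $\iota$. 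Hence $-\circ\iota$ is injective on objects and faithful.

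For fullness together with surjectivity onto the subcategory, I would carry out the usual universal-property construction. Given $F\colon\cK\to\cM$ that inverts $\we$, let $\bar F$ agree with $F$ on objects, send $\iota f$ to $Ff$, and send the backward copy of each $s\in\we$ to $(Fs)^{-1}$, which exists \emph{precisely because} $F$ inverts $\we$; this data extends uniquely over the free category on the augmented graph, and one checks it respects the three defining relations --- the composition and identity relations because $F$ is a functor, the two zig-zag relations because $(Fs)^{-1}$ is a genuine two-sided inverse to $Fs$. Thus $\bar F$ descends to $\cK[\we^{-1}]$ with $\bar F\circ\iota = F$, so $F$ itself lies in the image. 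For fullness, given $G,H\colon \cK[\we^{-1}]\to\cM$ and $\alpha\colon G\iota\To H\iota$, define $\bar\alpha$ to have the same components as $\alpha$; because every morphism of $\cK[\we^{-1}]$ is a zig-zag composite of arrows $\iota f$ and formal inverses of arrows $\iota s$, it is enough to check the naturality square of $\bar\alpha$ on these generators, and it holds --- on $\iota f$ by naturality of $\alpha$, and on a formal inverse of $\iota s$ because naturality with respect to an isomorphism automatically transfers to its inverse (conjugate the square for $\iota s$ by $G(\iota s)^{-1}$ and $H(\iota s)^{-1}$). So $\bar\alpha$ is a natural transformation $G\To H$ restricting to $\alpha$.

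The one place requiring genuine care --- as opposed to bookkeeping --- is the compatibility of $\bar F$ with the zig-zag relations and the matching upgrade of naturality to formal inverses. Both come down to the same observation: the relations imposed in Definition \ref{defn:cat-of-fractions} are exactly what is needed to make ``inverted by $\iota$'' coincide with ``inverted by every functor that factors through $\iota$.'' Everything else is routine manipulation of the presentation.
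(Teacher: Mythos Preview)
Your argument is correct and, at the 1-dimensional level, essentially identical to the paper's: both construct the extension $\bar F$ by sending the backward copy of $s$ to $(Fs)^{-1}$, extending over the free category on the augmented graph, and checking the three relations. The paper phrases uniqueness more tersely by observing that $\iota$ is an epimorphism (as for localizations of rings), but this is exactly your generators argument in disguise.

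The genuine difference is in how the 2-dimensional part (fullness and faithfulness on natural transformations) is handled. You do it by hand: you note that $\iota$ is bijective on objects, so components are unchanged, and then you verify naturality of $\bar\alpha$ on the generating morphisms and their formal inverses by conjugation. The paper instead reduces the 2-dimensional statement to the 1-dimensional one already proven, by identifying a natural transformation $G\To H$ with a single functor $\cK[\we^{-1}]\to\cM^{\2}$ valued in the arrow category, and observing that such a functor corresponds via the 1-dimensional universal property to a functor $\cK\to\cM^{\2}$ inverting $\we$, i.e.\ to a natural transformation $G\iota\To H\iota$. Your approach is more explicit and self-contained; the paper's trick is slicker and avoids re-running the generator argument, but requires the reader to unpack the arrow-category correspondence. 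Either is entirely adequate here.
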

\begin{proof}
As in the analogous case of rings, the functor $\iota\colon \cK \to \cK[\we^{-1}]$ is an epimorphism and so any functor $F \colon \cK \to \cM$ admits at most one extension along $\iota$. To show that any functor $F \colon \cK \to \cM$ that inverts $\we$ does extend to $\cK[\we^{-1}]$, note that we may define a graph morphism from the graph described in Definition \ref{defn:cat-of-fractions} to $\cM$ by sending the backwards copy of $s$ to the isomorphism $(Fs)^{-1}$ and thus a functor from the free category generated by this graph to $\cM$. Functoriality of $F$ ensures that the enumerated relations are respected by this functor, which therefore defines an extension $\hat{F} \colon \cK[\we^{-1}] \to \cM$ as claimed.

The 2-dimensional aspect of this universal property follows from the 1-dimen\-sion\-al one by considering functors valued in arrow categories \cite[\S 3]{kelly-2-limit}.
\end{proof}
%A natural transformation between functors $\cK[\we^{-1}] \to \cM$ is encoded by a functor $\cK[\we^{-1}] \to \cM^\2$ valued in the arrow category of $\cM$ that sends each object $k \in \ob\cK = \ob\cK[\we^{-1}]$ to the $k$-indexed component of the natural transformation. By the universal property just established, such functors correspond to functors $\cK \to \cM^\2$ that invert the morphisms in $\we$, i.e., to natural transformations between functors that both invert $\we$. This establishes the 2-categorical universal property of the statement.

\begin{ex}[groupoid reflection] When all of the morphisms in $\cK$ are inverted, the universal property of Proposition \ref{prop:fractions-UP} establishes an isomorphism $\Fun(\cK[\cK^{-1}], \cM) \cong \Fun(\cK,\fun{core}\cM)$ between functors from the category of fractions of $\cK$ to functors valued in the \textbf{groupoid core}, which is the maximal subgroupoid contained in $\cM$. In this way, the category of fractions construction specializes to define a left adjoint\footnote{More precisely, this left adjoint takes values in a larger universe of groupoids, since the category of fractions $\cK[\cK^{-1}]$ associated to a locally small category $\cK$ need not be locally small. Toy examples illustrating this phenomenon are easy to describe. For instance let $\cK$ be a category with a proper class of objects whose morphisms define a ``double asterisk'': each non-identity morphism has a common domain object and for each other object there are precisely two non-identity morphisms with that codomain.}
 to the inclusion of groupoids into categories:
\[
\begin{tikzcd}[column sep=large]
\cat{Cat} \arrow[r, bend left, "\fun{fractions}"] \arrow[r, bend right, "\fun{core}"' pos = .55 ] & \cat{Gpd} \arrow[l, hook', "\perp", "\perp"'] 
\end{tikzcd}
\]
\end{ex}

The universal property of Proposition \ref{prop:fractions-UP} applies to the class of morphisms inverted by any functor admitting a fully faithful right adjoint \cite[1.3]{GZ}. In this case, the category of fractions defines a \emph{reflective subcategory} of $\cK$, which admits a variety of useful characterizations, for instance as the \emph{local} objects orthogonal to the class of morphisms being inverted \cite[4.5.12, 4.5.vii, 5.3.3, 5.3.i]{riehl-context}. For instance, if $R \to R[S^{-1}]$ is the localization of a commutative ring at a multiplicatively closed set, then the category of $R[S^{-1}]$-modules defines a reflective subcategory of the category of $R$-modules \cite[4.5.14]{riehl-context}, and hence the extension of scalars functor $R[S^{-1}] \otimes_R -$ can be understood as a Gabriel-Zisman localization.

However reflective subcategories inherit all limits and colimits present in the larger category \cite[4.5.15]{riehl-context}, which is not typical behavior for categories of fractions that are ``homotopy categories'' in a sense to be discussed in \S\ref{sec:model}. With the question ``when is a category of fractions a homotopy category'' in mind, we now turn our attention to Quillen's homotopical algebra.

\section{Model category presentations of homotopical categories}\label{sec:model}

A question that motivated Quillen's introduction of model categories \cite{quillen} and also Dwyer, Kan, Hirschhorn, and Smith's later generalization \cite{DHKS} is: when is a category of fractions a homotopy category? Certainly, the localization functor must invert some class of morphisms that are suitably thought of as ``weak equivalences.'' Perhaps these weak equivalences coincide with a more structured class of ``homotopy equivalences'' on a suitable subcategory of ``fat'' objects that spans each weak equivalence class --- such as given in the classical case by Whitehead's theorem that any weak homotopy equivalence between CW complexes admits a homotopy inverse --- in such a way that the homotopy category is equivalent to the category of homotopy classes of maps in this full subcategory. Finally, one might ask that the homotopy category admit certain derived constructions, such as the loop and suspension functors definable on the homotopy category of based spaces. On account of this final desideratum, we will impose the blanket requirement that a category that bears a model structure must be complete and cocomplete.

A class of morphisms $\we$ denoted by ``$\wto$'' in a category $\cM$ might reasonably be referred to as ``weak equivalences'' if they somewhat resemble isomorphisms, aside from failing to be ``invertible'' in any reasonable sense. The meaning of ``somewhat resembling isomorphisms'' may be made precise via any of the following axioms, all of which are satisfied by the isomorphisms in any category.

\begin{defn}\label{defn:we-properties} The following hypotheses are commonly applied to a class of ``weak equivalences'' $\we$ in a category $\cM$:
\begin{itemize}
\item The \textbf{two-of-three property}: for any composable pair of morphisms if any two of $f$, $g$, and $gf$ is in $\we$ then so is the third.
\item The \textbf{two-of-six property}: for any composable triple of morphisms
\[
\begin{tikzcd}
& \bullet  \arrow[dr, "hg", "\sim"' sloped] \\ \bullet \arrow[ur, "f"] \arrow[dr, "gf"', "\sim" sloped] \arrow[rr, "hgf" near start] & & \bullet \\ & \bullet \arrow[from = uu, crossing over, "g" near end] \arrow[ur, "h"'] 
\end{tikzcd}
\]
if $gf, hg \in \we$ then $f,g,h, hgf \in \we$.
\item The class $\we$ is \textbf{closed under retracts} in the arrow category: given a commutative diagram
\[
\begin{tikzcd}
\bullet \arrow[d, "t"'] \arrow[r] \arrow[rr, bend left, equals] & \bullet \arrow[d, "s", "\wr"'] \arrow[r] & \bullet \arrow[d, "t"] \\ \bullet \arrow[r] \arrow[rr, bend right, equals] & \bullet \arrow[r] & \bullet
\end{tikzcd}
\]
if $s$ is in $\we$ then so is its retract $t$.
\item The class $\we$ might define a \textbf{wide subcategory}, meaning $\we$ is closed under composition and contains all identity morphisms.
\item More prosaically, it is reasonable to suppose that $\we$  contains the isomorphisms.
\item At bare minimum, one might at least insist that $\we$ contains all of the identities.
\end{itemize} 
\end{defn}

\begin{lem}\label{lem:moral-we-defn} Let $\we$ be the class of morphisms in $\cM$ inverted by a functor $F \colon \cM \to \cK$. Then $\we$ satisfies each of the closure properties just enumerated.
\end{lem}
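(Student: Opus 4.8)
The plan is to observe that every one of the enumerated closure properties for $\we$ is the pullback along $F$ of the corresponding (trivially true) property of the class of isomorphisms in $\cK$. Recall that $w$ lies in $\we$ if and only if $Fw$ is an isomorphism in $\cK$; so for each axiom I would take the relevant diagram in $\cM$ (a composable pair or triple, or a retract diagram in the arrow category), apply the functor $F$ to obtain the analogous diagram in $\cK$, invoke the fact that isomorphisms in any category satisfy the property in question, and then pull the conclusion back through the biconditional defining $\we$. The only ambient facts needed are that $F$ preserves composition, identities, and commutative diagrams, and that isomorphisms in a category are closed under composition, satisfy two-of-three and two-of-six, are closed under retracts, form a wide subcategory, and contain all identities.

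Concretely, I would run through the list. For two-of-three: if two of $Ff$, $Fg$, $F(gf) = Fg \cdot Ff$ are isomorphisms, the third is, by the usual cancellation argument for isomorphisms; hence the same holds for $f, g, gf$ in $\we$. For two-of-six: applying $F$ to the displayed triple gives morphisms $Ff, Fg, Fh$ with $Fg \cdot Ff$ and $Fh \cdot Fg$ isomorphisms; then $Fg$ has both a left and a right inverse, so it is an isomorphism, whence $Ff$ and $Fh$ are isomorphisms (each now a composite of an isomorphism with the inverse of an isomorphism), and so is $Fh \cdot Fg \cdot Ff$; thus $f, g, h, hgf \in \we$. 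For closure under retracts: applying $F$ to the retract diagram in the arrow category of $\cM$ yields a retract diagram in the arrow category of $\cK$ in which the middle vertical map $Fs$ is an isomorphism; since isomorphisms are closed under retracts (a two-line diagram chase producing an explicit inverse to $Ft$ from the inverse of $Fs$ and the retraction data), $Ft$ is an isomorphism and $t \in \we$. Finally, $\we$ is a wide subcategory because $F$ preserves composites and identities and isomorphisms are closed under composition and include all identities; and a fortiori $\we$ contains all isomorphisms of $\cM$ (since $F$ sends them to isomorphisms) and hence all identities.

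There is essentially no obstacle here: each verification is a routine transport of a standard categorical fact across the functor $F$, and the only point requiring the tiniest bit of care is the two-of-six case, where one must first extract that $Fg$ is invertible from the one-sided inverses supplied by $Fg \cdot Ff$ and $Fh \cdot Fg$ before concluding anything about $Ff$ and $Fh$. I would present the argument briefly, doing two-of-three and two-of-six explicitly and remarking that retract-closure and the wide-subcategory/identity clauses follow the same pattern, since the class of isomorphisms in any category manifestly enjoys all the listed properties and $F$ is a functor.
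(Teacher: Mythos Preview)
Your proposal is correct and takes the same approach as the paper, which simply remarks that the result ``follows immediately from the axioms of functoriality.'' You have spelled out in detail exactly what that one-line proof means: each closure property for isomorphisms in $\cK$ pulls back along $F$ to the corresponding property for $\we$.
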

\begin{proof} This follows immediately from the axioms of functoriality.
\end{proof}

In practice, most classes of weak equivalences arise as in Lemma \ref{lem:moral-we-defn}. For instance, the quasi-isomorphisms are those chain maps inverted by the homology functor $H_\bullet$ from chain complexes to graded modules while the weak homotopy equivalences are those continuous functions inverted by the homotopy group functors $\pi_\bullet$. Rather than adopt a universal set of axioms that may or may not fit the specific situation at hand, we will use the term \textbf{homotopical category} to refer to any pair $(\cM,\we)$ comprised of a category and a class of morphisms and enumerate the specific properties we need for each result or construction. When the homotopical category $(\cM,\we)$ underlies a model category structure, to be defined, Theorem \ref{thm:model-saturation} below proves $\we$ is precisely the class of morphisms inverted by the Gabriel-Zisman localization functor and hence satisfies all of the enumerated closure properties. 

The data of a \emph{model structure} borne by a homotopical category is given by two additional classes of morphisms ---  the \emph{cofibrations} $\cof$ denoted  ``$\rightarrowtail$'', and the \emph{fibrations} $\fib$ denoted ``$\twoheadrightarrow$'' --- 
satisfying axioms to be enumerated. In \S\ref{ssec:wfs}, we present a modern reformulation of Quillen's axioms that more clearly highlight the central features of a model structure borne by a complete and cocomplete category. In \S\ref{ssec:functoriality}, we discuss the delicate question of the functoriality of the factorizations in a model category with the aim of justifying our view that this condition is harmless to assume in practice.

In \S\ref{ssec:htpy-relation}, we explain what it means for a parallel pair of morphisms in a model category to be \emph{homotopic}; more precisely, we introduce distinct \emph{left homotopy} and \emph{right homotopy} relations that define a common equivalence relation when the domain  is \emph{cofibrant} and the codomain is \emph{fibrant}. The homotopy relation is used in \S\ref{ssec:htpy-cat-of-model} to construct and compare three equivalent models for the homotopy category of a model category: the Gabriel-Zisman category of fractions $\cM[\we^{-1}]$ defined by formally inverting the weak equivalences, the category $\h\cM_{\mathrm{cf}}$ of fibrant-cofibrant objects in $\cM$ and homotopy classes of maps, and an intermediary $\Ho\cM$ which has the objects of the former and hom-sets of the later, designed to facilitate the comparison. Finally, \S\ref{ssec:simp-sets} presents a fundamental example: Quillen's model structure on the category of simplicial sets.

\subsection{Model category structures via weak factorization systems}\label{ssec:wfs}

When Quillen first introduces the definition of a model category in the introduction to ``Chapter I.~Axiomatic Homotopy Theory'' \cite{quillen}, he highlights the factorization and lifting axioms as being the most important. These axioms are most clearly encapsulated in the categorical notion of a weak factorization system, a concept which was codified later.

\begin{defn}\label{defn:wfs} A \textbf{weak factorization system} $(\sL,\sR)$ on a category $\cM$ is comprised of two classes of morphisms $\sL$ and $\sR$ so that
\begin{enumerate}
\item\label{itm:wfs-factor} Every morphism in $\cM$ may be factored as a morphism in $\sL$ followed by a morphism in $\sR$.
\[
\begin{tikzcd}[row sep=tiny] 
\bullet \arrow[rr, "f"] \arrow[dr, "\sL\ni \ell"'] & & \bullet \\ & \bullet \arrow[ur, "r \in \sR"'] 
\end{tikzcd}
\]
\item\label{itm:wfs-lift} The maps in $\sL$ have the \textbf{left lifting property} with respect to each map in $\sR$ and equivalently the maps in $\sR$ have the \textbf{right lifting property} with respect to each map in $\sL$: that is, any commutative square
\[ 
\begin{tikzcd}
\bullet \arrow[d, "\sL \ni \ell"'] \arrow[r] & \bullet \arrow[d, "r \in \sR"] \\ \bullet \arrow[r] \arrow[ur, dashed] & \bullet
\end{tikzcd}
\]
admits a diagonal filler as indicated, making both triangles commute.
\item\label{itm:wfs-retract} The classes $\sL$ and $\sR$ are each closed under retracts in the arrow category: given a commutative diagram
\[
\begin{tikzcd}
\bullet \arrow[d, "t"'] \arrow[r] \arrow[rr, bend left, equals] & \bullet \arrow[d, "s"] \arrow[r] & \bullet \arrow[d, "t"] \\ \bullet \arrow[r] \arrow[rr, bend right, equals] & \bullet \arrow[r] & \bullet
\end{tikzcd}
\]
if $s$ is in that class then so is its retract $t$.
\end{enumerate}
\end{defn}

The following reformulation of Quillen's definition \cite[I.5.1]{quillen} was given by Joyal and Tierney \cite[7.7]{JT}, who prove that a homotopical category $(\cM,\we)$, with the weak equivalences satisfying the two-of-three property, admits a model structure just when there exist classes $\cof$ and $\fib$ that define a pair of weak factorization systems:

\begin{defn}[model category]\label{defn:model-cat} A \textbf{model structure} on a homotopical category $(\cM,\we)$ consists of three classes of maps --- the \textbf{weak equivalences} $\we$ denoted ``$\wto$'' which must satisfy the two-of-three property\footnote{The standard definition of a model category also requires the weak equivalences to be closed under retracts, but this is a consequence of the axioms given here \cite[7.8]{JT}.},  the \textbf{cofibrations} $\cof$ denoted  ``$\rightarrowtail$'', and the \textbf{fibrations} $\fib$ denoted ``$\twoheadrightarrow$'' respectively --- so that $(\cof,\fib\cap\we)$ and $(\cof\cap\we, \fib)$ each define weak factorization systems on $\cM$.
\end{defn}

\begin{rmk}[on self-duality]
Note that definitions \ref{defn:wfs} and \ref{defn:model-cat} are self-dual: if $(\sL,\sR)$ defines a weak factorization system on $\cM$ then $(\sR,\sL)$ defines a weak factorization system on $\cM^\op$. Thus the statements we prove about the left classes $\cof$ of cofibrations and $\cof\cap\we$ of \textbf{trivial cofibrations} ``$\cwto$'' will have dual statements involving the right classes  $\fib$  of fibrations and $\fib \cap \we$ of \textbf{trivial fibrations} ``$\fwto$.''
\end{rmk}

The third axiom \eqref{itm:wfs-retract} of Definition \ref{defn:wfs} was missing from Quillen's original definition of a model category; he referred to those model categories that have the retract closure property as ``closed model categories.'' The importance of this closure property is that it implies that the left class of a weak factorization system is comprised of all of those maps that have the left lifting property with respect to the right class and dually, that the right class is comprised of all of those maps that have the right lifting property with respect to the left class. %\footnote{It also implies that the weak equivalences are closed under retracts \cite[7.8]{JT}, a fact which is used in Quillen's Theorem \ref{thm:model-saturation} to show that the weak equivalences are precisely the class of morphisms inverted by the localization functor $\cM \to \cM[\we^{-1}]$.}
  These results follow as a direct corollary of the famous ``retract argument.''

\begin{lem}[retract argument]\label{lem:retract} Suppose $f = r \circ \ell$ and $f$ has the left lifting property with respect to its right factor $r$. Then $f$ is a retract of its left factor $\ell$.
\end{lem}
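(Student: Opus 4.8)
The plan is to chase the standard diagram that produces the retract. We are given $f = r \circ \ell$ with, say, $f \colon A \to B$, $\ell \colon A \to E$, and $r \colon E \to B$, and the hypothesis that $f$ has the left lifting property against $r$. First I would form the lifting square whose top edge is $\ell \colon A \to E$ and whose bottom edge is $\id_B \colon B \to B$; this square commutes precisely because $r \circ \ell = f = \id_B \circ f$. The left lifting property of $f$ against $r$ then supplies a diagonal filler $s \colon B \to E$ with $s \circ f = \ell$ and $r \circ s = \id_B$.

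Next I would assemble these maps into the retract diagram in the arrow category. The columns are $f \colon A \to B$, $\ell \colon A \to E$, and $f \colon A \to B$ again; the top row is $\id_A, \id_A$ and the bottom row is $s \colon B \to E$ followed by $r \colon E \to B$. I need to check the four commuting squares: the left-top square commutes since $\ell \circ \id_A = \ell = s \circ f$ (using $s \circ f = \ell$), the left-bottom square trivially, the right-top square since $f \circ \id_A = f = r \circ \ell$, and the right-bottom square since $r \circ s = \id_B = \id_B \circ \id_B$. Finally the composites along each row are identities: $\id_A$ on top, and $r \circ s = \id_B$ on the bottom. This exhibits $f$ as a retract of $\ell$ in the arrow category, which is exactly the claim.

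There is essentially no obstacle here; the only thing to be careful about is bookkeeping of the direction of the maps and orienting the retract diagram so that it matches the convention of Definition \ref{defn:wfs}\eqref{itm:wfs-retract} (with the retract $f$ in the outer columns and $\ell$ in the middle). The one genuine choice is recognizing that the correct lifting square to consider is the one with $\id_B$ along the bottom — once that is set up, the filler $s$ does all the work. A clean way to present this is simply to display the retract diagram with the filler $s$ inserted and let the reader verify commutativity, remarking that $s$ exists by the assumed lifting property applied to the square with top $\ell$ and bottom $\id_B$.
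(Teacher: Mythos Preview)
Your proof is correct and follows exactly the same approach as the paper: solve the lifting problem with $\ell$ on top and $\id_B$ on the bottom to obtain a filler $s$ (the paper calls it $t$), then display the resulting retract diagram with $\id_A,\id_A$ along the top and $s,r$ along the bottom. The paper simply draws the two diagrams and leaves the commutativity checks implicit, whereas you spell them out.
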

\begin{proof}
The solution to the lifting problem displayed below left
\[
\begin{tikzcd}
\bullet \arrow[d, "f"'] \arrow[r, "\ell"] & \bullet \arrow[d, "r"] & &  \bullet \arrow[d, "f"'] \arrow[r, equals] & \bullet \arrow[d, "\ell"] \arrow[r, equals] & \bullet \arrow[d, "f"] \\ \bullet \arrow[ur, dashed, "t"'] \arrow[r, equals]  &  \bullet & &  \bullet \arrow[r, "t"] \arrow[rr, bend right, equals] & \bullet \arrow[r, "r"] & \bullet
\end{tikzcd}
\]
defines the retract diagram displayed above right.
\end{proof}

\begin{cor}\label{cor:wfs-by-half} Either class of a weak factorization system determines the other: the left class consists of  those morphisms that have the left lifting property with respect to the right class, and the right class consists of  those morphisms that have the right lifting property with respect to the left class.
\end{cor}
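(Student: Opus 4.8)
The plan is to obtain this as the advertised direct corollary of the retract argument, Lemma \ref{lem:retract}. By the self-duality of Definition \ref{defn:wfs} noted above, it suffices to prove the characterization of the left class: if $(\sL,\sR)$ is a weak factorization system on $\cM$, then $\sL$ is exactly the class of morphisms having the left lifting property with respect to every morphism in $\sR$. The statement about the right class then follows by applying this to the weak factorization system $(\sR,\sL)$ on $\cM^\op$.

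Write $\sL'$ for the class of morphisms in $\cM$ with the left lifting property against each morphism of $\sR$. The inclusion $\sL \subseteq \sL'$ is immediate from axiom \eqref{itm:wfs-lift}. For the reverse inclusion, take $f \in \sL'$. By axiom \eqref{itm:wfs-factor} we may factor $f = r \circ \ell$ with $\ell \in \sL$ and $r \in \sR$. Since $f \in \sL'$ and $r \in \sR$, the morphism $f$ has the left lifting property with respect to its own right factor $r$, so Lemma \ref{lem:retract} exhibits $f$ as a retract of $\ell$ in the arrow category. By axiom \eqref{itm:wfs-retract}, $\sL$ is closed under retracts, so $f \in \sL$; hence $\sL = \sL'$.

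There is no real obstacle: each of the three weak factorization system axioms is used exactly once, and the only point requiring a moment's attention is recognizing that the hypothesis of Lemma \ref{lem:retract} — that $f$ lifts against its right factor — is automatic here precisely because that right factor lies in $\sR$ and $f$ was assumed to lift against all of $\sR$. I would present the argument in this order (easy inclusion first, then factor–retract–close for the hard inclusion, then invoke duality for the right class) and keep it to a few lines.
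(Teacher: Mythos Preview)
Your proof is correct and follows essentially the same approach as the paper: factor the map, observe it lifts against its own right factor since that factor lies in $\sR$, apply the retract argument to exhibit it as a retract of its left factor, and conclude by retract closure. The paper's version is terser---it only spells out the nontrivial inclusion and leaves the easy inclusion and the dual statement implicit---but the argument is the same.
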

\begin{proof}
Any map with the left lifting property with respect to the right class of a weak factorization system in particular lifts against its right factor of the factorization guaranteed by \ref{defn:wfs}\eqref{itm:wfs-factor} and so belongs to the left class by \ref{defn:wfs}\eqref{itm:wfs-retract}. 
\end{proof}

In particular the trivial cofibrations  can be defined without reference to either the cofibrations or weak equivalences as those maps that have the left lifting property with respect to the fibrations, and dually the trivial fibrations are precisely those maps that have the right lifting property with respect to the cofibrations.

\begin{exc}\label{exc:model-given-by} Verify that a model structure on $\cM$, if it exists, is uniquely determined by any of the following data:
\begin{enumerate}
\item\label{itm:model-i} The cofibrations and weak equivalences.
\item\label{itm:model-ii} The fibrations and weak equivalences.
\item\label{itm:model-iii} The cofibrations and fibrations.
\end{enumerate}
By a more delicate observation of Joyal \cite[E.1.10]{joyal-theory} using terminology to be introduced in Definition \ref{defn:fib-cof}, a model structure is also uniquely determined by 
\begin{enumerate}[resume]
\item\label{itm:model-iv} The cofibrations and fibrant objects.
\item\label{itm:model-v} The fibrations and cofibrant objects.
\end{enumerate}
\end{exc}

As a further consequence of the characterizations of the cofibrations, trivial cofibrations, fibrations, and trivial fibrations by lifting properties,  each class automatically enjoys certain closure properties.

\begin{lem}\label{lem:closure}
Let $\sL$ be any class of maps characterized by a left lifting property with respect to a fixed class of maps $\sR$. Then $\sL$ contains the isomorphisms and is closed under coproduct, pushout, retract, and (transfinite) composition.
\end{lem}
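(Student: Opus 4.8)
The plan is to work directly from the hypothesis, writing $\sL$ for the class of all maps having the left lifting property with respect to every map of $\sR$. Each of the five assertions then becomes a self-contained diagram chase of the same shape: given a lifting problem against an arbitrary $r \in \sR$ for the map we wish to place in $\sL$, transport that problem along the relevant (co)limiting cone to lifting problems for maps already known to lie in $\sL$, solve those, and reassemble the solutions --- via the universal property of the cone --- into a filler for the original square. Commutativity of the two triangles in the reassembled square is checked by postcomposing with a jointly epimorphic family of legs, so that it reduces to commutativity in the transported problems.

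Concretely, the isomorphism case is immediate: if $i$ is invertible then $u \circ i^{-1}$ solves any lifting square with $i$ on the left. For retracts, given a retract diagram as in Definition \ref{defn:wfs}\eqref{itm:wfs-retract} exhibiting $t$ as a retract of $s \in \sL$ together with a lifting problem for $t$, precompose its top and bottom edges with the maps of the retract diagram landing in the domain and codomain of $s$ to obtain a lifting problem for $s$; solve it, then postcompose the solution with the maps of the retract diagram emanating from $s$; the two retract identities force both triangles to commute. For coproducts, restrict a lifting problem for $\coprod_\alpha \ell_\alpha$ along each coproduct inclusion to a lifting problem for $\ell_\alpha$, solve these, and assemble the solutions using the universal property of the coproduct on the domain side. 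For pushouts, if $\ell'$ is the pushout of $\ell \in \sL$ along some map, compose a lifting problem for $\ell'$ with the pushout square to obtain one for $\ell$; a solution $w$ for $\ell$ agrees on the pushout corner with the top edge of the original square, so the pushout's universal property produces the desired filler, whose defining triangles hold because they do after composing with the two jointly epimorphic legs of the pushout.

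The transfinite composition case is the one requiring genuine care. One is given a smooth chain $X_0 \to X_1 \to \cdots \to X_\beta \to \cdots$ indexed by an ordinal $\lambda$ --- ``smooth'' meaning the canonical map $\colim_{\gamma < \beta} X_\gamma \to X_\beta$ is an isomorphism at each limit ordinal $\beta \le \lambda$ --- with each $X_\beta \to X_{\beta+1}$ in $\sL$, and one must show the composite $X_0 \to X_\lambda$ lies in $\sL$; finite composites are the special case where $\lambda$ is finite, in which case the recursion below involves no limit stages. Given a lifting problem for $X_0 \to X_\lambda$ against $r \in \sR$, build by transfinite recursion a cocone $(w_\beta \colon X_\beta \to X)_{\beta \le \lambda}$ under the chain with $w_0$ the top edge of the square and $r w_\beta$ equal to the bottom edge composed with $X_\beta \to X_\lambda$: at a successor stage $\beta+1$ solve the lifting square with $X_\beta \to X_{\beta+1} \in \sL$ on the left, and at a limit stage $\beta$ invoke smoothness, the maps $w_\gamma$ for $\gamma < \beta$ forming a cocone that factors uniquely through $X_\beta$. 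The component $w_\lambda$ of the resulting cocone solves the original lifting problem. The only subtleties are the bookkeeping of the recursion and the compatibility of the maps produced at limit stages with those produced earlier, both routine given smoothness. (All of this dualizes: a class defined by a right lifting property is closed under products, pullback, retract, and transfinite cocomposition.)
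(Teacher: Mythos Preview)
Your proof is correct and follows essentially the same approach as the paper's: the paper proves only the pushout and transfinite composition cases explicitly (leaving the others as ``similar''), and your arguments for those two match the paper's, with your transfinite recursion being more carefully spelled out at limit stages than the paper's terse ``constructed sequentially.'' The remaining cases you supply are the standard arguments the paper leaves to the reader.
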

\begin{proof}
We prove the cases of pushout and transfinite composition to clarify the meaning of these terms, the other arguments being  similar. Let $k$ be a pushout of a morphism $\ell \in \sL$ as  displayed below left, and consider a lifting problem against a morphism $r \in \sR$ as presented by the square  below right:
\[
\begin{tikzcd}%\bullet \arrow[d, "\sL\ni\ell"'] \arrow[r, "a"] \arrow[dr, phantom, "\ulcorner" very near end] & \bullet \arrow[d, "k"]  & & \bullet \arrow[d, "k"'] \arrow[r, "u"] & \bullet \arrow[d, "r \in \sR"]  & &
%\bullet \arrow[r, "b"'] & \bullet & & \bullet \arrow[r, "v"'] & \bullet & & 
 \bullet \arrow[d, "\sL\ni\ell"'] \arrow[r, "a"] \arrow[dr, phantom, "\ulcorner" very near end] & \bullet \arrow[d, "k"]  \arrow[r, "u"] & \bullet \arrow[d, "r \in \sR"] 
\\ \bullet \arrow[r, "b"'] \arrow[urr, dotted, bend left=10, "s" near start] & \bullet  \arrow[r, "b"'] \arrow[ur, dashed, "t"']  & \bullet
\end{tikzcd}
\]
Then there exists a lift $s$ in the composite rectangle and this lift together with $u$ define a cone under the pushout diagram, inducing the desired lift $t$.

Now let $\bbalpha$ denote any ordinal category. The \textbf{transfinite composite} of a diagram $\bbalpha \to \cM$ is the leg $\ell_\alpha$ of the colimit cone from the initial object in this diagram to its colimit. To see that this morphism lies in $\sL$ under the hypothesis that the generating morphisms $\ell_i$ in the diagram do, it suffices to construct the solution to any lifting problem against a map $r \in \sR$. 
\[
\begin{tikzcd}
\bullet \arrow[r, "\ell_0" description] \arrow[d] \arrow[rrrr, bend left=20, "\ell_\alpha" description] & \bullet \arrow[r, "\ell_1" description]  \arrow[dl, dotted] & \bullet \arrow[r, "\ell_2" description]  \arrow[dll, dotted] & \bullet \arrow[r, phantom, "\cdots"] \arrow[dlll, dotted] & \bullet \arrow[d] \arrow[dllll, dashed] \\ \bullet \arrow[rrrr, "r \in \sR"'] & & & & \bullet
\end{tikzcd}
\]
By the universal property of the colimit object, this dashed morphism exists once the commutative cone of dotted lifts do, and these may be constructed sequentially starting by lifting $\ell_0$ against $r$.
\end{proof}

\begin{exc} Verify that the class of morphisms $\sL$ characterized by the left lifting property against a fixed class of morphisms $\sR$ is closed under coproducts, closed under retracts, and contains the isomorphisms.
\end{exc}

\begin{defn}\label{defn:cell-complex} Let $\sJ$ be any class of maps. A $\sJ$-\textbf{cell complex} is a map built as a transfinite composite of pushouts of coproducts of maps in $\sJ$, which may then be referred to in this context as the basic \textbf{cells}.
\[
\begin{tikzcd}
\bullet \arrow[r, "\coprod j \in \sJ"] \arrow[d] \arrow[dr, phantom, "\ulcorner" very near end] & \bullet \arrow[d] &  \bullet \arrow[r, "\coprod j \in \sJ"] \arrow[d] \arrow[dr, phantom, "\ulcorner" very near end] & \bullet \arrow[d] \\ \bullet  \arrow[rrrr, bend right=20, dashed, "\in \sJ\text{-cell}"' near end] \arrow[r] & \bullet \arrow[r] & \bullet \arrow[r] &  \bullet \arrow[r, dotted] & \bullet \\ & \bullet \arrow[u] \arrow[r, "\coprod j \in \sJ"'] \arrow[ur, phantom, "\llcorner" very near end] & \bullet \arrow[u] 
\end{tikzcd}
\]
\end{defn}

Lemma \ref{lem:closure} implies that the left class of a weak factorization is closed under the formation of cell complexes.

\begin{exc} Explore the reason why the class of morphisms $\sL$ characterized by the left lifting property against a fixed class of morphisms $\sR$ may fail to be closed under coequalizers, formed in the arrow category.\footnote{Note however if the maps in $\sL$ are equipped with specified solutions to every lifting problem posed by $\sR$ and if the squares in the coequalizer diagram commute with these specified lifts, then the coequalizer inherits canonically-defined solutions to every lifting problem posed by $\sR$ and is consequently in the class $\sL$.}
\end{exc}

\subsection{On functoriality of  factorizations}\label{ssec:functoriality}

The weak factorization systems that arise in practice, such as those that define the components of a model category, tend to admit \emph{functorial} factorizations in the following sense.

\begin{defn}\label{defn:fun-fact} A \textbf{functorial factorization} on a category $\cM$ is given by a functor $\cM^\2 \to \cM^\3$ from the category of arrows in $\cM$ to the category of composable pairs of arrows in $\cM$ that defines a section to the  composition functor $\circ \colon \cM^\3 \to \cM^\2$. The action of this functor on objects in $\cM^\2$ (which are arrows, displayed vertically) and morphisms in $\cM^\2$ (which are commutative squares) is displayed below:
\[
\begin{tikzcd} X \arrow[d, "f"'] \arrow[r, "u"] & Z \arrow[d, "g"] \\ Y \arrow[r, "v"'] & W \end{tikzcd}
\qquad \mapsto\qquad
\begin{tikzcd} 
X \arrow[dd, bend right, "f"'] \arrow[r, "u"] \arrow[d, "Lf"] & Z \arrow[d, "Lg"'] \arrow[dd, bend left, "g"] \\ Ef \arrow[r, "{E(u,v)}"] \arrow[d, "Rf"]  & Eg \arrow[d, "Rg"'] \\ Y \arrow[r, "v"'] & W \end{tikzcd}
\]
This data is equivalently presented by a pair of endofunctors $L,R \colon \cM^\2 \rightrightarrows \cM^\2$ satisfying compatibility conditions
relative to the domain and codomain projections $\dom,\cod \colon \cM^\2 \rightrightarrows \cM$, namely that  \[\dom L = \dom, \quad \cod R =\cod,\quad \mathrm{and}\quad E:=\cod L = \dom R\] as functors $\cM^\2 \to \cM$.
\end{defn}

The functoriality of Definition \ref{defn:fun-fact} is with respect to (horizontal) composition of squares and is encapsulated most clearly by the functor $E$ which carries a square $(u,v)$ to the morphism $E(u,v)$ between the objects through which $f$ and $g$ factor. Even without assuming the existence of functorial factorizations, in any category with a weak factorization system $(\sL,\sR)$, commutative squares may be factored into a square between morphisms in $\sL$ on top of a square between morphisms in $\sR$
\[
\begin{tikzcd} X \arrow[d, "f"'] \arrow[r, "u"] & Z \arrow[d, "g"] \\ Y \arrow[r, "v"'] & W \end{tikzcd}
\qquad \mapsto\qquad
\begin{tikzcd} 
X \arrow[dd, bend right, "f"'] \arrow[r, "u"] \arrow[d, "\ell \in \sL"] & Z \arrow[d, "\sL\ni\ell'"'] \arrow[dd, bend left, "g"] \\ E \arrow[r, dashed, "{e}"] \arrow[d, "r \in \sR"]  & F \arrow[d, "\sR\ni r'"'] \\ Y \arrow[r, "v"'] & W \end{tikzcd}
\]
with the dotted horizontal morphism defined by lifting  $\ell$ against $r'$. These factorizations will not be strictly functorial because the solutions to the lifting problems postulated by \ref{defn:wfs}\eqref{itm:wfs-lift} are not unique. However, for either of the weak factorizations systems in a model category, any two solutions to a lifting problem are \emph{homotopic} in a sense defined by Quillen which appears as Definition \ref{defn:htpy}. As homotopic maps are identified in the homotopy category, this means that any model category has functorial factorizations up to homotopy, which suffices for most purposes.\footnote{While the derived functors constructed in Corollary \ref{cor:quillen-derived} make use of explicit point-set level functorial factorizations, their total derived functors in the sense of Definition \ref{defn:total} are well-defined without strict functoriality.} Despite the moral sufficiency of the standard axioms, for economy of language we tacitly assume that our models categories have functorial factorizations henceforth and take comfort in the fact that it seems to be exceedingly difficult to find model categories that fail to satisfy this condition.

\subsection{The homotopy relation on arrows}\label{ssec:htpy-relation}

Our aim now is to define Quillen's homotopy relation, which will be used to construct a relatively concrete model $\h\cM_{\mathrm{cf}}$ for  the homotopy category of the model category $\cM$, which is equivalent to the Gabriel-Zisman category of fractions $\cM[\we^{-1}]$ but provides better control over the sets of morphisms between each pair of objects. Quillen's key observation appears as Proposition \ref{prop:structured-we}, which shows that the weak equivalences between objects of $\cM$ that are both \emph{fibrant} and \emph{cofibrant}, in a sense to be defined momentarily, are more structured, 
 always admitting a homotopy inverse for a suitable notion of homotopy. The homotopy relation is respected by pre- and post-composition, which means that $\h\cM_{\mathrm{cf}}$ may be defined simply to be the category of fibrant-cofibrant objects and homotopy classes of maps. In this section, we give all of these definitions. In \S\ref{ssec:htpy-cat-of-model}, we construct the category $\h\cM_{\mathrm{cf}}$ sketched above and prove its equivalence with the category of fractions $\cM[\we^{-1}]$.

\begin{defn}\label{defn:fib-cof} An object $X$ in a model category $\cM$ is \textbf{fibrant} just when the unique map $X \to *$ to the terminal object is a fibration and \textbf{cofibrant} just when the unique map $\emptyset \to X$ from the initial object is a cofibration.
\end{defn}

Objects that are not fibrant or cofibrant can always be replaced by weakly equivalent objects that are by factoring the maps to the terminal object or from the initial object, as appropriate.

\begin{exc}[fibrant and cofibrant replacement]\label{exc:fib-cof} Assuming the functorial factorizations of \S\ref{ssec:functoriality}, define a \textbf{fibrant replacement} functor $R \colon \cM \to \cM$ and a \textbf{cofibrant replacement} functor $Q \colon \cM \to \cM$ equipped with natural weak equivalences
\[ \begin{tikzcd}[column sep=small] \eta \colon \id_\cM \arrow[r, Rightarrow,  "\sim" pos=.4] & R \end{tikzcd} \qquad \mathrm{and} \qquad \begin{tikzcd}[column sep=small] \epsilon \colon Q \arrow[r, Rightarrow, "\sim" pos=.4] & \id_\cM.\end{tikzcd}\]
\end{exc}

Applying both constructions, one obtains a \textbf{fibrant-cofibrant} replacement of any object $X$ as either $RQX$ or $QRX$. In the diagram
\[
\begin{tikzcd}
& \emptyset \arrow[dr, tail] \arrow[dl, tail] \arrow[dd] \\ QX \arrow[rr, crossing over,  "Q \eta" near start, "\sim"' near end] \arrow[dd, tail, "\eta"', "\wr"] \arrow[dr, two heads, "\epsilon"', "\sim" sloped] & & QRX \arrow[dd, two heads, "\epsilon", "\wr"'] \\ & X \arrow[dd] \arrow[dr, tail, "\eta", "\sim"' sloped] \\ RQX \arrow[rr, crossing over, "R \epsilon"' near end, "\sim" near start] \arrow[dr, two heads] \arrow[uurr, dashed, bend right=10] & & RX \arrow[dl, two heads] \\ & {\ast}
\end{tikzcd}
\] the middle square commutes because its two subdivided triangles do, by naturality of the maps $\eta$ and $\epsilon$ of Exercise \ref{exc:fib-cof}. In particular, this induces a direct comparison weak equivalence $RQX \overset{\sim}{\longrightarrow} QRX$ by lifting $\eta_{QX}$ against $\epsilon_{RX}$.

\begin{exc}\label{exc:replacing-maps}
Show that any map in a model category may be replaced, up to a zig zag of weak equivalences, by one between fibrant-cofibrant objects that moreover may be taken to be either a fibration or a cofibration, as desired.\footnote{Exercise \ref{exc:replacing-maps} reveals that the notions of ``cofibration'' and ``fibration'' are not homotopically meaningful: up to isomorphism in $\cM[\we^{-1}]$, any map in a model category can be taken to be either a fibration or a cofibration.}
\end{exc}

The reason for our particular interest in the subcategory of fibrant-cofibrant objects in a model category is that between such objects, the weak equivalences become more structured, coinciding with a class of ``homotopy equivalences'' in a sense we now define.

\begin{defn}\label{defn:path-cyl}
Let $A$ be an object in a model category. A \textbf{cylinder object} for $A$ is given by a factorization of the fold map
\[
\begin{tikzcd}[row sep=small] A \coprod A \arrow[dr, tail, "{(i_0, i_1)}"'] \arrow[rr, "{(1_A,1_A)}"] &  & A \\ &  \fun{cyl}(A) \arrow[ur, two heads, "\sim" sloped, "q"'] 
\end{tikzcd}
\]
into a cofibration followed by a trivial fibration. Dually, a \textbf{path object} for $A$ is given by any factorization of the diagonal map
\[
\begin{tikzcd}[row sep=small]
 & \fun{path}(A) \arrow[dr, two heads, "{(p_0,p_1)}"] \\ A \arrow[ur, tail, "j", "\sim"' sloped] \ar[rr, "{(1_A,1_A)}"']& & A \times A
\end{tikzcd}
\]
into a trivial cofibration followed by a fibration.
\end{defn}

\begin{rmk} For many purposes it suffices to drop the hypotheses that the maps in the cylinder and path object factorizations are cofibrations and fibrations, and retain only the hypothesis that the second and first factors, respectively, are weak equivalences. The standard terminology for the cylinder and path objects defined here adds the adulation ``very good.'' But since ``very good'' cylinder and path objects always exist, we eschew the usual convention and adopt these as the default notions.
\end{rmk}

\begin{defn}\label{defn:htpy} Consider a parallel pair of maps $f,g \colon A \rightrightarrows B$ in a model category. A \textbf{left homotopy} $H$ from $f$ to $g$ is given by a map from a cylinder object of $A$ to $B$ extending $(f,g) \colon A \coprod A \to B$
\[
\begin{tikzcd} A \arrow[r, "i_0"] \arrow[dr, "f"'] & \fun{cyl}(A) \arrow[d, "H"] & A \arrow[l, "i_1"'] \arrow[dl,"g"] \\ & B
\end{tikzcd}
\]
in which case one writes $f \sim_\ell g$ and says that $f$ and $g$ are \textbf{left homotopic}. 

A \textbf{right homotopy} $K$ from $f$ to $g$ is given by a map from $A$ to a path object for $B$ extending $(f,g) \colon A \to B \times B$
\[
\begin{tikzcd}
& A \arrow[dl, "f"'] \arrow[dr, "g"] \arrow[d, "K"] \\ B & \fun{path}(B) \arrow[l, "p_0"] \arrow[r, "p_1"'] & B
\end{tikzcd}
\]
in which case one writes $f \sim_r g$ and says that $f$ and $g$ are \textbf{right homotopic}.
\end{defn}

\begin{exc}\label{exc:cylinder-inclusions} Prove that the endpoint inclusions $i_0,i_1 \colon A \rightrightarrows \fun{cyl}(A)$ into a cylinder object are weak equivalences always and also cofibrations if $A$ is cofibrant. Conclude that if $f \sim_\ell g$ then $f$ is a weak equivalence if and only if $g$ is. Dually, the projections $p_0,p_1\colon \fun{path}(B) \rightrightarrows B$ are weak equivalences always and also fibrations if $B$ is fibrant, and if $f \sim_r g$ then $f$ is a weak equivalence if and only if $g$ is.
\end{exc}

A much more fine-grained analysis of the left and right homotopy relations is presented in a classic expository paper ``Homotopy theories and model categories'' of Dwyer and Spalinski \cite{DS}. Here we focus on only the essential facts for understanding the homotopy relation on maps between cofibrant and fibrant objects.

\begin{prop}\label{prop:htpy}
If $A$ is cofibrant and $B$ is fibrant then left and right homotopy define equivalence relations on the set $\Hom(A,B)$ of arrows and moreover these relations coincide.
\end{prop}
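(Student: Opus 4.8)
The plan is to prove four things: (1) left homotopy is reflexive on all of $\Hom(A,B)$, (2) when $A$ is cofibrant, left homotopy is symmetric and transitive, (3) when $A$ is cofibrant and $B$ is fibrant, $f \sim_\ell g$ implies $f \sim_r g$, and dually (4) $f \sim_r g$ implies $f \sim_\ell g$; together these give that both relations are equivalence relations that coincide. Reflexivity is immediate: for any $A$ pick a cylinder object $\fun{cyl}(A)$, and the composite $A \coprod A \xrightarrow{(i_0,i_1)} \fun{cyl}(A) \xrightarrow{q} A \xrightarrow{f} B$ restricts to $(f,f)$, so $q$ postcomposed with $f$ witnesses $f \sim_\ell f$ (no cofibrancy needed).

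For symmetry and transitivity of $\sim_\ell$ I would use the freedom to choose the cylinder object. Symmetry is formal: a cylinder object for $A$ equipped with $(i_0,i_1)$ is equally a cylinder object equipped with $(i_1,i_0)$, so swapping the roles of the endpoints turns a left homotopy $f \sim_\ell g$ into one $g \sim_\ell f$. Transitivity is the one place the argument has real content. Given $H \colon \fun{cyl}(A) \to B$ from $f$ to $g$ (using endpoint inclusions $i_0, i_1$) and $H' \colon \fun{cyl}'(A) \to B$ from $g$ to $h$ (using $i_0', i_1'$), I form the pushout $\fun{cyl}(A) \cup_A \fun{cyl}'(A)$ identifying the $i_1$-copy of $A$ with the $i_0'$-copy; here I need $A$ cofibrant so that $i_1 \colon A \rightarrowtail \fun{cyl}(A)$ is a cofibration (Exercise \ref{exc:cylinder-inclusions}), whence the pushout inclusion $\fun{cyl}'(A) \to \fun{cyl}(A)\cup_A \fun{cyl}'(A)$ is again a cofibration by Lemma \ref{lem:closure}, and the weak equivalence $q'$ on it extends over the pushout; this exhibits $\fun{cyl}(A)\cup_A\fun{cyl}'(A)$ as a cylinder object for $A$ with endpoints $i_0$ and $i_1'$, and $H$ and $H'$ glue to a left homotopy witnessing $f \sim_\ell h$. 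I expect this pushout-of-cylinders construction to be the main obstacle: one must check that the map from the glued object down to $A$ is still a weak equivalence (via two-of-three applied to the legs through each half), and that it really is a cofibration out of $A \coprod A$ and not merely out of each summand.

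For the comparison $\sim_\ell \Rightarrow \sim_r$, suppose $A$ is cofibrant, $B$ is fibrant, and $H \colon \fun{cyl}(A) \to B$ is a left homotopy from $f$ to $g$. Choose a path object $\fun{path}(B)$ for $B$ with $j \colon B \cwto \fun{path}(B)$ and $(p_0,p_1) \colon \fun{path}(B) \fto B\times B$. One seeks a lift in the square with top edge $A \xrightarrow{i_0} \fun{cyl}(A)$ followed by $A \xrightarrow{f} B \xrightarrow{j} \fun{path}(B)$ and left edge $i_0 \colon A \cto \fun{cyl}(A)$ — here $i_0$ is a cofibration since $A$ is cofibrant — against $p_0 \colon \fun{path}(B) \to B$, which is a trivial fibration by Exercise \ref{exc:cylinder-inclusions} using fibrancy of $B$; the bottom edge is $H$. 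The diagonal filler $G \colon \fun{cyl}(A) \to \fun{path}(B)$ then satisfies $p_0 G = H$ and $G i_0 = j f$, and one computes that $K := G i_1 \colon A \to \fun{path}(B)$ is a right homotopy from $f$ to $g$: indeed $p_0 G i_1 = H i_1 = g$ and $p_1 G i_1 = p_1 G i_0 \cdot(\text{suitably})$... more precisely, precompose the identity $p_1 G = (\text{constant at } f)$-type relation; the cleanest route is to use the path object square so that $p_1 G i_0 = p_1 j f = f$ and hence $p_1 G i_1$ also equals $f$ once one arranges $p_1 G$ to factor through $q$, or alternatively to pick the lift against the other projection. The dual argument, swapping cofibrant/fibrant, cylinder/path, and $i$/$p$, gives $\sim_r \Rightarrow \sim_\ell$. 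Combining: on $\Hom(A,B)$ with $A$ cofibrant and $B$ fibrant, $\sim_\ell$ and $\sim_r$ are mutually refining, hence equal, and since $\sim_\ell$ is an equivalence relation (symmetry and transitivity from cofibrancy of $A$, reflexivity always) so is their common value. I would flag that a little care is needed in the comparison step to lift against the correct endpoint/projection so that the resulting $K$ genuinely extends $(f,g)$ rather than $(f,f)$; choosing the path object factorization and the lifting square to match up the indices is the delicate bookkeeping there.
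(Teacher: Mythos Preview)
Your overall plan matches the paper's, but there is a genuine gap in the comparison step and a smaller issue in transitivity.

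In your square for $\sim_\ell \Rightarrow \sim_r$, you lift the cofibration $i_0$ against the trivial fibration $p_0$, obtaining $G \colon \fun{cyl}(A) \to \fun{path}(B)$ with $p_0 G = H$ and $G i_0 = jf$. But this leaves $p_1 G$ completely unconstrained, so $K := G i_1$ satisfies $p_0 K = g$ while $p_1 K$ is an unknown map: you have produced a right homotopy between $g$ and a mystery map, not between $f$ and $g$. Neither of your proposed repairs works --- there is no mechanism to force $p_1 G$ to factor through $q$, and lifting against $p_1$ instead merely swaps which projection is uncontrolled. The fix, as in the paper, is to lift the \emph{trivial} cofibration $i_0 \colon A \cwto \fun{cyl}(A)$ against the \emph{fibration} $(p_0,p_1) \colon \fun{path}(B) \fto B \times B$, with bottom edge $(fq, H) \colon \fun{cyl}(A) \to B \times B$. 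The resulting lift $L$ then satisfies both $p_0 L = fq$ and $p_1 L = H$, whence $L i_1$ is a right homotopy from $f$ to $g$. You must constrain both projections at once, and for that you need the other weak factorization system $(\cof \cap \we, \fib)$ rather than $(\cof, \fib \cap \we)$.

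For transitivity, your pushout $C := \fun{cyl}(A) \cup_A \fun{cyl}'(A)$ has projection to $A$ that is a weak equivalence by two-of-three, but not in general a trivial fibration; under Definition~\ref{defn:path-cyl} it is therefore not a cylinder object, and the glued map $C \to B$ is not yet a left homotopy in the paper's sense. The paper factors $C \to A$ as a trivial cofibration $C \cwto \fun{cyl}''(A)$ followed by a trivial fibration, producing a genuine cylinder, and then uses fibrancy of $B$ to extend the glued homotopy along $C \cwto \fun{cyl}''(A)$. So with these conventions transitivity also uses fibrancy of $B$, contrary to your attribution.
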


In light of Proposition \ref{prop:htpy}, we say that maps $f,g \colon A \rightrightarrows B$ from a cofibrant object to a fibrant one are \textbf{homotopic} and write $f \sim g$ to mean that they are left or equivalently right homotopic.

\begin{proof}
The left homotopy relation is reflexive and symmetric without any cofibrancy or fibrancy hypotheses on the domains or codomains. To prove transitivity, consider a pair of left homotopies $H \colon \fun{cyl}(A) \to B$ from $f$ to $g$ and $K \colon \fun{cyl}'(A) \to B$ from $g$ to $h$, possibly constructed using different cylinder objects for $A$.
By cofibrancy of $A$ and Exercise \ref{exc:cylinder-inclusions}, a new cylinder object $\fun{cyl}''(A)$ for $A$ may be constructed by factoring the map from the following pushout $C$ to $A$
\[
\begin{tikzcd}[row sep=1.5em]
A \arrow[dr, tail, "i_0","\sim"' sloped] & & A \arrow[dl, tail, "i_1"',"\sim" sloped ]  \arrow[dd, phantom, "\rotatebox{135}{$\lrcorner$}" very near end] \arrow[dr, tail, "i_0","\sim"' sloped]  & & A\arrow[dl, tail, "i_1"',"\sim" sloped ]  \\
& \fun{cyl}(A) \arrow[dr, tail, "\sim" sloped] \arrow[dddr, bend right, two heads, "\sim"' sloped] & & \fun{cyl}'(A) \arrow[dl, tail, "\sim"' sloped] \arrow[dddl, bend left, two heads, "\sim" sloped] \\ & & C \arrow[d, tail, "\wr"] & & 
\\ & & \fun{cyl}''(A) \arrow[d, two heads, "\wr"] \\ & & A
\end{tikzcd}
\]
The homotopies $H$ and $K$ define a cone under the pushout diagram inducing a map $H \cup_A K \colon C \to B$. By fibrancy of $B$, this map may be extended along the trivial cofibration $C \cwto \fun{cyl}''(A)$ to define a homotopy $\fun{cyl}''(A) \to B$ from $f$ to $h$. This proves that left homotopy is an equivalence relation.

Finally we argue that if $H \colon \fun{cyl}(A) \to B$ defines a left homotopy from $f$ to $g$ then $f \sim_r g$. 
The desired right homotopy from $f$ to $g$ is constructed as the restriction of the displayed lift
\[
\begin{tikzcd}
& A \arrow[d, tail, "i_0"', "\wr"] \arrow[r, "f"] & B \arrow[r, tail, "\sim"] & \fun{path}(B) \arrow[d, two heads, "{(p_0,p_1)}"d] \\ A \arrow[r, tail, "i_1"', "\sim"] & \fun{cyl}(A) \arrow[rr, "{(fq, H)}"'] \arrow[urr, dashed] & & B \times B
\end{tikzcd}
\]
along the endpoint inclusion $i_1$. The remaining assertions are dual to ones already proven.
\end{proof}

Moreover, the homotopy relation is respected by pre- and post-composition.

\begin{prop}\label{prop:htpy-comp} Suppose $f,g \colon A \rightrightarrows B$ are left or right homotopic maps and consider any maps $h \colon A' \to A$ and $k\colon B \to B'$. Then $kfh, kgh \colon A' \rightrightarrows B'$ are again left or right homotopic, respectively.
\end{prop}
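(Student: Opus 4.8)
The plan is to decompose the assertion into two independent, mutually dual operations --- \emph{post}-composition with $k$ and \emph{pre}-composition with $h$ --- and to carry out the argument only for left homotopies, the right-homotopy case following by invoking the whole discussion in $\cM^\op$ (where, by the self-duality noted after Definition~\ref{defn:model-cat}, cylinder objects become path objects, left homotopies become right homotopies, and the roles of $h$ and $k$ interchange). Post-composition is immediate: given a left homotopy $H\colon \fun{cyl}(A) \to B$ from $f$ to $g$ relative to some cylinder object $\fun{cyl}(A)$, the composite $kH \colon \fun{cyl}(A) \to B'$ uses the \emph{same} cylinder object and satisfies $kH i_0 = kf$, $kH i_1 = kg$, so it is a left homotopy from $kf$ to $kg$.

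The content is in pre-composition, where one cannot reuse or "pull back" a cylinder object for $A$ along $h$; instead one manufactures a morphism of cylinder objects over $h$. First choose any cylinder object $A' \amalg A' \xrightarrow{(i_0',i_1')} \fun{cyl}(A') \xrightarrow{q'} A'$ for $A'$ --- such a factorization exists by axiom \ref{defn:wfs}\eqref{itm:wfs-factor} applied to the weak factorization system $(\cof, \fib\cap\we)$. Then solve the lifting problem whose outer rectangle is
\[
\begin{tikzcd}
A' \amalg A' \arrow[d, tail, "{(i_0',i_1')}"'] \arrow[r, "{(i_0,i_1)\circ(h\amalg h)}"] & \fun{cyl}(A) \arrow[d, two heads, "q", "\sim"'] \\
\fun{cyl}(A') \arrow[r, "h\circ q'"'] \arrow[ur, dashed, "\tilde h"'] & A
\end{tikzcd}
\]
which it does, the left leg being a cofibration and the right leg a trivial fibration. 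Commutativity of the two triangles records exactly that $\tilde h i_j' = i_j h$ and $q\tilde h = hq'$, so $\tilde h$ is a map of cylinder objects covering $h$. Consequently $H\tilde h \colon \fun{cyl}(A') \to B$ is a left homotopy from $fh$ to $gh$, since $H\tilde h i_0' = H i_0 h = fh$ and $H\tilde h i_1' = gh$. Finally, composing the two constructions, $kH\tilde h \colon \fun{cyl}(A') \to B'$ is a left homotopy from $kfh$ to $kgh$, as desired.

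The only delicate point --- the "main obstacle", though it is a mild one --- is the production of the comparison map $\tilde h$: one has to observe that a cylinder object in the sense of Definition~\ref{defn:path-cyl} comes with its left leg a genuine cofibration and its right leg a genuine trivial fibration (rather than the weaker data retained in the remark following that definition), so that the lifting axiom of the weak factorization system applies on the nose. Note that no fibrancy or cofibrancy hypotheses on $A$, $A'$, $B$, or $B'$ are needed anywhere, which is why this proposition can precede, and be used independently of, Proposition~\ref{prop:htpy}.
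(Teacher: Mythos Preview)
Your proof is correct and follows essentially the same approach as the paper: both construct a cylinder comparison map $\fun{cyl}(A') \to \fun{cyl}(A)$ by lifting the cofibration $(i_0',i_1')$ against the trivial fibration $q$, then compose with $H$ and $k$. Your presentation is slightly more explicit in separating the post-composition and pre-composition steps, but the mathematical content is identical.
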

\begin{proof}
%Suppose $f \sim_\ell g$ via a left homotopy $H \colon \fun{cyl}(A) \to B$. Then $kH \colon \fun{cyl}(A) \to B'$ defines a left homotopy from $kf$ to $kg$. 
By lifting the endpoint inclusion $(i_0,i_1) \colon A' \coprod A' \cto \fun{cyl}(A')$ against the projection $\fun{cyl}(A) \fwto A$ --- or by functoriality of the cylinder construction in the sense discussed in \S\ref{ssec:functoriality} --- there is a map $\fun{cyl}(h) \colon \fun{cyl}(A') \to \fun{cyl}(A)$. Now for any left homotopy $H \colon \fun{cyl}(A) \to B$ from $f$ to $g$, the horizontal composite then defines a left homotopy $kfh \sim_\ell kgh$.
\[
\begin{tikzcd} &  {A' \coprod A'} \arrow[dl, tail] \arrow[r, "h \sqcup h"] & A \coprod A \arrow[dl, tail] \arrow[dr, "{(f,g)}"] \\ {\fun{cyl}(A')} \arrow[r, dashed, "{\fun{cyl}(h)}"] \arrow[d, two heads, "\wr"'] & \fun{cyl}(A) \arrow[d, two heads, "\wr"'] \arrow[rr,  "H"'] & & B \arrow[r, "k"] & {B'} \\ {A'} \arrow[r, "h"] & A
\end{tikzcd} \qedhere
\]
\end{proof}
%OLD PROOF ASSUMING co/fibrancy: By Proposition \ref{prop:htpy}, there exists a right homotopy $A \to \fun{path}(B)$ from $f$ to $g$, which may be restricted along $h$ to define a  right homotopy $fh \sim_r gh$. By Proposition \ref{prop:htpy} again, there is now a left homotopy $\fun{cyl}(A') \to B$ from $fh$ to $gh$ that may be composed with $k$ to define a left homotopy from $kfh$ to $kgh$. Thus $kfh \sim kgh$.

%It follows from Definition \ref{defn:model-cat} that the weak equivalences are closed under retracts \cite[7.8]{JT} and also have the two-of-six property \cite[??]{DHKS}. {\color{darkred} or prove both locally} The closure conditions for the weak equivalences are the key ingredients in the proof of the model categorical generalization of the classical Whitehead theorem, that a map between CW complexes is a weak homotopy equivalence if and only if it is a homotopy equivalence:\footnote{Unlike the Whitehead theorem, both directions are non-trivial.}

\begin{prop}\label{prop:structured-we}
Let $f \colon A \to B$ be a map between objects that are both fibrant and cofibrant. Then $f$ is a weak equivalence if and only if it has a homotopy inverse.
\end{prop}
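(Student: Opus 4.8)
The plan is to prove both implications, using throughout the three facts established in this section: on fibrant--cofibrant objects the left and right homotopy relations agree and are equivalence relations (Proposition~\ref{prop:htpy}); homotopy is preserved by pre- and post-composition (Proposition~\ref{prop:htpy-comp}); and homotopic maps are simultaneously weak equivalences (Exercise~\ref{exc:cylinder-inclusions}).

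For the ``only if'' direction, suppose $f\in\we$ and factor it, via the weak factorization system $(\cof\cap\we,\fib)$, as a trivial cofibration $\ell\colon A\cto C$ followed by a map $r\colon C\to B$ in $\fib$; the two-of-three property forces $r\in\we$, so $r$ is a trivial fibration, and $C$ is fibrant--cofibrant (a composite of cofibrations out of $\emptyset$, resp.\ of fibrations into the terminal object). It thus suffices to show that a trivial fibration $r\colon C\fwto B$ with cofibrant codomain, and --- applying the same argument in $\cM^\op$ --- a trivial cofibration with fibrant domain, admits a homotopy inverse: a one-line computation with Proposition~\ref{prop:htpy-comp} then assembles homotopy inverses of $\ell$ and $r$ into one for $f=r\ell$. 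For $r$: since $B$ is cofibrant, lifting $\id_B$ through $r$ along $\emptyset\cto B$ produces a section $s$ with $rs=\id_B$; then lifting $(sr,\id_C)\colon C\coprod C\to C$ through $r$ along the cofibration $(i_0,i_1)\colon C\coprod C\cto\fun{cyl}(C)$ --- over the map $r\circ q\colon\fun{cyl}(C)\to B$, where $q$ is the cylinder's structure map --- yields a left homotopy $sr\sim\id_C$. Hence $s$ is a homotopy inverse of $r$.

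For the ``if'' direction, let $g$ be a homotopy inverse of $f$, so $gf\sim\id_A$ and $fg\sim\id_B$ (hence $gf,fg\in\we$). Factor $f=pi$ with $i\colon A\cwto C$ a trivial cofibration and $p\colon C\fto B$ a fibration; then $C$ is fibrant--cofibrant, $i\in\we$, and $f\in\we$ if and only if $p\in\we$. By the ``only if'' direction $i$ has a homotopy inverse $i'$, so $p\sim fi'$ (apply Proposition~\ref{prop:htpy-comp} to $ii'\sim\id_C$), and $fi'$ --- a composite of homotopy equivalences --- is again a homotopy equivalence, hence so is $p$ (transitivity of homotopy is legitimate here since everything in sight is fibrant--cofibrant). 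Everything therefore reduces to the claim I expect to be the crux: \emph{a fibration $p\colon C\fto B$ between fibrant--cofibrant objects that admits a homotopy inverse is a weak equivalence.} One first promotes the homotopy inverse to a section: a chosen left homotopy $\fun{cyl}(B)\to B$ from $pg$ to $\id_B$, lifted through $p$ along the trivial cofibration $i_0\colon B\cwto\fun{cyl}(B)$ (legitimate since $B$ is cofibrant), produces $s$ with $ps=\id_B$ and $s\sim g$, whence $sp\sim gp\sim\id_C$; a further lift of $(sp,\id_C)$ through $p$, exactly as above, yields a \emph{fibrewise} left homotopy $H\colon\fun{cyl}(C)\to C$ from $sp$ to $\id_C$ with $pH=pq$. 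One then shows $p$ has the right lifting property against an arbitrary cofibration $\ell\colon U\cto V$ --- so that $p$ is a trivial fibration by Corollary~\ref{cor:wfs-by-half} --- by correcting the naive candidate $sv\colon V\to C$: it satisfies $p(sv)=v$ strictly, and agrees with the prescribed value $u$ on $U$ only up to the fibrewise homotopy $H\circ\fun{cyl}(u)$; one straightens this out by transporting that homotopy across $V$ and solving one final lifting problem against $p$.

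This final straightening is the delicate point, and I expect it to be the main obstacle. The subtlety is that $U$ is merely the domain of a cofibration, so it need not be cofibrant and the endpoint inclusions of $\fun{cyl}(U)$ need not be cofibrations; one therefore cannot transport $H\circ\fun{cyl}(u)$ by a naive argument. The standard remedy (carried out in detail in Dwyer--Spalinski~\cite{DS}) invokes the homotopy extension property of the cofibration $\ell$: one works with a relative cylinder for $\ell$, such as $(V\coprod V)\cup_{U\coprod U}\fun{cyl}(U)$, whose inclusions from $V\coprod V$ remain cofibrations because they are pushouts of the cofibration $(i_0,i_1)\colon U\coprod U\cto\fun{cyl}(U)$ and $\cof$ is closed under pushout (Lemma~\ref{lem:closure}); gluing $sv$, the homotopy $H\circ\fun{cyl}(u)$, and the lifting data over such an object, the bookkeeping --- keeping track of which of the auxiliary maps assembled from $\ell$ and the cylinder data are \emph{trivial} cofibrations --- is exactly what must be handled with care before one final lift against $p$ supplies an honest diagonal filler $V\to C$. (A reader willing to grant that the weak equivalences in any model category satisfy the two-of-six property can bypass the ``if'' direction altogether, since then $gf,fg\in\we$ forces $f\in\we$; I avoid this shortcut because the usual proof of two-of-six itself passes through a version of the present proposition.)
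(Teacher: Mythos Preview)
Your forward direction is essentially the paper's: factor $f$ as a trivial cofibration followed by a (trivial) fibration, produce a section of the trivial fibration by lifting against $\emptyset\cto B$, and then lift $(sr,\id_C)$ against the trivial fibration to obtain a homotopy $sr\sim\id_C$; dually for the trivial cofibration, then compose.

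For the converse the two arguments diverge. You aim to prove that the fibration $p$ in the factorization $f=pi$ is a \emph{trivial} fibration by directly verifying the right lifting property against every cofibration $\ell\colon U\cto V$, using a section of $p$, a fibrewise homotopy $sp\sim\id_C$, and a relative-cylinder / homotopy-extension correction. That route can be completed (it is the line taken in \cite{DS}), but as you correctly flag, the straightening step is genuinely delicate when $U$ is not cofibrant, and your sketch stops short of carrying it out. The paper sidesteps all of this with a retract argument: after producing a section $i$ of $p$ homotopic to $jg$, and a one-sided homotopy inverse $q$ for the trivial cofibration $j$, one chains homotopies
\[
ip \ \sim\ ipjq \ =\ ifq \ \sim\ jgfq \ \sim\ jq \ \sim\ \id_P
\]
to conclude $ip$ is a weak equivalence; then $p$ is a retract of $ip$ via $(\id_P,i,\id_P)$ over $(i,p)$, and retract-closure of the weak equivalences (which in this paper's axiomatization is a consequence of the model axioms, not an assumption) finishes. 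This is considerably shorter and avoids the homotopy-extension bookkeeping entirely; the trade-off is that it invokes the nontrivial fact that $\we$ is closed under retracts, whereas your approach would yield the stronger conclusion that $p$ is actually a trivial fibration using only the lifting axioms.
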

\begin{proof}
For both implications we make use of the fact that any map between fibrant-cofibrant objects may be factored as a trivial cofibration followed by a fibration
\[
\begin{tikzcd}
& P \arrow[dr, two heads, "p"] \\ A \arrow[rr, "f"'] \arrow[ur, tail, "j"', "\sim" sloped] & & B
\end{tikzcd}
\]
through an object that is again fibrant-cofibrant. If $f$ is a weak equivalence then $p$ is a trivial fibration. We argue that any trivial fibration $p$ between fibrant-cofibrant objects extends to a deformation retraction: admitting a right inverse that is also a left homotopy inverse. A dual argument proves that the trivial cofibration $j$ admits a left inverse that is also a right homotopy inverse. These homotopy equivalences compose in the sense of Proposition \ref{prop:htpy-comp}  to define a homotopy inverse for $f$.

If $p$ is a trivial fibration, then cofibrancy of $B$ implies that it admits a right inverse $i$. The homotopy constructed in the lifting problem
\[
\begin{tikzcd} 
\emptyset \arrow[r] \arrow[d, tail] & P \arrow[d, two heads,  "p"] & & P \coprod P \arrow[rr, "{(1_P,ip)}"] \arrow[d, tail] & & P \arrow[d, two heads, "p", "\wr"'] \\ B \arrow[r, equals] \arrow[ur, dashed, "i"] & B & &  {\fun{cyl}(P)} \arrow[r, two heads, "\sim"] \arrow[urr, dashed] & P \arrow[r, "p"] & B
\end{tikzcd}
\] 
proves that $ip \sim 1_P$ as desired.

For the converse we suppose that $f$ admits a homotopy inverse $g$. To prove that $f$ is a weak equivalence it suffices to prove that $p$ is a weak equivalence. A right inverse $i$ to $p$ may be found by lifting the endpoint of the homotopy $H \colon fg \sim 1_B$
\[
\begin{tikzcd} 
& B  \arrow[r, "g"] \arrow[d, tail, "i_0", "\wr"'] & A \arrow[r, "j"] & P \arrow[d, two heads, "p"] \\  B \arrow[r, tail, "i_1", "\sim"'] &\fun{cyl}(B) \arrow[rr, "H"] \arrow[urr, dashed] & & B
\end{tikzcd}
\]
and then restricting this lift along $i_1$. By construction this section $i$ is homotopic to $jg$.  The argument of the previous paragraph applies to the trivial cofibration $j$ to prove that it has a left inverse and right homotopy inverse $q$. Composing the homotopies $1_P\sim jq$, $i \sim jg$,  and $gf \sim 1_A$ we see that
\[ ip \sim ip jq = i f q \sim jgfq \sim jq \sim 1_P \] By Exercise \ref{exc:cylinder-inclusions} we conclude that $ip$ is a weak equivalence. But by construction $p$ is a retract of $ip$
\[
\begin{tikzcd}
P \arrow[d, two heads, "p"'] \arrow[r,equals] & P \arrow[d, "ip", "\wr"'] \arrow[r, equals] & P \arrow[d, two heads, "p"] \\ B \arrow[r, "i"] & P \arrow[r, "p"] & B
\end{tikzcd}
\]
so it follows from the retract stability of the weak equivalences \cite[7.8]{JT} that $p$ is a weak equivalence as desired.
\end{proof}

\subsection{The homotopy category of a model category}\label{ssec:htpy-cat-of-model}

In this section, we prove that the  category of fractions $\cM[\we^{-1}]$, defined by formally inverting the weak equivalences, is equivalent to the category $\h\cM_{\mathrm{cf}}$ of fibrant-cofibrant objects and homotopy classes of maps. Our proof appeals to the universal property of Proposition \ref{prop:fractions-UP}, which characterizes those categories that are \emph{isomorphic} to the category of fractions. For categories to be isomorphic, they must have the same object sets, so we define a larger version of the homotopy category $\Ho\cM$, which has the same objects as $\cM[\we^{-1}]$ and is  equivalent to its full subcategory $\h\cM_{\mathrm{cf}}$.

\begin{defn}\label{defn:nice-htpy-cat} For any model category $\cM$, there is a category $\h\cM_{\mathrm{cf}}$ whose:
\begin{itemize}
\item objects are the fibrant-cofibrant objects in $\cM$ and 
\item in which the set of morphisms from $A$ to $B$ is taken to be the set of homotopy classes of maps 
\[ [A,B] := \Hom(A,B)_{/\sim}.\]
\end{itemize}
\end{defn}

Proposition \ref{prop:htpy-comp} ensures that composition in $\h\cM_{\mathrm{cf}}$ is well-defined.

\begin{defn} The \textbf{homotopy category} $\Ho\cM$ of a model category $\cM$ is defined by applying the (bijective-on-objects, fully faithful) factorization %\footnote{The category of categories has an \textbf{orthogonal factorization system} --- a weak factorization system with unique solutions to lifting problems and hence also unique factorizations up to isomorphism ---  whose left class is comprised of the bijective-on-objects functors and whose right class is comprised of the fully faithful functors. The functorial factorization is constructed by taking the left functor to be identity-on-objects and defining the hom-sets in its codomain to agree with the hom-sets in the target category, making the right functor fully faithful by construction.}
 to the composite functor
\begin{equation}\label{eq:ho-cat-defn}
\begin{tikzcd}[row sep=tiny] \cM \arrow[r, "RQ"] \arrow[dr, dashed, "\mathrm{bij~obj}"', "\gamma" near end] & \cM_{\mathrm{cf}} \arrow[r, "\pi"] & \h\cM_{\mathrm{cf}} \\ & \Ho\cM \arrow[ur, dashed, "\mathrm{f+f}"', "\nu" near start] 
\end{tikzcd}
\end{equation}
 That is the objects in $\Ho\cM$ are the objects in $\cM$ and \[ \Ho\cM(A,B) := \cM(RQA,RQB)_{/\sim}.\]
\end{defn}

\begin{exc}[$\Ho\cM\simeq\h\cM_{\mathrm{cf}}$]$\quad$
\begin{enumerate}
\item Verify that the category $\h\cM_{\mathrm{cf}}$ defined by Definition \ref{defn:nice-htpy-cat} is equivalent to the full subcategory of $\Ho\cM$ spanned by the fibrant-cofibrant objects of $\cM$. 
\item Show that every object in $\cM$ is isomorphic in $\Ho\cM$  to a fibrant-cofibrant object.
\item Conclude that the categories $\Ho\cM$ and $\h\cM_{\mathrm{cf}}$ are equivalent.
\end{enumerate}
\end{exc}

\begin{thm}[Quillen]\label{thm:equivalent-ho} For any model category $\cM$, the category of fractions $\cM[\we^{-1}]$ obtained by formally inverting the weak equivalences is isomorphic to the homotopy category $\Ho\cM$.
\end{thm}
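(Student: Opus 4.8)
The plan is to show that the functor $\gamma\colon \cM\to\Ho\cM$ of \eqref{eq:ho-cat-defn} exhibits $\Ho\cM$ as a Gabriel--Zisman localization of $\cM$ at $\we$; by the characterization in Proposition \ref{prop:fractions-UP} it then suffices to verify that $\gamma$ inverts $\we$ and that every functor $F\colon \cM\to\cN$ inverting $\we$ extends \emph{uniquely} along $\gamma$. As in Proposition \ref{prop:fractions-UP} the two-dimensional part of the universal property is then automatic, and since that universal property pins down $\cM[\we^{-1}]$ up to isomorphism under $\cM$, this gives the asserted isomorphism $\Ho\cM\cong\cM[\we^{-1}]$.

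To see that $\gamma$ inverts $\we$, I would factor it --- up to the bijective-on-objects comparison $\nu$ --- as $\cM\xrightarrow{RQ}\cM_{\mathrm{cf}}\xrightarrow{\pi}\h\cM_{\mathrm{cf}}$ and argue in two steps. First, $RQ$ preserves weak equivalences: applying the two-of-three property to the naturality squares of $\epsilon\colon Q\To\id_\cM$ and then of $\eta\colon\id_\cM\To R$ promotes a weak equivalence $f$ first to $Qf$ and then to $RQf$. Second, $\pi$ carries weak equivalences to isomorphisms, since a weak equivalence between fibrant--cofibrant objects admits a homotopy inverse by Proposition \ref{prop:structured-we} and homotopic maps become equal in $\h\cM_{\mathrm{cf}}$.

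Next, given $F\colon\cM\to\cN$ inverting $\we$, I would build the extension $\bar F$ as follows. On objects $\bar F$ is forced to agree with $F$. For each $X$ the span $FX\xleftarrow{F\epsilon_X}F(QX)\xrightarrow{F\eta_{QX}}F(RQX)$ consists of isomorphisms, giving an isomorphism $\theta_X := F(\eta_{QX})\circ F(\epsilon_X)^{-1}\colon FX\xrightarrow{\cong}F(RQX)$; a morphism $A\to B$ of $\Ho\cM$ is a homotopy class $[\varphi]$ of a map $\varphi\colon RQA\to RQB$ of $\cM$, and I set $\bar F[\varphi]:=\theta_B^{-1}\circ F(\varphi)\circ\theta_A$. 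The point that makes this well defined is that \emph{any} $\we$-inverting functor identifies left-homotopic maps (hence also right-homotopic ones, by Proposition \ref{prop:htpy}): if $H\colon\fun{cyl}(A)\to B$ is a left homotopy with retraction $q\colon\fun{cyl}(A)\wto A$, the endpoint inclusions are weak equivalences (Exercise \ref{exc:cylinder-inclusions}), so $F(i_0)=F(q)^{-1}=F(i_1)$ and $F$ agrees on the two ends of $H$. Functoriality of $\bar F$, and the identity $\bar F\circ\gamma=F$, are then short diagram chases: for instance $\bar F(\gamma f)=\theta_B^{-1}F(RQf)\theta_A$ collapses to $F(\epsilon_B)F(Qf)F(\epsilon_A)^{-1}=F(f)$ after invoking naturality of $\eta$ and then of $\epsilon$.

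It remains to check uniqueness of $\bar F$, which reduces to showing that $\Ho\cM$ is generated under composition by the morphisms $\gamma(f)$, for $f$ a morphism of $\cM$, together with the formal inverses $\gamma(s)^{-1}$, for $s\in\we$: granting this, any functor restricting to $F$ along $\gamma$ is determined and hence equals $\bar F$ (applied to $F=\iota$, this also produces the comparison $\Ho\cM\to\cM[\we^{-1}]$). For the generation statement I would use the isomorphism $d_X:=\gamma(\eta_{QX})\circ\gamma(\epsilon_X)^{-1}\colon X\to RQX$ of $\Ho\cM$, which has the required form, and identify an arbitrary morphism $[\varphi]\colon A\to B$ with $d_B^{-1}\circ\gamma(\varphi)\circ d_A$. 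I expect the real work to be in this last identification --- tracking a morphism of $\Ho\cM$, which by construction lives between fibrant--cofibrant \emph{replacements}, back to images under $\gamma$ of honest morphisms of $\cM$, while keeping every object in sight fibrant--cofibrant so that the homotopy relation remains available; everything genuinely homotopy-theoretic (that weak equivalences between fibrant--cofibrant objects are homotopy equivalences, that left and right homotopy agree there, and that homotopy is a congruence) has been packaged into Propositions \ref{prop:structured-we}, \ref{prop:htpy}, and \ref{prop:htpy-comp} and Exercise \ref{exc:cylinder-inclusions}.
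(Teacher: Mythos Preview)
Your proposal is correct and follows essentially the same route as the paper: both verify the universal property of Proposition~\ref{prop:fractions-UP} for $\gamma$, build the extension $\bar F$ by conjugating $F$ by the natural isomorphism $FX\cong FRQX$ arising from $\epsilon$ and $\eta$, and prove uniqueness by expressing an arbitrary morphism of $\Ho\cM$ in terms of morphisms in the image of $\gamma$ and inverses of $\gamma$-images of weak equivalences. The one small difference is in the uniqueness step: the paper uses the zig-zag $RQX\xleftarrow{\epsilon_{RQX}}QRQX\xrightarrow{\eta_{QRQX}}RQRQX$ rather than your $X\xleftarrow{\epsilon_X}QX\xrightarrow{\eta_{QX}}RQX$, which has the advantage that the required commuting squares are just naturality of $\epsilon$ and $\eta$ \emph{at the map $h$ itself} and so hold on the nose in $\cM$---this is precisely the ``real work'' you flagged, and this choice of zig-zag dispatches it cleanly.
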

\begin{proof} 
We will prove that $\gamma \colon \cM \to \Ho\cM$ satisfies the universal property of Proposition \ref{prop:fractions-UP} that characterizes the category of fractions $\cM[\we^{-1}]$. First we must verify that $\gamma$ inverts the weak equivalences. The functor $RQ$ carries weak equivalences in $\cM$ to weak equivalences between fibrant-cofibrant objects. Proposition \ref{prop:structured-we} then implies that these admit homotopy inverses and thus become isomorphisms in $\h\cM_{\mathrm{cf}}$. This proves that the composite horizontal functor of \eqref{eq:ho-cat-defn} inverts the weak equivalences. By fully-faithfulness of $\nu$, the functor $\gamma \colon \cM \to \Ho\cM$ also inverts weak equivalences. 

It remains to verify that any functor  $F \colon \cM \to \cE$ that inverts the weak equivalences factors uniquely through $\gamma$
\[
\begin{tikzcd}[row sep=tiny] \cM \arrow[rr, "F"] \arrow[dr, "\gamma"'] & & \cE \\ & \Ho\cM \arrow[ur, dashed, "\bar{F}"']
\end{tikzcd}
\]
Since $\gamma$ is identity-on-objects, we must define $\bar{F}$ to agree with $F$ on objects. Recall that the fibrant and cofibrant replacement functors come with natural weak equivalences $\epsilon_X \colon QX \wto X$ and $\eta_X \colon X \wto RX$. Because $F$ inverts weak equivalences, these natural transformations define a natural isomorphism $\alpha \colon F \To FRQ$ of functors from $\cM$ to $\cE$. By the definition $\Ho\cM(X,Y) := \cM(RQX,RQY)_{/\sim}$, the morphisms from $X$ to $Y$ in $\Ho\cM$ correspond to homotopy classes of morphisms from $RQX$ to $RQY$ in $\cM$. Choose any representative $h \colon RQX \to RQY$ for the corresponding homotopy class of maps and define its image to be the composite
\[
\begin{tikzcd}
\bar{F}h \colon FX \arrow[r, "\alpha_X"] & FRQX \arrow[r, "Fh"] & FRQY \arrow[r, "\alpha_Y^{-1}"] & FY
\end{tikzcd}
\]
This is well-defined because if $h \sim h'$ then there exists a left homotopy so that $H i_0 = h$ and $H i_1 = h$, where $i_0$ and $i_1$ are both sections to a common weak equivalence (the projection from the cylinder). Since $F$ inverts weak equivalences, $Fi_0$ and $Fi_1$ are both right inverses to a common isomorphism, so it follows that $Fi_0 = Fi_1$ and hence $Fh = Fh'$. 

Functoriality of $\bar{F}$ follows immediately from naturality of $\alpha$ and functoriality of $FRQ$. To see that $\bar{F}\gamma = F$, recall that for any $f \colon X \to Y$ in $\cM$, $\gamma(f)$ is defined to be the map in $\Ho\cM(X,Y)$ represented by the homotopy class $RQf \colon RQX \to RQY$. By naturality of $\alpha$, $\bar{F}\gamma(f) = Ff$, so that the triangle of functors commutes.

Finally, to verify that $\bar{F}$ is unique observe that from the following commutative diagram in $\cM$ any map $h \in \Ho\cM(X,Y)$, as represented by the map on the left below, is isomorphic in $\Ho\cM$ to a map in the image of $\gamma$, the vertical morphism displayed on the right:
\[
\begin{tikzcd} RQX \arrow[d, "h"']& QRQX \arrow[r, "\eta_{QRQX}"] \arrow[d, "Qh"] \arrow[l, "\epsilon_{RQX}"'] & RQRQX \arrow[d, "RQh = \gamma(h)"] \\ 
RQY & QRQY \arrow[r, "\eta_{QRQY}"] \arrow[l, "\epsilon_{RQY}"'] & RQRQY
\end{tikzcd}
\]
Since the image of $\bar{F}$ on the right vertical morphism is uniquely determined and the top and bottom morphisms are isomorphisms, the image of $\bar{F}$ on the left vertical morphism is also uniquely determined.
\end{proof}

\begin{rmk} The universal property of $\h\cM_{\mathrm{cf}}$ is slightly weaker than the universal property described in Proposition \ref{prop:fractions-UP} for the category of fractions $\cM[\we^{-1}]$. For any category $\cE$, restriction along $\gamma \colon \cM \to \h\cM_{\mathrm{cf}}$ defines a fully faithful embedding $\Fun(\h\cM_{\mathrm{cf}},\cE) \hookrightarrow \Fun(\cM,\cE)$ and equivalence onto the full subcategory of functors from $\cM$ to $\cE$ that carry weak equivalences to isomorphisms. The difference is that a given homotopical functor on $\cM$ may not factor strictly through $\h\cM_{\mathrm{cf}}$ but may only factor up to natural isomorphism. In practice, this presents no serious difficulty.
\end{rmk}

As a corollary, it is now easy to see that the only maps inverted by the localization functor are weak equivalences. By Lemma \ref{lem:moral-we-defn}, this proves that the weak equivalences in a model category have all of the closure properties enumerated at the outset of this section.

\begin{thm}[{\cite[5.1]{quillen}}]\label{thm:model-saturation} A morphism in a model category $\cM$ is inverted by the localization functor \[ \cM \to \cM[\we^{-1}]\] if and only if it is a weak equivalence.
\end{thm}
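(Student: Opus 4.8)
The plan is to deduce this from Proposition \ref{prop:structured-we} after transporting the question across the isomorphism $\cM[\we^{-1}] \cong \Ho\cM$ of Theorem \ref{thm:equivalent-ho}; under that isomorphism the localization functor becomes $\gamma \colon \cM \to \Ho\cM$, so a map is inverted by the localization functor if and only if $\gamma$ carries it to an isomorphism. The ``if'' direction is then nothing new: the proof of Theorem \ref{thm:equivalent-ho} already verifies that $\gamma$ inverts every weak equivalence.

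For the ``only if'' direction, suppose $\gamma(f)$ is an isomorphism in $\Ho\cM$ for some $f \colon X \to Y$. First I would move the problem into $\h\cM_{\mathrm{cf}}$: the factorization \eqref{eq:ho-cat-defn} exhibits $\nu \circ \gamma = \pi \circ RQ$ with $\nu$ fully faithful, and a fully faithful functor reflects isomorphisms, so $\pi(RQf) = [RQf]$ is an isomorphism in $\h\cM_{\mathrm{cf}}$. Since $RQX$ and $RQY$ are fibrant-cofibrant and, by Definition \ref{defn:nice-htpy-cat}, the hom-sets of $\h\cM_{\mathrm{cf}}$ are homotopy classes of maps with composition induced from $\cM$, the invertibility of $[RQf]$ says precisely that $RQf$ admits a homotopy inverse. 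Proposition \ref{prop:structured-we}, applied to the fibrant-cofibrant objects $RQX$ and $RQY$, then gives $RQf \in \we$.

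It remains to descend from $RQf$ to $f$, which is a routine two-of-three argument: naturality of the weak equivalence $\epsilon \colon Q \To \id_\cM$ from Exercise \ref{exc:fib-cof} gives a commutative square relating $Qf$ and $f$ whose horizontal legs $\epsilon_X, \epsilon_Y$ lie in $\we$, so $f \in \we$ if and only if $Qf \in \we$; naturality of $\eta \colon \id_\cM \To R$ relates $Qf$ and $RQf$ the same way, so $Qf \in \we$ if and only if $RQf \in \we$; hence $f \in \we$. I do not anticipate a genuine obstacle: all of the substance already resides in Proposition \ref{prop:structured-we} and Theorem \ref{thm:equivalent-ho}, and the only points needing a word of care are that fully faithful functors reflect isomorphisms and that invertibility of a homotopy class is the same thing as the existence of a homotopy inverse.
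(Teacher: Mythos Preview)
Your proof is correct and takes essentially the same approach as the paper's: reduce via fibrant-cofibrant replacement and Theorem \ref{thm:equivalent-ho} to the functor $\cM_{\mathrm{cf}} \to \h\cM_{\mathrm{cf}}$, then invoke Proposition \ref{prop:structured-we} to identify the isomorphisms in $\h\cM_{\mathrm{cf}}$ with the weak equivalences. You have simply made explicit the steps the paper leaves terse --- that $\nu$ is fully faithful and hence reflects isomorphisms, and the two-of-three argument passing from $RQf \in \we$ back to $f \in \we$.
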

\begin{proof}
Cofibrantly and then fibrantly replacing the map it suffices to consider a map between fibrant-cofibrant objects. By Theorem \ref{thm:equivalent-ho} we may prove this result for $\cM_{\mathrm{cf}} \to \h\cM_{\mathrm{cf}}$ instead. But now this is clear by construction: since morphisms in $\h\cM_{\mathrm{cf}}$ are homotopy classes of maps, the isomorphisms are the homotopy equivalences, which coincide exactly with the weak equivalences between fibrant-cofibrant objects by Proposition \ref{prop:structured-we}.
\end{proof}

\subsection{Quillen's model structure on simplicial sets}\label{ssec:simp-sets}

We conclude this section with a prototypical example. Quillen's original model structure is borne by the category of \textbf{simplicial sets}, presheaves on the category $\DDelta$ of finite non-empty ordinals $[n] = \{0 < 1 < \cdots < n\}$ and order-preserving maps. A \textbf{simplicial set} $X\colon\DDelta^\op \to \cat{Set}$ is a graded set $\{X_n\}_{n \geq 0}$ --- where elements of $X_n$ are called ``$n$-simplices'' ---  equipped with dimension-decreasing ``face'' maps $X_n \to X_m$ arising from monomorphisms $[m] \rightarrowtail [n] \in \DDelta$ and dimension-increasing ``degeneracy'' maps $X_m \to X_n$ arising from epimorphisms $[n] \twoheadrightarrow [m] \in \DDelta$. An $n$-simplex has $n+1$ codimension-one faces, each of which avoids one of its $n+1$ vertices. 

There is a \textbf{geometric realization} functor $|-| \colon \cat{sSet} \to \cat{Top}$ that produces a topological space $|X|$ from a simplicial set $X$ by gluing together topological $n$-simplices for each non-degenerate $n$-simplex along its lower-dimensional faces. The simplicial set represented by $[n]$ defines the standard $n$-simplex $\Delta^n$. Its boundary $\partial\Delta^n$ is the union of its codimension-one faces, while a horn $\Lambda^n_k$ is the further subspace formed by omitting the face opposite the vertex $k \in [n]$.

\begin{thm}[Quillen]\label{thm:sset-model} The category $\cat{sSet}$ admits a model structure whose:
\begin{itemize}
\item weak equivalences are those maps $f \colon X \to Y$ that induce a weak homotopy equivalence $f \colon |X| \to |Y|$ on geometric realizations
\item cofibrations are monomorphisms
\item fibrations are the \textbf{Kan fibrations}, which are characterized by the left lifting property with respect to the set of all horn inclusions
\[
\begin{tikzcd}\Lambda^n_k \arrow[r] \arrow[d, tail, "\wr"'] & X \arrow[d, two heads] \\ \Delta^n \arrow[r] \arrow[ur, dashed] & Y
\end{tikzcd}
\]
\end{itemize}
\end{thm}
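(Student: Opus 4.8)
The plan is to verify the Joyal--Tierney form of the axioms (Definition~\ref{defn:model-cat}): it suffices to check that the weak equivalences satisfy two-of-three and that $(\cof,\fib\cap\we)$ and $(\cof\cap\we,\fib)$ are weak factorization systems. Two-of-three is immediate from Lemma~\ref{lem:moral-we-defn}, since $\we$ is precisely the class of maps inverted by the composite $\pi_\bullet\circ|-|$. For the factorizations I would run Quillen's small object argument on the two sets of generating maps: the boundary inclusions $\mathcal I=\{\partial\Delta^n\hookrightarrow\Delta^n\}_{n\ge0}$, producing a weak factorization system whose right class I provisionally call the \emph{trivial fibrations}, and the horn inclusions $\mathcal J=\{\Lambda^n_k\hookrightarrow\Delta^n\}_{0\le k\le n}$, whose associated right class is by definition the Kan fibrations and whose left class I call the \emph{anodyne} maps. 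Every simplicial set is small, so the argument applies. By Corollary~\ref{cor:wfs-by-half} together with the skeletal filtration of a simplicial set --- which exhibits any monomorphism as a transfinite composite of pushouts of coproducts of boundary inclusions, attaching the non-degenerate simplices of the codomain one dimension at a time --- the left class of the first weak factorization system is exactly the monomorphisms. So $(\cof,\text{trivial fibrations})$ is a weak factorization system, and what remains is to identify ``trivial fibrations'' with $\fib\cap\we$ and ``anodyne maps'' with $\cof\cap\we$.

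The ``easy'' inclusions go as follows. A trivial fibration is a Kan fibration because horn inclusions are monomorphisms; and it is a weak equivalence because lifting first against $\emptyset\hookrightarrow B$ and then against the monomorphism $E\sqcup E\hookrightarrow E\times\Delta^1$ exhibits it as a simplicial homotopy equivalence, and geometric realization (which preserves finite products) carries simplicial homotopy equivalences to homotopy equivalences of spaces. Conversely, an anodyne map is a monomorphism by Lemma~\ref{lem:closure}, and it is a weak equivalence because each $|\Lambda^n_k|\hookrightarrow|\Delta^n|$ is a deformation retract of spaces, hence a trivial cofibration in $\cat{Top}$; geometric realization is a left adjoint and trivial cofibrations of spaces are closed under coproduct, pushout, and transfinite composition (Lemma~\ref{lem:closure} applied in $\cat{Top}$), so the realization of any $\mathcal J$-cell complex is a trivial cofibration of spaces, in particular a weak homotopy equivalence, and a retract of a weak equivalence is a weak equivalence (Lemma~\ref{lem:moral-we-defn}).

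The whole theorem now reduces to one nonformal assertion, the \textbf{crux}: every Kan fibration that is a weak equivalence is a trivial fibration, i.e.\ has the right lifting property against every monomorphism. Granting this, the second weak factorization system follows by pure diagram chasing. Given $i\in\cof\cap\we$, factor it by the small object argument on $\mathcal J$ as an anodyne map $j$ followed by a Kan fibration $p$; by the previous paragraph $j\in\we$, so two-of-three forces $p\in\we$, whence $p$ is a trivial fibration by the crux and so has the right lifting property against the monomorphism $i$. The retract argument (Lemma~\ref{lem:retract}) then presents $i$ as a retract of $j$, and since anodyne maps are closed under retracts (Lemma~\ref{lem:closure}) we conclude $i$ is anodyne. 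Hence $\cof\cap\we=\{\text{anodyne maps}\}$ and $(\cof\cap\we,\fib)$ is the weak factorization system generated by $\mathcal J$, completing the verification of Definition~\ref{defn:model-cat}.

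The main obstacle is the crux, which genuinely requires combinatorial input about simplicial sets and cannot be dispatched by lifting-property formalities. Following Quillen~\cite{quillen}, I would develop the theory of \emph{minimal fibrations}: every Kan fibration admits a fibrewise strong deformation retract onto a minimal fibration; a minimal fibration is fibrewise ``locally trivial'', its pullback along any $\Delta^n\to B$ being a product projection; and a minimal fibration that is a weak equivalence has weakly contractible fibres, which, being minimal Kan complexes, must be points, so such a minimal fibration is an isomorphism. Consequently a Kan fibration that is a weak equivalence is fibrewise homotopy equivalent to an isomorphism, hence admits a section together with a fibrewise homotopy to the identity; from this the right lifting property against a monomorphism $A\hookrightarrow X$ follows by extending along the anodyne inclusion $A\times\Delta^1\cup_{A\times\{1\}}X\times\{1\}\hookrightarrow X\times\Delta^1$ and restricting to $X\times\{0\}$. (Alternatively one can sidestep minimal fibrations entirely by transferring the Quillen model structure on $\cat{Top}$ along the adjunction $|-|\dashv\fun{Sing}$, where the one nonformal hypothesis --- that every transfinite composite of pushouts of the maps $|\Lambda^n_k|\hookrightarrow|\Delta^n|$ is a weak equivalence --- holds because those maps are trivial cofibrations of spaces; but this presupposes the model structure on spaces.) I expect constructing the deformation retract onto a minimal fibration, and establishing its local triviality, to be the most delicate part of the whole proof.
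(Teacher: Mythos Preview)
The paper does not prove this theorem: it is stated as a result of Quillen and immediately followed by commentary on the cofibrant and fibrant objects, with no proof environment. So there is nothing to compare your argument against.

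That said, your outline is correct and is essentially Quillen's original proof from \cite{quillen}. You have correctly identified the one genuinely nonformal step --- that a Kan fibration which is a weak equivalence lifts against all monomorphisms --- and both routes you sketch (minimal fibrations, or transfer along $|-|\dashv\fun{Sing}$ from the Serre model structure on $\cat{Top}$) are standard and work. The reduction of everything else to this ``crux'' via the small object argument, the skeletal filtration identifying $\mathcal I$-cofibrations with monomorphisms, and the retract argument of Lemma~\ref{lem:retract} is exactly right.
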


All objects are cofibrant. The fibrant objects are the \textbf{Kan complexes}, those simplicial sets in which all horns can be filled. The fibrant objects are those simplicial sets that most closely resemble topological  spaces. In particular, two vertices in a Kan complex lie in the same path component if and only if they are connected by a single 1-simplex, with may be chosen to point in either direction. By Proposition \ref{prop:structured-we} a weak equivalence between Kan complexes is a homotopy equivalence where the notion of homotopy is defined with respect to the interval $\Delta^1$ using $\Delta^1 \times X$ as a cylinder object or $X^{\Delta^1}$ as a path object.

Quillen's model category of simplicial sets is of interested because, one the one hand, the category of simplicial sets is very well behaved and, on the other hand, the geometric realization functor defines an ``equivalence of homotopy theories'': in particular, the homotopy category of simplicial sets gives another model for the homotopy category of spaces. To explain this, we turn our focus to derived functors and derived equivalences between model categories, the subject of \S\ref{sec:derived}.

\section{Derived functors between model categories}\label{sec:derived}

Quillen's model category axioms allow us to conjure a homotopy relation between parallel maps in any model category, whatever the objects of that category might be. For this reason, model categories are often regarded as ``abstract homotopy theories.''  We will now zoom out to consider functors comparing such homotopy theories.

More generally, we might consider functors between homotopical categoriesequipped with weak equivalences that at least satisfy the two-of-three property. A great deal of the subtlety in ``category theory up to weak equivalence'' has to do with the fact that functors between homotopical categories need not necessarily preserve weak equivalences. In the case where a functor fails to preserve weak equivalence the next best hope is that it admits a universal approximation by a functor which does, where the approximation is either ``from the left'' or ``from the right.'' Such approximations are referred as \emph{left} or \emph{right derived functors}. 

The universal properties of left or right derived functors exists at the level of homotopy categories though the derived functors of greatest utility, and the ones that are most easily constructed in practice, can be constructed at the ``point-set level.'' One of the selling points of  Quillen's theory of model categories is that they highlight classes of functors---the left or right Quillen functors---whose left or right derived functors can be constructed in a uniform way making the passage to total derived functors pseudofunctorial. However, it turns out a full model structure is not necessary for this construction; morally speaking, all that matters for the specification of derived functors is the weak equivalences.

In  \S\ref{ssec:derived}, we give a non-standard and in our view greatly improved presentation of the theory of derived functors guided by a recent axiomatization of Dwyer-Hirschhorn-Kan-Smith \cite{DHKS} paired with a result of Maltsiniotis \cite{maltsiniotis}. The key point of difference is that we give a much stronger definition of what constitutes a derived functor than the usual one. In \S\ref{ssec:quillen-fun} we introduce left and right Quillen functors between model categories and show that such functors have a left or right derived functor satisfying this stronger property. Then in \S\ref{ssec:derived-adj}, we see that the abstract theory of this stronger class of derived functors is considerably better than the theory of the weaker ones. A highlight is an efficient expression of the properties of composite or adjoint derived functors proven by Shulman \cite{shulman-comparing} and reproduced as Theorem \ref{thm:double-ho}.

In \S\ref{ssec:monoidal-model}, we extend the theory of derived functors to allow functors of two variables with the aim of proving that the homotopy category of spaces is cartesian closed, inheriting an internal hom defined as the derived functor of the point-set level mapping spaces. Implicit in our approach to the proof of this statement is a result promised at the end of \S\ref{ssec:simp-sets}. In \S\ref{ssec:quillen-equiv}, we define a precise notion of equivalence between abstract homotopy theories encoded by model categories, which specializes to establish an equivalence between the homotopy theory of spaces and the homotopy theory of simplicial sets. Finally, in \S\ref{ssec:extending} we briefly sketch the connection between \emph{homotopical algebra} and \emph{homological algebra} by considering suitable model structures appropriate for a theory of derived functors between chain complexes.

\subsection{Derived functors and equivalence of homotopy theories}\label{ssec:derived}

As a warning to the reader, this definition of a derived functor is stronger than the usual one in two ways:
\begin{itemize}
\item We explicitly require our derived functors to be defined ``at the point-set level'' rather than simply as functors between homotopy categories.
\item We require the universal property of the corresponding ``total derived functors'' between homotopy categories to define \emph{absolute} Kan extensions.
\end{itemize}

Before defining our derived functors we should explain the general meaning of absolute Kan extensions.

\begin{defn}\label{defn:Kanext} A \textbf{left Kan extension} of $F \colon \cC \to \cE$ along $K \colon \cC \to \cD$ is a functor $\Lan_KF\colon \cD \to \cE$ together with a natural transformation $\eta\colon F \To \Lan_KF \cdot  K$ such that for any other such pair $(G \colon \cD \to \cE, \gamma \colon F \To GK)$, $\gamma$ factors uniquely through $\eta$ as illustrated.\footnote{Writing $\alpha$ for the natural transformation $\Lan_KF \To G$, the right-hand pasting diagrams express the equality $\gamma = \alpha K \cdot \eta$, i.e., that $\gamma$ factors as $\begin{tikzcd}[ampersand replacement=\&] F \arrow[r, Rightarrow,  "\eta"] \& {\Lan_K F \cdot K} \arrow[r, Rightarrow, "{\alpha K}"] \& GK \end{tikzcd}$.} 
\[
\begin{tikzcd} \cC \arrow[rr, "F"] \arrow[dr, "K"']  & \arrow[d, phantom, "\scriptstyle{\Downarrow \eta}"] & \cE
& \cC \arrow[rr, "F"] \arrow[dr, "K"']  & \arrow[d, phantom, "\scriptstyle{\Downarrow \gamma}"] & \cE \arrow[dr, phantom, "="] &  \cC \arrow[rr, "F"] \arrow[dr, "K"']  & \arrow[dl, phantom, "\scriptstyle{\Downarrow \eta}" near start] & \cE
 \\ & \cD \arrow[ur, dashed, "{\Lan_KF}"'] & & {~} &  \cD \arrow[ur, "G"'] && {~} & \cD \arrow[ur, bend left, "{\Lan_KF}" description] \arrow[ur, phantom, "\scriptstyle {\exists !}{\Downarrow}" near start] \arrow[ur, bend right, "G"']
\end{tikzcd}
\]
Dually, a \textbf{right Kan extension} of $F \colon \cC \to \cE$ along $K \colon \cC \to \cD$  is a functor $\Ran_K F \colon \cD \to \cE$ together with a natural transformation $\epsilon \colon \Ran_KF \cdot K\To F$ such that for any $(G \colon \cD \to \cE, \delta \colon GK \To F)$, $\delta$ factors uniquely through $\epsilon$ as illustrated.
\[
\begin{tikzcd} \cC \arrow[rr, "F"] \arrow[dr, "K"']  & \arrow[d, phantom, "\scriptstyle{\Uparrow \epsilon}"] & \cE
& \cC \arrow[rr, "F"] \arrow[dr, "K"']  & \arrow[d, phantom, "\scriptstyle{\Uparrow \delta}"] & \cE \arrow[dr, phantom, "="] &  \cC \arrow[rr, "F"] \arrow[dr, "K"']  & \arrow[dl, phantom, "\scriptstyle{\Uparrow \epsilon}" near start] & \cE
 \\ & \cD \arrow[ur, dashed, "{\Ran_KF}"'] & & {~} &  \cD \arrow[ur, "G"'] && {~} & \cD \arrow[ur, bend left, "{\Ran_KF}" description] \arrow[ur, phantom, "\scriptstyle {\exists !}{\Uparrow}" near start] \arrow[ur, bend right, "G"']
\end{tikzcd}
\]
A left or right Kan extension  is \textbf{absolute} if for any functor $H \colon \cE \to \cF$, the whiskered composite $(H\Lan_KF \colon \cD \to \cE, H\eta)$ or $(H\Ran_KF\colon\cD \to\cE,H\epsilon)$ defines the left or right Kan extension of $HF$ along $K$.
\end{defn}

A functor between homotopical categories is a \textbf{homotopical functor} if it preserves the classes of weak equivalences, or carries the weak equivalences in the domain to isomorphisms in the codomain in the case where no class of weak equivalences is specified. Derived functors can be understood as universal homotopical approximations to a given functor in a sense we now define.

\begin{defn}[{derived functors}]\label{defn:derived-functor} Let $\cM$ and $\cK$ be homotopical categories with weak equivalences satisfying the two-of-three property and localization functors $\gamma \colon \cM \to \Ho\cM$ and $\delta \colon \cK \to \Ho\cK$.
\begin{itemize}
\item A \textbf{left derived functor} of $F \colon \cM \to \cK$ is a homotopical functor $\LL F \colon \cM \to \cK$  equipped with a natural transformation $\lambda \colon \LL F \To F$ so that $\delta \LL F$  and $\delta \lambda \colon \delta  \LL F \To \delta F$ define an absolute \emph{right} Kan extension of $\delta F$ along $\gamma$.
\[
\begin{tikzcd} \cM \arrow[r, bend left, "F"] \arrow[r, bend right, "\LL F"'] \arrow[r, phantom, "\scriptstyle\Uparrow\lambda"] & \cK 
\end{tikzcd}
 \qquad \leftrightsquigarrow \qquad
 \begin{tikzcd}
 \cM \arrow[dr, phantom,"\scriptstyle\Uparrow\delta \lambda"] \arrow[r, "F"]  \arrow[d, "\gamma"'] & \cK \arrow[d, "\delta"]  \\ \Ho\cM \ar[r, "\delta\LL F"'] & \Ho\cK
 \end{tikzcd}\]
\item A \textbf{right derived functor} of $F \colon \cM \to \cK$ is a homotopical functor $\RR F \colon \cM \to \cK$  equipped with a natural transformation $\rho \colon F \To \RR F$ so that $\delta \RR F$  and $\delta \rho \colon \delta  F \To  \delta \RR F$ define an absolute \emph{left} Kan extension of $\delta F$ along $\gamma$.
\[
\begin{tikzcd} \cM \arrow[r, bend left, "F"] \arrow[r, bend right, "\RR F"'] \arrow[r, phantom, "\scriptstyle\Downarrow\rho"] & \cK 
\end{tikzcd}
 \qquad \leftrightsquigarrow \qquad
 \begin{tikzcd}
 \cM \arrow[dr, phantom,"\scriptstyle\Downarrow\delta \rho"] \arrow[r, "F"]  \arrow[d, "\gamma"'] & \cK \arrow[d, "\delta"]  \\ \Ho\cM \ar[r, "\delta\RR F"'] & \Ho\cK
 \end{tikzcd}\]
\end{itemize}
\end{defn}

\begin{rmk}
Absolute Kan extensions are in particular ``pointwise'' Kan extensions, these being the left or right Kan extensions that are preserved by representable functors. The pointwise left or right Kan extensions are those definable as colimits or limits in the target category \cite[\S 6.3]{riehl-context}, so it is somewhat surprising that these conditions are appropriate to require for functors valued in homotopy categories, which have few limits and colimits.\footnote{With the exception of products and coproducts, the so-called ``homotopy limits'' and ``homotopy colimits'' introduced in \S\ref{sec:holim} do not define limits and colimits in the homotopy category.}
\end{rmk}

As a consequence of Proposition \ref{prop:fractions-UP}, the homotopical functors \[\delta \LL F, \delta \RR F \colon \cM \rightrightarrows \Ho\cK\] factor uniquely through $\gamma$ and so may be equally regarded as functors \[\delta \LL F, \delta \RR F \colon \Ho \cM \rightrightarrows\Ho\cK,\] as appearing in the displayed diagrams of Definition \ref{defn:derived-functor}.

\begin{defn}[total derived functors]\label{defn:total}
The  \textbf{total left} or \textbf{right derived functors} of $F$ 
are the functors \[\delta \LL F, \delta \RR F \colon \Ho \cM \rightrightarrows\Ho\cK,\] defined as absolute Kan extensions in Definition \ref{defn:derived-functor} and henceforth  denoted by \[\bL F, \bR F \colon \Ho\cM \rightrightarrows \Ho\cK.\]
\end{defn}

There is a common setting in which derived functors exist and admit a simple construction. Such categories have a subcategory of ``good'' objects on which the functor of interest becomes homotopical and a functorial reflection into this full subcategory. The details are encoded in the following axiomatization due to \cite{DHKS} and exposed in \cite{shulman-homotopy}, though we diverge from their terminology to more thoroughly ground our intuition in the model categorical case.

\begin{defn}\label{defn:leftdef} A \textbf{left deformation} on a homotopical category $\cM$ consists of an endofunctor $Q$ together with a natural weak equivalence $q \colon Q \stackrel{\sim}{\To} 1$.
\end{defn}

The functor $Q$ is necessarily homotopical. Let $\cM_{\mathrm{c}}$ be any full subcategory of $\cM$ containing the image of $Q$. The inclusion $\cM_{\mathrm{c}} \to \cM$ and the left deformation $Q \colon \cM \to \cM_{\mathrm{c}}$ induce an equivalence between $\Ho\cM$ and $\Ho\cM_{\mathrm{c}}$. As our notation suggests, any model category $\cM$ admits a left deformation defined by cofibrant replacement. Accordingly, we refer to $\cM_{\mathrm{c}}$ as the subcategory of \textbf{cofibrant objects}, trusting the reader to understand that when we have not specified any model structures, Quillen's technical definition is not what we require.

\begin{defn}\label{defn:funleftdef} A functor $F \colon \cM \to \cK$ between homotopical categories is \textbf{left deformable} if there exists a left deformation on $\cM$ such that $F$ is homotopical on an associated subcategory of cofibrant objects.
\end{defn}

Our first main result proves that left deformations can be used to construct left derived functors. The basic framework of left deformations was set up by \cite{DHKS} while the observation that such derived functors are absolute Kan extensions is due to \cite{maltsiniotis}.

\begin{thm}[{\cite[41.2-5]{DHKS}, \cite{maltsiniotis}}]\label{thm:leftderived} If $F\colon \cM \to \cK$ has a left deformation $q \colon Q \stackrel{\sim}{\To} 1$, then $\LL F = FQ$ is a left derived functor of $F$.
\end{thm}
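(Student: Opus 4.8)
The plan is to verify directly that the pair $(FQ, F q \colon FQ \To F)$, after localization, satisfies the universal property of an absolute right Kan extension of $\delta F$ along $\gamma$. First I would record that $FQ$ is homotopical: $Q$ lands in the subcategory $\cM_{\mathrm c}$ of cofibrant objects (by the definition of a left deformation, shrinking $\cM_{\mathrm c}$ if necessary so that it contains the image of $Q$), $Q$ is homotopical because $q$ is a natural weak equivalence and weak equivalences satisfy two-of-three, and $F$ is homotopical on $\cM_{\mathrm c}$ by the hypothesis that $F$ is left deformable; composing, $FQ$ sends weak equivalences to weak equivalences. Hence by Proposition \ref{prop:fractions-UP} the composite $\delta FQ$ factors uniquely through $\gamma$, giving a functor $\overline{FQ}\colon \Ho\cM \to \Ho\cK$, and $\delta F q$ descends to a natural transformation $\overline{FQ}\,\gamma \To \delta F$ — equivalently, since every object of $\Ho\cM$ is (isomorphic to) an object of $\cM$ and $\gamma$ is identity-on-objects in the category-of-fractions model, a transformation $\overline{FQ}\To \overline{\delta F}$ where $\overline{\delta F}$ is a chosen factorization of... — more carefully, I will phrase everything in terms of the comma-category/2-categorical universal property so as to avoid pretending $\delta F$ itself factors through $\gamma$.

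The heart of the argument is the following observation, which simultaneously gives the Kan extension property and its absoluteness. Let $H \colon \Ho\cK \to \cF$ be arbitrary, let $G \colon \Ho\cM \to \cF$ be any functor, and suppose given $\sigma \colon G\gamma \To HF$ (here I am implicitly using that $\delta$ precomposed with anything lands in $\Ho\cK$, and folding $\delta$ into $H$; the case $H = \delta$, $\cF = \Ho\cK$ recovers the plain Kan extension statement, so proving the absolute version is no extra work). I claim $\sigma$ factors uniquely through $H F q$. For existence: the whiskered transformation $Gq_\ast$... rather, observe that $q \colon Q \To 1_\cM$ is inverted by $\gamma$, so $\gamma q \colon \gamma Q \To \gamma$ is a natural isomorphism in $\Ho\cM$; therefore $G\gamma q \colon G\gamma Q \To G\gamma$ is a natural isomorphism. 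Define the comparison $\alpha \colon G \To HFQ$ (at the level of $\Ho\cM$, using that $HFQ$ is homotopical hence factors through $\gamma$) to be the composite
\[
\begin{tikzcd}[column sep=large]
G\gamma \arrow[r, "{(G\gamma q)^{-1}}"] & G\gamma Q \arrow[r, "\sigma Q"] & HFQ,
\end{tikzcd}
\]
which makes sense since $Q$ factors through $\cM_{\mathrm c}$ where $F$, and hence $HF$, agrees with its derived functor up to the canonical comparison. One then checks that $\alpha$ pasted with $HFq$ recovers $\sigma$, using naturality of $\sigma$ applied to the component $q_X \colon QX \to X$ together with the triangle-type identity $Q q = q Q$-up-to-the-relevant-coherence for the deformation; this is the one genuinely computational point. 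For uniqueness: any $\alpha'$ with $HFq \cdot \alpha' \gamma = \sigma$ must, when whiskered by $Q$ and combined with $\sigma$ being determined on $\cM_{\mathrm c}$ (where $q$ is, in the homotopy category, an isomorphism between identically-behaving functors), agree with $\alpha$; concretely, evaluate $\alpha'$ at a cofibrant object $X$ and use that $q_X$ is a weak equivalence between cofibrant objects on which $F$ is homotopical, so $HFq_X$ is an isomorphism.

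I expect the main obstacle to be purely bookkeeping rather than conceptual: namely, tracking the distinction between a functor $\cM \to \cK$ that is homotopical and its unique factorization through the localizations, and making the "every object of $\Ho\cM$ is an object of $\cM$" sloppiness precise enough that the pasting-diagram manipulations are honest 2-categorical identities and not just statements about objects. The cleanest route is probably to invoke the 2-dimensional part of Proposition \ref{prop:fractions-UP} at the outset to reduce all of Definition \ref{defn:Kanext}'s universal properties to the subcategory-of-cofibrant-objects situation, where $q$ becomes an actual natural isomorphism after localization and the derived functor $FQ$ is literally naturally isomorphic to $F$; once there, absoluteness is immediate because an isomorphism of functors is preserved by any $H$. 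I would also remark, as the theorem's statement invites, that the dual construction via a right deformation yields right derived functors, and that Exercise \ref{exc:fib-cof} exhibits cofibrant replacement in a model category as a left deformation making every left Quillen functor left deformable — but that belongs to \S\ref{ssec:quillen-fun}, not to this proof.
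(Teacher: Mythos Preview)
Your proposal is correct and follows essentially the same argument as the paper: construct the factorization via $(Gq)^{-1}$ followed by whiskering the given transformation by $Q$, prove uniqueness by restricting along $Q$ and using that $Fq_Q$ becomes an isomorphism after localization, and obtain absoluteness by noting the same argument goes through with $H\delta F$ in place of $\delta F$. The paper streamlines the bookkeeping you worry about in your final paragraph precisely as you suggest --- by invoking the 2-dimensional part of Proposition~\ref{prop:fractions-UP} at the outset to identify functors $\Ho\cM \to \Ho\cK$ with homotopical functors $\cM \to \Ho\cK$, so that the entire argument takes place at the level of transformations between homotopical functors on $\cM$; your uniqueness step would benefit from making explicit (as the paper does) the naturality square of the candidate factorization with respect to $q$, which is what reduces the general component to the component after whiskering by $Q$.
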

\begin{proof} 
Write $\delta \colon \cK \to \Ho\cK$ for the localization. To show that $(FQ, Fq)$ is a point-set left derived functor, we must show that the functor $\delta FQ$ and natural transformation $\delta Fq \colon \delta FQ \To \delta F$ define a right Kan extension. The verification makes use of Proposition \ref{prop:fractions-UP}, which identifies the functor category $\Fun(\Ho\cM,\Ho\cK)$ with the full subcategory of $\Fun(\cM,\Ho\cK)$ spanned by the homotopical functors. Suppose $G \colon \cM \to \Ho\cK$ is homotopical and consider $\alpha \colon G \To \delta F$. Because $G$ is homotopical and $q \colon Q \To 1_\cM$ is a natural weak equivalence, $Gq \colon GQ \To G$ is a natural isomorphism. Using naturality of $\alpha$, it follows that $\alpha$ factors through $\delta F q$ as
\[
\begin{tikzcd}
G \arrow[r, Rightarrow, "{(Gq)^{-1}}"] & GQ \arrow[r, Rightarrow, "{\alpha_Q}"] & \delta F Q \arrow[r, Rightarrow, "{\delta Fq}"] & \delta F
\end{tikzcd} 
\]

To prove uniqueness, suppose $\alpha$ factors as
\[
\begin{tikzcd}
 G \arrow[r, Rightarrow, "{\beta}"] & \delta FQ \arrow[r, Rightarrow,"{\delta Fq}"] & \delta F
 \end{tikzcd}\]
Naturality of $\beta$ provides a commutative square of natural transformations: 
\[
\begin{tikzcd}
GQ \arrow[r, Rightarrow,"{\beta_Q}"]\arrow[d, Rightarrow,"{Gq}"'] & \delta F Q^2 \arrow[d, Rightarrow,"{\delta FQq}"] \\ G \arrow[r, Rightarrow, "{\beta}"'] & \delta FQ
\end{tikzcd} \] 
Because $q$ is a natural weak equivalence and the functors $G$ and $\delta FQ$ are homotopical, the vertical arrows are natural isomorphisms, so $\beta$ is determined by $\beta_Q$. This restricted natural transformation is  uniquely determined: $q_Q$ is a natural weak equivalence between objects in the image of $Q$. Since $F$ is homotopical on this subcategory, this means that $Fq_Q$ is a natural weak equivalence and thus $\delta Fq_Q$ is an isomorphism, so $\beta_Q$ must equal the composite of the inverse of this natural isomorphism with $\alpha_Q$. 

Finally, to show that this right Kan extension is absolute, our task is to show that for any functor $H \colon \Ho\cK \to \cE$, the pair $(H\delta FQ, H\delta Fq)$ again defines a right Kan extension. Note that $(Q,q)$ also defines a left deformation for $H\delta F$, simply because the functor $H\colon \Ho\cK \to \cE$ preserves isomorphisms. The argument just given now demonstrates that $(H\delta FQ, H\delta Fq)$ is a right Kan extension, as claimed.
\end{proof}

\subsection{Quillen functors}\label{ssec:quillen-fun}

We'll now introduce important classes of functors between model categories that will admit derived functors.

\begin{defn} A functor between model categories is
\begin{itemize}
\item \textbf{left Quillen} if it preserves cofibrations, trivial cofibrations, and cofibrant objects, and
\item  \textbf{right Quillen} if it preserves fibrations, trivial fibrations, and fibrant objects.
\end{itemize}
\end{defn}

Most left Quillen functors are ``cocontinuous,'' preserving all colimits, while most right Quillen functors are ``continuous,'' preserving all limits; when this is the case there is no need to separately assume that cofibrant or fibrant objects are preserved. Importantly, cofibrant replacement defines a left deformation for any left Quillen functor, while fibrant replacement defines a right deformation for any right Quillen functor, as we now demonstrate:

\begin{lem}[Ken Brown's lemma]\label{lem:kb}  $\quad$
\begin{enumerate}
\item\label{itm:kb-i} Any map between fibrant objects in a model category can be factored as a right inverse to a trivial fibration followed by a  fibration.
\begin{equation}\label{eq:kb-construction}
\begin{tikzcd}
& P \arrow[dr, two heads, "p"] \arrow[dl, bend right=45, two heads, "q"',  "\sim" sloped] \\ A \arrow[rr, "f"'] \arrow[ur, tail, "j"', "\sim" sloped] & & B
\end{tikzcd}
\end{equation}
\item\label{itm:kb-ii} Let $F \colon \cM \to \cK$ be a functor from a model category to a category with a class of weak equivalences satisfying the two-of-three property. If $F$ carries trivial fibrations in $\cM$ to weak equivalences in $\cK$, then $F$ carries all weak equivalences between fibrant objects in $\cM$ to weak equivalences in $\cK$.
\end{enumerate}
\end{lem}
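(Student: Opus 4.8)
The plan is to prove the two parts of Ken Brown's lemma in sequence, with part \eqref{itm:kb-i} feeding directly into part \eqref{itm:kb-ii}.

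For part \eqref{itm:kb-i}, given a map $f \colon A \to B$ between fibrant objects, I would first form the map $(1_A, f) \colon A \to A \times B$. The key observation is that when $A$ and $B$ are fibrant, the product $A \times B$ is fibrant, and the two projections $A \times B \to A$ and $A \times B \to B$ are fibrations (being pullbacks of the fibrations $B \to *$ and $A \to *$ respectively). Now factor $(1_A, f) \colon A \to A \times B$ as a trivial cofibration $j \colon A \cwto P$ followed by a fibration $P \fto A \times B$ using the weak factorization system $(\cof \cap \we, \fib)$. Composing with the projection $A \times B \fto A$ yields a map $q \colon P \fto A$; since $q j = 1_A$ is a weak equivalence and $j$ is a weak equivalence, two-of-three gives that $q$ is a weak equivalence, and it is a composite of fibrations hence a fibration, so $q$ is a trivial fibration with right inverse $j$. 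Composing $P \fto A\times B$ with the other projection $A\times B \fto B$ gives a fibration $p \colon P \fto B$, and by construction $p j = f$. This gives the factorization displayed in \eqref{eq:kb-construction}.

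For part \eqref{itm:kb-ii}, suppose $f \colon A \to B$ is a weak equivalence between fibrant objects in $\cM$, and let $F$ carry trivial fibrations to weak equivalences. Apply part \eqref{itm:kb-i} to factor $f = p \circ j$ as in \eqref{eq:kb-construction}, with $q j = 1_A$, $q$ a trivial fibration, $j$ a (trivial) cofibration with $q$ as a retraction, and $p$ a fibration with $p j = f$. By two-of-three applied to $f = p j$, since $f$ and $j$ are weak equivalences, $p$ is a weak equivalence, hence a trivial fibration. So both $p$ and $q$ are trivial fibrations, and $F p$, $F q$ are weak equivalences in $\cK$ by hypothesis. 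Applying $F$ to $q j = 1_A$ gives $Fq \cdot Fj = 1_{FA}$, so $Fj$ is a section of the weak equivalence $Fq$; by two-of-three (or the retract-type argument: $F j$ composed with $Fq$ is an identity, hence a weak equivalence, and $Fq$ is a weak equivalence, so $Fj$ is a weak equivalence). Then $F f = F p \cdot F j$ is a composite of weak equivalences, hence a weak equivalence, which is what we wanted.

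The main obstacle — really the only subtle point — is getting the factorization in part \eqref{itm:kb-i} set up correctly so that $q$ comes out as a \emph{trivial} fibration admitting $f$'s source inclusion as a section; the trick of factoring the map into the product $A \times B$ rather than trying to factor $f$ directly is what makes this work, and one must be careful that fibrancy of $A$ and $B$ is exactly what guarantees the projections out of $A \times B$ are fibrations. Everything after that is a routine chase with the two-of-three property. I would also remark that the dual statement holds: any map between cofibrant objects factors as a cofibration followed by a left inverse to a trivial cofibration, so a functor sending trivial cofibrations to weak equivalences sends all weak equivalences between cofibrant objects to weak equivalences — this dual is what is actually applied to left Quillen functors and cofibrant replacement.
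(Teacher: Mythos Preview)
Your proof is correct and follows essentially the same approach as the paper: factor the graph map $(1_A,f)\colon A \to A \times B$ as a trivial cofibration followed by a fibration, use fibrancy of $A$ and $B$ to see that the product projections are fibrations, and then apply the two-of-three property throughout. Your justifications are in fact slightly more explicit than the paper's in places (e.g., identifying the projections as pullbacks of maps to the terminal object), and your closing remark about the dual statement is apt.
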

\begin{proof}
For \eqref{itm:kb-i}, given any map $f \colon A \to B$ factor its graph $(1_A,f) \colon A \to A \times B$ as a trivial cofibration $j$ followed by a fibration $r$:
\[
\begin{tikzcd}
& & & B \arrow[dr, two heads] \\
A \arrow[rrru, bend left, "f"] \arrow[rrrd, bend right, equals] \arrow[r, "j", tail, "\sim"'] & P \arrow[urr, two heads, "p"] \arrow[drr, two heads, "q"', "\sim" sloped] \arrow[r, two heads, "r" description] & A \times B \arrow[ur, two heads, "\pi_B"'] \arrow[dr, two heads, "\pi_A"] \arrow[rr, phantom, "\rotatebox{45}{$\lrcorner$}" very near start] & & 1 \\
& & & A \arrow[ur, two heads]
\end{tikzcd}
\]
Since $A$ and $B$ are fibrant, the dual of Lemma \ref{lem:closure} implies that the product projections are fibrations, and thus the composite maps $p$ and $q$ are fibrations. By the two-of-three property, $q$ is also a weak equivalence.

To prove \eqref{itm:kb-ii} assume that $f \colon A \to B$ is a weak equivalence in $\cM$ and construct the factorization \eqref{eq:kb-construction}. It follows from the two-of-three property that $p$ is also a trivial fibration, so by hypothesis both $Fp$ and $Fq$ are weak equivalences in $\cK$. Since $Fj$ is right inverse to $Fq$, it must also be a weak equivalence, and thus the closure of weak equivalences under composition implies that $Ff$ is a weak equivalence as desired.
\end{proof}

Specializing Theorem \ref{thm:leftderived} we then have:

\begin{cor}\label{cor:quillen-derived} The left derived functor of any left Quillen functor $F$ exists and is given by $\LL F := FQ$ while the right derived functor of any right Quillen functor $G$ exists and is given by $\RR G := GR$, where $Q$ and $R$ denote any cofibrant and fibrant replacement functors, respectively.
\end{cor}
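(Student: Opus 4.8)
The plan is to deduce this directly from Theorem~\ref{thm:leftderived} together with Ken Brown's lemma, by checking that a left Quillen functor is left deformable via cofibrant replacement and, dually, that a right Quillen functor is right deformable via fibrant replacement. First I would recall that Exercise~\ref{exc:fib-cof} supplies a cofibrant replacement functor $Q$ equipped with a natural weak equivalence $q := \epsilon \colon Q \stackrel{\sim}{\To} 1_\cM$ whose image lands, by construction, in the full subcategory $\cM_{\mathrm{c}}$ of cofibrant objects; thus $(Q,q)$ is a left deformation in the sense of Definition~\ref{defn:leftdef}, and the only thing left to establish for left deformability (Definition~\ref{defn:funleftdef}) is that a left Quillen functor $F$ is homotopical when restricted to $\cM_{\mathrm{c}}$.

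For that restriction statement I would appeal to the dual of Ken Brown's lemma, i.e.\ the instance of Lemma~\ref{lem:kb}\eqref{itm:kb-ii} applied to $F^{\op} \colon \cM^{\op} \to \cK^{\op}$: since $F$ is left Quillen it carries trivial cofibrations in $\cM$ to trivial cofibrations in $\cK$, which are in particular weak equivalences there, and so that lemma guarantees that $F$ carries every weak equivalence between cofibrant objects to a weak equivalence. Hence $F$ is homotopical on $\cM_{\mathrm{c}}$, so $F$ is left deformable and Theorem~\ref{thm:leftderived} applies verbatim to conclude that $\LL F = FQ$, with structure map $\lambda = Fq \colon FQ \To F$, is a left derived functor of $F$.

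The right-handed assertion follows by running the same argument in the opposite categories. Explicitly, fibrant replacement (Exercise~\ref{exc:fib-cof}) is a right deformation $(R,\eta)$ with $\eta \colon 1_\cM \stackrel{\sim}{\To} R$ taking values in the subcategory $\cM_{\mathrm{f}}$ of fibrant objects; a right Quillen functor $G$ carries trivial fibrations to trivial fibrations and hence to weak equivalences, so Lemma~\ref{lem:kb}\eqref{itm:kb-ii} applies (this time directly, without dualizing) to show that $G$ is homotopical on $\cM_{\mathrm{f}}$; and the dual of Theorem~\ref{thm:leftderived} then yields that $\RR G = GR$, with structure map $\rho = G\eta \colon G \To GR$, is a right derived functor of $G$.

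I do not expect a real obstacle, since the substance has already been isolated in Theorem~\ref{thm:leftderived} and Lemma~\ref{lem:kb}; the only point that requires attention is recognizing that ``$F$ preserves trivial cofibrations'' is precisely the hypothesis Ken Brown's lemma needs (one must resist the temptation to hope that $F$ preserves all weak equivalences, which it generally does not), and keeping straight which of Lemma~\ref{lem:kb}\eqref{itm:kb-ii} or its opposite-category instance is invoked in the left versus the right case.
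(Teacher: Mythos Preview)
Your proposal is correct and matches the paper's approach exactly: the corollary is presented there as an immediate specialization of Theorem~\ref{thm:leftderived}, with Ken Brown's lemma (Lemma~\ref{lem:kb}) supplying the fact that a left Quillen functor is homotopical on cofibrant objects (and dually for right Quillen functors). Your care in noting which instance of Lemma~\ref{lem:kb}\eqref{itm:kb-ii} is the direct one and which is the dual is accurate and more explicit than the paper itself.
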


\subsection{Derived composites and derived adjunctions}\label{ssec:derived-adj}

Left and right Quillen functors frequently occur in adjoint pairs, in which case the left adjoint is left Quillen if and only if the right adjoint is right Quillen:

\begin{defn}\label{defn:quillen-adj} Consider an adjunction between a pair of model categories.
\begin{equation}\label{eq:quillen-adj}
\begin{tikzcd}
\cM \arrow[r, bend left, "F"] \arrow[r, phantom, "\perp"] & \cN \arrow[l, bend left, "G"]
\end{tikzcd}
\end{equation}
Then the following are equivalent, defining a \textbf{Quillen adjunction}.
\begin{enumerate}
\item\label{itm:quillen-adj-i} The left adjoint $F$ is left Quillen.
\item\label{itm:quillen-adj-ii} The right $G$ is right Quillen.
\item\label{itm:quillen-adj-iii} The left adjoint preserves cofibrations and the right adjoint preserves fibrations.
\item\label{itm:quillen-adj-iv} The left adjoint preserves trivial cofibrations and the right adjoint preserves trivial fibrations.
\end{enumerate}
\end{defn}

\begin{exc} Justify the equivalence of Definition \ref{defn:quillen-adj}\eqref{itm:quillen-adj-i}--\eqref{itm:quillen-adj-iv} by proving:
\begin{enumerate}
\item In the presence of any adjunction \eqref{eq:quillen-adj} the lifting problem displayed below left in $\cN$ admits a solution if and only if the transposed lifting problem displayed below right admits a solution in $\cM$.
\[
\begin{tikzcd}
FA \arrow[d, "F\ell"'] \arrow[r, "f^\sharp"] & X \arrow[d, "r"] &  & A \arrow[r, "f^\flat"] \arrow[d, "\ell"'] & GX \arrow[d, "Gr"] \\ FB \arrow[r, "g^\sharp"'] \arrow[ur, dashed, "k^\sharp"] & Y & & B \arrow[ur, dashed, "k^\flat"] \arrow[r, "g^\flat"'] & GY
\end{tikzcd}
\]
\item Conclude that if $\cM$ has a weak factorization system $(\sL,\sR)$ while $\cN$ has a weak factorization system $(\sL',\sR')$ then $F$ preserves the left classes if and only if $G$ preserves the right classes.
\end{enumerate}
\end{exc}

Importantly, the total left and right derived functors of a Quillen pair form an adjunction between the appropriate homotopy categories. 

\begin{thm}[Quillen {\cite[I.3]{quillen}}]\label{thm:derived-adj} If 
\[
\begin{tikzcd}
\cM \arrow[r, bend left, "F"] \arrow[r, phantom, "\perp"] & \cN \arrow[l, bend left, "G"]
\end{tikzcd}
\]
 is a Quillen adjunction, then the total derived functors form an adjunction
 \[
\begin{tikzcd}
\Ho\cM \arrow[r, bend left, "\bL F"] \arrow[r, phantom, "\perp"] & \Ho\cN \arrow[l, bend left, "\bR G"]
\end{tikzcd}
\]
\end{thm}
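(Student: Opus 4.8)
The plan is to realize $\bL F \dashv \bR G$ as a natural isomorphism of hom-sets in the homotopy categories, obtained by descending the point-set adjunction isomorphism $\cN(F-,-) \cong \cM(-,G-)$ to homotopy classes of maps between cofibrant and fibrant objects. By Corollary \ref{cor:quillen-derived} we may take $\bL F$ to be the functor $\Ho\cM \to \Ho\cN$ induced by $FQ$ and $\bR G$ the one induced by $GR$, for chosen cofibrant and fibrant replacement functors $Q, R$; since $F$ is left Quillen and $G$ is right Quillen, $FQ$ takes values among the cofibrant objects and $GR$ among the fibrant ones, and Ken Brown's Lemma \ref{lem:kb} shows both are homotopical. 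As $q\colon Q\To 1$ and $r\colon 1 \To R$ are natural weak equivalences, $\bL F A$ is naturally isomorphic in $\Ho\cN$ to the cofibrant object $FQA$ and $\bR G B$ is naturally isomorphic in $\Ho\cM$ to the fibrant object $GRB$. Finally, I will use the standard identification --- a consequence of Theorem \ref{thm:equivalent-ho} and the comparison of $\Ho\cM$ with the category of homotopy classes of maps between fibrant-cofibrant objects, cf.\ \cite{DS} --- that in any model category, for $X$ cofibrant and $Y$ fibrant the localization functor restricts to a bijection from the set $[X,Y]$ of homotopy classes of maps $X \to Y$ (a well-defined equivalence relation by Propositions \ref{prop:htpy} and \ref{prop:htpy-comp}) onto the corresponding hom-set of the homotopy category.

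The heart of the argument is to show that for $X$ cofibrant in $\cM$ and $Y$ fibrant in $\cN$ the adjunction bijection $\cN(FX,Y)\cong \cM(X,GY)$ restricts to a bijection $[FX,Y] \cong [X,GY]$. Here $FX$ is cofibrant and $GY$ is fibrant, so homotopy is a sensible equivalence relation on either side. Given a left homotopy $H\colon \fun{cyl}(X) \to GY$ from $f^\flat$ to $g^\flat$, note that $\fun{cyl}(X)$ and $X$ are cofibrant, so $F$ carries the weak equivalence $\fun{cyl}(X) \wto X$ to a weak equivalence by Lemma \ref{lem:kb}; together with the canonical map $FX \coprod FX \to F\fun{cyl}(X)$ this exhibits $F\fun{cyl}(X)$ as a (good) cylinder object for $FX$. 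Transposing $H$ across the adjunction, and using naturality of the transpose to see that the result restricts to $f$ and $g$ along the two endpoint inclusions, produces a left homotopy $f \sim g$ (promoting $F\fun{cyl}(X)$ to a genuine cylinder object if one insists, which is harmless since $Y$ is fibrant). Dually, a right homotopy $K\colon FX \to \fun{path}(Y)$ between $f$ and $g$ transposes --- using that $Y$ and $\fun{path}(Y)$ are fibrant and the dual half of Lemma \ref{lem:kb}, so that $G\fun{path}(Y)$ is a path object for $GY$ --- to a right homotopy $f^\flat \sim g^\flat$. Thus the transpose preserves the homotopy relation in both directions, and the displayed bijection follows.

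Assembling the pieces, for arbitrary $A$ in $\cM$ and $B$ in $\cN$ we obtain bijections
\[ \Ho\cN(\bL F A, B) \cong [FQA, RB] \cong [QA, GRB] \cong \Ho\cM(A, \bR G B), \]
where the outer two use the hom-set identification together with the isomorphisms $\bL F A \cong FQA$, $B \cong RB$ and $A \cong QA$, $\bR G B \cong GRB$ in the homotopy categories, and the middle one is the restricted adjunction bijection applied with $X = QA$ and $Y = RB$. It remains to verify that this composite is natural in $A$ and in $B$; this is a routine diagram chase using naturality of the point-set transpose and of the weak equivalences $q$ and $r$. A natural isomorphism of this shape is exactly the data of an adjunction $\bL F \dashv \bR G$. (Equivalently, one could follow Quillen's original route and assemble the derived unit and counit from the natural zig-zags obtained by applying $F$, $G$, $Q$, $R$ to the unit and counit of $F \dashv G$, then check the triangle identities in the homotopy categories.)

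I expect the main obstacle to be the second step: verifying that the point-set adjunction transpose carries homotopic maps to homotopic maps. The essential input is the observation that a left Quillen functor sends a cylinder object of a cofibrant object to a cylinder object, and a right Quillen functor sends a path object of a fibrant object to a path object --- each following from the relevant half of Ken Brown's lemma. Everything else --- the reduction to cofibrant sources and fibrant targets, and the naturality bookkeeping in the final assembly --- is straightforward.
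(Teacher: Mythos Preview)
Your argument is correct and complete; the key point---that the adjunction transpose respects homotopy because $F$ carries cylinders of cofibrant objects to (good) cylinders and $G$ carries path objects of fibrant objects to (good) path objects---is handled carefully via Ken Brown's lemma, and your acknowledgment that $F\fun{cyl}(X)$ is only a ``good'' rather than ``very good'' cylinder, harmless since $Y$ is fibrant, shows you have not overlooked the one subtlety.

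However, your route differs from the one the paper takes. The paper does not give a direct proof at all but defers to Exercise~\ref{exc:derivedadj}, which asks the reader to deduce $\bL F \dashv \bR G$ purely from the fact that the total derived functors are \emph{absolute} Kan extensions, following Maltsiniotis. That argument never touches cylinders, path objects, or homotopy classes: one simply observes that an absolute right Kan extension is left adjoint to an absolute left Kan extension whenever the point-set functors are adjoint, by a formal 2-categorical manipulation. What your concrete approach buys is an explicit description of the derived hom-set bijection---useful if one later wants to compute with it---and it stays close to Quillen's original argument. What the paper's approach buys is greater generality (it applies to any pair of adjoint deformable functors, not just Quillen pairs) and a proof that is entirely independent of the specifics of model-categorical homotopy; it also explains \emph{why} the paper insisted on the stronger ``absolute'' condition in Definition~\ref{defn:derived-functor}.
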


A particularly elegant proof of Theorem \ref{thm:derived-adj} is due to Maltsiniotis. Once the strategy is known, the details are elementary enough to be left as an exercise:

\begin{exc}[{\cite{maltsiniotis}}]\label{exc:derivedadj} Use the fact that the total derived functors of a Quillen pair  $F \dashv G$ define \emph{absolute} Kan extensions to prove that ${\bL}F \dashv {\bR}G$. Conclude that Theorem \ref{thm:derived-adj} applies more generally to any pair of adjoint functors that are deformable in the sense of Definition \ref{defn:funleftdef} \cite[44.2]{DHKS}. 
\end{exc}

A double categorical theorem of Shulman \cite{shulman-comparing} consolidates the adjointness of the total derived functors of a Quillen adjunction, the pseudo-functoriality of the construction of total derived functors of Quillen functors, and one further result about functors that are simultaneously left and right Quillen into a single statement. A \textbf{double category} is a category internal to $\cat{Cat}$: it has a set of objects, a category of horizontal morphisms, a category of vertical morphisms, and a set of squares that are composable in both vertical and horizontal directions, defining the arrows in a pair of categories with the horizontal and vertical morphisms as objects, respectively \cite{kelly-street}.

For instance, $\mathbb{C}\cat{at}$ is the double category of categories, functors, functors, and natural transformations inhabiting squares and pointing southwest. There is another double category $\mathbb{M}\cat{odel}$ whose objects are model categories, whose vertical morphisms are left Quillen functors, whose horizontal morphisms are right Quillen functors, and whose squares are natural transformations pointing southwest. The following theorem and a generalization, with deformable functors in place of Quillen functors \cite[8.10]{shulman-comparing}, is due to Shulman.

\begin{thm}[{\cite[7.6]{shulman-comparing}}]\label{thm:double-ho}
The map that sends  a model category to its homotopy category and a left or right Quillen functor to its total left or right derived functor defines a double pseudofunctor $\Ho\colon \mathbb{M}\cat{odel} \to \mathbb{C}\cat{at}$.
\end{thm}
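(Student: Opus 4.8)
The plan is to verify the data and coherence of a double pseudofunctor in turn, leaning throughout on the fact established in Theorem \ref{thm:leftderived} and Corollary \ref{cor:quillen-derived} that the total derived functor of a Quillen functor is computed by precomposing with a (co)fibrant replacement and is characterized by an \emph{absolute} Kan extension property. On objects, $\Ho$ sends a model category $\cM$ to $\Ho\cM$; on the vertical morphisms (left Quillen functors $F$) and horizontal morphisms (right Quillen functors $G$) it sends $F\mapsto\bL F$ and $G\mapsto\bR G$, which are legitimate assignments by Corollary \ref{cor:quillen-derived}.

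First I would treat pseudofunctoriality in each direction separately. Given composable left Quillen functors $\cM\xrightarrow{F}\cN\xrightarrow{F'}\cP$, the composite $F'F$ is again left Quillen, and writing $Q$ for a cofibrant replacement the functors $\bL F'\cdot\bL F$ and $\bL(F'F)$ are represented at $X$ by $F'QFQX$ and $F'FQX$; the map induced by the replacement weak equivalence $QFQX\wto FQX$ is sent to an isomorphism in $\Ho\cP$ because $FQX$ is cofibrant ($F$ being left Quillen), so that $QFQX\wto FQX$ is a weak equivalence between cofibrant objects and hence is carried by $F'$ to a weak equivalence by Ken Brown's Lemma \ref{lem:kb}. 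Dually, fibrant replacement and Ken Brown's lemma give $\bR G'\cdot\bR G\cong\bR(G'G)$. The unitors $\bL\id_\cM\cong\id_{\Ho\cM}$ and $\bR\id_\cM\cong\id_{\Ho\cM}$ come from the fact that $\LL\id_\cM=Q$ (resp. $\RR\id_\cM=R$) is connected to the identity by a natural weak equivalence and hence induces the identity after localizing. Since every functor in sight factors through the localizations, the associativity and unit coherences for these comparisons follow from the uniqueness clause in the universal property of the localization functors (Proposition \ref{prop:fractions-UP}).

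Next, the action on squares. A square in $\mathbb{M}\cat{odel}$ with top $U\colon\cA\to\cB$ and bottom $V\colon\cC\to\cD$ right Quillen, left $P\colon\cA\to\cC$ and right $Q\colon\cB\to\cD$ left Quillen, is a natural transformation $\theta\colon QU\To VP$, and I must produce $\Ho\theta\colon\bL Q\cdot\bR U\To\bR V\cdot\bL P$. Here I would argue via the universal properties: $\bR U$ is an absolute left Kan extension of $\delta_\cB U$ along $\gamma_\cA$, so whiskering by $\bL Q$ exhibits $\bL Q\cdot\bR U$ as the left Kan extension of $\bL Q\delta_\cB U$ along $\gamma_\cA$, while dually $\bR V\cdot\bL P$ is the right Kan extension of $\bR V\delta_\cC P$ along $\gamma_\cA$; it therefore suffices to produce a suitable natural transformation between the diagrams $\bL Q\delta_\cB U$ and $\bR V\delta_\cC P$ over $\cA$. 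Restricting attention to cofibrant–fibrant objects (legitimate since $\Ho\cM\simeq\h\cM_{\mathrm{cf}}$), on which $U$ lands in fibrant objects and $P$ in cofibrant objects, one checks with some care — again via repeated use of Ken Brown's Lemma \ref{lem:kb} — that the (co)fibrant replacements implicit in the two composites become invisible in the homotopy category, so that $\theta$ descends to the required $\Ho\theta$. Functoriality of $\theta\mapsto\Ho\theta$ under horizontal and vertical pasting, and the interchange coherences, then reduce to equalities of natural transformations each characterized by one of the universal properties above, so hold by uniqueness.

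The main obstacle is precisely this square construction: unlike the one-sided composites, the ``diagonal'' composite $QU$ of a right Quillen functor followed by a left Quillen functor need not be homotopical on any single (co)fibrantly replaced subcategory, so making precise the claim that ``$\theta$ descends'' requires the flexible deformation calculus of Dwyer--Hirschhorn--Kan--Smith \cite{DHKS} (as exposed in \cite{shulman-homotopy}): one must exhibit a common deformation adapted to both sides and verify that the resulting comparison is independent of choices up to the coherence isomorphisms already in hand. This is exactly the content of Shulman's theorem, so an expedient alternative is to cite \cite[7.6]{shulman-comparing} directly; the special case asserting $\bL F\dashv\bR G$ for a Quillen adjunction (Theorem \ref{thm:derived-adj} and Exercise \ref{exc:derivedadj}) is the essential phenomenon in miniature and a good warm-up for the bookkeeping.
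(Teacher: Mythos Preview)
Your treatment of the one-sided pseudofunctoriality is exactly what the paper does: it writes down the comparison map $\LL G\circ\LL F = GQFQ \xrightarrow{G\epsilon_{FQ}} GFQ = \LL(GF)$, observes that $FQ$ lands in cofibrant objects so that $\epsilon_{FQ}$ is a weak equivalence between cofibrant objects, and invokes Ken Brown's Lemma~\ref{lem:kb} to conclude that $G\epsilon_{FQ}$ is a natural weak equivalence, hence an isomorphism after localizing. The paper also notes the coherence for composable triples and then stops.

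Where you differ is that you go further than the paper does. The paper does not attempt to construct $\Ho\theta$ on squares or to verify the double-categorical coherences; it simply records the theorem with attribution to \cite[7.6]{shulman-comparing}, explains the ``essential content'' via the vertical composition argument above, and then remarks that Quillen adjunctions appear as conjoint pairs (recovering Theorem~\ref{thm:derived-adj}) and simultaneous left/right Quillen functors as companion pairs. Your sketch of the square construction via absolute Kan extensions is a reasonable line of attack, and you correctly locate the genuine obstacle --- the mixed composite of a right Quillen followed by a left Quillen functor has no single deformation that works --- but this is precisely the part neither you nor the paper proves; both defer to Shulman. So your proposal is sound and, on the portion the paper actually argues, takes the same route.
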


The essential content of the pseudo-functoriality statement is that the composite of the left derived functors of a pair of left Quillen functors is coherently naturally weakly equivalent to the left derived functor of their composite. 
Explicitly, given a composable pair of left Quillen functors $\begin{tikzcd}
\cM \arrow[r, "F"] & \cL \arrow[r, "G"] & \cK
\end{tikzcd}$, the map
\[ \LL G \circ \LL F := GQ \circ FQ \xrightarrow{G \epsilon_{FQ}} GFQ =: \LL (GF),\] defines a comparison natural transformation. Since $Q \colon \cM \to \cM_{\mathrm{c}}$ lands in the subcategory of cofibrant objects and $F$ preserves cofibrant objects, $\epsilon_{FQ} \colon QFQ \To FQ$ is a weak equivalence between cofibrant objects. Lemma \ref{lem:kb}\eqref{itm:kb-ii} then implies that $G\epsilon_{FQ} \colon GQFQ \to GFQ$ defines a natural weak equivalence $\LL{G} \circ \LL{F} \to \LL{GF}$. Given a composable triple of left Quillen functors, there is a commutative square of natural weak equivalences $\LL{H} \circ \LL{G}\circ \LL{F} \to \LL(H \circ G \circ F)$. If we compose with the Gabriel-Zisman localizations to pass to homotopy categories and total left derived functors these coherent natural weak equivalences become coherent natural isomorphisms, defining the claimed pseudofunctor. 

Quillen adjunctions are encoded in the double category $\mathbb{M}\cat{odel}$ as ``conjoint'' relationships between vertical and horizontal 1-cells; in this way Theorem \ref{thm:double-ho} subsumes Theorem \ref{thm:derived-adj}. Similarly, functors that are simultaneously left and right Quillen are presented as vertical and horizontal ``companion'' pairs. The double pseudofunctoriality of Theorem \ref{thm:double-ho} contains a further result:  if a functor is both left and right Quillen, then its total left and right derived functors are isomorphic.

\subsection{Monoidal and enriched model categories}\label{ssec:monoidal-model}

If $\cM$ has a model structure and a monoidal structure it is natural to ask that these be compatible in some way, but what sort of compatibility should be required? In the most common examples, the monoidal product is \emph{closed} --- that is, the functors $A \otimes -$ and $- \otimes A$ admit right adjoints\footnote{Very frequently a monoidal structure is symmetric, in which case these functors are naturally isomorphic, and a single right adjoint suffices.}  and consequently preserve colimits in each variable separately. This situation is summarized and generalized by the notion of a two-variable adjunction, which we introducing using notation that will suggest the most common examples.

\begin{defn}\label{defn:two-var}
A triple of bifunctors 
\[ \cK \times \cL \xrightarrow{\otimes} \cM\rlap{{\,},} \quad \cK^\op \times \cM \xrightarrow{\{,\}} \cL\rlap{{\,},} \quad \cL^\op \times \cM \xrightarrow{\Map}\cK\] equipped with a natural isomorphism \[  \cM(K \otimes L,M) \cong \cL(L,\{K,M\})\cong \cK(K,\Map(L,M)) \] defines a \textbf{two-variable adjunction}. 
\end{defn}

\begin{ex} A symmetric monoidal category is \textbf{closed} just when its monoidal product $- \otimes - \colon \cV \times \cV \to \cV$ defines the left adjoint of a two-variable adjunction
 \[  \cV(A \otimes B, C) \cong \cV(B, \Map(A,C)), \cV(A,\Map(B,C)),\]
 the right adjoint $\Map \colon\cV^\op \times \cV \to \cV$ defining an \textbf{internal hom}.
\end{ex}

\begin{ex}\label{ex:tensored-cotensored} A category $\cM$ that is enriched over a monoidal category is \textbf{tensored} and \textbf{cotensored} just when the enriched hom functor $\Map \colon \cM^\op \times \cM \to \cV$ is one of the right adjoints of a two-variable adjunction
\[ \cM(V \otimes M, N) \cong \cM(M, \{V,N\}) \cong \cV(V, \Map(M,N)),\]
the other two adjoints defining the \textbf{tensor} $V \otimes M$ and \textbf{cotensor} $\{V,N\}$
of an object $V \in \cV$ with objects $M,N \in \cM$.\footnote{As stated this definition is a little too weak: one needs to ask in addition that (i) the tensors are associative relative to the monoidal product in $\cV$, (ii) dually that the cotensors are associative relative to the monoidal product in $\cV$, and (iii) that the two-variable adjunction is enriched in $\cV$. Any of these three conditions implies the other two.}
\end{ex}

A Quillen two-variable adjunction is a two-variable adjunction in which the left adjoint is a left Quillen bifunctor while the right adjoints are both right Quillen bifunctors, any one of these conditions implying the other two. To state these definitions, we must introduce the following construction. The ``pushout-product'' of a bifunctor  $-\otimes- \colon \cK \times \cL \to \cM$
defines a bifunctor $-\leib\otimes-\colon \cK^\2 \times \cL^\2 \to \cM^\2$ that we refer to as the ``Leibniz tensor'' (when the bifunctor $\otimes$ is called a ``tensor''). The ``Leibniz cotensor''  and ``Leibniz hom'' \[\widehat{\{-,-\}}\colon (\cK^\2)^\op \times \cM^\2 \to \cL^\2 \qquad \mathrm{and} \qquad \widehat{\Map}(-,-) \colon (\cL^\2)^\op \times \cM^\2 \to \cK^\2\]
are defined dually, using pullbacks in $\cL$ and $\cK$ respectively. 

\begin{defn}[the Leibniz construction] Given a bifunctor $-\otimes- \colon \cK \times \cL \to \cM$ valued in a category with pushouts, the \textbf{Leibniz tensor} of a map $k \colon I \to J$ in $\cK$ and a map $\ell \colon A \to B$ in $\cL$ is the map $k \leib\otimes \ell$ in $\cM$ induced by the pushout diagram below-left:
\[
\begin{tikzcd} I \otimes A \arrow[r, "I \otimes \ell"] \arrow[d, "k \otimes A"'] \arrow[dr, phantom,  "\ulcorner"  near end] & I \otimes B \arrow[d] \arrow[ddr, bend left, "k \otimes B"] & & {\{ J, X\}} \arrow[drr, bend left, "{\{k,X\}}"] \arrow[ddr, bend right, "{\{J, m\}}"'] \arrow[dr, dashed, "{\widehat{\{ k,m\}}}" description]  \\ J \otimes A \arrow[r] \arrow[drr, bend right, "J \otimes \ell"'] & \bullet \arrow[dr, dashed, "k \leib\otimes \ell" description] & & & \bullet \arrow[r] \arrow[d] \arrow[dr, phantom, "\lrcorner" near start] & {\{ I, X\}} \arrow[d, "{\{I,m\}}"] \\ & & J \otimes B & & {\{ J, Y\}} \arrow[r, "{\{k,Y\}}"'] & {\{ I,Y\}}
\end{tikzcd}
\] In the case of a bifunctor $\{-,-\} \colon \cK^\op \times \cM \to \cL$ contravariant in one of its variables valued in a category with pullbacks, the \textbf{Leibniz cotensor} of a map $k\colon I \to J$ in $\cK$ and a map $m \colon X \to Y$ in $\cM$ is the map $\widehat{\{ k,m\}}$ induced by the pullback diagram above right.
\end{defn}

\begin{prop}\label{prop:leibniz-facts} The Leibniz construction preserves: 
\begin{enumerate}
\item\label{itm:leib-iso} structural isomorphisms: a natural isomorphism
\[ X \ast (Y \otimes Z) \cong (X \times Y) \square Z\] between suitably composable bifunctors extends to a natural isomorphism
\[ f \leib\ast (g \leib\otimes h) \cong (f \leib\times g) \leib\square h\] between the corresponding Leibniz products;
\item\label{itm:leib-adj} adjointness: if $(\otimes, \{,\},\Map)$ define a two-variable adjunction, then the Leibniz bifunctors $(\leib\otimes, \widehat{\{,\}}, \widehat{\Map})$ define a two-variable adjunction between the corresponding arrow categories;
%\item\label{itm:leib-lifting} lifting properties (a two-variable adjunction induces a two-variable adjunction on arrow categories),
\item\label{itm:leib-cocont} colimits in the arrow category: if $\otimes \colon \cK \times \cL \to\cM$ is cocontinuous in either variable, then so is $\leib\otimes \colon \cK^\2 \times \cL^\2 \to \cM^\2$;
\item\label{itm:leib-pushout} pushouts: if $\otimes \colon \cK \times \cL \to\cM$ is cocontinuous in its second variable, and if $g'$ is a pushout of $g$, then $f \leib\otimes g'$ is a pushout of $f \leib\otimes g$;
\item\label{itm:leib-comp} composition, in a sense: the Leibniz tensor $f \leib\otimes (h \cdot g)$ factors as a composite of a pushout of $f \leib\otimes g$ followed by $f \leib\otimes h$
\[
\begin{tikzcd}
I \otimes A \arrow[d, "f \otimes A"'] \arrow[r, "I \otimes g"] \arrow[dr, phantom, "\ulcorner" very near end] & I \otimes B \arrow[r, "I \otimes h"] \arrow[d] \arrow[dr, phantom, "\ulcorner" very near end] & I \otimes C \arrow[d] \arrow[dddrr, bend left, "f \otimes C"] \\
J \otimes A \arrow[r] \arrow[drr, "J \otimes g"'] & \bullet \arrow[dr, "f \leib\otimes g" description] \arrow[r] \arrow[drr, phantom, "\ulcorner" very near end] & \bullet \arrow[dr] \arrow[ddrr, bend left, "f \leib\otimes (h \cdot g)" description] \\  & & J \otimes B \arrow[drr,  "J \otimes h"'] \arrow[r]  & \bullet \arrow[dr, "f \leib\otimes h" description] \\ & &  & & J \otimes C
\end{tikzcd}
\]
\item\label{itm:leib-cell} cell complex structures: if $f$ and $g$ may be presented as cell complexes with cells $f_\alpha$ and $g_\beta$, respectively, and if $\otimes$ is cocontinuous in both variables, then $f \leib\otimes g$ may be presented as a cell complex with cells $f_\alpha \leib\otimes g_\beta$.
\end{enumerate}
\end{prop}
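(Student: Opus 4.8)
The plan is to note that the whole proposition is a package of facts about (co)limits, routed through the defining feature of the construction: $f \leib\otimes g$ is the canonical map from the pushout $(\cod f \otimes \dom g) \cup_{\dom f \otimes \dom g} (\dom f \otimes \cod g)$ to $\cod f \otimes \cod g$, and dually $\widehat{\{k,m\}}$ and $\widehat{\Map}(\ell,m)$ are maps into pullbacks, while colimits in any arrow category $\cM^\2$ are computed pointwise by $\dom$ and $\cod$. With this dictionary in hand, \eqref{itm:leib-cocont}, \eqref{itm:leib-pushout}, and \eqref{itm:leib-comp} are bookkeeping with pushouts, \eqref{itm:leib-adj} is a transposition argument, and \eqref{itm:leib-iso} and \eqref{itm:leib-cell} are the two that need genuine care.

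For \eqref{itm:leib-cocont} I would fix $\ell \in \cL^\2$ and express $k \leib\otimes \ell$ via $\dom k$, $\cod k$, $\dom\ell$, $\cod\ell$; if $\otimes$ is cocontinuous in its first variable then $\cod k \otimes \cod\ell$ is cocontinuous in $k$ (using that $\cod \colon \cK^\2 \to \cK$ is cocontinuous), and likewise each corner of the pushout computing $\dom(k \leib\otimes \ell)$, so the pushout is too; hence $- \leib\otimes \ell$ is cocontinuous, and symmetrically in the other variable. Then \eqref{itm:leib-pushout} follows from \eqref{itm:leib-cocont}: the hypothesis that $g'$ is a pushout of $g$ exhibits $g'$ as a pushout in $\cL^\2$ (of the span $g \leftarrow \id_{\dom g} \to \id_{\dom g'}$, computed pointwise), which the cocontinuous functor $f \leib\otimes -$ carries to a pushout exhibiting $f \leib\otimes g'$ as a pushout of $f \leib\otimes g$. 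Finally \eqref{itm:leib-comp} is precisely the pasting lemma for pushouts read off the displayed diagram: the left square is the pushout defining $f \leib\otimes g$, the adjacent square is arranged to be the pushout defining $f \leib\otimes(h\cdot g)$, and the induced factoring square is then a pushout, identifying the intermediate object with $\dom(f \leib\otimes h)$; no cocontinuity hypothesis is needed here.

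For \eqref{itm:leib-adj} I would argue by transposition. Unwinding $\cM^\2(k \leib\otimes \ell, m)$: such a morphism is a commutative square whose left leg is the pushout $(\cod k \otimes \dom\ell) \cup_{\dom k \otimes \dom\ell}(\dom k \otimes \cod\ell)$, which amounts to a compatible triple of maps $\cod k \otimes \cod\ell \to \cod m$, $\cod k \otimes \dom\ell \to \dom m$, $\dom k \otimes \cod\ell \to \dom m$ in $\cM$ subject to three commutativities. Transposing each across the isomorphism $\cM(K \otimes L, M) \cong \cL(L, \{K,M\})$ of the given two-variable adjunction converts this into a compatible triple of maps in $\cL$, which repackages exactly as a commutative square $\ell \to \widehat{\{k,m\}}$, i.e.\ an element of $\cL^\2(\ell, \widehat{\{k,m\}})$; naturality in all variables is then routine, and using $\cM(K \otimes L, M) \cong \cK(K, \Map(L,M))$ instead gives the identification with $\cK^\2(k, \widehat{\Map}(\ell,m))$.

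The main obstacle is \eqref{itm:leib-iso}, and, resting on it and the earlier parts, \eqref{itm:leib-cell}. The key is to recognize the $n$-ary Leibniz construction of an $n$-variable functor as the map from the colimit of its restriction to the punctured $n$-cube (the poset $\2^n$ with its top element removed) to the value at the top, and to see that such a colimit can be formed one block of coordinates at a time — a Fubini-type interchange — provided the bifunctors are cocontinuous in the relevant variables, as they are in the monoidal and enriched applications. Then both $f \leib\ast(g \leib\otimes h)$ and $(f \leib\times g) \leib\square h$ are, by this interchange, identified with the ternary Leibniz construction of $X \ast (Y \otimes Z)$ and of $(X \times Y) \square Z$, and the hypothesized natural isomorphism of these trifunctors induces the desired isomorphism of Leibniz products. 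The delicate point is precisely the Fubini step: a punctured $(m+n)$-cube colimit is not the naive iteration of punctured-cube colimits, so one must verify the decomposition (the $\{0\}$-block contributing a full sub-cube, whose colimit is its own top vertex, and the $\{1\}$-block a punctured sub-cube) before the reassociation is valid. Granting \eqref{itm:leib-iso}, part \eqref{itm:leib-cell} is an induction over the cell presentation of $f$ with $g$ fixed: by \eqref{itm:leib-cocont} the functor $- \leib\otimes g$ preserves coproducts and transfinite composites, by \eqref{itm:leib-pushout} it sends the pushouts in a cell presentation to pushouts, so $f \leib\otimes g$ is an $\{f_\alpha \leib\otimes g\}$-cell complex; repeating in the $g$-variable and using that a cell complex whose cells are themselves cell complexes is again one produces the presentation with cells $f_\alpha \leib\otimes g_\beta$.
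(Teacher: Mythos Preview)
The paper does not prove this proposition itself; immediately after the statement it defers entirely to \cite[\S\S 4--5]{RV0} for the arguments. Your sketch is correct and follows essentially the line taken there: parts \eqref{itm:leib-adj} through \eqref{itm:leib-comp} are routine bookkeeping with pushouts and transposition as you describe, \eqref{itm:leib-cell} follows from the earlier parts by the induction you outline, and the punctured-cube description underlying your treatment of \eqref{itm:leib-iso} is the standard one. You are also right to flag that \eqref{itm:leib-iso} requires the outer bifunctors to preserve the relevant pushouts --- a hypothesis the survey's statement suppresses but which appears in \cite{RV0} and holds in all the intended applications.
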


Proofs of these assertions and considerably more details are given in \cite[\S\S 4-5]{RV0}.

\begin{exc}\label{exc:leibniz-lifting} 
Given a two variable adjunction as in Definition \ref{defn:two-var} and classes of maps $\sA, \sB, \sC$  in $\cK, \cL, \cM$, respectively, prove that the following lifting properties are equivalent \[ \sA\hat{\otimes}\sB \boxslash \sC \quad \Leftrightarrow \quad \sB \boxslash \widehat{\{\sA,\sC\}} \quad \Leftrightarrow\quad \sA \boxslash \widehat{\Map}(\sB,\sC).\] 
Here $\sA\hat{\otimes}\sB \boxslash \sC$, for instance, asserts that maps in $\sC$ have the right lifting property with respect to each map in $\sA\hat{\otimes}\sB$.
\end{exc}

Exercise \ref{exc:leibniz-lifting} explains the equivalence between the following three equivalent definitions of a Quillen two-variable adjunction.

\begin{defn}\label{defn:quillen-two-var} A two-variable adjunction 
\[
\cV \times \cM \xrightarrow{\otimes} \cN, \quad \cV^\op \times \cN \xrightarrow{\{-,-\}} \cM, \quad \cM^\op \times \cN \xrightarrow{\Map} \cV\]
between model categories $\cV$, $\cM$, and $\cN$ defines a \textbf{Quillen two-variable adjunction} if any, and hence all, of the following equivalent conditions are satisfied:
\begin{enumerate}
\item The functor $\hat{\otimes} \colon \cV^\2 \times \cM^\2 \to \cN^\2$ carries any pair comprised of a cofibration in $\cV$ and a cofibration in $\cM$ to a cofibration in $\cN$ and furthermore this cofibration is a weak equivalence if either of the domain maps are.
\item The functor $\widehat{\{-,-\}} \colon (\cV^\2)^\op \times \cN^\2 \to \cM^\2$ carries any pair comprised of a 
cofibration in $\cV$ and a fibration in $\cN$ to a fibration in $\cN$ and furthermore this fibration is a weak equivalence if either of the domain maps are.
\item The functor $\widehat{\Map} \colon (\cM^\2)^\op \times \cN^\2 \to \cV^\2$ carries any pair comprised of a 
cofibration in $\cM$ and a fibration in $\cN$ to a fibration in $\cV$ and furthermore this fibration is a weak equivalence if either of the domain maps are.
\end{enumerate}
\end{defn}

\begin{exc} Prove that if $-\otimes - \colon \cV \times \cM \to \cN$ is a left Quillen bifunctor and $V \in \cV$ is cofibrant then $\cV \otimes -\colon \cM \to\cN$ is a left Quillen functor.
\end{exc}

Quillen's axiomatization of the additional properties enjoyed by his model structure on the category of simplicial sets  has been generalized by Hovey \cite[\S 4.2]{hovey}.

\begin{defn}\label{defn:monmodelcat} A (\textbf{closed symmetric}) \textbf{monoidal model category} is a (closed symmetric) monoidal category $(\cV, \otimes, I)$ with a model structure so that the monoidal product and hom define a Quillen two-variable adjunction and furthermore so that the maps \begin{equation}\label{eq:monoidal-unit} QI \otimes v \to I \otimes v \cong v \qquad \mathrm{and} \qquad v \otimes QI \to v \otimes I \cong v\end{equation} are weak equivalences if $v$ is cofibrant.\footnote{If the monoidal product is symmetric then of course these two conditions are equivalent and if it is closed then they are also equivalent to a dual one involving the internal hom \cite[4.2.7]{hovey}.}
\end{defn}

Then

\begin{defn}\label{defn:Vmodelcat} If $\cV$ is a monoidal model category a $\cV$-\textbf{model category} is a model category $\cM$ that is tensored, cotensored, and $\cV$-enriched in such a way that $(\otimes,\{,\},\Map)$ is a Quillen two-variable adjunction and the maps \[ QI \otimes\, m \to I \otimes\, m \cong m \] are weak equivalences if $m$ is cofibrant.
\end{defn}

\begin{exc}\label{exc:unenriched-as-enriched} In a locally small category $\cM$ with products and coproducts the hom bifunctor is part of a two-variable adjunction:
\[ -\ast - \colon \cat{Set} \times \cM \to \cM, \quad \{-,-\} \colon \cat{Set}^\op \times \cM \to \cM, \quad \Hom \colon \cM^\op \times \cM \to \cat{Set}.\]
Equipping $\cat{Set}$ with the model structure whose weak equivalences are all maps, whose cofibrations are monomorphisms, and whose fibrations are epimorphisms, prove that
\begin{enumerate}
\item $\cat{Set}$ is a cartesian monoidal model category.
\item Any model category $\cM$ is a $\cat{Set}$-model category.
\end{enumerate}
\end{exc}

\begin{ex} Quillen's model structure of Theorem \ref{thm:sset-model} is a closed symmetric monoidal model category. The term \textbf{simplicial model category} refers to a model category enriched over this model structure.
\end{ex}

\begin{exc}\label{exc:kan-enriched-simp-model} Show that if $\cM$ is a simplicial model category then the full simplicial subcategory $\cM_{\mathrm{cf}}$ is Kan-complex enriched.
\end{exc}

The conditions \eqref{eq:monoidal-unit} on the cofibrant replacement of the monoidal unit are implied by the Quillen two-variable adjunction if the monoidal unit is cofibrant and  are necessary for the proof of  Theorem \ref{thm:hoVmonoidal}, which shows that the homotopy categories are again closed monoidal and enriched, respectively. 

\begin{thm}[{\cite[4.3.2,4]{hovey}}]\label{thm:hoVmonoidal} $\quad$
\begin{enumerate}
\item The homotopy category of a closed symmetric monoidal model category is a closed monoidal category with tensor and hom given by the derived adjunction
 \[ (\LL\otimes,\RR\Map,\RR\Map) \colon \Ho\cV \times \Ho\cV \to \Ho\cV \] and monoidal unit $QI$.
\item If $\cM$ is a $\cV$-model category, then $\Ho\cM$ is the underlying category of a $\Ho\cV$-enriched, tensored, and cotensored category  with enrichment given by the total derived two-variable adjunction \[ (\LL\otimes,\RR\{\},\RR\Map) \colon \Ho\cV \times \Ho\cM \to \Ho\cM. \]
\end{enumerate}
\end{thm}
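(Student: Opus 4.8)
The plan is to construct the three derived bifunctors, check that they assemble into a two-variable adjunction on the homotopy categories, and then transport the associativity and unit coherences from the point-set level; part (2) will follow by the same argument with $\cM$ in place of one copy of $\cV$. First I would observe that $-\otimes-$, $\{-,-\}$ and $\Map(-,-)$ are deformable. Applying the Leibniz characterization of a Quillen two-variable adjunction (Definition \ref{defn:quillen-two-var} via Exercise \ref{exc:leibniz-lifting}) to the maps out of an initial object and into a terminal object shows that $-\otimes-$ sends a pair of cofibrant objects to a cofibrant object, while $\{-,-\}$ and $\Map(-,-)$ send a cofibrant object in the first slot together with a fibrant object in the last slot to a fibrant object; and the two-variable form of Ken Brown's Lemma \ref{lem:kb} --- which follows because the Leibniz tensor of a trivial cofibration with a cofibration is a trivial cofibration, together with the dual facts --- shows these bifunctors are homotopical on the respective subcategories. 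Hence $Q\times Q$ is a left deformation for $-\otimes-\colon\cV\times\cM\to\cN$, while $(Q^\op,R)$ is a right deformation for each of $\{-,-\}\colon\cV^\op\times\cN\to\cM$ and $\Map(-,-)\colon\cM^\op\times\cN\to\cV$, so Theorem \ref{thm:leftderived} and its dual give derived bifunctors $\LL\otimes = Q(-)\otimes Q(-)$, $\RR\{-,-\} = \{Q(-),R(-)\}$ and $\RR\Map(-,-) = \Map(Q(-),R(-))$ whose total derived functors are absolute Kan extensions.

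\textbf{The derived two-variable adjunction.} Next I would show that $(\LL\otimes,\RR\{,\},\RR\Map)$ is a two-variable adjunction on homotopy categories. It suffices to produce the defining natural bijections on fibrant-cofibrant representatives, since for $K,M$ cofibrant and $N$ fibrant the objects $K\otimes M$, $\{K,N\}$, $\Map(M,N)$ are respectively cofibrant, fibrant, fibrant (by the previous paragraph), and then $\Ho\cN(\LL\otimes(K,M),N)$ is computed as the set $[K\otimes M,N]$ of homotopy classes, with the analogous identifications for the other two hom-sets (cf.\ \cite{DS}). For such $K,M,N$ the point-set bijections $\cN(K\otimes M,N)\cong\cM(M,\{K,N\})\cong\cV(K,\Map(M,N))$ descend to homotopy classes: $K\otimes-$ is left Quillen, so it carries $K\otimes\fun{cyl}(M)$ --- with endpoint inclusions $K\otimes i_0, K\otimes i_1$ --- to a cylinder object for $K\otimes M$ and hence preserves left homotopies, while $\{K,-\}$ is right Quillen and preserves right homotopies; since left and right homotopy agree between a cofibrant source and a fibrant target (Proposition \ref{prop:htpy}) and the point-set bijections are natural in the slot being homotoped, they must identify the homotopy relations and so pass to the quotients. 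Feeding cofibrant and fibrant replacements of arbitrary arguments into these bijections yields the derived two-variable adjunction; in particular $\RR\Map$ is a right adjoint of $\LL\otimes$ in each variable, which supplies the internal hom in part (1) and the $\Ho\cV$-enrichment in part (2). One could alternatively fix a cofibrant $K$, note $K\otimes-\dashv\{K,-\}$ is a Quillen adjunction, derive it by Theorem \ref{thm:derived-adj}/Exercise \ref{exc:derivedadj}, and glue over $K\in\Ho\cV$ --- but verifying naturality in $K$ reduces to the same point-set input.

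\textbf{Descending the monoidal coherence.} Next I would descend the monoidal structure. The point-set associator $(K\otimes L)\otimes M\cong K\otimes(L\otimes M)$ is a structural isomorphism, hence compatible with the Leibniz construction by Proposition \ref{prop:leibniz-facts}\eqref{itm:leib-iso}; restricting to cofibrant objects, inserting cofibrant replacements, and using the weak equivalences between cofibrant objects $Q(QK\otimes QL)\otimes QM\to(QK\otimes QL)\otimes QM$ --- the same comparison maps that govern the pseudofunctoriality of $\Ho$ in the discussion after Theorem \ref{thm:double-ho} --- produces a natural isomorphism $\LL\otimes(\LL\otimes(K,L),M)\cong\LL\otimes(K,\LL\otimes(L,M))$ in $\Ho\cV$. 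For the unit, hypothesis \eqref{eq:monoidal-unit} says precisely that $QI\otimes v\to I\otimes v\cong v$ is a weak equivalence whenever $v$ is cofibrant, so $\LL\otimes(QI,-)\cong\id_{\Ho\cV}$ and symmetrically on the right; thus $QI$ is a monoidal unit, with unitors obtained from the point-set ones composed with these weak equivalences. The pentagon and triangle identities then hold in $\Ho\cV$ because the derived associator and unitors are images under the localization $\cV\to\Ho\cV$ of coherent point-set data glued along coherent comparison weak equivalences --- exactly what is packaged by the pseudofunctoriality of $\Ho$ on Quillen functors (Theorem \ref{thm:double-ho}) applied to $-\otimes-$ and its iterates.

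\textbf{Part (2) and the main obstacle.} Part (2) is proved by the identical argument with $\cM$ in place of one copy of $\cV$: the parametrized two-variable adjunction of the second paragraph exhibits $\Ho\cM$ as $\Ho\cV$-tensored and cotensored with enriched hom $\RR\Map$; associativity of the $\cV$-tensor on $\cM$ and its compatibility with the enrichment again descend via Proposition \ref{prop:leibniz-facts}\eqref{itm:leib-iso}; and the unit axiom uses the corresponding condition in Definition \ref{defn:Vmodelcat}. The only additional bookkeeping is that the enriched composition law must be rewritten so that the replacements in the two hom-objects being composed agree (most cleanly, working with fibrant-cofibrant objects, where the correct hom-objects $\Map(X,Y)$ need no replacement in the relevant slots) before it can be derived. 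The main obstacle throughout is the content of the second paragraph: showing that the point-set two-variable adjunction survives localization, which amounts to controlling the interaction of tensor and cotensor with cylinder and path objects while carrying out the simultaneous $(Q,R)$-bookkeeping in three variables. A secondary difficulty is pinning down the coherence of the comparison weak equivalences in the third paragraph, which is conceptually automatic from Theorem \ref{thm:double-ho} but tedious to check by hand.
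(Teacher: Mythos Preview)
The paper does not supply its own proof of this theorem: it simply cites Hovey \cite[4.3.2,4]{hovey} and moves on to the corollary. So there is no in-paper argument to compare against.

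Your proposal is a correct outline of the standard proof, and indeed tracks the strategy in Hovey's book: construct the derived bifunctors via deformations arising from the Quillen two-variable adjunction and Ken Brown's lemma, show the point-set two-variable adjunction descends to homotopy classes by checking that one-variable Quillen functors preserve the homotopy relation, and then transport the coherence isomorphisms using that the comparison maps between iterated derived tensors are weak equivalences between cofibrant objects. Your invocation of Theorem \ref{thm:double-ho} to package the pentagon and triangle coherence is a clean way to organize what Hovey verifies more explicitly. The one place where you are slightly glib is in asserting that Proposition \ref{prop:leibniz-facts}\eqref{itm:leib-iso} handles the associator: that proposition concerns Leibniz bifunctors on arrow categories, whereas what you actually need is just the underlying natural isomorphism on objects together with the fact that $Q(QK\otimes QL)\to QK\otimes QL$ is a weak equivalence between cofibrants --- but you do say this in the next clause, so the argument goes through.
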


In particular:

\begin{cor} 
The homotopy category of spaces is cartesian closed. Moreover, if $\cM$ is a simplicial model category, then $\Ho\cM$ is enriched, tensored, and cotensored over the homotopy category of spaces.
\end{cor}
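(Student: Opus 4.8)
The plan is to read off both assertions from Theorem~\ref{thm:hoVmonoidal} applied to Quillen's model structure on $\cat{sSet}$. By the Example preceding this corollary, that model structure is a closed symmetric monoidal model category under the cartesian product; since every simplicial set is cofibrant, its monoidal unit $\Delta^0$ is cofibrant, so the maps \eqref{eq:monoidal-unit} may be taken to be identities and the hypotheses of Theorem~\ref{thm:hoVmonoidal}(1) hold. Thus $\Ho\cat{sSet}$ acquires a closed monoidal structure with tensor $\LL(-\times-)$, internal hom $\RR\Map$, and unit $QI\cong\Delta^0$.

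First I would check that this derived monoidal structure is \emph{cartesian}. Since all objects of $\cat{sSet}$ are cofibrant we may take cofibrant replacement to be the identity, and the product bifunctor is homotopical: a weak equivalence $X\wto X'$ realizes to a weak homotopy equivalence, and these are stable under product with a fixed space in a convenient category of spaces, so $X\times Y\wto X'\times Y$. Hence $\LL(-\times-)$ is the product functor descended along $\gamma\colon\cat{sSet}\to\Ho\cat{sSet}$, and in particular $\Delta^0$ is terminal in $\Ho\cat{sSet}$. That $\gamma(X\times Y)$ is the categorical product of $\gamma X$ and $\gamma Y$ follows by working in $\h\cat{sSet}_{\mathrm{cf}}$ via Theorem~\ref{thm:equivalent-ho}: a product of Kan complexes is a Kan complex, and for any Kan complex $Z$ the bijection $\Map(Z,X\times Y)\cong\Map(Z,X)\times\Map(Z,Y)$ of mapping spaces descends to a bijection $[Z,X\times Y]\cong[Z,X]\times[Z,Y]$ on path components. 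So $(\Ho\cat{sSet},\LL\times,\RR\Map)$ is cartesian closed.

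Next I would transport this along geometric realization. The Quillen equivalence $|-|\colon\cat{sSet}\rightleftarrows\cat{Top}$ promised at the end of \S\ref{ssec:simp-sets} and established in \S\ref{ssec:quillen-equiv} induces an equivalence $\Ho\cat{sSet}\simeq\Ho\cat{Top}$ with the homotopy category of spaces; since realization preserves finite products and the point, this equivalence respects the cartesian structures, and cartesian closedness is invariant under equivalence of categories. Hence the homotopy category of spaces is cartesian closed, its internal hom being the derived mapping space.

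Finally, for the ``moreover'' clause, a simplicial model category $\cM$ is by definition a $\cV$-model category in the sense of Definition~\ref{defn:Vmodelcat} with $\cV$ Quillen's $\cat{sSet}$, the unit condition being automatic because $\Delta^0$ is cofibrant. Hence Theorem~\ref{thm:hoVmonoidal}(2) makes $\Ho\cM$ enriched, tensored, and cotensored over $\Ho\cat{sSet}$ via the total derived two-variable adjunction $(\LL\otimes,\RR\{-,-\},\RR\Map)$, and transporting the enrichment along $\Ho\cat{sSet}\simeq\Ho\cat{Top}$ yields the enrichment over the homotopy category of spaces. I expect the only genuinely nonformal points to be the identification of the derived product with the categorical product in $\Ho\cat{sSet}$ and the compatibility of geometric realization with products; everything else is a direct appeal to Theorem~\ref{thm:hoVmonoidal}.
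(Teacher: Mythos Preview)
Your proposal is correct and follows the paper's intended approach: the corollary is stated immediately after Theorem~\ref{thm:hoVmonoidal} without proof, as a direct specialization to Quillen's $\cat{sSet}$ (a cartesian closed monoidal model category with cofibrant unit). The extra work you do---checking that the derived product really is the categorical product in $\Ho\cat{sSet}$ and transporting along the Quillen equivalence of Theorem~\ref{thm:geo-sing-equiv} to reach $\Ho\cat{Top}$---fills in details the paper leaves implicit but which are genuinely needed to justify the word ``cartesian'' and the phrase ``homotopy category of spaces.''
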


\subsection{Quillen equivalences between homotopy theories}\label{ssec:quillen-equiv}

Two model categories present equivalent homotopy theories if there exists a finite sequence of model categories and a zig-zag of Quillen equivalences between them, in a sense we now define. A Quillen adjunction defines a Quillen equivalence just when the derived adjunction of Theorem \ref{thm:derived-adj} defines an \textbf{adjoint equivalence}: an adjunction with invertible unit and counit. There are several equivalent characterizations of this situation.

\begin{defn}[{\cite[\S I.4]{quillen}}]\label{defn:quillen-equiv} A Quillen adjunction between a pair of model categories.
\[
\begin{tikzcd}
\cM \arrow[r, bend left, "F"] \arrow[r, phantom, "\perp"] & \cN \arrow[l, bend left, "G"]
\end{tikzcd}
\]
defines a \textbf{Quillen equivalence} if any, and hence all, of the following equivalent conditions is satisfied:
\begin{enumerate}
\item The total left derived functor $\bL{F} \colon \Ho\cM \to \Ho\cN$ defines an equivalence of categories.
\item The total right derived functor $\bR{G} \colon \Ho\cN \to \Ho\cM$ defines an equivalence of categories.
\item For every cofibrant object $A \in \cM$ and every fibrant object $X \in \cN$, a map $f^\sharp\colon FA \to X$ is a weak equivalence in $\cN$ if and only if its transpose $f^\flat \colon A \to GX$ is a weak equivalence in $\cN$.
\item For every cofibrant object $A \in \cM$, the composite $A \to GFA \to GRFA$ of the unit with fibrant replacement is a weak equivalence in $\cM$, and for every fibrant object $X \in \cN$, the composite $FQGX \to FGX \to X$ of the counit with cofibrant replacement is a weak equivalence in $\cN$.
\end{enumerate}
\end{defn}

Famously, the formalism of Quillen equivalences enables a proof that the homotopy theory of spaces is equivalent to the homotopy theory of simplicial sets.

\begin{thm}[{Quillen \cite[\S II.3]{quillen}}]\label{thm:geo-sing-equiv} The homotopy theory of simplicial sets is equivalent to the homotopy theory of topological spaces via the geometric realization $\dashv$ total singular complex adjunction
\[ 
\begin{tikzcd}
\cat{sSet} \arrow[r, bend left, "{|-|}"] \arrow[r, phantom, "\perp"] & \cat{Top} \arrow[l, bend left, "\mathrm{Sing}"]
\end{tikzcd}
\]
\end{thm}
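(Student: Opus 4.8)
The plan is to verify that the adjunction $|-| \dashv \mathrm{Sing}$ is first a Quillen adjunction and then a Quillen equivalence, using the characterizations of Definitions \ref{defn:quillen-adj} and \ref{defn:quillen-equiv}; the one substantive input is the classical theorem that the counit $\epsilon_X \colon |\mathrm{Sing}\,X| \to X$ is a weak homotopy equivalence for every space $X$.

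First I would check that $|-|$ is left Quillen. Recalling from Theorem \ref{thm:sset-model} that every simplicial set is cofibrant and that $|\emptyset| = \emptyset$, it suffices by Definition \ref{defn:quillen-adj}\eqref{itm:quillen-adj-i} to show that $|-|$ preserves cofibrations and trivial cofibrations. Every monomorphism of simplicial sets is a $\{\partial\Delta^n \hookrightarrow \Delta^n\}$-cell complex in the sense of Definition \ref{defn:cell-complex}, $|-|$ is cocontinuous, and the realization of $\partial\Delta^n \hookrightarrow \Delta^n$ is the generating cofibration $S^{n-1} \hookrightarrow D^n$ of $\cat{Top}$; so by Lemma \ref{lem:closure} the realization of any cofibration is a cofibration. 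Preservation of trivial cofibrations is then immediate from the defining property of the weak equivalences of $\cat{sSet}$ in Theorem \ref{thm:sset-model}: by construction $|-|$ preserves and reflects weak equivalences, so it carries a monomorphism that is a weak equivalence to a cofibration that is a weak homotopy equivalence. Hence the adjunction is Quillen and Theorem \ref{thm:derived-adj} supplies the derived adjunction $\bL\,|-| \dashv \bR\,\mathrm{Sing}$ between homotopy categories.

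To promote this to a Quillen equivalence I would verify the third of the equivalent conditions of Definition \ref{defn:quillen-equiv}. Let $A$ be a simplicial set (automatically cofibrant) and $X$ a space (automatically fibrant). Given $f^\flat \colon A \to \mathrm{Sing}\,X$ with transpose $f^\sharp \colon |A| \to X$, the defining property of the weak equivalences of $\cat{sSet}$ says that $f^\flat$ is a weak equivalence exactly when $|f^\flat| \colon |A| \to |\mathrm{Sing}\,X|$ is a weak homotopy equivalence. Since $f^\sharp$ factors as the composite $|A| \xrightarrow{|f^\flat|} |\mathrm{Sing}\,X| \xrightarrow{\epsilon_X} X$, the two-of-three property shows that $f^\sharp$ is a weak equivalence if and only if $|f^\flat|$ is, provided $\epsilon_X$ is a weak homotopy equivalence. (The fourth condition can be checked identically: the triangle identity $\epsilon_{|A|} \circ |\eta_A| = 1_{|A|}$ and two-of-three show $\eta_A \colon A \to \mathrm{Sing}|A|$ is a weak equivalence, and the cofibrant and fibrant replacements on $\cat{sSet}$ and $\cat{Top}$ may be taken to be identities.)

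The remaining --- and genuinely nonformal --- obstacle is that $\epsilon_X$ is a weak homotopy equivalence. I would first observe that $\mathrm{Sing}\,X$ is a Kan complex, because each horn inclusion realizes to a deformation retract $|\Lambda^n_k| \hookrightarrow |\Delta^n|$ and the resulting extension problems transpose across the adjunction. Then one shows that $\epsilon_X$ is a bijection on path components (surjective since points of $X$ are $0$-simplices of $\mathrm{Sing}\,X$, injective by an analogous argument with paths) and an isomorphism on $\pi_n$ at every basepoint. For the latter, the simplicial homotopy groups $\pi_n(\mathrm{Sing}\,X,x)$ --- available since $\mathrm{Sing}\,X$ is Kan --- coincide with the ordinary homotopy groups $\pi_n(X,x)$ essentially by unwinding the definitions (triangulating $I^n$ by simplices), while for any Kan complex $K$ the comparison map $\pi_n(K,x) \to \pi_n(|K|,|x|)$ is an isomorphism. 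This last fact is the substantive classical ingredient, established by Quillen through the theory of minimal fibrations \cite[\S II.3]{quillen}; assembling these identifications exhibits $\epsilon_X$ as a weak homotopy equivalence, completing the proof.
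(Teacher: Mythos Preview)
The paper does not supply a proof of this theorem: it is stated with attribution to Quillen \cite[\S II.3]{quillen} and immediately followed by \S\ref{ssec:extending}. Your proposal is therefore not competing with any argument in the text; rather, it is a correct and well-organized proof sketch that makes efficient use of the machinery the paper has developed --- the characterization of Quillen adjunctions in Definition \ref{defn:quillen-adj}, the equivalent conditions for Quillen equivalence in Definition \ref{defn:quillen-equiv}, and the cell-complex closure properties of Lemma \ref{lem:closure} --- while being appropriately candid that the genuinely nonformal content (that $\epsilon_X$ is a weak homotopy equivalence) is a classical result whose full proof lies outside the paper's scope.
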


\subsection{Extending homological algebra to homotopical algebra}\label{ssec:extending}

Derived functors are endemic to homological algebra. Quillen's homotopical algebra can be understood to subsume classical homological algebra in the following sense. The category of chain complexes of modules over a fixed ring (or valued in an arbitrary abelian category) admits a homotopical structure where the weak equivalences are quasi-isomorphisms. Relative to an appropriately-defined model structure, the left and right derived functors of homological algebra can be viewed as special cases of the construction of derived functors of left or right Quillen functors in Corollary \ref{cor:quillen-derived} or in the more general context of  Theorem \ref{thm:leftderived}.

The following theorem describes an equivalent presentation of the homotopy theory just discussed.

\begin{thm}[{Schwede-Shipley after Dold-Kan}]\label{thm:dold-kan} The homotopy theory of simplicial modules over a commutative ring, with fibrations and weak equivalences as on underlying simplicial sets, is equivalent to the homotopy theory of non-negatively graded chain complexes of modules, as presented by the projective model structure whose weak equivalences are the quasi-isomorphisms, fibrations are the chain maps which are surjective in positive dimensions, and cofibrations are the monomorphisms with dimensionwise projective cokernel.
\end{thm}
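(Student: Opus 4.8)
The plan is to reduce the statement to the classical Dold--Kan correspondence. Write $N \colon s\cat{Mod}_R \to \cat{Ch}_{\geq 0}(\cat{Mod}_R)$ for the normalized Moore complex functor and $\Gamma$ for its (adjoint) inverse, so that $\Gamma \dashv N$ is an adjoint equivalence of categories. The projective model structure on $\cat{Ch}_{\geq 0}(\cat{Mod}_R)$ appearing in the statement is standard --- it is cofibrantly generated by the evident inclusions of ``spheres'' into ``disks'' --- so I would transport it across the equivalence $N$ to obtain a model structure on $s\cat{Mod}_R$ with respect to which $N$ strictly preserves and reflects cofibrations, fibrations, and weak equivalences. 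Since $N$ is then a homotopical right adjoint whose left adjoint $\Gamma$ is its inverse, the adjunction $\Gamma \dashv N$ is automatically a Quillen equivalence in the sense of Definition~\ref{defn:quillen-equiv}: the induced functor $\bR N$ on homotopy categories is just the equivalence of categories that $N$ already is. The theorem therefore follows once one checks that this transported model structure on $s\cat{Mod}_R$ is exactly the one whose fibrations and weak equivalences are created by the forgetful functor to $\cat{sSet}$ equipped with the Kan--Quillen model structure of Theorem~\ref{thm:sset-model}.

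For the weak equivalences I would use the classical fact that the homotopy groups of a simplicial abelian group are the homology groups of its normalized Moore complex: there is a natural isomorphism $\pi_n(X) \cong H_n(NX)$, where the left side is computed on the underlying simplicial set based at $0$ and coincides with the homotopy groups at any other vertex by translation, $X$ being a simplicial abelian group and hence already a Kan complex. Consequently a map of simplicial $R$-modules is a weak homotopy equivalence of underlying simplicial sets if and only if $N$ carries it to a quasi-isomorphism. For the fibrations I would invoke Moore's theorem characterizing fibrations of simplicial groups: a map $f$ of simplicial $R$-modules is a Kan fibration if and only if $Nf$ is an epimorphism in every positive degree, which is precisely a fibration of the projective model structure. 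The cofibrations are then forced by the fibrations and weak equivalences (Exercise~\ref{exc:model-given-by}), and one verifies directly that on the chain side these are the monomorphisms with dimensionwise projective cokernel, as asserted. Hence all three classes match across $N$, and the transported structure is the claimed one; in particular this also reproves that the model structure on $s\cat{Mod}_R$ described in the statement exists.

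The main obstacle is Moore's fibration criterion for simplicial abelian groups. The homotopy group identification $\pi_\bullet \cong H_\bullet N$ is by now a routine consequence of the Moore-complex machinery and the Dold--Kan splitting of $X_n$ into a direct sum of copies of the $N_k X$; but recognizing Kan fibrations in terms of the normalized complex requires a genuine simplicial-homotopical lifting argument against the horn inclusions $\Lambda^n_k \hookrightarrow \Delta^n$, using the group structure to correct a partially-defined lift, and it is the one step where the combinatorics of $\DDelta$ really enter. Everything else in the proof is formal once the Dold--Kan equivalence, the computation of homotopy groups, and Moore's theorem are granted.
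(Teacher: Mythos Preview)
Your proposal is correct and relies on the same underlying ingredients the paper invokes: the Dold--Kan equivalence, the identification $\pi_n X \cong H_n(NX)$ for simplicial $R$-modules, and Moore's characterization of Kan fibrations of simplicial groups as the maps whose normalized complex is surjective in positive degrees. The paper's own proof is a terse pointer to the literature --- citing \cite[II.4, II.6]{quillen} and \cite[2.3.11, 4.2.13]{hovey} for the two model structures and \cite[\S 4.1]{schwede-shipley} for the comparison --- so you have in fact supplied considerably more detail than the paper does.

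The one organizational difference worth flagging is that you argue by \emph{transport of structure}: start from the projective model structure on $\cat{Ch}_{\geq 0}$, push it across the equivalence $N$, and then identify the result with the model structure on $s\cat{Mod}_R$ created by the forgetful functor to $\cat{sSet}$. The paper instead takes both model structures as given from the cited sources and asserts that $\Gamma$ and $N$ are \emph{each both left and right Quillen equivalences}. Your route has the virtue of reproving the existence of the simplicial-module model structure along the way; the paper's formulation records the slightly sharper fact that the Dold--Kan pair is Quillen in both directions simultaneously, which is occasionally useful (e.g., for comparing derived tensor products). Either way the verification reduces to exactly the two facts you isolate, and your identification of Moore's fibration criterion as the only nontrivial step is accurate.
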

\begin{proof} For details of the model structure on simplicial objects see \cite[II.4, II.6]{quillen} and on chain complexes see \cite[2.3.11, 4.2.13]{hovey}. The proof that the functors $\Gamma, N$ in the Dold-Kan equivalence are each both left and right Quillen equivalences can be found in \cite[\S 4.1]{schwede-shipley} or is safely left as an exercise to the reader.
\end{proof}
 
 The Dold-Kan Quillen equivalence of Theorem \ref{thm:dold-kan} suggests that simplicial methods might replace homological ones in non-abelian contexts. Let $\cM$ be any category of ``algebras'' such as monoids, groups, rings (or their commutative variants), or modules or algebras over a fixed ring; technically $\cM$ may be any category of models for a \emph{Lawvere theory} \cite{lawvere}, which specifies finite operations of any arity and relations between the composites of these operations.
 
 \begin{thm}[{Quillen \cite[\S II.4]{quillen}}] For any category $\cM$ of ``algebras'' --- a category of models for a Lawvere theory --- the category $\cM^{\DDelta^\op}$ of simplicial algebras admits a simplicial model structure whose
 \begin{itemize}
 \item weak equivalences are those maps that are weak homotopy equivalences on underlying simplicial sets
 \item fibrations are those maps that are Kan fibrations on underlying simplicial sets
 \item cofibrations are retracts of free maps.
 \end{itemize}
 \end{thm}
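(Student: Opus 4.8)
The plan is to obtain this model structure by \emph{transferring} Quillen's model structure on simplicial sets (Theorem~\ref{thm:sset-model}) along the levelwise free--forgetful adjunction. A category $\cM$ of algebras for a Lawvere theory is monadic over $\cat{Set}$ via a \emph{finitary} monad, so $\cM$ is complete and cocomplete and its forgetful functor to $\cat{Set}$ preserves filtered colimits; applying this degreewise produces an adjunction $F\colon \cat{sSet}\rightleftarrows \cM^{\DDelta^\op}\colon U$ in which $\cM^{\DDelta^\op}$ is complete and cocomplete and $U$ still preserves filtered colimits. I would then \emph{define} a map of $\cM^{\DDelta^\op}$ to be a weak equivalence, resp.\ a fibration, precisely when $U$ carries it to one in $\cat{sSet}$, and take the cofibrations to be the maps with the left lifting property against $\fib\cap\we$. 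By Lemma~\ref{lem:moral-we-defn} (applied to $U$ followed by the localization functor of $\cat{sSet}$, whose inverted maps are the weak homotopy equivalences), $\we$ automatically has the two-of-three property and is closed under retracts, so by Definition~\ref{defn:model-cat} everything comes down to producing the two weak factorization systems $(\cof,\fib\cap\we)$ and $(\cof\cap\we,\fib)$.

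First I would handle the formal half via the small object argument. Let $I=\{\partial\Delta^n\hookrightarrow\Delta^n\}$ and $J=\{\Lambda^n_k\hookrightarrow\Delta^n\}$ be the standard generating cofibrations and trivial cofibrations of $\cat{sSet}$. Because $\Hom_{\cM^{\DDelta^\op}}(FK,-)=\Hom_{\cat{sSet}}(K,U-)$ and $U$ preserves filtered colimits, the domains of $FI$ and $FJ$ are small, so the small object argument yields functorial factorizations of every map as an $FI$-cell complex followed by an $FI$-injective map, and as an $FJ$-cell complex followed by an $FJ$-injective map. Transposing along $F\dashv U$, the $FI$-injective maps are exactly $U^{-1}$ of the trivial fibrations of $\cat{sSet}$, which by Corollary~\ref{cor:wfs-by-half} applied to Theorem~\ref{thm:sset-model} is $\fib\cap\we$, and the $FJ$-injective maps are exactly $U^{-1}$ of the Kan fibrations, i.e.\ $\fib$. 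Combined with the retract argument (Lemma~\ref{lem:retract}) and Lemma~\ref{lem:closure}, this already produces the weak factorization system $(\cof,\fib\cap\we)$, whose left class consists of the retracts of $FI$-cell complexes; a direct inspection of what a single $FI$-cell attaches identifies these with the retracts of Quillen's \emph{free maps}. At this point the whole theorem reduces to the single non-formal assertion that every $FJ$-cell complex --- equivalently every map in $\cof$ --- is a weak equivalence, for then two-of-three turns the $FJ$-factorization into the second weak factorization system $(\cof\cap\we,\fib)$.

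The hard part is exactly this acyclicity, and I would prove it by the \emph{path-object argument}, which needs two constructions on $\cM^{\DDelta^\op}$. The first is a functorial fibrant replacement: Kan's functor $\mathrm{Ex}$ preserves finite limits, hence acts on simplicial algebras, and $\mathrm{Ex}^\infty=\colim(\mathrm{id}\to\mathrm{Ex}\to\mathrm{Ex}^2\to\cdots)$ lifts to an endofunctor of $\cM^{\DDelta^\op}$ commuting with $U$ (here one uses that $U$ preserves this filtered colimit); the natural map $1\To\mathrm{Ex}^\infty$ is a weak equivalence and $U\,\mathrm{Ex}^\infty X$ is a Kan complex, so $\mathrm{Ex}^\infty X$ is fibrant. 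The second is a path object for each fibrant $B$, namely the cotensor $B^{\Delta^1}$: its underlying simplicial set is $(UB)^{\Delta^1}$, and since $UB$ is a Kan complex the factorization $B\to B^{\Delta^1}\to B\times B$ of the diagonal has first map a weak equivalence and second map a fibration, because Quillen's model structure on $\cat{sSet}$ is enriched over itself (the Example following Theorem~\ref{thm:hoVmonoidal}). Granted these, the standard path-object argument (as in \cite{DHKS}) shows every map of $\cof$ is a weak equivalence, completing the verification of Definition~\ref{defn:model-cat} and hence establishing the model structure with the stated weak equivalences, fibrations, and cofibrations.

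Finally, to see that the resulting structure is \emph{simplicial}, observe that $\cM^{\DDelta^\op}$ is tensored, cotensored and $\cat{sSet}$-enriched via $(A\otimes K)_n:=\coprod_{K_n}A_n$, its dual, and $\Hom_{\cM^{\DDelta^\op}}(A\otimes\Delta^\bullet,B)$; the pushout-product axiom of Definition~\ref{defn:monmodelcat} follows from the monoidal model structure on $\cat{sSet}$ by transposing lifting problems along $F\dashv U$, using the identity $F(K\times L)\cong FK\otimes L$ so that $F$ carries pushout-products of generating cofibrations of $\cat{sSet}$ to Leibniz tensors in $\cM^{\DDelta^\op}$, and the unit $\Delta^0$ is already cofibrant so the unit condition is automatic. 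I expect every step to be routine except the acyclicity argument of the third paragraph, which is where the special features of simplicial sets --- horn-filling, $\mathrm{Ex}^\infty$, and self-enrichment --- genuinely enter.
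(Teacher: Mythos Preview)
The paper does not actually prove this theorem; it is stated with a citation to Quillen \cite[\S II.4]{quillen} and the section ends immediately afterward.  Your proposal therefore cannot be compared to a proof in the paper, but it can be assessed on its own merits, and the overall strategy --- right-transfer along the levelwise free--forgetful adjunction, factorizations via the small object argument on $FI$ and $FJ$, and acyclicity via the path-object argument using $\mathrm{Ex}^\infty$ and the cotensor $B^{\Delta^1}$ --- is correct and is the standard modern packaging of Quillen's original argument.

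There is, however, a genuine slip in your write-up that you should fix.  You twice assert that the acyclicity condition is ``every map in $\cof$ is a weak equivalence.''  That is false: $\cof$ was defined as the maps with the left lifting property against $\fib\cap\we$, i.e.\ the retracts of $FI$-cell complexes, and most of these are certainly not weak equivalences.  What you need, and what the path-object argument actually delivers, is that every map with the left lifting property against $\fib$ --- equivalently every retract of an $FJ$-cell complex --- is a weak equivalence; this is the class that will become $\cof\cap\we$.  Your third paragraph proves exactly this, so the error is in the statement rather than the argument, but as written the claim is wrong.

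One further small point: the reduction of the path-object argument from general codomain to fibrant codomain genuinely uses the two-of-six property, not merely two-of-three (one produces a lift $\ell\colon B\to W$ with $\ell j$ and $p\ell$ both weak equivalences and concludes that $j$ is).  You have two-of-six available by Lemma~\ref{lem:moral-we-defn}, since your weak equivalences are created by $U$ followed by localization, but it is worth saying so explicitly rather than invoking only two-of-three.
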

 
% \begin{defn} \textbf{Andr\'{e}-Quillen cohomology} \end{defn}

% {\color{darkblue} Also look at the intro to Chapter II of \cite{quillen}.}
 
% {\color{darkblue} If you want something else look at A.K. Bousfield, Localization of spaces with respect to homology, Topology 14 (1975), 133--150 as summarized in 11.5 of Dwyer-Spalinski: fibrant replacement as constructing the localization of a simplicial set relative to an arbitrary homology theory.}

%(from MO ``what are non-categorical applications of homotopical algebra?''; Qiaochu Yuan)

%The Cech nerve or a cover $U \to X$ gives a simplicial object that's like a free resolution. In the abelian case this gives Cech cohomology. In the non-abelian case each cocycle descrip at principle bundle?

\section{Homotopy limits and colimits}\label{sec:holim}

Limits and colimits provide fundamental tools for constructing new mathematical objects from existing ones, so it is important to understand these constructions in the homotopical context. There are a variety of possible meanings of a homotopical notion of limit or colimit including:
\begin{enumerate}
\item\label{itm:holim-i} limits or colimits in the homotopy category of a model category;
\item\label{itm:holim-ii} limit or colimit constructions that are ``homotopy invariant,'' with weakly equivalent inputs giving rise to weak\-ly equivalent outputs; 
\item\label{itm:holim-iii} derived functors of the limit or colimit functors; and finally
\item\label{itm:holim-iv} limits or colimits whose universal properties are (perhaps weakly) enriched over simplicial sets or topological spaces.
\end{enumerate}

We will explore all of these possibilities in turn. We begin in  \S\ref{ssec:hocat-lim} by observing that the homotopy category has few genuine limits and colimits but does have ``weak'' ones in the case where the category is enriched, tensored, and cotensored over spaces. For the reason explained in Remark \ref{rmk:holim-no-lim}, homotopy limits or colimits rarely satisfy condition \eqref{itm:holim-i}.

Then in \S\ref{ssec:holim}, we define homotopy limits and colimits as derived functors, which in particular give ``homotopy invariant'' constructions, and introduce hypotheses on the ambient model category that ensure that these homotopy limit and colimit functors always exist. In \S\ref{ssec:reedy} we consider particular diagram shapes, the so-called Reedy categories, for which homotopy limits and colimits exist in any model category. Finally, in \S\ref{ssec:weighted} we permit ourselves a tour through the general theory of weighted limits and colimits as a means of elucidating these results and introducing families of Quillen bifunctors that deserve to be better known. This allows us to finally explain the sense in which homotopy limits or colimits in a simplicial model category satisfy properties \eqref{itm:holim-ii}-\eqref{itm:holim-iv} and in particular have an enriched universal property which may be understood as saying they ``represent homotopy coherent cones'' over or under the diagram.

\subsection{Weak limits and colimits in the homotopy category}\label{ssec:hocat-lim}

Consider a category $\cM$ that is enriched over spaces --- either simplicial sets or topological spaces will do --- meaning that for each pair of objects $x,y$, there is a mapping space $\Map(x,y)$ whose points are the usual set $\cM(x,y)$ of arrows from $x$ to $y$. We may define a homotopy category of $\cM$ using the construction of Definition \ref{defn:nice-htpy-cat}.

\begin{defn} If $\cM$ is a simplicially enriched category its \textbf{homotopy category} $\h\cM$ has
\begin{itemize}
\item objects the same objects as $\cM$ and
\item hom-sets $\h\cM(x,y) := \pi_0\Map(x,y)$ taken to be the path components of the mapping spaces.
\end{itemize}
Thus, a morphism from $x$ to $y$ in $\h\cM$ is a homotopy class of vertices in the simplicial set $\Map(x,y)$, where two vertices are homotopic if and only if they can be connected by a finite zig-zag of 1-simplices.
\end{defn}

A product of a family of objects $m_\alpha$ in a category $\cM$ is given by a representation $m$ for the functor displayed on the right:  \[ \cM(-,m) \stackrel{\cong}{\longrightarrow} \prod_\alpha \cM(-,m_\alpha).\] By the Yoneda lemma,  a representation consists of an object $m \in \cM$ together with maps $m \to m_\alpha$ for each $\alpha$ that are universal in the sense that for any collection $x \to m_\alpha \in \cM$, each of these arrows factors uniquely along a common map $x \to m$. But if $\cM$ is enriched over spaces, we might instead require only that the triangles 
\begin{equation}\label{eq:homotopytriangle} 
\begin{tikzcd}  x \arrow[d, dashed, "{\exists}"'] \arrow[dr] & {~} \\ m \arrow[r] \arrow[ur, phantom, "\simeq" near start] & m_\alpha
\end{tikzcd}
\end{equation} commute ``up to homotopy'' in the sense of a path in the space $\Map(x,m_\alpha)$ whose underlying set of points is $\cM(x,m_\alpha)$. Now we can define the \textbf{homotopy product} to be an object $m$ equipped with a natural weak homotopy equivalence \[ \Map(x,m) \to \prod_\alpha \Map(x,m_\alpha)\] for each $x \in \cM$. Surjectivity on path components implies the existence and homotopy commutativity of the triangles \eqref{eq:homotopytriangle}.

\begin{exc} Use the fact that $\pi_0$ commutes with products and is homotopical to show, unusually for homotopy limits, that the homotopy product is a product in the homotopy category $\h\cM$. Similarly, a homotopy coproduct is a coproduct in the homotopy category. 
\end{exc}

For non-discrete diagram shapes, the homotopy category of a category enriched in spaces\footnote{Here we can take our enrichment over topological spaces or over simplicial sets, the latter being more general \cite[3.7.15-16]{riehl-cathtpy}.} 
 will no longer have genuine limits or colimits but in the presence of tensors in the colimit case and cotensors in the limit case it will have weak ones.
 
\begin{thm}[{\cite[11.1]{vogt}}]\label{thm:vogt-w-limits} If $\cM$ is cocomplete and also enriched and tensored over spaces, its homotopy category $\h\cM$ has all weak colimits: given any small diagram $F \colon \cD \to \h\cM$, there is a cone under $F$ through which every other cone factors, although not necessarily uniquely.
\end{thm}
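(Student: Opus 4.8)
The plan is to reduce the statement to two special cases — coproducts and coequalizers — and then assemble arbitrary weak colimits from these. First recall, as in the discussion preceding the theorem, that $\h\cM$ has all small coproducts: since $\cM$ is cocomplete and $\Map(\coprod_\alpha m_\alpha,x)\cong\prod_\alpha\Map(m_\alpha,x)$ while $\pi_0$ preserves products, the coproduct $\coprod_\alpha m_\alpha$ formed in $\cM$ is also a coproduct in $\h\cM$. The heart of the argument is to show that $\h\cM$ admits \emph{weak coequalizers}: for each parallel pair $f,g\colon a\rightrightarrows b$ there is a map $q\colon b\to w$ with $qf=qg$ such that every $h\colon b\to x$ with $hf=hg$ factors through $q$, not necessarily uniquely. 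Granting this, I would finish formally. Given a small diagram $F\colon\cD\to\h\cM$, form the coproducts $B:=\coprod_{d\in\cD}Fd$ and $A:=\coprod_{\phi\in\mor\cD}F(\dom\phi)$, and let $m_1,m_2\colon A\rightrightarrows B$ be the maps whose $\phi$-component is the coprojection $F(\dom\phi)\hookrightarrow B$, respectively the composite $F(\dom\phi)\xrightarrow{F\phi}F(\cod\phi)\hookrightarrow B$. Choosing a weak coequalizer $q\colon B\to W$ of $m_1,m_2$, the composites $\lambda_d\colon Fd\hookrightarrow B\xrightarrow{q}W$ form a cone under $F$, since for each $\phi\colon d\to d'$ the maps $\lambda_{d'}\circ F\phi$ and $\lambda_d$ are precisely the $\phi$-components of $qm_2$ and $qm_1$, which agree. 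Any other cone $\mu\colon F\To\Delta_x$ assembles, by the coproduct universal property in $\h\cM$, into a single map $B\to x$ coequalizing $m_1$ and $m_2$ — on $\phi$-components it sends both to $\mu_d$ by the cone condition $\mu_{d'}\circ F\phi=\mu_d$ — hence factors through $q$, and the induced map $W\to x$ gives the required (non-unique) factorization of $\mu$ through $\{\lambda_d\}$.

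It remains to construct weak coequalizers, and this is exactly where cocompleteness and the tensoring over spaces enter. Given $f,g\colon a\rightrightarrows b$ in $\h\cM$, I would choose representatives $\tilde f,\tilde g\colon a\rightrightarrows b$ in $\cM$ and form the \emph{double mapping cylinder}
\[
W\;:=\;b\;\coprod_{a\coprod a}\;(a\otimes\Delta^1),
\]
the pushout in $\cM$ of $b\xleftarrow{(\tilde f,\tilde g)}a\coprod a\cong a\otimes\partial\Delta^1\hookrightarrow a\otimes\Delta^1$, which exists because $\cM$ is cocomplete and tensored over spaces. Writing $q\colon b\to W$ for the structure map, $q$ coequalizes $f$ and $g$ in $\h\cM$: the induced map $a\otimes\Delta^1\to W$ restricts on the two endpoint inclusions to $q\tilde f$ and $q\tilde g$, so transposing across the tensor--cotensor adjunction it is a path in $\Map(a,W)$ joining them, whence $[q\tilde f]=[q\tilde g]$. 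For the weak universal property, let $h\colon b\to x$ be a map of $\h\cM$, represented in $\cM$, with $[h\tilde f]=[h\tilde g]$ in $\pi_0\Map(a,x)$. In the topologically enriched case this means $h\tilde f$ and $h\tilde g$ are joined by a path $\Delta^1\to\Map(a,x)$; transposing it back gives a map $a\otimes\Delta^1\to x$ extending $(h\tilde f,h\tilde g)\colon a\coprod a\to x$, which together with $h\colon b\to x$ induces $\bar h\colon W\to x$ with $\bar h\circ q=h$. In the simplicially enriched case the two representatives are joined only by a finite zig-zag of $1$-simplices; one accommodates this either by enlarging the cylinder $a\otimes\Delta^1$ to a tensor with a suitable finite contractible graph, or by first replacing the hom-spaces by weakly equivalent Kan complexes (which leaves $\h\cM$ unchanged) so that a single homotopy suffices; compare \cite[\S 11]{vogt}. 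Either way $(W,q)$ is a weak coequalizer, completing the proof.

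The one genuinely delicate point — the place where care is needed rather than formal manipulation — is this last universal property: a morphism of $\h\cM$ is only a \emph{component} of a mapping space, so two representatives of it are neither strictly equal nor, in the simplicial case, joined by a single homotopy, and the double mapping cylinder must be built roomy enough (via the tensor with an interval, and in the simplicial setting with an appropriate contractible graph) to absorb that ambiguity. By contrast, the existence of coproducts in $\h\cM$ and the passage from coproducts plus weak coequalizers to all small weak colimits are purely formal.
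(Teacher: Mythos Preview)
Your argument is correct and takes a genuinely different route from the paper's. You decompose the problem: first observe that coproducts in $\h\cM$ are honest coproducts, then build weak coequalizers via the double mapping cylinder $b\amalg_{a\sqcup a}(a\otimes I)$, and finally assemble arbitrary weak colimits from these two ingredients by the standard coproduct--coequalizer formula. The paper instead constructs the weak colimit of $F\colon\cD\to\h\cM$ in a single step, lifting $F$ to a reflexive directed graph in $\cM$ and forming one explicit pushout
\[
\Bigl(\coprod_{a,b}\cD(a,b)\times I\times Fa\Bigr)\ \cup\ \Bigl(\coprod_a Fa\Bigr)
\]
modulo identifications along the endpoints and along identities. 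If you unwind your double mapping cylinder with $A=\coprod_{\phi\in\mor\cD}F(\dom\phi)$ and $B=\coprod_{d}Fd$, you recover essentially this same object, minus the reflexive collapse along identity morphisms---a difference that is irrelevant for a \emph{weak} universal property. Your modular version isolates cleanly where the tensoring is used (only to produce weak coequalizers), while the paper's one-shot formula stays closer to the homotopy-coequalizer description and gives a uniform model for $\wcolim F$. One small correction: in the simplicial case your proposed fix of tensoring with a ``finite contractible graph'' cannot work uniformly, since different test maps $h$ may require zig-zags of unbounded length; you would need an infinite zig-zag (or some other weakly contractible simplicial set receiving all finite zig-zags). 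The paper's proof, written with a genuine interval $I$, glosses over the same point.
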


In general, the colimit of a diagram $F$ of shape $\cD$ may be constructed as the reflexive coequalizer of the diagram
\[
\begin{tikzcd}[column sep=large]
\coprod\limits_{a,b \in \cD} \cD(a,b) \times Fa \arrow[r, shift left=0.75em, "\ev"] \arrow[r, shift right=0.75em, "\proj"'] & \coprod\limits_{a \in \cD} Fa \arrow[l, "\id" description] 
\end{tikzcd}
\]
Note that this construction does not actually require the diagram $F$ to be a functor; it suffices for the diagram to define a reflexive directed graph in the target category. In the case of a diagram valued in $\h\cM$, the weak colimit will be constructed as a ``homotopy reflexive coequalizer''\footnote{Succinctly, it may be defined as the weighted colimit of this reflexive coequalizer diagram weighted by the truncated cosimplicial object $\begin{tikzcd}[ampersand replacement=\&] {\ast} \arrow[r, shift left=0.5em] \arrow[r, shift right=0.5em] \& I \arrow[l] \end{tikzcd}$ whose leftwards maps are the endpoint inclusions into the closed interval $I$; see \S\ref{ssec:weighted}.} of a lifted reflexive directed graph in $\cM$.

\begin{proof}
Any diagram $F \colon \cD \to \h\cM$ may be lifted to a reflexive directed graph $F \colon \cD \to \cM$, choosing representatives for each homotopy class of morphisms in such a way that identities are chosen to represent identities. Using these lifted maps and writing $I$ for the interval, define the weak colimit of $F\colon \cD \to \h\cM$ to be a quotient of the coproduct
\[ \left( \coprod\limits_{a,b \in \cD} \cD(a,b) \times I \times Fa \right) \sqcup \left( \coprod\limits_{a \in \cD} Fa\right)\] modulo three identifications
\[
\begin{tikzcd}[column sep=4.5em]
 \coprod\limits_{a,b \in \cD}  \left( \cD(a,b) \times \{0\}  \times Fa \sqcup  \cD(a,b) \times \{1\}  \times Fa \right) \sqcup \coprod\limits_{a \in \cD} I \times Fa \arrow[r, "(\ev \sqcup\proj)\sqcup\proj"] \arrow[d, "(\incl\sqcup\incl)\sqcup\id"'] \arrow[dr, phantom, "\ulcorner" very near end] & \coprod\limits_{a \in \cD} Fa \arrow[d, dashed] \\ \coprod\limits_{a,b \in \cD} \cD(a,b) \times I \times Fa  \arrow[r, dashed] & \wcolim{F}
 \end{tikzcd}
 \]
The right hand vertical map defines the legs of the colimit cone, which commute in $\h\cM$ via the witnessing homotopies given by the bottom horizontal map.

Now consider a cone in $\h\cM$ under $F$ with nadir $X$. We may regard the data of this cone as a diagram $\cD \times \2 \to \h\cM$ that restricts along $\{0\} \hookrightarrow \2$ to $F$ and along $\{1\}\hookrightarrow \2$ to the constant diagram at $X$. This data may be lifted to a reflexive directed graph $\cD\times\2 \to \cM$ whose lift over $0$ agrees with the previously specified lift $F$ and whose lift over $1$ is constant at $X$. This defines a cone under the pushout diagram, inducing the required map $\wcolim{F} \to X$.
\end{proof}

\subsection{Homotopy limits and colimits of general shapes}\label{ssec:holim}

In general, limit and colimit constructions in a homotopical category fail to be weak equivalence invariant. Famously the $n$-sphere can be formed by gluing together two disks along their boundary spheres $S^n \cong D^n \cup_{S^{n-1}} D^n$. The diagram 
\begin{equation}\label{eq:nhocolim}
\begin{tikzcd}
D^n \arrow[d, "{\rotatebox{270}{$\sim$}}"'] & {~}S^{n-1}{~} \arrow[r, hook]\arrow[l, hook'] \arrow[d, equals] & D^n \arrow[d, "{\rotatebox{90}{$\sim$}}"] \\ {*} & S^{n-1} \arrow[l] \arrow[r] & {*}
\end{tikzcd}
\end{equation} reveals that the pushout functor fails to preserves componentwise homotopy equivalences.

When a functor fails to be homotopical, the next best option is to replace it by a derived functor. Because colimits are left adjoints, one might hope that $\colim \colon \cM^\cD \to \cM$ has a left derived functor and dually that $\lim \colon \cM^\cD \to \cM$ has a right derived functor, leading us to the following definition:

\begin{defn} Let $\cM$ be a homotopical category and let $\cD$ be a small category. The \textbf{homotopy colimit functor}, when it is exists, is a left derived functor $\LL\colim\colon \cM^\cD \to \cM$ while the \textbf{homotopy limit functor}, when it exists, is a right derived functor $\RR\!\lim\colon \cM^\cD \to \cM$.
\end{defn}

We always take the weak equivalences in the category $\cM^\cD$ of diagrams of shape $\cD$ in a homotopical category $\cM$ to be defined pointwise. By the universal property of localization, there is a canonical map 
\begin{equation}\label{eq:coherent-vs-commutative} 
\begin{tikzcd} \cM^\cD \arrow[d, "{\gamma}"'] \arrow[r, "{\gamma^\cD}"] & (\Ho\cM)^\cD \\ \Ho(\cM^\cD) \arrow[ur, dashed]
\end{tikzcd}
\end{equation} 
but it is not typically an equivalence of categories. Indeed, some of the pioneering forays into abstract homotopy theory \cite{vogt,CP-vogt,DKS-homotopy}. were motivated by attempts to understand the essential image of the functor $\Ho(\cM^\cD) \to (\Ho\cM)^\cD$, the objects in $(\Ho\cM)^\cD$ being homotopy commutative diagrams while the isomorphism classes of objects in $\Ho(\cM^\cD)$ being somewhat more mysterious; see \S\ref{ssec:bergner}.

\begin{rmk}\label{rmk:holim-no-lim}
The diagonal functor $\Delta \colon \cM \to \cM^\cD$ is homotopical and hence acts as its own left and right derived functors. By Theorem \ref{thm:derived-adj} applied to a Quillen adjunction to be constructed in the proof of Theorem \ref{thm:proj-hocolim}, the total derived functor $\bL\colim \colon \Ho(\cM^\cD) \to \Ho\cM$ is left adjoint to $\Delta \colon \Ho \cM \to \Ho(\cM^\cD)$ but unless the comparison of \eqref{eq:coherent-vs-commutative} is an equivalence, this is not the same as the diagonal functor $\Delta \colon \Ho\cM \to \Ho(\cM)^{\cD}$. Hence, homotopy colimits are not typically colimits in the homotopy category.\footnote{The comparison \eqref{eq:coherent-vs-commutative} is an equivalence when $\cD$ is discrete, which is the reason why homotopy products and homotopy coproducts \emph{are} products and coproducts in the homotopy category.}
\end{rmk}

In the presence of suitable model structures, Corollary \ref{cor:quillen-derived} can be used to prove that the homotopy limit and colimit functors exist.

\begin{defn} Let $\cM$ be a model category and let $\cD$ be a small category. 
\begin{enumerate}
\item The \textbf{projective model structure} on $\cM^\cD$ has weak equivalences and fibrations defined pointwise in $\cM$. 
\item The \textbf{injective model structure} on $\cM^\cD$ has weak equivalences and cofibrations defined pointwise in $\cM$.
\end{enumerate}
\end{defn}

When $\cM$ is a \emph{combinatorial model category} both model structures are guaranteed to exist. More generally when $\cM$ is an \emph{accessible model category} these model structures exist \cite[3.4.1]{HKRS}. Of course, the projective and injective model structures might happen to exist on $\cM^\cD$, perhaps for particular diagram shapes $\cD$, in the absence of these hypotheses.

\begin{thm}\label{thm:proj-hocolim} Let $\cM$ be a model category and let $\cD$ be a small category.
\begin{enumerate}
\item Whenever the projective model structure on $\cM^\cD$ exists then the homotopy colimit functor $\LL\colim\colon \cM^\cD \to \cM$ exists and may be computed as the colimit of a projective cofibrant replacement of the original diagram.
\item Whenever the injective model structure on $\cM^\cD$ exists then the homotopy limit functor $\RR\!\lim\colon \cM^\cD \to \cM$ exists and may be computed as the limit of an injective fibrant replacement of the original diagram.
\end{enumerate}
\end{thm}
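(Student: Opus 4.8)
\emph{Proof proposal.} The plan is to exhibit the ordinary colimit and limit functors as halves of Quillen adjunctions with the constant-diagram functor $\Delta \colon \cM \to \cM^\cD$, and then quote Corollary \ref{cor:quillen-derived}. Since $\cM$ is complete and cocomplete and limits and colimits in $\cM^\cD$ are computed pointwise, the diagram category $\cM^\cD$ is again complete and cocomplete, so it is sensible to equip it with a model structure; by hypothesis the projective one (for (1)) or the injective one (for (2)) exists. In either case one has the standard adjoint triple $\colim \dashv \Delta \dashv \lim$.

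For (1), I would first observe that $\Delta$ is right Quillen for the projective model structure. The projective fibrations and projective trivial fibrations are by definition the pointwise fibrations and pointwise trivial fibrations of $\cM$, and $\Delta$ carries a fibration (resp.\ trivial fibration) $f \colon m \to m'$ of $\cM$ to the diagram that equals $f$ at every object of $\cD$, hence to a projective fibration (resp.\ trivial fibration); likewise the constant diagram at a fibrant object of $\cM$ is projectively fibrant. By the equivalence of conditions in Definition \ref{defn:quillen-adj}, its left adjoint $\colim$ is therefore left Quillen. Corollary \ref{cor:quillen-derived} now applies verbatim and yields that the left derived functor $\LL\colim$ exists and is computed as $\colim \circ Q$ for any cofibrant replacement functor $Q$ on the projectively-structured category $\cM^\cD$ --- that is, one forms a projective cofibrant replacement of the given diagram and takes its ordinary colimit. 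Part (2) is formally dual: in the injective model structure the cofibrations and trivial cofibrations are the pointwise ones, so $\Delta$ carries (trivial) cofibrations of $\cM$ to injective (trivial) cofibrations and cofibrant objects to injectively cofibrant diagrams, hence $\Delta$ is left Quillen, its right adjoint $\lim$ is right Quillen, and Corollary \ref{cor:quillen-derived} identifies $\RR\lim$ with $\lim \circ R$ for $R$ an injective fibrant replacement functor.

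Essentially the entire argument is an unwinding of definitions, so there is no serious obstacle to overcome. The one genuinely substantive ingredient is the \emph{assumed} existence of the projective (resp.\ injective) model structure on $\cM^\cD$ --- this is exactly where hypotheses on $\cM$ such as combinatoriality or accessibility enter (cf.\ \cite[3.4.1]{HKRS}), and it is simply posited here. The only point that deserves to be stated with care is that $\Delta$ must be shown to respect \emph{both} weak factorization systems of the model structure on $\cM^\cD$, not merely to be a right or left adjoint: it is the pointwise characterization of the projective fibrations and trivial fibrations (resp.\ the injective cofibrations and trivial cofibrations) that makes $\Delta$ a Quillen functor, and hence that lets Corollary \ref{cor:quillen-derived} supply the derived functor together with its explicit description.
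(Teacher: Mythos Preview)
Your proposal is correct and follows essentially the same route as the paper: establish the adjunction $\colim \dashv \Delta \dashv \lim$, observe that $\Delta$ is right Quillen for the projective structure and left Quillen for the injective structure because those model structures are defined pointwise, and invoke Corollary~\ref{cor:quillen-derived}. The paper's proof is terser but makes exactly the same moves.
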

\begin{proof}
This follows from Corollary \ref{cor:quillen-derived} once we verify that the colimit and limit functors are respectively left and right Quillen with respect to the projective and injective model structures. These functors are, respectively, left and right adjoint to the constant diagram functor $\Delta \colon\cM\to \cM^\cD$, so by Definition \ref{defn:quillen-adj} it suffices to verify that this functor is right Quillen with respect to the projective model structure and also left Quillen with respect to the injective model structure. But these model structures are designed so that this is the case.
\end{proof}

\begin{exc}\label{exc:hopushout}$\quad$
\begin{enumerate}
\item Show that any pushout diagram $\begin{tikzcd} B & A \arrow[r, tail] \arrow[l, tail] & C\end{tikzcd}$ comprised of a pair of cofibrations between cofibrant objects is projectively cofibrant. Conclude that the pushout of cofibrations between cofibrant objects is a homotopy pushout and use this to compute the homotopy pushout of \eqref{eq:nhocolim}.
\item Argue that for a generic pushout diagram $\begin{tikzcd} Y & X \arrow[l] \arrow[r] & Z\end{tikzcd}$, its homotopy pushout may be constructed by taking a cofibrant replacement $q \colon X' \to X$ of $X$ and then factoring the composites $h q$ and $k q$ as a cofibration followed by a trivial fibration.
\begin{tikzcd} {Y'} \arrow[d, two heads, "{\rotatebox{270}{$\sim$}}"'] & {X'} \arrow[d, "{\rotatebox{270}{$\sim$}}"', "q"] \arrow[l, tail] \arrow[r, tail]  & {Z'} \arrow[d, two heads, "{\rotatebox{270}{$\sim$}}"] \\ Y & X \arrow[l] \arrow[r] & Z
\end{tikzcd}
and then taking the ordinary pushout of this projective cofibrant replacement formed by the top row.
\end{enumerate}
\end{exc}

\begin{exc}\label{exc:seqhocolim}$\quad$
\begin{enumerate}
\item Verify that any $\bbomega$-indexed diagram
\[\begin{tikzcd}
A_0 \arrow[r, tail, "f_{01}"] & A_1 \arrow[r, tail,  "f_{12}"] & A_2 \arrow[r, tail, "f_{23}"] & \cdots
\end{tikzcd}
\]
of cofibrations between cofibrant objects is projectively cofibrant. Conclude that the sequential colimit of a diagram of cofibrations between cofibrant objects is a homotopy colimit.
\item Argue that for a generic sequential diagram
\[\begin{tikzcd}
X_0 \arrow[r, "f_{01}"] & X_1 \arrow[r, "f_{12}"] & X_2 \arrow[r, "f_{23}"] & \cdots
\end{tikzcd}
\] its projective cofibrant replacement may be formed by first replacing $X_0$ by a cofibrant object $Q_0$, then inductively factoring the resulting composite map $Q_n \to X_{n+1}$ into a cofibration followed by a trivial fibration:
 \[
 \begin{tikzcd}G \arrow[d, "q"', "{\rotatebox{90}{$\sim$}}"] & Q_0 \arrow[d, "q_0"', "{\rotatebox{90}{$\sim$}}"] \arrow[r, tail, "g_{01}"]& Q_1  \arrow[d, "{q_1}"', "{\rotatebox{90}{$\sim$}}"] \arrow[r, tail, "{g_{12}}"] & Q_2 \arrow[d, "q_2"', "{\rotatebox{90}{$\sim$}}"] \arrow[r, tail, "{g_{23}}"] & \cdots \\  F & X_0 \arrow[r, "f_{01}"] & X_1 \arrow[r, "f_{12}"] & X_2 \arrow[r, "f_{23}"] & \cdots
\end{tikzcd}
\]
Conclude that the homotopy sequential colimit is formed as the sequential colimit of this top row.
\end{enumerate}
\end{exc}

\subsection{Homotopy limits and colimits of Reedy diagrams}\label{ssec:reedy}

In fact, even if the projective model structures do not exist, certain diagram shapes  allow us to construct functorial ``projective cofibrant replacements'' in any model category nonetheless, such as following the prescriptions of Exercise \ref{exc:seqhocolim}. Dual ``injective fibrant replacements'' for pullback or inverse limit diagrams exist similarly.  The reason is because the categories indexing these diagrams are \emph{Reedy categories}.

 If $\cM$ is any model category and $\cD$ is any Reedy category, then category $\cM^\cD$ of Reedy diagrams admits a model structure. If the indexing category $\cD$ satisfies the appropriate half of a dual pair of conditions listed in Proposition \ref{prop:constants}, then the colimit or limit functors $\colim,\lim \colon \cM^\cD \to \cM$ are left or right Quillen. In such contexts, homotopy colimits and homotopy limits can be computed by applying Corollary \ref{cor:quillen-derived}.

The history of the abstract notion of Reedy categories is entertaining. The category $\DDelta^\op$ is an example of what is now called a \emph{Reedy category}. The eponymous model structure on simplicial objects taking values in any model category was introduced in an unpublished but widely disseminated manuscript written by Reedy \cite{reedy}. Reedy notes that a dual model structure exists for cosimplicial objects, which, in the case of cosimplicial simplicial sets, coincides with a model structure introduced by Bousfield and Kan to define homotopy limits \cite[\S X]{bousfield-kan}. The general definition, unifying these examples and many others, is due to Kan and appeared in the early drafts of the book that eventually became \cite{DHKS}. Various draft versions circulated  in the mid 1990s and contributed to the published accounts \cite[chapter 15]{hirschhorn} and \cite[chapter 5]{hovey}. The final \cite{DHKS} in turns references these sources in order to ``review the notion of a Reedy category'' that originated in an early draft of that same manuscript.

\begin{defn}\label{defn:reedy} A \textbf{Reedy structure} on a small category $\cA$ consists of a \textbf{degree function} $\deg \colon \ob{\cA} \to \omega$ together with a pair of wide subcategories $\overrightarrow{\cA}$ and $\overleftarrow{\cA}$ of \textbf{degree-increasing} and \textbf{degree-decreasing} arrows respectively so that 
\begin{enumerate}
\item The degree of the domain of every non-identity morphism in  $\overrightarrow{\cA}$ is strictly less than the degree of the codomain, and the degree of the domain of every non-identity morphism in $\overleftarrow{\cA}$ is strictly greater than the degree of the codomain.
\item Every $f \in \mor{\cA}$ may be factored uniquely as 
\begin{equation}\label{eq:reedy-fact}
\begin{tikzcd}
\bullet \arrow[rr, "f"] \arrow[dr, "{\overleftarrow{\cA}\ni \overleftarrow{f}}"'] & & \bullet \\ & \bullet \arrow[ur, "{\overrightarrow{f} \in \overrightarrow{\cA}}"'] 
\end{tikzcd}
\end{equation}
\end{enumerate}
\end{defn}

\begin{ex} $\quad$
\begin{enumerate}
\item Discrete categories are Reedy categories, with all objects having degree zero
\item If $\cA$ is a  Reedy category, then so is $\cA^\op$: its Reedy structure has the same degree function but has the degree-increasing and degree-decreasing arrows interchanged. 
\item Finite posets are Reedy categories with all morphisms degree-increasing.  Declare any minimal element to have degree zero and define the degree of a generic object $d \in \cD$ to be the length of the maximal-length path of non-identity arrows from an element of degree zero to $d$. This example can be extended without change to include infinite posets such as $\bbomega$ provided that each object has finite degree.
\item The previous example gives the category $b \leftarrow a \rightarrow c$ a Reedy structure in which $\deg(a)=0$ and $\deg(b)= \deg(c)=1$. There is another Reedy category structure in which $\deg(b)=0$, $\deg(a)=1$, and $\deg(c)=2$. 
\item The category $a \rightrightarrows b$ is a Reedy category with $\deg(a)=0$, $\deg(b)=1$, and both non-identity arrows said to strictly raise degrees.
\item   The category $\DDelta$ of finite non-empty ordinals and the category $\DDelta_+$ of finite ordinals and order-preserving maps  both support canonical Reedy category structures, for which we take the degree-increasing maps to be the subcategories of face operators (monomorphisms) and the degree-decreasing maps to be the subcategories of degeneracy operators (epimorphisms).   
\end{enumerate}
\end{ex}

\begin{exc} $\quad$
\begin{enumerate}
\item Show that every morphism $f$ factors uniquely through an object of minimum degree and this factorization is the ``Reedy factorization'' of  \eqref{eq:reedy-fact}.
\item Show that the Reedy category axioms prohibit any non-identity iso\-morph\-isms.
\end{enumerate}
\end{exc}

\begin{rmk}
The notion of Reedy category has been usefully extended by Berger and Moerdijk to include examples such as finite sets or finite pointed sets that do have non-identity automorphisms. All of the results to be described here have analogues in this more general context, but for ease of exposition we leave these details to \cite{BM}.
\end{rmk}

To focus attention on our goal, we now introduce the \emph{Reedy model structure}, which serves as motivation for some auxiliary constructions we  have yet to introduce.

\begin{thm}[{Reedy, Kan \cite[\S 7]{RV0}}] Let $\cM$ be a model category and let $\cD$ be a Reedy category. Then the category $\cM^\cD$ admits a model structure whose
\begin{itemize}
\item weak equivalences are the pointwise weak equivalences
\item weak factorization systems $(\cof \cap \we[\cD],\fib[\cD])$ and $(\cof[\cD], \fib\cap\we[\cD])$ are the Reedy weak factorization systems
\end{itemize}
\end{thm}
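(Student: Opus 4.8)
The plan is to apply the Joyal--Tierney criterion recorded in Definition~\ref{defn:model-cat}: the pointwise weak equivalences of $\cM^\cD$ inherit the two-of-three property objectwise from $\cM$, so it suffices to produce the two weak factorization systems $(\cof[\cD]\cap\we[\cD],\fib[\cD])$ and $(\cof[\cD],\fib[\cD]\cap\we[\cD])$. Here I declare $f\colon X\to Y$ in $\cM^\cD$ to be a \emph{Reedy cofibration} when for every $d\in\cD$ the relative latching map $\latch{d}f\colon X_d\cup_{L_dX}L_dY\to Y_d$ lies in $\cof$, a \emph{Reedy trivial cofibration} when each $\latch{d}f$ lies in $\cof\cap\we$, and dually a \emph{Reedy fibration} (resp.\ \emph{Reedy trivial fibration}) when each relative matching map $\match{d}f\colon X_d\to M_dX\times_{M_dY}Y_d$ lies in $\fib$ (resp.\ in $\fib\cap\we$). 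The latching object $L_d$ and matching object $M_d$ are determined by the restriction of $f$ to the full subcategory of objects of degree strictly below $\deg(d)$, so all of this makes sense by induction on degree.

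The crux is a \emph{recognition lemma}: a Reedy cofibration is a pointwise weak equivalence if and only if it is a Reedy trivial cofibration, and dually for fibrations. I would prove this by induction on degree. Granting the statement for relative latching maps at objects of degree below $n$, for an object $d$ of degree $n$ the component $f_d$ factors as $X_d\to X_d\cup_{L_dX}L_dY\xrightarrow{\latch{d}f}Y_d$; the first arrow is a pushout of the comparison map $L_dX\to L_dY$ on latching objects, which — because that map is a colimit over a diagram of objects of strictly lower degree and, by the inductive hypothesis, the relevant relative latching maps there are trivial cofibrations — is a trivial cofibration by Lemma~\ref{lem:closure} (applied to the left class $\cof\cap\we$ of the system $(\cof\cap\we,\fib)$, which is closed under pushout, coproduct, and transfinite composite by Corollary~\ref{cor:wfs-by-half} and Lemma~\ref{lem:closure}). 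Two-of-three then transfers membership in $\we$ between $f_d$ and $\latch{d}f$, completing the inductive step.

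With the recognition lemma in hand, the factorization and lifting axioms of Definition~\ref{defn:wfs} are assembled by induction on degree. For factorizations of $f\colon X\to Y$: supposing the factorization $X\to Z\to Y$ has been defined on all objects of degree below $n$, for each $d$ of degree $n$ the data built so far supply a canonical map from the pushout $P_d:=X_d\cup_{L_dX}L_dZ$ to the pullback $Q_d:=Y_d\times_{M_dY}M_dZ$; factoring $P_d\to Q_d$ in $\cM$ as a cofibration followed by a trivial fibration defines $Z_d$ and its $\cD$-action so that the relative latching map of the left factor at $d$ is the chosen cofibration and the relative matching map of the right factor at $d$ is the chosen trivial fibration. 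Performing instead the factorization in $\cM$ into a trivial cofibration followed by a fibration at each stage yields $X\to W\to Y$ whose left factor is a Reedy trivial cofibration, pointwise a weak equivalence by the recognition lemma, and whose right factor is a Reedy fibration. For lifting: a commutative square from a Reedy cofibration $i$ (resp.\ a Reedy trivial cofibration) to a Reedy trivial fibration $p$ (resp.\ a Reedy fibration) is filled by building the lift one degree at a time, where at an object $d$ the lift already defined on lower degrees together with the given square present a single lifting problem in $\cM$ between $\latch{d}i\in\cof$ (resp.\ $\cof\cap\we$) and $\match{d}p\in\fib\cap\we$ (resp.\ $\fib$), solvable because $\cM$ is a model category. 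Closure of all four Reedy classes under retracts is immediate: a retract of $f$ in the arrow category of $\cM^\cD$ induces a retract of each $\latch{d}f$ and each $\match{d}f$, and $\cof,\fib,\cof\cap\we,\fib\cap\we$ are retract-closed in $\cM$ by Definition~\ref{defn:wfs}\eqref{itm:wfs-retract}. This verifies Definition~\ref{defn:wfs} for both systems, hence Definition~\ref{defn:model-cat}.

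I expect the main obstacle to be the bookkeeping that keeps these inductions honest rather than any isolated difficulty: at each degree one must confirm that the latching and matching objects of the partially constructed diagram satisfy the universal properties the next inductive step presumes, that the maps produced at stage $n$ genuinely are the relative latching and matching maps of the globally assembled natural transformation, and that establishing stability of a comparison map $L_dX\to L_dY$ under the lifting-defined closure properties itself needs a sub-induction over the latching diagram. The structural facts that make all of this go through are the commutation of latching objects with pushouts and of matching objects with pullbacks (instances of the Leibniz calculus of Proposition~\ref{prop:leibniz-facts}) together with the pushout, coproduct, transfinite composite, and retract stability of the lifting-defined classes recorded in Lemma~\ref{lem:closure}.
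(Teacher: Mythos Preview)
Your proposal is correct and follows the standard inductive strategy for the Reedy model structure: establish the two Reedy weak factorization systems by induction on degree, together with the recognition lemma identifying Reedy trivial (co)fibrations with Reedy (co)fibrations that are pointwise weak equivalences. The paper does not include a proof of this theorem in the text, deferring instead to \cite[\S 7]{RV0}; the argument there follows exactly the outline you give, with the ``sub-induction'' you flag---showing that $L^dX \to L^dY$ inherits membership in the left class from the relative latching maps at lower degree---packaged via the cellular presentation of the hom-weights and the Leibniz calculus of Proposition~\ref{prop:leibniz-facts}.
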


In the \textbf{Reedy weak factorization system} $(\sL[\cD],\sR[\cD])$ defined relative to a weak factorization system $(\sL,\sR)$ on $\cM$, a natural transformation $f \colon X \to Y \in \cM^\cD$ is in $\sL[\cD]$ or $\sR[\cD]$, respectively, if and only if, for each $d \in \cD$, the \emph{relative latching map} $X^d \cup_{L^dX} L^dY \to Y^d$ is in $\sL$ or the \emph{relative matching map} $X^d \to Y^d \times_{M^dY} M^dX$ is in $\sR$. The most efficient definitions of these latching and matching objects $L^dX$ and $M^dX$ appearing in Example \ref{ex:latch-match} makes use of the theory of weighted colimits and limits, a subject to which we now turn.

\subsection{Quillen adjunctions for weighted limits and colimits}\label{ssec:weighted}

Ordinary limits and colimits are objects representing the functor of cones with a given summit over or under a fixed diagram. Weighted limits and colimits are defined analogously, except that the cones over or under a diagram might have exotic ``shapes.'' These shapes are allowed to vary with the objects indexing the diagram. More formally, the \textbf{weight} --- a functor which specifies the ``shape'' of a cone over a diagram indexed by $\cD$ or a cone under a diagram indexed by $\cD^\op$ --- takes the form of a functor in $\cat{Set}^\cD$ in the unenriched context or $\cV^\cD$ in the $\cV$-enriched context.

%\begin{ex}[tensors and cotensors]\label{ex:tensor-cotensor}For example, in the case of a diagram of shape $\1$ in a category $\cM$, the shape of a cone might be a set $S \in \cat{Set}$. Writing $X \in \cM$ for the object in the image of the diagram, the $S$-weighted limit of $X$ is an object $\{S,  X\} \in \cM$ satisfying the universal property \[ \cM(M, \{S, X\}) \cong \cat{Set}(S,\cM(M,X))\] while the $S$-weighted colimit of $X$ is an object $S \ast X \in \cM$ satisfying the universal property \[ \cM(S \ast X, M) \cong \cat{Set}(S, \cM(X,M)).\]  For historical reasons, $\{S, X\}$ is called the \textbf{cotensor} and $S \ast X$ is called the \textbf{tensor} of $X \in \cM$ by the set $S$. 

%If $\cM$ has small products and coproducts, in this case guaranteed by our standing assumption that the large categories under consideration are bicomplete,  then $\{S ,X\}$ and $S\ast X$ are, respectively, the $S$-fold product and coproduct of the object $X$ with itself, and cotensors and tensors define bifunctors \[ \{-,-\}  \colon \cat{Set}^\op \times \cM \to \cM \qquad \mathrm{and}\qquad  -\ast- \colon \cat{Set} \times \cM \to \cM.\]   \end{ex}

\begin{defn}[weighted limits and colimits, axiomatically]\label{defn:weighted-i}
For a general small category $\cD$ and bicomplete category $\cM$, the weighted limit and weighted colimit define bifunctors
\[   \{-,-\}^{\cD}\colon (\cat{Set}^{\cD})^\op\times\cM^\cD \to \cM \qquad \mathrm{and} \qquad      -\ast_{\cD}-\colon \cat{Set}^{\cD}\times\cM^{\cD^\op} \to {\cM}\]
which are characterized by the following pair of axioms.
\begin{enumerate}
\item Weighted (co)limits with representable weights evaluate at the representing object: 
\[\{\cD(d,-),X\}^\cD \cong X(d) \qquad \text{and} \qquad \cD(-,d)\ast_\cD Y \cong Y(d).\]
\item The weighted (co)limit bifunctors are cocontinuous in the weight: for any diagram $X \in \cM^\cD$, the functor $ -\ast_\cD X$ preserves colimits, while the functor $\{-,X\}^\cD$ carries colimits to limits.
\end{enumerate}
We interpret axiom (ii) to mean that weights can be ``made-to-order'': a weight constructed as a colimit of representables --- as all $\cat{Set}$-valued functors are --- will stipulate the expected universal property. 
\end{defn}

Let $\cM$ be any locally small category with products and coproducts.  For any set $S$, the $S$-fold product and coproduct define cotensor and tensor bifunctors \[ \{-,-\}  \colon \cat{Set}^\op \times \cM \to \cM \qquad \mathrm{and}\qquad  -\ast- \colon \cat{Set} \times \cM \to \cM,\] which form a two-variable adjunction with $\Hom  \colon \cM^\op \times \cM \to \cat{Set}$; cf.~Exercise \ref{exc:unenriched-as-enriched}.

\begin{defn}[weighted limits and colimits, constructively]\label{defn:weighted-ii}
The weighted colimit is a functor tensor product and the weighted limit is a functor cotensor product: \[ \{W,X\}^\cD \cong \int_{d \in \cD} \{W(d),X(d)\} \qquad \qquad W \ast_\cD Y \cong \int^{d \in \cD} W(d) \ast Y(d).\]
 The  limit $\{W,X\}^\cD$ of the diagram $X$ weighted by $W$ and the colimit $W \ast_\cD Y$  of $Y$ weighted by $W$ are characterized by the universal properties: \[ \cM(M, \{{W},{X}\}^{\cD}) \cong \cat{Set}^{\cD}(W, \cM(M,X)) \qquad \cM(W\ast_{\cD} Y, M) \cong \cat{Set}^{\cD^\op}(W,\cM(Y,M)).\] 
\end{defn}

\begin{ex}\label{ex:latch-match} Let $\cA$ be a Reedy category and write $\cA_{\leq n}$ for the full subcategory of objects of degree at most $n$. Restriction along the inclusion $\cA_{\leq n}\hookrightarrow \cA$ followed by left Kan extension defines an comonad $\sk_n\colon \cat{Set}^\cA \to \cat{Set}^\cA$. 

Let $a \in \cA$ be an object of degree $n$ and define 
\[\partial\cA(a,-) := \sk_{n-1}\cA(a,-) \in \cat{Set}^{\cA}\quad \mathrm{and}\quad\partial\cA(-,a):=\sk_{n-1}\cA(-,a)\in \cat{Set}^{\cA^\op},\] where $\cA(a,-)$ and $\cA(-,a)$ denote the co- and contravariant functors represented by $a$, respectively. %The counit of the skeleton comonad induces natural inclusions $\partial\cA(-,a)\hookrightarrow\cA(-,a)$ and $\partial\cA(a,-)\hookrightarrow \cA(a,-)$.
 Then for any $X \in \cM^\cA$, the \textbf{latching} and \textbf{matching} objects are defined by
\[ L^aX := \partial\cA(-,a) \ast_\cA X \qquad \mathrm{and} \qquad M^aX := \{\partial\cA(a,-),X\}.\]
\end{ex}

\begin{exc}[enriched weighted limits and colimits]\label{exc:enriched-weighted}
For the reader who knows some enriched category theory, generalize Definitions \ref{defn:weighted-i} and \ref{defn:weighted-ii} to the $\cV$-enriched context to define weighted limit and weighted colimit bifunctors 
\[ \{-,-\}^\cA \colon (\cV^\cA)^\op \times \cM^\cA \to \cM \qquad \mathrm{and} \qquad - \otimes_\cA - \colon \cV^\cA \times \cM^{\cA^\op} \to \cM\]
in any $\cV$-enriched, tensored, and cotensored category $\cM$ whose underlying unenriched category is 
 complete and cocomplete.
\end{exc}

Recall the notion of Quillen two-variable adjunction, the prototypical example being the tensor-cotensor-hom of a $\cV$-model category $\cM$. 

\begin{thm}[{\cite[7.1]{riehl-reedy}}]\label{thm:left-leibniz-bifunctor} Let $\cA$ be a Reedy category and let $\otimes \colon \cK \times \cL \to \cM$ be a left Quillen bifunctor between model categories. Then the functor tensor product
\[ \otimes_{\cA} \colon \cK^{\cA^\op} \times \cL^\cA \to \cM\]
is left Quillen with respect to the Reedy model structures.
\end{thm}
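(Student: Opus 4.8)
The plan is to reduce the claim to the Leibniz-bifunctor characterization of Quillen two-variable adjunctions together with the cellular decomposition of Reedy weights. By Exercise~\ref{exc:leibniz-lifting} (or Definition~\ref{defn:quillen-two-var}), the assertion that $\otimes_\cA \colon \cK^{\cA^\op}\times\cL^\cA \to \cM$ is a left Quillen bifunctor is equivalent to showing that its Leibniz version $\leib{\otimes}_\cA$ carries a pair consisting of a Reedy cofibration $f\colon X \to Y \in \cK^{\cA^\op}$ and a Reedy cofibration $g\colon A \to B \in \cL^\cA$ to a cofibration in $\cM$, which is moreover trivial if either $f$ or $g$ is a pointwise weak equivalence. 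So the entire proof amounts to analyzing the map $f \leib{\otimes}_\cA g$.

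\textbf{Key steps.} First I would recall the cellular decomposition of the functor tensor product: for $f\colon X\to Y$ a Reedy cofibration in $\cK^{\cA^\op}$, the map $f$ (viewed via the generating data of its relative latching maps $\colatch{a}f$) determines, and for $g$ a Reedy cofibration in $\cL^\cA$ with relative latching maps $\latch{a}g$, one has an identification in the spirit of \eqref{eq:one-sided-cell-app}:
\[ f \leib{\otimes}_\cA g \;\cong\; \text{(cell complex whose cells are)}\quad \colatch{a}f \leib{\otimes} \latch{a}g, \]
indexed over objects $a \in \cA$, where $\leib{\otimes}$ on the right is the ordinary Leibniz tensor of the left Quillen bifunctor $\otimes \colon \cK \times \cL \to \cM$. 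This is exactly the two-sided analogue of the computation carried out in the (commented-out) proof of Proposition~\ref{prop:reedy-components}, and follows from the cellular presentation of the hom-bifunctors $\cA(a,-)$, $\cA(-,a)$ as $\sk$-filtered colimits together with Proposition~\ref{prop:leibniz-facts}\eqref{itm:leib-iso} (to rewrite the iterated Leibniz tensor) and \eqref{itm:leib-cell} (to assemble the cell structure). Second, since $f$ and $g$ are Reedy cofibrations, each $\colatch{a}f$ and each $\latch{a}g$ lies in $\cof$; because $\otimes$ is left Quillen, $\colatch{a}f \leib{\otimes} \latch{a}g \in \cof$, so by Lemma~\ref{lem:closure} (cofibrations are closed under pushout and transfinite composite, hence under cell complexes) $f \leib{\otimes}_\cA g \in \cof$. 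Third, for the acyclicity clause: if, say, $f$ is additionally a pointwise weak equivalence, then by Lemma~\ref{lem:3} each relative latching map $\colatch{a}f$ is a \emph{trivial} cofibration; since $\otimes$ is left Quillen the Leibniz tensor $\colatch{a}f \leib{\otimes} \latch{a}g$ of a trivial cofibration with a cofibration is a trivial cofibration, and a cell complex built from trivial cofibrations is again one, so $f\leib{\otimes}_\cA g \in \cof\cap\we$. The case where $g$ is the pointwise weak equivalence is symmetric.

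\textbf{Main obstacle.} The technical heart is establishing the cellular decomposition of $f \leib{\otimes}_\cA g$ in the two-variable setting: one must show that the two-sided functor tensor product of $\colatch{}f$ against $\latch{}g$ is built cellularly from the pieces $\colatch{a}f \leib{\otimes} \latch{a}g$, with the degree filtration on $\cA$ controlling which cells appear. This requires care about the bookkeeping of the double $\sk$-filtration (on both the covariant and contravariant represented functors) and repeated appeals to the Leibniz calculus of Proposition~\ref{prop:leibniz-facts}, especially the interchange isomorphism \eqref{itm:leib-iso} and the preservation of cell structures \eqref{itm:leib-cell}; it is essentially the content cited as \cite[\S 7]{RV0} / \cite[7.1]{riehl-reedy}. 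Once that decomposition is in hand, the closure-property arguments in steps two and three are routine. I would therefore present the decomposition as the key lemma (citing \cite{RV0,riehl-reedy} for the detailed verification) and then deduce the theorem in a few lines from Lemmas~\ref{lem:closure} and~\ref{lem:3}.
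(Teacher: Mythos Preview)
The paper does not supply its own proof of this theorem; it simply records the statement with a citation to \cite[7.1]{riehl-reedy} (and implicitly to \cite[\S 7]{RV0}). Your outlined argument---reduce to the Leibniz bifunctor $\leib{\otimes}_\cA$, present $f\leib{\otimes}_\cA g$ as a cell complex whose cells are the pointwise Leibniz tensors $\colatch{a}f\leib{\otimes}\latch{a}g$ of the relative latching maps, and then invoke the left Quillen bifunctor hypothesis on $\otimes$ together with Lemma~\ref{lem:closure}---is exactly the strategy carried out in those references, so your proposal is correct and on the same track as the source the paper defers to.

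One practical caution: several of the labels you invoke (Lemma~\texttt{lem:3}, Proposition~\texttt{prop:reedy-components}, equation~\texttt{eq:one-sided-cell-app}) live in a commented-out block of this paper and will produce undefined references if you cite them as written. If you intend to insert your argument here, you should either state the needed input---that a Reedy (trivial) cofibration has all relative latching maps (trivial) cofibrations, and the two-sided cellular decomposition of $f\leib{\otimes}_\cA g$---as a self-contained lemma, or cite \cite[\S 7]{RV0} and \cite{riehl-reedy} directly for those facts rather than the internal labels.
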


A dual result holds for functor cotensor products formed relative to a right Quillen bifunctor. In particular, if $\cM$ is a $\cV$-model category, then its tensor, cotensor, and hom define a Quillen two-variable adjunction, and so in particular:

\begin{cor}\label{cor:quillen-strict-weighted} Let $\cM$ be a $\cV$-model category and let $\cA$ be a Reedy category. Then for any Reedy cofibrant weight $W \in \cV^\cA$, the weighted colimit and weighted limit functors
\[ W \ast_\cA - \colon \cM^\cA \to \cM \qquad \mathrm{and}\qquad \{W,-\}^\cA \colon \cM^{\cA^\op} \to \cM\] are respectively left and right Quillen with respect to the Reedy model structures on $\cM^\cA$ and $\cM^{\cA^\op}$.
\end{cor}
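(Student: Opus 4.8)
The plan is to read this off from Theorem~\ref{thm:left-leibniz-bifunctor} by specializing the left Quillen bifunctor there to the tensoring of the $\cV$-model structure, and then restricting one variable to the fixed Reedy cofibrant weight. Recall first that, by Definition~\ref{defn:Vmodelcat}, the data making $\cM$ a $\cV$-model category includes the requirement that the tensor, cotensor, and hom $(\otimes,\{-,-\},\Map)$ form a Quillen two-variable adjunction; by Definition~\ref{defn:quillen-two-var} this says precisely that $\otimes\colon\cV\times\cM\to\cM$ is a left Quillen bifunctor and, equivalently, that $\{-,-\}\colon\cV^\op\times\cM\to\cM$ is a right Quillen bifunctor. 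Recall also that $\cA^\op$ inherits a Reedy structure from $\cA$ (same degree function, with degree-increasing and degree-decreasing maps interchanged), so that the Reedy model structures invoked in the statement — on $\cV^\cA$ or $\cV^{\cA^\op}$ and on $\cM^\cA$ or $\cM^{\cA^\op}$ — all exist.

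For the colimit assertion I would apply Theorem~\ref{thm:left-leibniz-bifunctor} to $\otimes\colon\cV\times\cM\to\cM$, obtaining that the functor tensor product
\[ \otimes_\cA\colon \cV^{\cA^\op}\times\cM^\cA\longrightarrow\cM \]
is a left Quillen bifunctor for the Reedy model structures: its Leibniz tensor sends a pair of Reedy cofibrations to a cofibration in $\cM$, which is a weak equivalence if either input is. Now fix the given Reedy cofibrant weight $W$, regarded as the Reedy cofibration $\emptyset\to W$ in the left-hand variable. Because the coend computing $-\ast_\cA-$ is cocontinuous in each variable (so $\emptyset\ast_\cA(-)$ and $(-)\ast_\cA\emptyset$ are constant at the initial object of $\cM$), the Leibniz tensor at $(\emptyset\to W,\,j)$ collapses to the map $W\ast_\cA j$ for any $j$ in $\cM^\cA$, and at $(\emptyset\to W,\,\emptyset\to Y)$ it collapses to $\emptyset\to(W\ast_\cA Y)$. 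Hence $W\ast_\cA-$ carries Reedy cofibrations to cofibrations, Reedy trivial cofibrations to trivial cofibrations, and Reedy cofibrant objects to cofibrant objects; that is, it is left Quillen. This is the restriction-to-a-cofibrant-object argument of the exercise following Definition~\ref{defn:quillen-two-var}, applied to the Leibniz bifunctor $\otimes_\cA$.

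The limit assertion is the formal dual: applying the cotensor version of Theorem~\ref{thm:left-leibniz-bifunctor} noted immediately after its statement to the right Quillen bifunctor $\{-,-\}\colon\cV^\op\times\cM\to\cM$ shows that
\[ \{-,-\}^\cA\colon (\cV^\cA)^\op\times\cM^\cA\longrightarrow\cM \]
is a right Quillen bifunctor for the Reedy model structures, and fixing $W$ in the contravariant slot collapses the Leibniz cotensor to $\{W,-\}^\cA$, which therefore carries Reedy fibrations to fibrations, Reedy trivial fibrations to trivial fibrations, and Reedy fibrant objects to fibrant objects — reading the Reedy structure on $\cA^\op$ rather than $\cA$ where needed to match the variances in the statement. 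Once both functors are known to be Quillen, the fact that homotopy colimits and limits of such Reedy diagrams can be computed by Corollary~\ref{cor:quillen-derived} is immediate.

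I do not expect a genuine obstacle: the real work is contained in Theorem~\ref{thm:left-leibniz-bifunctor} and its dual. The only points requiring care are (i) keeping straight which of $\cA$, $\cA^\op$ carries the Reedy structure in each variable and each functor, and (ii) the routine verification that a Leibniz (co)tensor with the map $\emptyset\to W$ frozen in one slot degenerates to the ordinary weighted (co)limit functor $W\ast_\cA-$ or $\{W,-\}^\cA$ — which uses nothing beyond (co)continuity of the defining coend/end in each variable.
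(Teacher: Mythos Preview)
Your argument is correct and follows exactly the route the paper intends: the corollary is stated immediately after the remark that the dual of Theorem~\ref{thm:left-leibniz-bifunctor} holds for functor cotensor products and that in a $\cV$-model category the tensor--cotensor--hom is a Quillen two-variable adjunction, with no further proof given. Your write-up simply makes explicit the ``restrict to a fixed Reedy cofibrant object'' step (the exercise after Definition~\ref{defn:quillen-two-var}) and the collapse of the Leibniz construction at $\emptyset\to W$, which is precisely what the paper leaves implicit.
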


\begin{ex}[geometric realization and totalization] The Yoneda embedding defines a Reedy cofibrant weight $\DDelta^\bullet \in \cat{sSet}^{\DDelta}$. The weighted colimit and weighted limit functors
\[ \DDelta^\bullet \ast_{\DDelta^\op} - \colon \cM^{\DDelta^\op} \to \cM \qquad \mathrm{and} \qquad \{\DDelta^\bullet, - \}^{\DDelta} \colon \cM^{\DDelta} \to \cM\] typically go by the names of \textbf{geometric realization} and \textbf{totalization}. Corollary \ref{cor:quillen-strict-weighted} proves that if $\cM$ is a simplicial model category, then these functors are left and right Quillen.
\end{ex}

By Exercise \ref{exc:unenriched-as-enriched}, Corollary \ref{cor:quillen-strict-weighted} also has implications in the case of an unenriched model category $\cM$ in which case ``Reedy cofibrant'' should be read as ``Reedy monomorphic.'' Ordinary limits and colimits are weighted limits and colimits where the weight is the terminal functor, constant at the singleton set.

\begin{prop}[homotopy limits and colimits of Reedy shape]\label{prop:constants}
$\quad$
\begin{enumerate}
\item  If $\cA$ is a Reedy category with the property that the constant $\cA$-indexed diagram at any cofibrant object in any model category is Reedy cofibrant, then the limit functor $\lim \colon \cM^\cA \to \cM$ is right Quillen.
\item  If $\cA$ is a Reedy category with the property that the constant $\cA$-indexed diagram at any fibrant object in any model category is Reedy fibrant, then the colimit functor $\colim \colon \cM^\cA \to \cM$ is left Quillen.
\end{enumerate}
\end{prop}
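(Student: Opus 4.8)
The plan is to exploit the adjunction $\colim \dashv \Delta \dashv \lim$ between $\cM$ and $\cM^\cA$, exactly as in the proof of Theorem \ref{thm:proj-hocolim}, but now with the Reedy model structure on $\cM^\cA$ in place of the projective or injective one. By Definition \ref{defn:quillen-adj}, to prove that $\lim \colon \cM^\cA \to \cM$ is right Quillen it suffices to show that its left adjoint, the constant diagram functor $\Delta \colon \cM \to \cM^\cA$, is left Quillen for the Reedy model structure; dually, to prove that $\colim$ is left Quillen it suffices to show that $\Delta$ is right Quillen. So the whole statement reduces to understanding when $\Delta$ preserves Reedy (trivial) cofibrations, respectively Reedy (trivial) fibrations.

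First I would unpack what it means for $\Delta$ to preserve Reedy cofibrations. A map $X \to Y$ in $\cM^\cA$ is a Reedy cofibration exactly when each relative latching map $X^a \cup_{L^aX} L^aY \to Y^a$ is a cofibration in $\cM$. Applied to $\Delta f$ for $f \colon A \to B$ in $\cM$, all the components are $f$ itself, and the latching objects are $L^a(\Delta A) = \partial\cA(-,a) \ast_\cA \Delta A$, i.e. the colimit of the constant diagram at $A$ over the category of elements of $\partial\cA(-,a)$. Thus the relative latching map of $\Delta f$ at $a$ is the Leibniz tensor (in the $\cat{Set}$-tensoring of $\cM$) of the map $\emptyset \hookrightarrow \partial\cA(-,a)$ — no wait: it is the Leibniz tensor of $\partial\cA(-,a) \hookrightarrow \cA(-,a)$, evaluated via the $\cat{Set} \ast -$ action, with $f \colon A \to B$. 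Concretely it is the map $\mathrm{colim}(\partial\cA(-,a)) \ast_{\mathrm{pt}} B \ \cup\ \ldots \to B$, which is a cofibration for every cofibration $f$ (indeed for $f = (\emptyset \hookrightarrow A)$, making $\Delta A$ Reedy cofibrant) precisely under the hypothesis imposed in part (ii) — oh, I have the parts backwards; let me keep them straight: $\Delta$ is right Quillen $\iff$ $\Delta$ preserves fibrations and trivial fibrations $\iff$ the relative \emph{matching} maps of $\Delta f$ are fibrations whenever $f$ is, and $\Delta A$ Reedy fibrant whenever $A$ is fibrant. So part (i), where the constant diagram at a cofibrant object is assumed Reedy cofibrant, gives that $\Delta$ preserves cofibrant objects; combined with the fact that $\Delta$ always preserves (trivial) cofibrations — which I'd verify from the Leibniz description above, since the relative latching map of $\Delta f$ is always obtained from $f$ by Leibniz-tensoring against a monomorphism of sets, hence lands in $\cof$ (resp. $\cof\cap\we$) by Proposition \ref{prop:leibniz-facts} whenever $f$ does — yields that $\Delta$ is left Quillen, so $\lim$ is right Quillen. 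Part (ii) is the formal dual.

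Let me reorganize so the argument is clean. I would argue: (a) For \emph{any} Reedy category $\cA$, the constant diagram functor $\Delta \colon \cM \to \cM^\cA$ sends cofibrations to Reedy cofibrations and trivial cofibrations to Reedy trivial cofibrations, because the relative latching map of $\Delta f$ at $a$ is the Leibniz $\cat{Set}$-tensor of the monomorphism $\partial\cA(-,a) \hookrightarrow \cA(-,a)$ with $f$, and the Leibniz tensor of a monomorphism of sets with a (trivial) cofibration of $\cM$ is again a (trivial) cofibration — this is the $\cat{Set}$-model-category instance of the Quillen two-variable adjunction from Exercise \ref{exc:unenriched-as-enriched}, applied via Proposition \ref{prop:leibniz-facts} and the standard fact that the relative latching map of a weighted colimit is a Leibniz weighted colimit. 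Dually $\Delta$ sends (trivial) fibrations to Reedy (trivial) fibrations. (b) Under the hypothesis of part (i), $\Delta$ additionally preserves cofibrant objects, so $\Delta$ is left Quillen, whence its right adjoint $\lim$ is right Quillen by Definition \ref{defn:quillen-adj}. Dually, under the hypothesis of part (ii), $\Delta$ preserves fibrant objects, hence is right Quillen, so its left adjoint $\colim$ is left Quillen.

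The main obstacle is step (a): pinning down precisely that the relative latching map of the constant diagram $\Delta f$ is the Leibniz $\cat{Set}$-copower of $\partial\cA(-,a) \hookrightarrow \cA(-,a)$ with $f$, so that Proposition \ref{prop:leibniz-facts} and the $\cat{Set}$-model structure of Exercise \ref{exc:unenriched-as-enriched} apply. This is bookkeeping with the definitions of latching objects as weighted colimits (Example \ref{ex:latch-match}) and the behaviour of Leibniz constructions under the copower bifunctor $-\ast- \colon \cat{Set}\times\cM\to\cM$; none of it is deep, but it is the one place where care is required. Everything else is a direct appeal to the adjunction characterization of Quillen functors and the self-duality of the Reedy construction.
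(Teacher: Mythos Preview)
Your overall strategy---show that the constant diagram functor $\Delta$ is left (resp.\ right) Quillen for the Reedy model structure and then invoke Definition~\ref{defn:quillen-adj}---is a perfectly good alternative to the paper's argument, which instead identifies $\lim$ with the weighted limit $\{1,-\}^\cA$ for the terminal weight and applies Corollary~\ref{cor:quillen-strict-weighted}. But your step (a) is false as stated, and this is not a bookkeeping issue: $\Delta$ does \emph{not} send cofibrations to Reedy cofibrations for an arbitrary Reedy category $\cA$. The error is in the identification of the relative latching map. For a constant diagram one has $L^a(\Delta A) = \partial\cA(-,a) \ast_\cA \Delta A \cong (\colim \partial\cA(-,a)) \ast A$, so the relative latching map of $\Delta f$ at $a$ is the Leibniz $\cat{Set}$-copower of the map $\colim\partial\cA(-,a) \to \colim\cA(-,a) \cong 1$ in $\cat{Set}$ with $f$---not of the monomorphism $\partial\cA(-,a) \hookrightarrow \cA(-,a)$ in $\cat{Set}^{\cA^\op}$. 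The map $\colim\partial\cA(-,a) \to 1$ is a monomorphism precisely when the category of elements of $\partial\cA(-,a)$ is empty or connected, and this is exactly the ``cofibrant constants'' hypothesis. For instance, if $\cA = (\bullet \rightrightarrows \bullet)$ with both arrows degree-increasing, the latching map of $\Delta A$ at the degree-one object is the fold map $A \sqcup A \to A$, which is almost never a cofibration.

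So your decomposition into ``(a) always holds'' plus ``(b) uses the hypothesis'' collapses: the hypothesis is precisely what makes (a) true, and since $\Delta$ preserves the initial object, (a) already implies (b). Once you correct this, your argument and the paper's are essentially dual reformulations of the same combinatorial fact---that the terminal weight $1 \in \cat{Set}^\cA$ is Reedy monomorphic if and only if each $\partial\cA(-,a)$ has empty or connected category of elements---read either through the right Quillen functor $\{1,-\}^\cA$ or through its left adjoint $\Delta$.
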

\begin{proof}
Taking the terminal weight $1$ in $\cat{Set}^\cA$, the weighted limit reduces to the ordinary limit functor. The functor $1 \in \cat{Set}^\cA$ is Reedy monomorphic just when, for each $a \in\cA$, the category of elements for the weight $\partial\cA(-,a)$ is either empty or connected. This is the case if and only if $\cA$ has ``cofibrant constants,'' meaning that the constant $\cA$-indexed diagram at any cofibrant object in any model category is Reedy cofibrant. Thus, we conclude that if $\cA$ has cofibrant constants, then the limit functor $\lim \colon \cM^\cA \to \cM$ is right Quillen. See \cite[\S9]{RV0} for more discussion.
\end{proof}

There is an analogous result for projective and injective model structures which the author first saw formulated in this way by Gambino in the context of a simplicial model category.

\begin{thm}[{\cite{gambino}}]\label{thm:gambino} If $\cM$ is a $\cV$-model category and $\cD$ is a small category, then the weighted colimit functor \[ -\otimes_\cD - \colon \cV^{\cD} \times \cM^{\cD^\op} \to \cM \] is left Quillen if the domain has the (injective, projective) or (projective, injective) model structure. Similarly, the weighted limit functor \[ \{-,-\}^\cD\colon (\cV^{\cD})^\op \times \cM^\cD \to \cM\] is right Quillen if the domain has the (projective, projective) or (injective, injective) model structure.
\end{thm}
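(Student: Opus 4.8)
The plan is to reduce both statements to the single hypothesis that $\cM$ is a $\cV$-model category, i.e.\ that its tensor, cotensor, and hom $(\otimes, \{-,-\}, \Map)$ form a Quillen two-variable adjunction (Definition \ref{defn:Vmodelcat}), by combining the adjointness of the Leibniz construction (Proposition \ref{prop:leibniz-facts}\eqref{itm:leib-adj}) with the transposition of lifting problems across a two-variable adjunction (Exercise \ref{exc:leibniz-lifting}). The point-set observation that makes this work is that each weighted (co)limit functor is one leg of a two-variable adjunction two of whose three members are \emph{computed pointwise}. Using the coend description of Definition \ref{defn:weighted-ii} and a Fubini argument, one obtains a two-variable adjunction
\[ \cV^\cD \times \cM^{\cD^\op} \xrightarrow{\,\otimes_\cD\,} \cM, \qquad (\cV^\cD)^\op \times \cM \xrightarrow{\,\{-,-\}\,} \cM^{\cD^\op}, \qquad (\cM^{\cD^\op})^\op \times \cM \xrightarrow{\,\Map_\cD\,} \cV^\cD, \]
in which $\{W, M\}(d) = \{W(d), M\}$ and $\Map_\cD(Y, M)(d) = \Map(Y(d), M)$, and, dually, a two-variable adjunction with members $\star \colon \cV^\cD \times \cM \to \cM^\cD$, $(W \star M)(d) = W(d) \otimes M$, the weighted limit $\{-,-\}^\cD \colon (\cV^\cD)^\op \times \cM^\cD \to \cM$, and $\Map \colon \cM^\op \times \cM^\cD \to \cV^\cD$, $\Map(M, X)(d) = \Map(M, X(d))$.

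For the weighted colimit in the (projective, injective) case, let $j$ be a projective cofibration of $\cV^\cD$ and $f$ an injective cofibration of $\cM^{\cD^\op}$; by Corollary \ref{cor:wfs-by-half} it is enough to solve the lifting problem of $j \leib\otimes_\cD f$ against an arbitrary (trivial) fibration $p$ of $\cM$, and I would transpose this, using Exercise \ref{exc:leibniz-lifting}, into the lifting problem of $j$ against the Leibniz hom $\widehat{\Map_\cD}(f, p)$ in $\cV^\cD$. Since $\Map_\cD$ and the pullbacks in $\cV^\cD$ are formed pointwise, $\widehat{\Map_\cD}(f, p)$ is at each $d$ the map $\widehat{\Map}(f(d), p)$; as injective cofibrations are pointwise cofibrations and $\Map$ is a right Quillen bifunctor, each such component is a fibration of $\cV$, trivial whenever $p$ is trivial (or whenever $f$ is a trivial injective cofibration, so that $f(d)$ is a trivial cofibration). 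Hence $\widehat{\Map_\cD}(f, p)$ is a pointwise, and so a projective, (trivial) fibration; and a projective cofibration lifts against projective trivial fibrations while a trivial projective cofibration lifts against projective fibrations. Tracking which of $j$, $f$, $p$ is trivial then gives exactly that $j \leib\otimes_\cD f$ is a cofibration of $\cM$, trivial if $j$ or $f$ is. The (injective, projective) case is the mirror image, transposing instead to the lifting property of the projective cofibration $f$ against the Leibniz cotensor $\widehat{\{j, p\}}$ in $\cM^{\cD^\op}$, which is at each $d$ the map $\widehat{\{j(d), p\}}$ and hence a projective (trivial) fibration because $\{-,-\} \colon \cV^\op \times \cM \to \cM$ is a right Quillen bifunctor and injective cofibrations are pointwise.

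For the weighted limit, saying $\{-,-\}^\cD$ is right Quillen means its Leibniz form sends a cofibration $j$ of $\cV^\cD$ and a fibration $g$ of $\cM^\cD$ to a fibration $\widehat{\{j, g\}}^\cD$ of $\cM$, trivial if either is. In the (projective, projective) case I would test $\widehat{\{j, g\}}^\cD$ against a (trivial) cofibration $i$ of $\cM$ and transpose, via Exercise \ref{exc:leibniz-lifting} for the adjunction $(\star, \{-,-\}^\cD, \Map)$, to the lifting property of the projective (trivial) cofibration $j$ against $\widehat{\Map}(i, g)$, which is at each $d$ the (trivial) fibration $\widehat{\Map}(i, g(d))$ of $\cV$ since $g$, being a projective fibration, has all components $g(d)$ fibrations---so $\widehat{\Map}(i, g)$ is a projective (trivial) fibration and the lift exists. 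The (injective, injective) case is the most painless: transpose instead to the lifting property of $j \leib\star i$ against $g$; since $\star$ is computed pointwise by the left Quillen bifunctor $\otimes$ and injective cofibrations of $\cV^\cD$ are pointwise, $j \leib\star i$ is at each $d$ the map $j(d) \leib\otimes i$, hence a pointwise, i.e.\ injective, (trivial) cofibration of $\cM^\cD$, against which the injective (trivial) fibration $g$ lifts. I expect the only real work to be bookkeeping: pinning down the variances in the two-variable adjunctions, confirming that the Leibniz constructions above really are componentwise, and matching ``pointwise (trivial) fibration/cofibration'' with the correct member of the projective/injective dictionary in each of the four cases. I anticipate no deep obstacle---the argument is arranged precisely so that all model-categorical content is absorbed into the hypothesis that $\cM$ is a $\cV$-model category.
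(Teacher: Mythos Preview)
Your proposal is correct and follows essentially the same approach as the paper: both arguments pass to a pointwise-computed adjoint in the two-variable adjunction involving the weighted (co)limit and reduce the Quillen bifunctor condition to the pointwise hypothesis that $(\otimes,\{-,-\},\Map)$ is a Quillen two-variable adjunction on $\cM$. The paper states this more tersely---checking directly that the right adjoint $\Map_\cD(F,m)=\Map(F-,m)$ is a right Quillen bifunctor because the relevant Leibniz pullbacks are formed pointwise---while you phrase it via transposed lifting problems and treat all four cases explicitly, but the content is the same.
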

\begin{proof} 
By Definition \ref{defn:quillen-two-var} we can prove both statements in adjoint form. The weighted colimit bifunctor of Exercise \ref{exc:enriched-weighted} has a right adjoint  (used to express the defining universal property of the weighted colimit) \[\Map(-,-) \colon (\cM^{\cD^\op})^\op \times \cM \to \cV^{\cD}\] which sends $F \in \cM^{\cD^\op}$ and $m \in \cM$ to $\Map(F-,m) \in \cV^\cD$. 

To prove the statement when $\cV^{\cD}$ has the projective and $\cM^{\cD^\op}$ has the injective model structure, we must show that this is a right Quillen bifunctor with respect to the pointwise (trivial) cofibrations in $\cM^{\cD^\op}$, (trivial) fibrations in $\cM$, and pointwise (trivial) fibrations in $\cV^{\cD}$. Because the limits involved in the definition of right Quillen bifunctors are also formed pointwise, this follows immediately from the corresponding property of the simplicial hom bifunctor, which was part of the definition of a simplicial model category. The other cases are similar. 
\end{proof}

The upshot of Theorem \ref{thm:gambino} is that there are two approaches to constructing a homotopy colimit: fattening up the diagram, as is achieved by the derived functors of \S\ref{ssec:holim}, or fattening up the weight. The famous Bousfield-Kan formulae for homotopy limits and colimits in the context of a simplicial model category define them to be weighted limits and colimits for a particular weight constructed as a projective cofibrant replacement of the terminal weight; see \cite{bousfield-kan} or \cite[\S 11.5]{riehl-cathtpy}. The Quillen two variable adjunction of Theorem \ref{thm:gambino} can be derived as in Theorem \ref{thm:hoVmonoidal} to express a homotopically-enriched universal property of the weighted limit or colimit, as representing ``homotopy coherent'' cones over or under a diagram, an intuition to be explored in the next section.

\section{Simplicial localizations}\label{sec:hammock}

Quillen's model categories provide a robust axiomatic framework within which to ``do homotopy theory.'' But the constructions of \S\ref{sec:holim} imply that the homotopy theories presented by model categories have all homotopy limits and homotopy colimits, which need not be the case in general. In this section we introduce a framework originally developed by Dwyer and Kan and re-conceptualized by Bergner which allows us to extend our notion of equivalence between homotopy theories introduced in \S\ref{ssec:quillen-equiv} to a more flexible notion of \emph{DK-equivalence} (after Dwyer and Kan) that identifies when any two homotopical categories are equivalent.

A mere equivalence of categories of fractions is insufficient to detect an equivalence of homotopy theories; instead a construction that takes into account the ``higher dimensional'' homotopical structure is required. To that end, Dwyer and Kan build, from any homotopical category $(\cK,\we)$,  a simplicial category $\sL^H(\cK,\we)$ called the \emph{hammock localization} \cite{DK-function} and demonstrate that their construction has a number of good products that we tour in \S\ref{ssec:hammock-loc}:
\begin{itemize}
\item The homotopy category $\h{\sL^H(\cK,\we)}$ is equivalent to the category of fractions $\cK[\we]^{-1}$ (Proposition \ref{prop:hammock-to-fractions}).
\item If $(\cK,\we)$ underlies a simplicial model category then the Kan complex enriched category $\cK_{\mathrm{cf}} \subset \cK$ is DK-equivalent to $\sL^H\cK$ (Proposition \ref{prop:hammock-of-model}).
\item More generally, $\sL^H(\cK,\we)$ provides a not-necessarily simplicial model category $(\cK,\we)$ with function complexes that have the correct mapping type even if the model structure is not simplicial (Proposition \ref{prop:hammock-gives-spaces}).
\item If two model categories are Quillen equivalent, then their hammock localizations are DK-equivalent (Proposition \ref{prop:DK-generalizes-quillen}).
\end{itemize}

The DK-equivalences are those simplicial functors that are bijective on homotopy equivalence classes of objects and define local equivalences of the mapping spaces constructed by the hammock localization. Zooming out a categorical level, the Bergner model structure on simplicially enriched categories gives a presentation of the homotopical category of homotopy theories, with the DK-equivalences as its weak equivalences. This is the subject of \S\ref{ssec:bergner}.

\subsection{The hammock localization}\label{ssec:hammock-loc}

There are two equivalent ways to present the data of a simplicially enriched category, either as a category equipped with a simplicial set of morphisms between each pair of objects, or simplicial diagram of categories $\cK_n$ of $n$-\textbf{arrows}, each of which is equipped with a constant set of objects.

\begin{exc}\label{exc:simplicial-category}
Prove that the following are equivalent:
\begin{enumerate}
\item A simplicially enriched category with objects $\ob\cK$.
\item A simplicial object $\cK_\bullet \colon \DDelta^\op \to \cat{Cat}$ in which each of the categories $\cK_n$ has objects $\ob\cK$ and each functor $\cK_n \to \cK_m$ is the identity on objects.
\end{enumerate}
\end{exc}

We being by introducing the notion of a DK-equivalence between simplicially enriched categories. 

\begin{defn}\label{defn:DK-equiv} A simplicial functor $F \colon \cK \to \cM$ is a \textbf{DK-equivalence} iff
\begin{enumerate}
\item\label{itm:DK-i} It defines an equivalence of homotopy categories $\h{F} \colon \h\cK \to \h\cM$.
\item\label{itm:DK-ii} It defines a local weak equivalence of mapping complexes: for all $X,Y \in \cK$, $\Map_\cK(X,Y) \wto \Map_\cM(FX,FY)$.
\end{enumerate}
\end{defn}

In the case where $F$ is identity on objects, condition \eqref{itm:DK-ii} subsumes condition \eqref{itm:DK-i}.

\begin{defn}[{\cite[2.1]{DK-calculating}}]  Let $\cK$ be a category with a wide subcategory $\we$, containing all the identity arrows. The \textbf{hammock localization} $\sL^H(\cK,\we)$ is a simplicial category with the same objects as $\cK$ and with the mapping complex $\Map(X,Y)$ defined to be the simplicial set whose $k$-simplices are ``reduced hammocks of width $k$'' from $X$ to $Y$, these being commutative diagrams
\[
\begin{tikzcd} & A_{0,1} \arrow[d, "\wr"] \arrow[r, no head] & A_{0,2} \arrow[d, "\wr"] \arrow[r, no head] & \cdots \arrow[r, no head] & A_{0,n-1} \arrow[d, "\wr"] \arrow[ddr, no head] \\
& A_{1,1} \arrow[d, "\wr"] \arrow[r, no head] & A_{1,2} \arrow[d, "\wr"] \arrow[r, no head] & \cdots \arrow[r, no head] & A_{1,n-1} \arrow[d, "\wr"]\arrow[dr, no head] \\ X \arrow[uur, no head] \arrow[ur, no head] \arrow[dr, no head] & \vdots\arrow[d, "\wr"] & \vdots\arrow[d, "\wr"] & & \vdots\arrow[d, "\wr"] & Y \\ & A_{k,1} \arrow[r, no head] & A_{k,2} \arrow[r, no head] & \cdots \arrow[r, no head] & A_{k,n-1} \arrow[ur, no head]
\end{tikzcd}
\]
where the length of the hammock is any integer $n \geq 1$ so that
\begin{enumerate}
\item all vertical maps are in $\we$,
\item in each column of horizontal morphisms all maps go in the same direction and if they go left then they are in $\we$, and 
\item the maps in adjacent columns go in different directions.
\end{enumerate}
The graded set of reduced hammocks of width $k$ from $X$ to $Y$ becomes a simplicial set $\Map(X,Y)$ in which
\begin{enumerate}
\item[(i)$'$] faces are defined by omitting rows and 
\item[(ii)$'$] degeneracies are defined by duplicating rows.
\end{enumerate}
Composition is defined by horizontally pasting hammocks and then reducing by 
\begin{enumerate}
\item[(i)$''$] composing adjacent columns whose maps point in the same direction and
\item[(ii)$''$] omitting any column which contains only identity maps.
\end{enumerate}
\end{defn}

There is a canonical functor $\cK \to \sL^H(\cK,\we)$ whose image is comprised of dimension zero length 1 hammocks pointing forwards. 

\begin{exc}\label{exc:hammock-inverts} Verify that the composite of the functor $\cK \to \sL^H(\cK,\we)$ just described with the quotient functor $\sL^H(\cK,\we) \to \h\sL^H(\cK,\we)$ that collapses each mapping space onto its set of path components inverts the weak equivalences in $\cK$, sending each to an isomorphism in the homotopy category  $\h\sL^H(\cK,\we)$.
\end{exc}

In the hammock localization \begin{quote} cancelation in any dimension is achieved not by ``imposing relations'' in the same dimension, but by ``imposing homotopy relations'', i.e.~adding maps, in the next dimension, \cite[\S 3]{DK-calculating}
\end{quote}
in contrast with the category of fractions constructed in \S\ref{sec:fractions}. By considering the effect of these ``homotopy relations,'' it is straightforward to see that the induced functor from the category of fractions to the homotopy category of the hammock localization is an isomorphism of categories.

\begin{prop}[{Dwyer-Kan \cite[3.2]{DK-function}}]\label{prop:hammock-to-fractions} The canonical functor $\cK \to \sL^H(\cK,\we)$ induces an isomorphism of categories $\cK[\we^{-1}] \cong \h\sL^H(\cK,\we)$.
\end{prop}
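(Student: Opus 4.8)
The plan is to verify that the functor $\cK \to \sL^H(\cK,\we)$, composed with the quotient $\sL^H(\cK,\we) \to \h\sL^H(\cK,\we)$, satisfies the universal property of Proposition \ref{prop:fractions-UP} characterizing $\cK[\we^{-1}]$ up to isomorphism. By Exercise \ref{exc:hammock-inverts} the composite $\cK \to \h\sL^H(\cK,\we)$ already inverts $\we$, so it factors through a canonical functor $\cK[\we^{-1}] \to \h\sL^H(\cK,\we)$; the task is to show this is an isomorphism, i.e.\ bijective on objects and fully faithful. Bijectivity on objects is immediate since all three constructions have object set $\ob\cK$. So everything reduces to showing that every morphism of $\h\sL^H(\cK,\we)$ comes from a zig-zag in $\cK$ (fullness) and that two such zig-zags become equal in $\h\sL^H(\cK,\we)$ if and only if they are already identified by the Gabriel--Zisman relations of Definition \ref{defn:cat-of-fractions} (faithfulness).

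For fullness, I would take a morphism $X \to Y$ in $\h\sL^H(\cK,\we)$, represented by a vertex of $\Map(X,Y)$, that is a reduced hammock of width $0$. Such a hammock is exactly a zig-zag $X = A_0 \to A_1 \leftarrow A_2 \to \cdots$ in which every backward-pointing arrow lies in $\we$, which is precisely the data of a morphism of $\cK[\we^{-1}]$; this shows the canonical functor hits every morphism. For faithfulness, the key observation — the content of the quoted slogan from \cite{DK-calculating} — is that two width-$0$ hammocks representing maps $X \to Y$ are identified in $\pi_0\Map(X,Y)$ iff they are connected by a finite zig-zag of $1$-simplices, i.e.\ width-$1$ reduced hammocks. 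I would analyze what a width-$1$ hammock between two given width-$0$ zig-zags encodes: its two rows are the source and target zig-zags, and the vertical weak equivalences together with the commutativity of the ladder produce exactly the elementary moves that generate the Gabriel--Zisman equivalence relation. Concretely, one checks that (a) a vertical weak equivalence sitting over a pair of composable forward arrows lets one compose them or insert/delete identities; (b) a vertical weak equivalence straddling an adjacent backward--forward pair of copies of some $s \in \we$ realizes the cancellation move $\begin{tikzcd}[ampersand replacement=\&] x \arrow[r,"s"] \& y \& x \arrow[l,"s"'] \end{tikzcd} \rightsquigarrow \id_x$; and conversely every Gabriel--Zisman relation is witnessed by an explicit width-$1$ hammock. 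Composition of such elementary moves corresponds to horizontal pasting and reduction of hammocks, which is exactly how composition in $\sL^H$ is defined, so the equivalence relation generated by width-$1$ hammocks coincides with the Gabriel--Zisman relation.

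The main obstacle I anticipate is the bookkeeping in the faithfulness direction: a general width-$1$ hammock can have arbitrary length $n$ and its columns can point in either direction, so one must argue carefully that after reduction any such hammock decomposes into a finite composite of the elementary moves listed above, and that no move can identify zig-zags that the Gabriel--Zisman relations keep distinct. The cleanest route is probably to observe that both sides are quotients of the set of all zig-zags from $X$ to $Y$ with admissible backward arrows, and to show the two generating relations agree by exhibiting, for each Gabriel--Zisman generator, a witnessing $1$-simplex, and for each $1$-simplex, a factorization into Gabriel--Zisman generators — with the reduction procedure ensuring well-definedness of composition on both sides. Once fullness and faithfulness are established, Proposition \ref{prop:fractions-UP} (or simply the fact that a bijective-on-objects, fully faithful functor is an isomorphism) gives $\cK[\we^{-1}] \cong \h\sL^H(\cK,\we)$.
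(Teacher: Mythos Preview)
Your approach is correct and matches the paper's almost exactly: construct the comparison via the universal property, note it is bijective on objects and full, and for faithfulness reduce to showing that the two rows of any width-$1$ hammock represent the same morphism in $\cK[\we^{-1}]$. The only redundancy is your ``conversely'' direction---that every Gabriel--Zisman relation is witnessed by a $1$-simplex---which you do not need, since the comparison functor $\cK[\we^{-1}] \to \h\sL^H(\cK,\we)$ is already well-defined by the universal property, so faithfulness only requires the forward implication.
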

\begin{proof}
The comparison functor $\cK[\we^{-1}] \to \h\sL^H(\cK,\we)$ induced by Exercise \ref{exc:hammock-inverts} and  the universal property of Proposition \ref{prop:fractions-UP} is clearly bijective on objects and  full, homotopy classes in $\h\sL^H(\cK,\we)$ being represented by zig zags whose ``backwards'' maps lie in $\we$. To see that this functor is faithful it suffices to consider a 1-simplex in $\Map(X,Y)$
\[
\begin{tikzcd}[row sep=tiny] & A_{0,1} \arrow[dd, "\wr"] \arrow[r, no head] & A_{0,2} \arrow[dd, "\wr"] \arrow[r, no head] & \cdots \arrow[r, no head] & A_{0,n-1} \arrow[dd, "\wr"] \arrow[dr, no head] \\
X \arrow[dr, no head] \arrow[ur, no head] & && & & Y \\
& A_{1,1} \arrow[r, no head] & A_{1,2} \arrow[r, no head] & \cdots \arrow[r, no head] & A_{1,n-1} \arrow[ur, no head]
\end{tikzcd}
\] and argue that the top and bottom zig zags define the same morphism in $\cK[\we^{-1}]$. This is an easy exercise in diagram chasing, applying the rules of Definition \ref{defn:cat-of-fractions}.
\end{proof}

%\begin{exc}{\color{darkred} From \cite[\S 11]{DHKS} maps in the homotopy category of a model category are representable as 3-arrow zig zags (in fact there is a 3-arrow calculus) with the equivalence relation between them also as a three-arrow zig-zag of weak equivalences (try to reverse the direction here). This suggests the sense in which the hammock localization presents the homotopy category with data in place of relations }
%\end{exc} IF ADDING THIS BACK, PUT IT AFTER THE NEXT SENTENCE

The previous result applies to a model category $(\cM,\we)$ in which case we see that $\sL^H(\cM,\we)$ is a higher dimensional incarnation of the homotopy category, equipping $\cM[\we^{-1}]$ with mapping spaces whose path components correspond to arrows in the category of fractions.  
 A further justification that the mapping spaces of the hammock localization have the correct homotopy type, not just the correct sets of path components, proceeds as follows.  A \textbf{simplicial resolution} of $Y \in \cM$ is a Reedy fibrant simplicial object $Y_\bullet$ together with a weak equivalence $Y \wto Y_0$. \textbf{Cosimplicial resolutions} $X^\bullet \to X$ are defined dually. Every object has a simplicial and cosimplicial resolution, defined as the Reedy fibrant  replacement of the constant simplicial object in $\cM^{\DDelta^\op}$ and the Reedy cofibrant replacement of the constant cosimplicial object in $\cM^{\DDelta}$, respectively.

\begin{prop}[{Dwyer-Kan \cite[4.4]{DK-function}}]\label{prop:hammock-gives-spaces} For any cosimplicial resolution $X^\bullet \to X$ and simplicial resolution $Y \to Y_\bullet$, the diagonal of the bisimplicial set $\cM(X^\bullet, Y_\bullet)$ has the same homotopy type of $\Map_{\sL^H(\cM,\we)}(X,Y)$, and if $X$ or $Y$ are respectively cofibrant or fibrant the simplicial sets $\cM(X,Y_\bullet)$ and $\cM(X^\bullet, Y)$ do as well.
\end{prop}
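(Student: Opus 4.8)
The plan is to reduce the statement to known facts about mapping spaces in simplicially enriched categories obtained from model categories, following the strategy of Dwyer--Kan. First I would recall the three-arrow description of the hammock localization: Dwyer and Kan prove (in \cite{DK-calculating, DK-function}) that $\Map_{\sL^H(\cM,\we)}(X,Y)$ is weakly equivalent to the ``standard'' mapping space built from zig-zags of the form $X \wtoleft \cdot \to \cdot \wtoleft Y$, and more generally that all the standard constructions of homotopy function complexes agree up to weak equivalence whenever the source $X$ is cofibrant and the target $Y$ is fibrant. So the heart of the argument is to identify $\cM(X^\bullet, Y_\bullet)$, $\cM(X, Y_\bullet)$, and $\cM(X^\bullet, Y)$ with such homotopy function complexes.

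The key steps, in order, would be: (1) Observe that for a cosimplicial resolution $X^\bullet \to X$ and a simplicial resolution $Y \to Y_\bullet$, the bisimplicial set $(\,[p],[q]\,) \mapsto \cM(X^p, Y_q)$ has, in each fixed simplicial degree, the homotopy type one expects from the Reedy (co)fibrancy hypotheses: the Reedy cofibrancy of $X^\bullet$ means each latching map $L^pX^\bullet \to X^p$ is a cofibration, and dually each $Y_q \to M^qY_\bullet$ is a fibration. (2) Apply the standard fact (a Reedy-model-structure computation, using that $\cM(-,-) \colon \cM^\op \times \cM \to \cat{Set}$ is, at the point-set level, compatible with the tensor/cotensor structure via Exercise \ref{exc:unenriched-as-enriched}, so that $\cM(X^\bullet, Y_q)$ and $\cM(X^p, Y_\bullet)$ are homotopically well-behaved cosimplicial/simplicial sets) that in each variable separately the inclusions induced by $X^\bullet \to X$ and $Y \to Y_\bullet$ are weak equivalences onto a homotopically meaningful complex when the other entry is cofibrant, resp.\ fibrant. (3) Invoke the Bousfield--Kan-type lemma that a map of bisimplicial sets which is a weak equivalence in each row (or column) induces a weak equivalence on diagonals; this lets one collapse $\cM(X^\bullet, Y_\bullet)$ down to $\cM(X, Y_\bullet)$ when $X$ is cofibrant and to $\cM(X^\bullet, Y)$ when $Y$ is fibrant, and all the way to a one-sided resolution when appropriate. (4) Finally, cite Dwyer--Kan's theorem that the hammock-localization mapping space agrees with any of these standard homotopy function complexes, which is exactly the comparison with $\Map_{\sL^H(\cM,\we)}(X,Y)$ asserted in the Proposition.

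I expect the main obstacle to be step (2)--(3): carefully checking that the simplicial and cosimplicial resolutions interact correctly with the hom-bifunctor so that the relevant realizations (diagonals of bisimplicial sets) compute the right homotopy type. This is where one genuinely uses the Reedy model structure on $\cM^\DDelta$ and $\cM^{\DDelta^\op}$ introduced in \S\ref{ssec:reedy}, together with the fact that $\cM(-, Y_\bullet)$ sends a cofibrant replacement to a Reedy trivial fibration of simplicial sets (and dually), so that the displayed comparisons are pointwise weak equivalences of (co)simplicial simplicial sets and hence diagonal weak equivalences. The cleanest route is to not reprove these resolution lemmas from scratch but to cite \cite[4.4]{DK-function} and the surrounding results of \cite{DK-function, DK-calculating} for the core statement, and supply only the bookkeeping that matches our notation (cosimplicial resolution as Reedy cofibrant replacement of a constant diagram, simplicial resolution as Reedy fibrant replacement) to their hypotheses. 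Since the Proposition is explicitly attributed to Dwyer--Kan, a proof at the level of this survey can legitimately consist of this translation plus a pointer to the original argument.
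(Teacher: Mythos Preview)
The paper does not prove this proposition at all: it is simply stated with attribution to Dwyer--Kan \cite[4.4]{DK-function} and then immediately used to deduce the next result. Your proposal therefore already exceeds what the paper provides, and your concluding observation---that at the level of this survey a proof can legitimately consist of a translation of notation together with a pointer to the original argument---is precisely what the paper does, minus even the translation.

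That said, your sketch of how the Dwyer--Kan argument actually runs is accurate in outline. The one place to be a little careful is step (4): the comparison between the hammock mapping space and the homotopy function complexes built from (co)simplicial resolutions is the genuine content of the cited theorem, not something that follows formally from the Reedy bookkeeping in steps (1)--(3). Dwyer and Kan prove this by comparing various reduced-hammock and restricted-zig-zag models and showing they are all weakly equivalent; this is not a one-line citation but several pages of \cite{DK-function}. So if you were to expand your sketch into a self-contained argument, that is where the real work lies, and it is appropriate that you flag it as the part to be cited rather than reproved.
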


As a corollary of this result one can show:

\begin{prop}[{Dwyer-Kan \cite[4.7, 4.8]{DK-function}}]\label{prop:hammock-of-model} Let $(\cM,\we)$ be the homotopical category underlying a simplicial model category $\cM$. Then for cofibrant $X$ and fibrant $Y$, $\Map_{\cM}(X,Y)$ and $\Map_{\sL^H(\cM,\we)}(X,Y)$ have the same homotopy type and hence the simplicial categories $\cM_{\mathrm{cf}}$ and $\sL^H(\cM,\we)$ are DK-equivalent.
\end{prop}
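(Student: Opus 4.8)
The plan is to pin down the homotopy type of $\Map_{\sL^H(\cM,\we)}(X,Y)$ by feeding a convenient resolution into Proposition \ref{prop:hammock-gives-spaces}, and then to upgrade the resulting equivalences to a DK-equivalence of simplicial categories.

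A simplicial model category carries a canonical supply of resolutions. Tensoring with the Yoneda embedding $\DDelta^\bullet\in\cat{sSet}^{\DDelta}$ produces a cosimplicial object $X\otimes\DDelta^\bullet$ and, dually, cotensoring produces a simplicial object $Y^{\DDelta^\bullet}$; since $\DDelta^\bullet$ is Reedy cofibrant and $\otimes$ is a left Quillen bifunctor, a check of the relative latching (resp.\ matching) maps via the pushout--product axiom shows that $X\otimes\DDelta^\bullet$ is Reedy cofibrant when $X$ is cofibrant and $Y^{\DDelta^\bullet}$ is Reedy fibrant when $Y$ is fibrant, while $X\otimes\Delta^0=X$ and $Y^{\Delta^0}=Y$ make the augmentations $X\otimes\DDelta^\bullet\to X$ and $Y\to Y^{\DDelta^\bullet}$ a cosimplicial resolution and a simplicial resolution respectively. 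Proposition \ref{prop:hammock-gives-spaces}, applied with $X$ cofibrant, then gives a homotopy equivalence $\Map_{\sL^H(\cM,\we)}(X,Y)\simeq\cM(X,Y^{\DDelta^\bullet})$, and the two-variable adjunction underlying the simplicial enrichment provides natural isomorphisms $\cM(X,Y^{\Delta^n})\cong\cM(X\otimes\Delta^n,Y)\cong\Map_\cM(X,Y)_n$, i.e.\ an isomorphism of simplicial sets $\cM(X,Y^{\DDelta^\bullet})\cong\Map_\cM(X,Y)$. Hence $\Map_\cM(X,Y)\simeq\Map_{\sL^H(\cM,\we)}(X,Y)$ for every cofibrant $X$ and fibrant $Y$.

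To conclude the DK-equivalence (Definition \ref{defn:DK-equiv}), the homotopy-category clause is essentially free: $\cM_{\mathrm{cf}}$ is Kan-complex enriched by Exercise \ref{exc:kan-enriched-simp-model}, its homotopy category $\h\cM_{\mathrm{cf}}$ has hom-sets $\pi_0\Map_\cM(X,Y)=\Hom_\cM(X,Y)_{/\sim}$ and so coincides with Quillen's homotopy category of fibrant--cofibrant objects (Definition \ref{defn:nice-htpy-cat}), hence is equivalent to $\Ho\cM$ by Theorem \ref{thm:equivalent-ho}, while $\h\sL^H(\cM,\we)\cong\cM[\we^{-1}]\cong\Ho\cM$ by Proposition \ref{prop:hammock-to-fractions} and Theorem \ref{thm:equivalent-ho}. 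What remains is to realize these identifications by an actual zig-zag of simplicial functors and to see it is a local weak equivalence of mapping complexes. Following Dwyer--Kan, one fixes \emph{functorial} cosimplicial and simplicial resolutions and assembles them into an auxiliary simplicially enriched category on the fibrant--cofibrant objects whose mapping spaces are the diagonals of the bisimplicial sets $\cM(X^\bullet,Y_\bullet)$; the augmentations $X^\bullet\to X$ and $Y\to Y_\bullet$ then induce simplicial functors from it to $\cM_{\mathrm{cf}}$ and to $\sL^H(\cM,\we)$, and Proposition \ref{prop:hammock-gives-spaces} together with the computation of the previous paragraph shows that each is a local weak equivalence.

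The homotopy-theoretic content is thus entirely concentrated in Proposition \ref{prop:hammock-gives-spaces}, and the bookkeeping of the last paragraph is the only real obstacle: one must choose the functorial resolutions coherently enough that composition of the diagonal mapping spaces is strictly associative, so that one genuinely obtains simplicially enriched categories and simplicial functors rather than a homotopy-coherent diagram of them --- the careful ``standard resolution'' argument carried out in \cite{DK-function}. For a survey I would cite that construction and let the two displayed computations above carry the conceptual weight.
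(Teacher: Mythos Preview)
Your proposal is correct and follows precisely the route the paper indicates: the paper offers no detailed proof, merely stating that the result ``can be shown as a corollary'' of Proposition~\ref{prop:hammock-gives-spaces} with a citation to \cite[4.7, 4.8]{DK-function}, and you have filled in exactly the expected argument --- building the resolutions $X\otimes\DDelta^\bullet$ and $Y^{\DDelta^\bullet}$ from the simplicial structure, invoking Proposition~\ref{prop:hammock-gives-spaces}, and identifying $\cM(X,Y^{\DDelta^\bullet})\cong\Map_\cM(X,Y)$ via the two-variable adjunction. Your candid acknowledgment that the coherence bookkeeping for the zig-zag of simplicial functors must be outsourced to \cite{DK-function} matches the paper's own deferral.
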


The statement of this result requires some explanation. If $\cK$ is a simplicial category whose underlying category of 0-arrows $\cK_0$ has a subcategory of weak equivalences $\we$, then these weak equivalences degenerate to define homotopical categories $(\cK_n,\we)$ for each category of $n$-arrows in $\cK$. For each $n$ we may form the hammock localization $\sL^H(\cK_n,\we)$.  As $n$ varies, this gives a bisimplicial sets of mapping complexes for each fixed pair of objects of $\cK$. The mapping complexes in the hammock localization $\sL^H(\cK,\we)$ are defined to be the diagonals of these bisimplicial sets. In the case of a simplicial model category $\cM$, the hammock localization $\sL^H(\cM,\we)$ is DK-equivalent to the hammock localization $\sL^H(\cM_0,\we)$ of the underlying unenriched homotopical category.

\begin{prop}[{\cite[5.4]{DK-function}}]\label{prop:DK-generalizes-quillen} A Quillen equivalence
\[\begin{tikzcd}
\cM \arrow[r, bend left, "F"] \arrow[r, phantom, "\perp"] & \cN \arrow[l, bend left, "G"]
\end{tikzcd}
\] induces DK-equivalences
\[ \sL^H(\cM_{\mathrm{c}},\we) \wto \sL^H(\cN_{\mathrm{c}},\we) \qquad \sL^H(\cN_{\mathrm{f}},\we) \wto \sL^H(\cM_{\mathrm{f}},\we)\]
Moreover, for any model category the inclusions
\[\sL^H(\cM_{\mathrm{c}},\cof \cap \we) \wto \sL^H(\cM_{\mathrm{c}},\we_{\mathrm{c}}) \wto \sL^H(\cM,\we)\]
are DK-equivalences  and hence $\sL^H(\cM,\we)$ and $\sL^H(\cN,\we)$ are DK-equivalent.
\end{prop}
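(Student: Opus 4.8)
The plan is to verify that each simplicial functor named in the statement is a DK-equivalence in the sense of Definition~\ref{defn:DK-equiv}, and then to obtain the last clause by splicing a zig-zag. Since $F$ is left Quillen it preserves cofibrant objects, and by the dual of Ken Brown's Lemma~\ref{lem:kb}\eqref{itm:kb-ii} it is homotopical on cofibrant objects; hence $F$ restricts to a homotopical functor $(\cM_{\mathrm{c}},\we_{\mathrm{c}}) \to (\cN_{\mathrm{c}},\we_{\mathrm{c}})$ and induces the first displayed simplicial functor, and dually $G$ induces the second. The inclusions in the ``Moreover'' clause are homotopical functors and so induce simplicial functors as well. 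Granting that all of these are DK-equivalences, the final assertion follows by composing
\[ \sL^H(\cM,\we) \xleftarrow{\ \sim\ } \sL^H(\cM_{\mathrm{c}},\we_{\mathrm{c}}) \xrightarrow[\ \sim\ ]{\ F\ } \sL^H(\cN_{\mathrm{c}},\we_{\mathrm{c}}) \xrightarrow{\ \sim\ } \sL^H(\cN,\we),\]
the two outer maps coming from the ``Moreover'' clause applied to $\cM$ and to $\cN$. For the two identity-on-objects inclusions it suffices, by the remark following Definition~\ref{defn:DK-equiv}, to check condition~\eqref{itm:DK-ii}; for $\sL^H(F)$, $\sL^H(G)$, and the full inclusion $\sL^H(\cM_{\mathrm{c}},\we_{\mathrm{c}}) \to \sL^H(\cM,\we)$ one checks conditions~\eqref{itm:DK-i} and~\eqref{itm:DK-ii} in turn.

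Condition~\eqref{itm:DK-i} is formal. By Proposition~\ref{prop:hammock-to-fractions}, $\h\sL^H(\cK,\we) \cong \cK[\we^{-1}]$ for any category $\cK$ with a wide subcategory of weak equivalences, so the full inclusion induces the functor $\cM_{\mathrm{c}}[\we_{\mathrm{c}}^{-1}] \to \cM[\we^{-1}] \cong \Ho\cM$ (Theorem~\ref{thm:equivalent-ho}); since cofibrant replacement is a left deformation whose image lies in $\cM_{\mathrm{c}}$, this functor is an equivalence of categories, as observed after Definition~\ref{defn:leftdef}. Under these identifications $\sL^H(F)$ induces the functor $\cM_{\mathrm{c}}[\we_{\mathrm{c}}^{-1}] \to \cN_{\mathrm{c}}[\we_{\mathrm{c}}^{-1}]$ obtained by restricting $F$, which transports to the total left derived functor $\bL F = FQ$ on homotopy categories — on $\cM_{\mathrm{c}}$ the deformation $q \colon Q \To 1$ is a weak equivalence between cofibrant objects, so $FQ$ and $F$ agree there — and $\bL F$ is an equivalence of categories because $F \dashv G$ is a Quillen equivalence (condition~(1) of Definition~\ref{defn:quillen-equiv}). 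Dually $\sL^H(G)$ realizes $\bR G$, again an equivalence.

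Condition~\eqref{itm:DK-ii} is where the content lies. For cofibrant $X$ and arbitrary $Y$, Proposition~\ref{prop:hammock-gives-spaces} models $\Map_{\sL^H(\cM,\we)}(X,Y)$ by the simplicial set $\cM(X,\tilde Y)$ built from a simplicial resolution $\tilde Y$ of $Y$; when $Y$ is itself cofibrant one may take $\tilde Y$ to be Reedy fibrant \emph{and} levelwise cofibrant, by Reedy-fibrantly replacing the constant object at $Y$ in $\cM^{\DDelta^\op}$, so that $\tilde Y$ and the hammocks implementing the equivalence of Proposition~\ref{prop:hammock-gives-spaces} stay inside $\cM_{\mathrm{c}}$. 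This shows that the full inclusion $\sL^H(\cM_{\mathrm{c}},\we_{\mathrm{c}}) \to \sL^H(\cM,\we)$ is a local weak equivalence, and — together with Dwyer--Kan's analysis of how hammock function complexes depend on the chosen class of weak equivalences \cite{DK-calculating} — also disposes of the first ``Moreover'' inclusion. For $\sL^H(F)$ one compares mapping complexes across the adjunction: $\cN(FX,Z) \cong \cM(X,GZ)$, and if $Z$ is a Reedy fibrant, levelwise cofibrant resolution of $FY$ then $GZ$ is again Reedy fibrant because $G$ is right Quillen, hence a simplicial resolution of $GZ_0$; here $Z_0$ is a fibrant replacement of $FY$, so $GZ_0$ is weakly equivalent to $Y$ by condition~(4) of Definition~\ref{defn:quillen-equiv} together with Ken Brown's lemma. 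Consequently $\Map_{\sL^H(\cN_{\mathrm{c}},\we_{\mathrm{c}})}(FX,FY) \simeq \cN(FX,Z) \cong \cM(X,GZ) \simeq \Map_{\sL^H(\cM_{\mathrm{c}},\we_{\mathrm{c}})}(X,Y)$, and tracing through the comparison of Proposition~\ref{prop:hammock-gives-spaces} identifies this chain of equivalences with the map induced by $\sL^H(F)$. The statement for $\sL^H(G)$ is dual: one uses cosimplicial rather than simplicial resolutions, fibrant codomains rather than cofibrant domains, and the counit clause of condition~(4) of Definition~\ref{defn:quillen-equiv}.

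The main obstacle is the execution of condition~\eqref{itm:DK-ii}. One must reprove enough of Dwyer--Kan's theory of homotopy function complexes to be sure that (a) the model $\cM(X,\tilde Y)$ for $\Map_{\sL^H(\cM,\we)}(X,Y)$ is insensitive both to restricting the ambient homotopical category to $\cM_{\mathrm{c}}$ and to replacing $\we$ by $\cof\cap\we$, which forces one to work throughout with resolutions that are simultaneously Reedy (co)fibrant and levelwise (co)fibrant — that is, with frames — and to track that all the relevant hammocks and augmentation maps have the required form; and that (b) the map on mapping complexes induced by $\sL^H(F)$ is genuinely homotopic to the equivalence assembled from the adjunction transpose and the Quillen-equivalence criterion, i.e.\ that the pertinent squares of resolutions commute up to homotopy. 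Everything else — producing the induced functors, verifying condition~\eqref{itm:DK-i}, and assembling the final zig-zag — is routine given the machinery already developed.
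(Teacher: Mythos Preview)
The paper does not actually prove this proposition: it is stated with a citation to \cite[5.4]{DK-function} and no argument is given, after which the text proceeds directly to \S\ref{ssec:bergner}. So there is no ``paper's own proof'' to compare against; your proposal is a genuine attempt to supply what the paper omits.

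Your outline follows the expected Dwyer--Kan strategy and is broadly sound: reduce condition~\eqref{itm:DK-i} to the known equivalences $\Ho\cM_{\mathrm{c}}\simeq\Ho\cM$ and $\bL F$ being an equivalence, and handle condition~\eqref{itm:DK-ii} via the resolution models of Proposition~\ref{prop:hammock-gives-spaces} together with the adjunction bijection $\cN(FX,Z)\cong\cM(X,GZ)$. The honest self-assessment at the end is accurate: the real work is in (a) showing the resolution model is insensitive to restricting to $\cM_{\mathrm{c}}$ and to shrinking $\we$ to $\cof\cap\we$, and (b) identifying the assembled equivalence with the map actually induced by $\sL^H(F)$. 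For (a), the claim that a Reedy fibrant replacement of the constant simplicial object at a cofibrant $Y$ is automatically levelwise cofibrant is not quite right in general; you need to arrange frames (Reedy cofibrant \emph{and} Reedy fibrant replacements), and for the $\cof\cap\we$ inclusion you also need the comparison results of \cite{DK-calculating} that you cite but do not reprove. For (b), the phrase ``tracing through the comparison of Proposition~\ref{prop:hammock-gives-spaces} identifies this chain of equivalences with the map induced by $\sL^H(F)$'' hides a genuine naturality check that Dwyer--Kan carry out carefully. None of this is wrong in spirit, but as written it is a sketch that defers to \cite{DK-function, DK-calculating} at precisely the points where the paper itself defers to them.
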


\subsection{A model structure for homotopy coherent diagrams}\label{ssec:bergner}

Several of Dwyer and Kan's proofs of the results in the previous subsection make use of a model structure on the category of simplicial categories with a fixed set of objects and with identity-on-objects functors. But this restriction to categories with the same objects is somewhat unnatural. The Bergner model structure is the extension of Dwyer and Kan's model structure that drops that restriction, unifying the notions of DK-equivalence, free simplicial category (also known as ``simplicial computad''), and Kan complex enriched simplicial category, the importance of which will be made clear in \S\ref{sec:quasi}.

\begin{thm}[{Bergner \cite{bergner}}]\label{thm:bergner} There exists a model structure on the category of simplicially enriched categories whose:
\begin{itemize}
\item weak equivalences are the DK equivalences,%: those simplicial functors $F \colon \cC \to \cD$ that are
%\begin{itemize}
%\item define local weak homotopy equivalences $F \colon \cC(x,y) \to \cD(Fx,Fy)$ of simplicial sets for all pairs of objects $x,y \in \cC$
%\item essentially surjective and hence bijective on objects at the level of homotopy categories $F \colon \cat{h}\cC \to \cat{h}\cD$.
%\end{itemize}
\item cofibrant objects are the \textbf{simplicial computads}: those simplicial categories that, when considered as a simplicial object $\cC_\bullet \colon \DDelta^\op \to \cat{Cat}$ have the property that:
\begin{itemize}
\item each category $\cC_n$ is freely generated by the reflexive directed graph of its \textbf{atomic arrows}, those admitting no non-trivial factorizations
\item the degeneracy operators $[m] \twoheadrightarrow [n]$ in $\DDelta$ preserve atomic arrows, and 
\end{itemize}
\item fibrant objects are the \textbf{Kan complex enriched categories}: those simplicial categories whose mapping spaces  are all Kan complexes.
\end{itemize}
\end{thm}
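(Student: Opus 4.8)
The plan is to realise this as a cofibrantly generated model structure and then to identify the cofibrant and fibrant objects by inspection. Write $\cat{sCat}$ for the category of simplicially enriched categories; it is the category of models for a finite-limit sketch, hence locally presentable, so it is bicomplete and every small set of maps permits Quillen's small object argument. Take the weak equivalences to be the DK-equivalences of Definition \ref{defn:DK-equiv}; closure under the two-of-three property and under retracts follows from the corresponding facts for weak homotopy equivalences of simplicial sets and for equivalences of categories. Declare a simplicial functor $F\colon\cC\to\cD$ to be a \emph{fibration} when (a) each $\Map_\cC(x,y)\to\Map_\cD(Fx,Fy)$ is a Kan fibration, and (b) $F$ has the ``lifting of equivalences'' property: any $1$-simplex of a mapping complex of $\cD$ issuing from an object in the image of $F$ and becoming invertible in $\h\cD$ lifts along $F$. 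The cofibrations are then forced (cf.\ Exercise \ref{exc:model-given-by}) as the maps with the left lifting property against the maps that are simultaneously fibrations and DK-equivalences. The assertion of the theorem is that these choices form a model structure with the stated (co)fibrant objects, and I would prove it by verifying the hypotheses of the recognition principle for cofibrantly generated model categories, following \cite{bergner} and \cite{hirschhorn}.

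For the generating sets, write $\Sigma K$ for the simplicial category with objects $\{0,1\}$, with $\Map(0,1)=K$, with $\Map(0,0)=\Map(1,1)=\ast$, and with $\Map(1,0)=\emptyset$; the assignment $K\mapsto\Sigma K$ is functorial and colimit preserving. Take the generating cofibrations to be the maps $\Sigma\partial\Delta^n\to\Sigma\Delta^n$ for $n\ge 0$, where $\partial\Delta^0:=\emptyset$, so that this family contains $\emptyset\to\Sigma\Delta^0$ — the one generator that creates objects. Take the generating trivial cofibrations to be the maps $\Sigma\Lambda^n_k\to\Sigma\Delta^n$ together with a single inclusion $\{0\}\hookrightarrow H$ of the terminal one-object simplicial category into a chosen cofibrant two-object simplicial category $H$ modelling the ``free-living equivalence'' (both mapping complexes contractible, composites coherently identities). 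A short transposition argument then shows that the maps with the right lifting property against the generating cofibrations are exactly those that are surjective on objects and a trivial Kan fibration on each mapping complex — and one checks these are DK-equivalences — while the maps with the right lifting property against the generating trivial cofibrations are exactly the fibrations above, the horn generators forcing condition (a) and $\{0\}\hookrightarrow H$ forcing condition (b).

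What then remains is the recognition check, and here all but one ingredient is formal: the two-of-three and retract axioms, the fact that the maps right-orthogonal to the generating cofibrations are DK-equivalences, and one further closure condition (for instance, that every fibration which is also a DK-equivalence has the right lifting property against every cofibration). \textbf{The main obstacle is proving that every relative cell complex built from the generating trivial cofibrations is a DK-equivalence.} This is delicate because colimits in $\cat{sCat}$ do not commute with the mapping-complex functors: a pushout along $\Sigma\Lambda^n_k\to\Sigma\Delta^n$ freely adjoins a new generating arrow, and therefore also every formal composite in which that arrow appears, so the mapping complexes of the pushout must be computed as a colimit indexed by strings of composable arrows. One analyses this colimit to show that, because $\Lambda^n_k\to\Delta^n$ is anodyne, each such pushout induces a weak homotopy equivalence on every mapping complex and a bijection on homotopy-equivalence classes of objects, and then passes to transfinite composites by a filtered-colimit argument; the pushouts along $\{0\}\hookrightarrow H$ only adjoin objects forced to be homotopy equivalent to existing ones. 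This homotopical bookkeeping for free products of simplicial categories is where \cite{bergner} does its real work and is the step most prone to error.

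Finally I would read off the (co)fibrant objects. A simplicial category is cofibrant precisely when it is a retract of a cell complex built from the generating cofibrations; unwinding the attachments, adjoining $\emptyset\to\Sigma\Delta^0$ adds a new free object and adjoining $\Sigma\partial\Delta^n\to\Sigma\Delta^n$ glues on an $n$-simplex to a single mapping complex without imposing relations among composites. Translating into the presentation $\cC_\bullet\colon\DDelta^\op\to\cat{Cat}$ of Exercise \ref{exc:simplicial-category}, such a cell complex is exactly a simplicial category in which every $\cC_n$ is freely generated by the reflexive graph of its atomic arrows and the degeneracy operators preserve atomic arrows — a simplicial computad — and since freeness and this atomic-preservation property are stable under retracts (split the idempotent and note that an atomic arrow of a retract is a retract of an atomic arrow), the cofibrant objects are precisely the simplicial computads. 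For fibrancy, $\cC$ is fibrant iff $\cC\to\ast$ is a fibration; condition (a) for this map says exactly that each $\Map_\cC(x,y)\to\ast$ is a Kan fibration, i.e.\ every mapping complex is a Kan complex, while condition (b) is vacuous when the codomain is $\ast$. Hence the fibrant objects are precisely the Kan-complex enriched categories.
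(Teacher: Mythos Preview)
The paper does not supply a proof of this theorem: it is stated with attribution to Bergner and followed only by the remark that ``the cofibrations in the Bergner model structure are retracts of relative simplicial computads and the fibrations are those functors that are local Kan fibrations and define isofibrations at the level of homotopy categories; see \cite{bergner} for more details.'' So there is no in-paper argument to compare against; what you have written is a reasonable pr\'ecis of Bergner's own strategy in the cited reference, including the correct identification of the delicate step (that pushouts along the generating trivial cofibrations are DK-equivalences).

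One genuine slip: your generating cofibrations are not quite right. You claim that the family $\Sigma\partial\Delta^n\to\Sigma\Delta^n$ already contains a map ``$\emptyset\to\Sigma\Delta^0$'' creating objects, but $\Sigma\emptyset$ is the two-object simplicial category with $\Map(0,1)=\emptyset$, not the empty simplicial category. Consequently the right-lifting property against your family detects local trivial Kan fibrations but does not force surjectivity on objects, and without that the acyclic-fibration class is too large and the recognition theorem will not go through. Bergner's generating set includes the map $\emptyset\to\ast$ from the empty simplicial category to the terminal one-object simplicial category in addition to the $\Sigma\partial\Delta^n\to\Sigma\Delta^n$. With that correction your identification of the fibrant and cofibrant objects proceeds as you describe, and your sketch of the retract-stability of simplicial computads is in line with what is needed.
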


More generally, the cofibrations in the Bergner model structure are retracts of relative simplicial computads and the fibrations are those functors that are local Kan fibrations and define isofibrations at the level of homotopy categories; see \cite{bergner} for more details.

Definition \ref{defn:nice-htpy-cat} tells us that maps in the homotopy category of the Bergner model structure from a simplicial category $\cA$ to a simplicial category $\cK$ are represented by simplicial functors from a cofibrant replacement of $\cA$ to a fibrant replacement of $\cK$. These are classically studied objects. Cordier and Porter after Vogt define such functors to be \textbf{homotopy coherent diagrams} of shape $\cA$ in $\cK$ \cite{CP-vogt}. 

A particular model for the cofibrant replacement of a strict 1-category $\cA$ regarded as a discrete simplicial category gives some intuition for the data involved in defining a homotopy coherent diagram. This construction, introduced by Dwyer and Kan under the name ``standard resolutions'' \cite[2.5]{DK-simplicial}, can be extended to the case where $\cA$ is non-discrete by applying it levelwise and taking diagonals.

\begin{defn}[free resolutions]\label{defn:free-resolution}
There is a comonad $(F,\epsilon,\delta)$ on the category of categories that sends a small category to the free category on its underlying reflexive directed graph. Explicitly $F\cA$ has the same objects as $\cA$ and its non-identity arrows are strings of composable non-identity arrows of $\cA$. 

Adopting the point of view of Exercise \ref{exc:simplicial-category}, we define a simplicial category $\gC\cA_\bullet$ with $\ob\gC\cA=\ob\cA$ and with the category of $n$-arrows $\gC\cA_n := F^{n+1}\cA$. A non-identity $n$-arrow is a string of composable arrows in $\cA$ with each arrow in the string enclosed in exactly $n$ pairs of well-formed parentheses. In the case $n=0$, this recovers the previous description of the non-identity 0-arrows in $F\cA$, strings of composable non-identity arrows of $\cA$. 

The required identity-on-objects functors in the simplicial object $\gC\cA_\bullet$ are defined by evaluating the comonad resolution for $(F, \epsilon, \delta)$ on a small category $\cA$. 
\[
\begin{tikzcd}
\gC\cA_\bullet := &
F\cA\arrow[r, tail] & \arrow[l, shift left=0.75em, two heads] \arrow[l, shift right=0.75em, two heads]  F^2\cA  \arrow[r, shift left=0.75em, tail] \arrow[r, shift right=0.75em, tail] &   \arrow[l, two heads] \arrow[l, two heads, shift right=1.5em] \arrow[l, two heads, shift left=1.5em]   F^3\cA \arrow[r, tail] \arrow[r, tail, shift right=1.5em] \arrow[r, tail, shift left=1.5em]   &\arrow[l, two heads, shift right=0.75em] \arrow[l, two heads, shift right=2.25em] \arrow[l, two heads, shift left=2.25em]  \arrow[l, two heads, shift left=0.75em] F^4\cA  & \cdots
\end{tikzcd}
\]
Explicitly, for $j \geq 1$, the face maps \[F^k\epsilon F^j  \colon F^{k+j+1}\cA \to F^{k+j}\cA\] remove the parentheses that are contained in exactly $k$ others, while $F^{k+j}\epsilon$ composes the morphisms inside the innermost parentheses. For $j \geq 1$, the degeneracy maps \[F^k\delta F^j\colon F^{k+j+1}\cA \to F^{k+j+2}\cA\] double up the parentheses that are contained in exactly $k$ others, while $F^{k+j}\delta$ inserts parentheses around each individual morphism.
\end{defn}

\begin{exc} Explain the sense in which free resolutions define Bergner cofibrant replacements of strict 1-categories by:
\begin{enumerate}
\item verifying that for any $\cA$, the free resolution $\gC\cA_\bullet$ is a simplicial computad, and
\item defining a canonical identity-on-objects augmentation functor $\epsilon\colon \gC\cA \to \cA$ and verifying that it defines a local homotopy equivalence.
\end{enumerate}
\end{exc}

The notation $\gC\cA_\bullet$ for the free resolution is non-standard and will be explained in \S\ref{ssec:qcat-nature}, where we will gain a deeper understanding of the importance of the Bergner model structure from the vantage point of $(\infty,1)$-categories.

\section{Quasi-categories as $(\infty,1)$-categories}\label{sec:quasi}

Any topological space $Y$ has an associated simplicial set $\fun{Sing}(Y)$ called its \textbf{total singular complex}. The vertices in $\fun{Sing}(Y)$ are the points in $Y$ and the 1-simplices are the paths; in general, an $n$-simplex in $\fun{Sing}(Y)$ corresponds to an $n$-simplex in $Y$, that is, to a continuous map $|\Delta^n| \to Y$. In particular, a 2-simplex $|\Delta^2| \to Y$ defines a triangular shaped homotopy from the composite  paths along the spine $\Lambda^2_1 \subset \Delta^2$ of the 2-simplex to the direct path from the 0th to the 2nd vertex that is contained in its 1st face.\footnote{The simplicial $n$-simplex $\Delta^n$, its boundary sphere $\partial\Delta^n$, and its horns $\Lambda^n_k$ are defined in \S\ref{ssec:simp-sets}.} Since the inclusion $|\Lambda^n_k|\to|\Delta^n|$ admits a retraction, $\fun{Sing}(Y)$ is a Kan complex.

The total singular complex is a higher-dimensional incarnation of some of the basic invariants of $Y$, which can be recovered by truncating the total singular complex at some level and replacing the top-dimensional simplices with suitably defined ``homotopy classes'' of such. Its set of path components is the set $\pi_0Y$ of path components in $Y$.  Its homotopy category, in a sense to be defined below, comprised of the vertices and homotopy classes of paths between them, is a groupoid $\pi_1Y$ called the  \emph{fundamental groupoid} of $Y$. By extension, it is reasonable to think of the higher dimensional simplices of $\fun{Sing}(Y)$ as being invertible in a similar sense, with composition relations witnessed by higher cells. In this way, $\fun{Sing}(Y)$ models the $\infty$-groupoid associated to the topological space $Y$ and the Quillen equivalence \ref{thm:geo-sing-equiv} is one incarnation of Grothendieck's famous ``homotopy hypothesis'' (the moniker due to  Baez), that $\infty$-groupoids up to equivalence should model homotopy types \cite{grothendieck}.

In the catalogue of weak higher-dimensional categories, the $\infty$-groupoids define $(\infty,0)$-\textbf{categories}, weak categories with morphisms in each dimensional all of which are weakly invertible. In \S\ref{ssec:quasi}, we  introduce \textbf{quasi-categories}, which provide a particular model for $(\infty,1)$-categories --- infinite-dimensional categories in which every morphism above dimension 1 is invertible --- in parallel with the Kan complex model for $(\infty,0)$-categories. We explain the sense in which quasi-categories, which are defined to be simplicial sets with an inner horn lifting property, model $(\infty,1)$-categories by introducing the homotopy category of a quasi-category and constructing the hom-space between objects in a quasi-category. In \S\ref{ssec:qcat-nature}, we explain how simplicially enriched categories like those considered in \S\ref{sec:hammock} can be converted into quasi-categories. Then in \S\ref{ssec:joyal}, we introduce a model structure whose fibrant objects are the quasi-categories due to Joyal and in this way obtain a suitable notion of (weak) equivalence between quasi-categories.

\subsection{Quasi-categories and their homotopy categories}\label{ssec:quasi}

The \textbf{nerve} of a small category $\cD$ is the simplicial set $\cD_\bullet$ whose vertices $\cD_0$ are the objects of $\cD$, whose 1-simplices $\cD_1$ are the morphisms, and whose set of $n$-simplices $\cD_n$ is the set of $n$ composable pairs of morphisms in $\cD$. The simplicial structure defines a diagram in $\cat{Set}$
\[
\begin{tikzcd}
\cdots & \cD_3  \arrow[r, shift right=.5em] \arrow[r, shift right=1.5em] \arrow[r, shift left=.5em] \arrow[r, shift left=1.5em] & \cD_2 \arrow[l] \arrow[l, shift right=1em] \arrow[l, shift left=1em] \arrow[r] \arrow[r, shift left=1em] \arrow[r, shift right=1em] & \cD_1 \arrow[l, shift left=.5em] \arrow[l, shift right=.5em] \arrow[r, shift left=.5em] \arrow[r, shift right=.5em] & \cD_0 \arrow[l]
\end{tikzcd}
\]
Truncating at level 2 we are left with precisely the data that defines a small category $\cD$ as a category internal to the category of sets and in fact this higher-dimensional data is redundant in a sense: the simplicial set $\cD_\bullet$ is 2-coskeletal, meaning any sphere bounding a hypothetical simplex of dimension at least 3 admits a unique filler.

The description of the nerve as an internal category relies on an isomorphism $\cD_2 \cong \cD_1 \times_{\cD_0} \cD_1$ identifying the set of 2-simplices with the pullback of the domain and codomain maps $\cD_1 \rightrightarrows \cD_0$: a composable pair of arrows is given by a pair of arrows so that the domain of the second equals the codomain of the first. Equivalently, this condition asserts that the map
\[
\begin{tikzcd} \Lambda^2_1 \arrow[r] \arrow[d, tail] & \cD_\bullet \\ \Delta^2 \arrow[ur, dashed, "\exists !"']
\end{tikzcd}
\]
admits a unique filler. In higher dimensions, we can consider the inclusion of the spine
$\Delta^1 \cup_{\Delta^0} \cdots  \cup_{\Delta^0} \Delta^1 \hookrightarrow\Delta^n$ of an $n$-simplex, and similarly the nerve $\cD_\bullet$ will admit unique extensions along these maps. From the perspective of an infinite dimensional category, in which the higher dimensional simplices represent data and not just conditions on the one simplices, it is better to consider extensions along inner horn inclusions $\Lambda^n_k\hookrightarrow\Delta^n$ for the reasons explained by the following exercise.

\begin{exc} Prove that the spine inclusions can be presented as cell complexes (see Definition \ref{defn:cell-complex}) built from the inner horn inclusions $\{\Lambda^k_n\hookrightarrow\Delta^n\}_{n \geq 2, 0 < k< n}$ but demonstrate by example that the inner horn inclusions cannot be presented as cell complexes built from the spine inclusions.
\end{exc}

The original definition of a simplicial set satisfying the ``restricted Kan condition,''  now called a \emph{quasi-category} (following Joyal \cite{joyal-quasi}) or an \emph{$\infty$-category} (following Lurie \cite{lurie-topos}), is due to Boardman and Vogt \cite{BV}. Their motivating example appears as Corollary \ref{cor:coherent-diagram-quasi-category}.

\begin{defn} A \textbf{quasi-category} is a simplicial set $X$ such that $X \to *$ has the right lifting property with respect to the inner horn inclusions  for each $n \geq 2$, $0 < k < n$. \begin{equation}\label{eq:qcatdefn}
\begin{tikzcd}
 \Lambda^n_k \arrow[r] \arrow[d, tail] & X \\ \Delta^n \arrow[ur, dashed]
 \end{tikzcd}
 \end{equation}
\end{defn}

Nerves of categories are quasi-categories; in fact in this case each lift \eqref{eq:qcatdefn} is unique. Tautologically, Kan complexes are quasi-categories. In particular, the total singular complex of a topological space is a Kan complex and hence a quasi-category. More sophisticated examples of (frequently large) quasi-categories are produced by Theorem \ref{thm:nerve-to-quasi} below.

\begin{defn}[{the homotopy category of a quasi-category \cite[4.12]{BV}}]\label{defn:qcat-htpy-cat}
Any quasi-category $X$ has an associated  \textbf{homotopy category} $hX$ whose objects are the vertices of $X$ and whose morphisms are represented by 1-simplices, which we consequently depict as arrows $f \colon x \to y$ from their 0th vertex to their 1st vertex. The degenerate 1-simplices serve as identities in the homotopy category which may be depicted using an equals sign in place of the arrow. 

As the name would suggest, the morphisms in $hX$ are homotopy classes of 1-simplices, where a pair of 1-simplices $f$ and $g$ with common boundary are \textbf{homotopic}  if there exists a 2-simplex whose boundary has any of the following forms: 
\begin{equation}\label{eq:1simphtpy}
\begin{tikzcd}[column sep=small]
~ & \bullet \arrow[dr, "f"] \arrow[d, phantom, "\sim" near end] & & & \bullet \arrow[dr, equals]  \arrow[d, phantom, "\sim" near end] & & & \bullet \arrow[dr, "g"] \arrow[d, phantom, "\sim" near end] & & & \bullet \arrow[dr, equals]  \arrow[d, phantom, "\sim" near end] \\ \bullet \arrow[ur, equals] \arrow[rr, "g"']  & ~& \bullet & \bullet \arrow[ur, "f"] \arrow[rr, "g"'] &~ & \bullet & \bullet \arrow[ur, equals] \arrow[rr, "f"'] & ~& \bullet & \bullet \arrow[ur, "g"] \arrow[rr, "f"'] &~ & \bullet
 \end{tikzcd}
 \end{equation}
Indeed, in a quasi-category, if any of the 2-simplices \eqref{eq:1simphtpy} exists then there exists a 2-simplex of each type.

Generic 2-simplices in $X$ \begin{equation}\label{eq:compwitness}
\begin{tikzcd}[column sep=small] & \bullet\arrow[dr, "g"]  \arrow[d, phantom, "\sim" near end]  \\ \bullet \arrow[ur, "f"] \arrow[rr, "h"'] &~ & \bullet
\end{tikzcd}
\end{equation} witness that $gf=h$ in the homotopy category. Conversely, if $h=gf$ in $hX$ and $f, g, h$ are any 1-simplices representing these homotopy classes, then there exists a 2-simplex \eqref{eq:compwitness} witnessing the composition relation. 
\end{defn}

\begin{exc} $\quad$
\begin{enumerate}
\item Verify the assertions made in Definition \ref{defn:qcat-htpy-cat} or see \cite[\S 1.2.3]{lurie-topos}.
\item Show that $h$ is the left adjoint to the nerve functor:\footnote{In fact, this pair defines a Quillen adjunction between the model structure to be introduced in Theorem \ref{thm:quasimodel} and the ``folk'' model structure on categories \cite[15.3.8]{riehl-cathtpy}.}
\[ 
\begin{tikzcd}
\cat{qCat} \arrow[r, bend left, "h" pos=.45] \arrow[r, phantom, "\perp"] & \cat{Cat} \arrow[l, bend left, "N" pos=.6]
\end{tikzcd}
\]
\end{enumerate}
\end{exc}%Here we have restricted the domain of the left adjoint to $\cat{qCat} \subset \cat{sSet}$, the full subcategory of quasi-categories. The definition of the homotopy category associated to a generic simplicial set  is slightly more complicated. 

The mapping space between two objects of a quasi-category $A$ is modeled by the Kan complex defined via the pullback
\[
\begin{tikzcd}
\Map_A(x,y) \arrow[r] \arrow[d, two heads] \arrow[dr, phantom, "\lrcorner" very near start] & A^{\Delta^1} \arrow[d, two heads] \\ \Delta^0 \arrow[r, "{(x,y)}"'] & A \times A
\end{tikzcd}
\]

The following proposition of Joyal is useful in proving that $\Map_A(x,y)$ is a Kan complex and also characterizes the $\infty$-groupoids in the quasi-categorical model of $(\infty,1)$-categories.

\begin{prop}[{Joyal \cite[1.4]{joyal-quasi}}] A quasi-category is a Kan complex if and only if its homotopy category is a groupoid.
\end{prop}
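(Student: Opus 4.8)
The plan is to treat the two implications separately: the forward direction is short, and the converse carries the weight, so for the converse I would isolate a horn-filling lemma and flag its proof as the real obstacle.

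For the direction ``$X$ a Kan complex $\implies hX$ a groupoid'', I would show directly that every $1$-simplex $f \colon x \to y$ of $X$ represents an isomorphism in $hX$. First I would fill the outer horn $\Lambda^2_0 \to X$ whose $\Delta^{\{0,1\}}$-edge is $f$ and whose $\Delta^{\{0,2\}}$-edge is the degenerate $1$-simplex $s_0 x$: the Kan condition produces a $2$-simplex whose remaining face is a $1$-simplex $g \colon y \to x$ together with a witness that $[g]\circ[f] = \id_x$ in $hX$. Dually, filling $\Lambda^2_2 \to X$ with $\Delta^{\{1,2\}}$-edge $f$ and $\Delta^{\{0,2\}}$-edge $s_0 y$ produces $g' \colon y \to x$ with $[f]\circ[g'] = \id_y$. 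Since $hX$ is an ordinary category, $[g] = [g]\circ[f]\circ[g'] = [g']$ is a two-sided inverse of $[f]$, so every morphism of $hX$ is invertible. For the converse, the essential input is Joyal's special outer horn-filling lemma: in any quasi-category $X$ and any $n \geq 2$, a horn $\Lambda^n_0 \to X$ extends over $\Delta^n$ as soon as its edge $\Delta^{\{0,1\}} \hookrightarrow \Lambda^n_0 \to X$ represents an isomorphism in $hX$, and dually $\Lambda^n_n \to X$ extends as soon as its edge $\Delta^{\{n-1,n\}}$ represents an isomorphism in $hX$. Granting this, I would argue: if $hX$ is a groupoid then every $1$-simplex of $X$ represents an isomorphism, so every outer horn with $n \geq 2$ admits a filler; the remaining horn inclusions $\Lambda^1_0 = \Delta^{\{0\}} \hookrightarrow \Delta^1$ and $\Lambda^1_1 = \Delta^{\{1\}} \hookrightarrow \Delta^1$ are filled by a degenerate edge; and the inner horns are filled because $X$ is a quasi-category. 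Hence $X \to \ast$ has the right lifting property against every horn inclusion, i.e.\ $X$ is a Kan complex.

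The main obstacle is thus the special horn-filling lemma, which I would prove by induction on $n$, reducing outer horn filling to inner horn filling with the help of the distinguished invertible edge. The base case $n = 2$ is representative: given $\Lambda^2_0 \to X$ with $\Delta^{\{0,1\}}$-edge an equivalence $f$ and $\Delta^{\{0,2\}}$-edge $p$, one chooses a $1$-simplex $g$ with $[g] = [f]^{-1}$ together with a $2$-simplex witnessing $[g]\circ[f] = \id$, fills an inner $\Lambda^2_1$-horn to realize a composite $[p]\circ[g]$, and then fills a suitable inner $3$-horn assembled from these $2$-simplices and degeneracies, one of whose faces is the desired filler of $\Lambda^2_0$. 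The general inductive step applies the lemma in lower dimensions to the faces $d_1\Delta^n, \dots, d_n\Delta^n$ that are already present, together with the filling of one inner horn of dimension $n$; the $\Lambda^n_n$-statement follows by passing to the opposite simplicial set. The delicate part is the combinatorial bookkeeping of which faces are constrained and the choice of auxiliary simplices whose boundaries encode the extension, and I would cite \cite{joyal-quasi} (or \cite{lurie-topos}) for those details rather than carry them out here.
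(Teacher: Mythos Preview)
The paper does not supply its own proof of this proposition: it is stated with attribution to Joyal \cite[1.4]{joyal-quasi} and no argument follows. So there is nothing in the paper to compare against beyond the citation itself.

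Your proposal is correct and is essentially Joyal's original argument. The forward direction is handled cleanly. For the converse you have correctly isolated the special outer-horn lemma as the load-bearing step and given an honest sketch of the base case; your acknowledgement that the combinatorics of the inductive step are delicate and best cited to \cite{joyal-quasi} or \cite{lurie-topos} is appropriate for a survey. One small refinement: your description of the inductive step as ``applying the lemma in lower dimensions to the faces $d_1\Delta^n, \ldots, d_n\Delta^n$ that are already present'' is a bit off --- those faces are already given as part of the horn $\Lambda^n_0$, so there is nothing to fill there. The actual mechanism (in Joyal's proof and in Lurie's exposition) is rather to build an $(n{+}1)$-simplex by first using the invertible edge and lower-dimensional fillers to assemble enough of its boundary, then filling an inner horn of dimension $n{+}1$; the missing $n$-face of the original $\Lambda^n_0$ appears as one face of that filler. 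Your $n=2$ sketch already exhibits this pattern correctly.
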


\begin{defn}[{\cite[1.6]{joyal-quasi}}]\label{defn:quasi-iso} A 1-simplex $f$ in a quasi-category $X$ is an \textbf{isomorphism} if and only if it represents an isomorphism in the homotopy category, or equivalently  if and only it admits a coherent homotopy inverse:
\[ \begin{tikzcd} \2 \arrow[r, "f"] \arrow[d, hook] & X \\ \iso \arrow[ur, dashed]
\end{tikzcd}
\] extending along the map $\2 \hookrightarrow\iso$ including the the nerve of the free-living arrow into the nerve of the free-living isomorphism.
\end{defn}

\subsection{Quasi-categories found in nature}\label{ssec:qcat-nature}

Borrowing notation from the simplex category $\DDelta$, we write $[n] \subset \bbomega$ for the ordinal category ${\mathbb{n}}+1$, the full subcategory spanned by $0,\ldots, n$ in the category that indexes a countable sequence: 
\[
\begin{tikzcd}
{[n]} := & 0 \arrow[r] & 1 \arrow[r] & 2 \arrow[r] & 3 \arrow[r] & \cdots \arrow[r] & n
\end{tikzcd}
\]
 These categories define the objects of a diagram $\DDelta\hookrightarrow\cat{Cat}$ that is a full embedding: the only functors $[m] \to [n]$ are order-preserving maps from $[m] = \{0,\ldots, m\}$ to $[n] = \{0,\ldots, n\}$. Applying the free resolution construction of Definition \ref{defn:free-resolution} to these categories we get a functor $\gC \colon \DDelta \to \cat{sCat}$ where $\gC[n]$ is the full simplicial subcategory of $\gC\bbomega$ spanned by those objects $0,\ldots,n$. 

\begin{defn}[homotopy coherent realization and nerve]\label{defn:hocoh-adj} The homotopy coherent nerve $\gN$ and homotopy coherent realization $\gC$ are the adjoint pair of functors obtained by applying Kan's construction \cite[1.5.1]{riehl-cathtpy} to the functor
    $\gC\colon \DDelta \to \cat{sCat}$ to construct an adjunction
    \[
\begin{tikzcd}
\cat{sSet} \arrow[r,bend left, "\gC"] \arrow[r, phantom, "\perp"]  & \cat{sCat} \arrow[l, bend left, "\gN"]
\end{tikzcd}
\]

The right adjoint,  called the \textbf{homotopy coherent nerve}, converts a simplicial category $\cS$ into a simplicial set $\gN\cS$ whose $n$-simplices  are homotopy coherent diagrams of shape $[n]$ in $\cS$. That is  \[\gN \cS_n := \{ \gC[n] \to \cS\}.\]

The left adjoint is defined by pointwise left Kan extension along the
  Yoneda embedding:
\[
\begin{tikzcd}
      {\DDelta}\arrow[rr, hook, "\yo"] \arrow[dr, "{\gC}"'] &
\arrow[d, phantom, "\cong" ] & {\cat{sSet}}\arrow[dl, dashed, "\gC"] \\
      & {\cat{sCat}} &
    \end{tikzcd}
\]
That is, $\gC\Delta^n$ is defined to be $\gC[n]$ --- a simplicial category that we call the \textbf{homotopy coherent $n$-simplex} --- and for a generic simplicial set $X$, $\gC{X}$ is defined to be a colimit of the homotopy coherent simplices indexed by the category of simplices of $X$.\footnote{The simplicial set $X$ is obtained by gluing in a $\Delta^n$ for each $n$-simplex $\Delta^n \to X$ of $X$. The functor $\gC$ preserves these colimits, so $\gC{X}$ is obtained by gluing in a $\gC[n]$ for each $n$-simplex of $X$.}  Because of the formal similarity with the geometric realization functor, another left adjoint defined by Kan's construction, we refer to $\gC$ as \textbf{homotopy coherent realization}.   
\end{defn}

Many examples of quasi-categories fit into the following paradigm.

\begin{thm}[{\cite[2.1]{CP-vogt}}]\label{thm:nerve-to-quasi}
 If $\cS$ is Kan complex enriched, then $\gN\cS$ is a quasi-category.
 \end{thm}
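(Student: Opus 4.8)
The plan is to verify the inner horn lifting property for $\gN\cS$ directly, translating each lifting problem across the $\gC \dashv \gN$ adjunction into a problem about extending simplicial functors out of $\gC\Lambda^n_k$ into the Kan complex enriched category $\cS$. Concretely, a lifting problem
\[
\begin{tikzcd}
\Lambda^n_k \arrow[r] \arrow[d, tail] & \gN\cS \\ \Delta^n \arrow[ur, dashed]
\end{tikzcd}
\]
with $0 < k < n$ transposes to the problem of extending a simplicial functor $\gC\Lambda^n_k \to \cS$ along the inclusion $\gC\Lambda^n_k \hookrightarrow \gC\Delta^n = \gC[n]$. Since $\gC$ is a left adjoint it preserves the pushouts expressing $\Lambda^n_k$ and $\Delta^n$ as colimits of representables, so the real content is an explicit understanding of the simplicial category $\gC[n]$ and the subcategory $\gC\Lambda^n_k$. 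The key combinatorial fact, which I would either cite or recall, is that $\gC[n]$ has objects $0,\dots,n$ and mapping spaces $\gC[n](i,j)$ given by the nerve of the poset of subsets of $\{i, i+1, \dots, j\}$ containing both endpoints (empty if $i > j$), i.e.\ a cube $(\Delta^1)^{j-i-1}$; composition is union of subsets. The inclusion $\gC\Lambda^n_k \hookrightarrow \gC[n]$ is an isomorphism on all mapping spaces except $\gC[n](0,n)$, where for the inner horn it is the inclusion of the boundary $\partial(\Delta^1)^{n-1}$ of the top cube into $(\Delta^1)^{n-1}$ — more precisely, for $0<k<n$ it is the inclusion of the cube with the single open face opposite a vertex removed, which deformation retracts onto, and in any case is an inclusion of a subcomplex that admits a filler against any Kan complex.

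The main steps are then: (1) establish the description of $\gC[n]$ and of the map $\gC\Lambda^n_k \to \gC[n]$ just sketched — this is where the only real work lies, and I would lean on the cited reference \cite[\S 1.5]{riehl-cathtpy} or reprove it by induction on $n$ using the pushout decomposition of $\Lambda^n_k$; (2) observe that extending a simplicial functor $\gC\Lambda^n_k \to \cS$ to $\gC[n]$ requires no choices on objects and no choices on mapping spaces other than $\gC[n](0,n)$, since all other mapping spaces and all composition constraints are already present in $\gC\Lambda^n_k$; (3) reduce the extension problem on $\gC[n](0,n)$ to a single lifting problem of the form
\[
\begin{tikzcd}
\partial(\Delta^1)^{n-1} \arrow[r] \arrow[d, tail] & \cS(0,n) \\ (\Delta^1)^{n-1} \arrow[ur, dashed]
\end{tikzcd}
\]
(for $n=2$ the domain $\partial(\Delta^1)^1 = \partial\Delta^1$ is two points, so this is path-lifting; for $n\ge 3$ it is a boundary-of-a-cube inclusion, which is anodyne); (4) conclude using that $\cS(0,n) = \Map_\cS(0,n)$ is a Kan complex, so the required filler exists. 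One must also check that the chosen filler is compatible with the already-specified composites — i.e.\ that the functoriality (compositionality) constraints involving the top mapping space are automatically satisfied — which follows because composition in $\gC[n]$ out of or into the object pair $(0,n)$ lands in mapping spaces on which our functor was already defined and the relevant squares in $\gC\Lambda^n_k$ already commute.

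The step I expect to be the main obstacle is (1) together with the precise identification in (3): pinning down exactly which faces of the cube $\gC[n](0,n)$ lie in $\gC\Lambda^n_k$ for a given inner index $k$, and checking that the resulting inclusion into $(\Delta^1)^{n-1}$ is a trivial cofibration in the Kan--Quillen model structure (equivalently, has the left lifting property against all Kan fibrations, in particular against $\cS(0,n) \to *$). For $0 < k < n$ the missing face of the horn $\Lambda^n_k$ is an inner face, and tracking this through the cube description shows the relevant map is the inclusion of the cube minus one open facet, which is an anodyne inclusion — but making this bookkeeping clean is the delicate part. Everything else is formal manipulation of the adjunction $\gC \dashv \gN$ and the definition of a Kan complex enriched category.
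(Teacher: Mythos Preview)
The paper does not supply its own proof of this theorem; it simply records the statement with a citation to Cordier--Porter and moves on. So there is no in-paper argument to compare against, and your sketch is being judged on its own merits.

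Your strategy is the standard one and is correct in outline: transpose across $\gC \dashv \gN$, use the explicit cube description of the hom-spaces in $\gC[n]$, observe that the inclusion $\gC\Lambda^n_k \hookrightarrow \gC[n]$ is bijective on objects and an isomorphism on every hom-space except $\gC[n](0,n)$, and reduce to a single anodyne extension into the Kan complex $\Map_\cS(F0,Fn)$. That is exactly the Cordier--Porter/Lurie argument.

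One point does need correcting. In step (3) you display the lifting problem with domain $\partial(\Delta^1)^{n-1}$, but that inclusion into $(\Delta^1)^{n-1}$ is \emph{not} anodyne: the boundary of a cube is a sphere, and the map has nontrivial homotopy cofiber. The subcomplex $\gC\Lambda^n_k(0,n)$ is strictly smaller than the cubical boundary --- it is the boundary of the cube with one codimension-one face removed (the face recording ``$k \notin S$''), i.e.\ an open box --- and it is \emph{that} inclusion into the solid cube which is anodyne. You effectively say this in your final paragraph, so the issue is the displayed diagram in (3) rather than your understanding; just make sure the domain there is the open box, not the full cubical boundary, or else your ``Kan complex'' hypothesis will not suffice to produce the filler.
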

 
 In particular, in light of Exercise \ref{exc:kan-enriched-simp-model}, the quasi-category associated to a simplicial model category $\cM$ is defined  to be $\gN\cM_{\mathrm{cf}}$.

%\begin{proof}
%By adjunction, to extend along an inner horn inclusion $\Lambda^n_k\hookrightarrow\Delta^n$ mapping into the homotopy coherent nerve $\gN\cS$ is to  extend along simplicial subcomputad inclusions $\gC\Lambda^n_k\hookrightarrow\gC\Delta^n$ mapping into the Kan complex enriched category $\cS$. This is the simplicial subcomputad generated by all arrows whose beads are supported by simplices in $\Lambda^n_k\subset\Delta^n$. The only missing ones are in the mapping space from $0$ to $n$, so we are asked to solve a single lifting problem
%\[
%\begin{tikzcd}
%\gC\Lambda^n_k(0,n) \arrow[r] \arrow[d, hook] & \Map(X_0,X_n) \\ \gC\Delta^n(0,n) \arrow[ur, dashed]
%\end{tikzcd}
%\] 
%The space $\gC\Delta^n(0,n) \cong (\Delta^1)^{n-1}$ is a cube, while $\gC\Lambda^n_k(0,n)$ is a cubical horn. Cubical horn inclusions can be filled in the Kan complex $\Map(X_0,X_n)$, completing the proof; see \cite[4.4.4]{RV6} for more details.
%\end{proof}

Recall from \S\ref{ssec:bergner} that a \textbf{homotopy coherent diagram} of shape $\cA$ in a Kan complex enriched category $\cS$ is a functor $\gC\cA \to \cS$. Similarly,  a \textbf{homotopy coherent natural transformation} $\alpha \colon F \to G$ between homotopy coherent diagrams $F$ and $G$ of shape $\cA$ is a homotopy coherent diagram of shape $\cA \times [1]$ that restricts on the endpoints of $[1]$ to $F$ and $G$ as follows:
\[
\begin{tikzcd} \gC\cA \arrow[r, tail, "0"] \arrow[dr, "F"'] &  \gC(\cA \times [1]) \arrow[d, "\alpha"] & \gC\cA \arrow[l, tail, "1"'] \arrow[dl, "G"] \\ & \cS 
\end{tikzcd}
\]

Note that the data of a pair of homotopy coherent natural transformations $\alpha \colon F \to G$ and $\beta \colon G \to H$ between homotopy coherent diagrams of shape $\cA$ does not uniquely determine a (vertical) ``composite'' homotopy coherent natural transformation $F \to H$ because this data does not define a homotopy coherent diagram of shape $\cA \times [2]$, where $[2] = 0 \to 1 \to 2$. Here $\alpha$ and $\beta$ define a diagram of shape $\gC(\cA\times \Lambda^2_1)$ rather than a diagram of shape $\gC(\cA\times[2])$, where $\Lambda^2_1$ is the shape of the generating reflexive directed graph of the category $[2]$. This observation motivated Boardman and Vogt to define, in place of a \emph{category} of homotopy coherent diagrams and natural transformations of shape $\cA$, a \emph{quasi-category} of homotopy coherent diagrams and natural transformations of shape $\cA$. 

For any category $\cA$, let $\fun{Coh}(\cA,\cS)$ denote the simplicial set whose $n$-simplices are homotopy coherent diagrams of shape $\cA \times [n]$, i.e., are simplicial functors
\[ \gC(\cA\times[n]) \to \cS.\]

\begin{cor}\label{cor:coherent-diagram-quasi-category}
$\fun{Coh}(\cA,\cS) \cong \gN\cS^\cA$  is a quasi-category.
\end{cor}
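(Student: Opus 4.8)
The plan is to unpack the two claims in Corollary~\ref{cor:coherent-diagram-quasi-category} separately: first the isomorphism $\fun{Coh}(\cA,\cS)\cong\gN\cS^\cA$ of simplicial sets, and then the assertion that this simplicial set is a quasi-category. For the second point, once the isomorphism is established, it suffices to observe that $\cS^\cA$ is again Kan-complex enriched whenever $\cS$ is --- its mapping spaces $\Map_{\cS^\cA}(F,G)$ are computed as ends, hence as (iterated) limits of Kan complexes along the face and degeneracy maps of $\cA$, and limits of Kan complexes are Kan complexes since $\cat{Kan}$ is closed under limits in $\cat{sSet}$ --- so Theorem~\ref{thm:nerve-to-quasi} applies directly to give that $\gN\cS^\cA$ is a quasi-category.

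The substance is therefore the isomorphism $\fun{Coh}(\cA,\cS)\cong\gN\cS^\cA$, which I would prove by comparing $n$-simplices on each side using the defining adjunction $\gC\dashv\gN$ of Definition~\ref{defn:hocoh-adj}. An $n$-simplex of $\gN\cS^\cA$ is by definition a simplicial functor $\gC[n]\to\cS^\cA$, which by the universal property of the (enriched) functor category $\cS^\cA$ corresponds to a simplicial functor $\gC[n]\otimes\cA\to\cS$, where $\otimes$ denotes the tensor (``funny'' or cartesian-style product) of a simplicial category with an ordinary category $\cA$ viewed as a discrete simplicial category. On the other hand, an $n$-simplex of $\fun{Coh}(\cA,\cS)$ is a simplicial functor $\gC(\cA\times[n])\to\cS$. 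So the whole corollary reduces to the key comparison
\[ \gC(\cA\times[n])\;\cong\;\gC[n]\otimes\cA \]
naturally in $[n]\in\DDelta$, together with the compatibility of this with the appropriate notion of tensor/cotensor defining $\cS^\cA$.

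The main obstacle --- and the step I would spend the most care on --- is precisely this identification $\gC(\cA\times[n])\cong \cA\times\gC[n]$ in $\cat{sCat}$, or more precisely making sure the ``product'' on the right is the correct one. This is not the honest cartesian product of simplicial categories; rather $\gC$ sends the cartesian product of a 1-category with a simplicial set to the appropriate ``Gray-type'' or ``tensor'' product, and the cleanest route is to note that for a \emph{discrete} simplicial category $\cA$ (an ordinary category), $\gC(\cA\times X)\cong\cA\times\gC X$ does hold because $\gC$ preserves products with discrete objects --- one checks this on representables $X=\Delta^n$ using the explicit description of $\gC[n]$ from Definition~\ref{defn:free-resolution} (its mapping spaces are nerves of poset of subsets), verifies $\gC(\cA\times[n])$ has the same objects $\ob\cA\times\{0,\dots,n\}$ and the same mapping complexes, and then extends to general $X$ by the colimit formula for $\gC$, using that $\cA\times-$ preserves the relevant colimits. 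With that lemma in hand, the chain of natural bijections
\[ \gN\cS^\cA_n=\cat{sCat}(\gC[n],\cS^\cA)\cong\cat{sCat}(\cA\times\gC[n],\cS)\cong\cat{sCat}(\gC(\cA\times[n]),\cS)=\fun{Coh}(\cA,\cS)_n \]
is immediate, is visibly simplicial in $[n]$, and completes the proof. I would also remark that this is exactly the coherence mechanism Boardman--Vogt introduced, so the corollary is really just bookkeeping once the adjunction and Theorem~\ref{thm:nerve-to-quasi} are available.
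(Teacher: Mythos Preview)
You have misread the notation $\gN\cS^\cA$: in the paper it denotes the exponential $(\gN\cS)^{\cA}$ in $\cat{sSet}$ (with $\cA$ identified with its nerve), not the homotopy coherent nerve $\gN(\cS^\cA)$ of an enriched functor category. With the intended reading, the isomorphism is immediate from the adjunction $\gC\dashv\gN$ alone: an $n$-simplex of $\fun{Coh}(\cA,\cS)$ is a simplicial functor $\gC(\cA\times[n])\to\cS$, which transposes to a map of simplicial sets $\cA\times\Delta^n\to\gN\cS$, i.e.\ an $n$-simplex of $(\gN\cS)^\cA$. No comparison of $\gC$ with products is needed. The paper then deduces that $(\gN\cS)^\cA$ is a quasi-category from the cartesian closure of the Joyal model structure (Theorem~\ref{thm:quasimodel}), which makes quasi-categories an exponential ideal, rather than by a second appeal to Theorem~\ref{thm:nerve-to-quasi}.

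More seriously, the lemma you propose to rest everything on, $\gC(\cA\times[n])\cong\cA\times\gC[n]$, is false. Take $\cA=[1]$ and $n=1$. Since $[1]$ is already free, $\gC[1]=[1]$, so $\cA\times\gC[1]=[1]\times[1]$ is the commuting square, with a single morphism $(0,0)\to(1,1)$. But $\gC([1]\times[1])$ has $0$-arrows given by $F([1]\times[1])$, the free category on the underlying reflexive graph of the square; there the three generators $(0,0)\to(0,1)$, $(0,0)\to(1,0)$, $(0,0)\to(1,1)$ together with the two arrows into $(1,1)$ yield three distinct morphisms $(0,0)\to(1,1)$. So the hom-sets already disagree at level $0$. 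The functor $\gC$ is a left adjoint and does not preserve products, even products with ordinary categories; what is true is only that $\gC(X\times Y)\to\gC X\times\gC Y$ is a DK-equivalence, which would at best give $\fun{Coh}(\cA,\cS)\simeq\gN(\cS^\cA)$, not the isomorphism asserted in the corollary.
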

\begin{proof}
By the adjunction of Definition \ref{defn:hocoh-adj}, a simplicial functor $\gC \cA \to \cS$ is the same as a simplicial map $\cA \to \gN\cS$. So $\fun{Coh}(\cA,\cS) \cong \gN\cS^\cA$ and since the quasi-categories define an exponential ideal  in simplicial sets as a consequence of the cartesian closure of the Joyal model structure of Theorem \ref{thm:quasimodel}, the fact that $\gN\cS$ is a quasi-category implies that $\gN\cS^\cA$ is too.
\end{proof}

\begin{rmk}[all diagrams in homotopy coherent nerves are homotopy coherent]\label{rmk:coherent-diagram-qcat}
This corollary explains that any map of simplicial sets $X \to \gN\cS$ transposes to define a simplicial functor $\gC{X} \to \cS$, a homotopy coherent diagram of shape $X$ in $\cS$. While not every quasi-category is isomorphic to a homotopy coherent nerve of a Kan complex enriched category, every quasi-category is equivalent to a homotopy coherent nerve; one proof appears as \cite[7.2.2]{RV6}. This explains the slogan  that ``all diagrams in quasi-categories are homotopy coherent.''
\end{rmk}

%\begin{ex}[stable quasi-categories of chain complexes]
%\cite{lurie-algebra}
%\end{ex}

\subsection{The Joyal model structure}\label{ssec:joyal}

In analogy with Quillen's model structure of Theorem \ref{thm:sset-model}, in which the fibrant objects are the Kan complexes and the cofibrations are the monomorphisms, we might hope that there is another model structure on $\cat{sSet}$ whose fibrant objects are the quasi-categories and with the monomorphisms as cofibrations, and indeed this is true (and hence by Exercise \ref{exc:model-given-by}\eqref{itm:model-iv} is unique with these properties).

The weak equivalences in this  hoped-for model structure for quasi-categories can be described using a particularly nice cylinder object. Let $\iso$ be the nerve of the free-standing isomorphism $\iso$; the name is selected because $\iso$ is something like an interval. 

\begin{prop}\label{prop:quasi-cyl} For any simplicial set $A$, the evident inclusion and projection maps define a cylinder object
\[
\begin{tikzcd}[row sep=small] A \sqcup A \arrow[dr, tail, "{(i_0, i_1)}"'] \arrow[rr, "{(1_A,1_A)}"] &  & A \\ &  A \times \iso \arrow[ur, two heads, "\sim" sloped, "\pi"'] 
\end{tikzcd}
\]
\end{prop}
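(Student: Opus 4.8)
The plan is to verify the three defining properties of a (``very good'') cylinder object from Definition \ref{defn:path-cyl} applied to the Joyal model structure, namely that $(i_0,i_1)\colon A\sqcup A \to A\times\iso$ is a cofibration, that $\pi\colon A\times\iso \to A$ is a weak equivalence, and that the composite is the fold map $(1_A,1_A)$. The last point is immediate from the definitions of $i_0$, $i_1$, and $\pi$: the two endpoint inclusions $\{0\},\{1\}\hookrightarrow\iso$ composed with the projection $\iso\to\Delta^0$ are both the identity, so postcomposing $(i_0,i_1)$ with $\pi$ recovers $(1_A,1_A)$ on each summand. That requires no real work beyond unwinding notation.

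First I would handle the cofibration claim. Since cofibrations in the Joyal model structure are exactly the monomorphisms (this is part of what is being asserted to exist, and is assumed available here), it suffices to observe that $(i_0,i_1)$ is injective in each simplicial degree. This follows because $\{0\}\sqcup\{1\}\hookrightarrow\iso$ is a monomorphism of simplicial sets (the two vertices $0$ and $1$ of $\iso$ are distinct, and no nondegenerate simplex of the two-point discrete set is identified in $\iso$), and the functor $A\times(-)$ preserves monomorphisms; equivalently $A\sqcup A \cong A\times(\{0\}\sqcup\{1\})\to A\times\iso$ is a product of a monomorphism with the identity on $A$, hence a monomorphism.

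The substantive step is showing $\pi\colon A\times\iso\to A$ is a weak equivalence in the Joyal model structure. The key input is that $\iso \to \Delta^0$ is a weak equivalence: $\iso$ is the nerve of the free-living isomorphism, which is equivalent as a category to the terminal category, and the nerve functor sends equivalences of categories to Joyal weak equivalences (indeed $\iso$ is a contractible Kan complex, so this also follows from the fact that Kan equivalences are Joyal equivalences, or directly from the Quillen adjunction $h\dashv N$). I expect the main obstacle to be justifying that $A\times(-)$ preserves this particular weak equivalence. The clean route is to invoke cartesian closure of the Joyal model structure (Theorem \ref{thm:quasimodel}, referenced elsewhere in the excerpt): the functor $A\times(-)$ is left Quillen for the Joyal model structure, and since every object is cofibrant (the cofibrations are the monomorphisms, so $\emptyset\to X$ is always a cofibration), Ken Brown's lemma (Lemma \ref{lem:kb}\eqref{itm:kb-ii}, or simply the fact that a left Quillen functor between categories in which all objects are cofibrant preserves all weak equivalences) shows $A\times(-)$ sends the weak equivalence $\iso\to\Delta^0$ to the weak equivalence $A\times\iso\to A\times\Delta^0\cong A$, which is precisely $\pi$.

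Assembling these three observations completes the verification that the displayed diagram exhibits $A\times\iso$ as a cylinder object for $A$ in the sense of Definition \ref{defn:path-cyl}. The only genuinely model-categorical fact being used is that $A\times(-)$ preserves Joyal weak equivalences, which I would either take from the cartesian-closure statement of Theorem \ref{thm:quasimodel} or, if that is not yet available at this point in the text, establish directly by noting that $\iso\to\Delta^0$ is a deformation retract (the required retracting homotopy $\iso\times\iso\to\iso$ exists because $\iso$ is the nerve of a category with an initial-and-terminal object), whence $A\times\iso\to A$ is likewise a deformation retract and in particular a Joyal weak equivalence.
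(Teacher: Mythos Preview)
Your argument has a genuine gap together with a circularity problem.

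First, the gap: Definition~\ref{defn:path-cyl} requires the second factor $\pi$ to be a \emph{trivial fibration}, not merely a weak equivalence, and the diagram in the proposition records $\pi$ with the trivial fibration decoration ``$\fwto$''. Your arguments---both the primary one via cartesian closure and the fallback via a deformation retraction---only establish that $\pi$ is a weak equivalence. That is strictly weaker and does not complete the verification.

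Second, the circularity: in the paper's logical order this proposition precedes Theorem~\ref{thm:quasimodel} and is in fact used to \emph{describe} the weak equivalences of the Joyal model structure via the left homotopy relation relative to this very cylinder. Invoking the cartesian closure of Theorem~\ref{thm:quasimodel}, or even the notion of ``Joyal weak equivalence,'' therefore begs the question. Your fallback deformation-retract argument avoids the circularity but still does not produce the required trivial fibration.

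The paper's proof sidesteps both issues by working entirely with the lifting characterization of trivial fibrations (maps with the right lifting property against all monomorphisms, equivalently against all $\partial\Delta^n \hookrightarrow \Delta^n$), which is available as soon as the cofibrations are specified. It observes that $\pi$ is a pullback of $\iso \to \ast$, so by Lemma~\ref{lem:closure} it suffices to show the latter is a trivial fibration. This is then verified by an elementary combinatorial argument: $\iso$ is nonempty (handling $n=0$), and for $n>0$ one uses that $\iso \cong \cosk_0\iso$, so lifting $\partial\Delta^n \to \Delta^n$ against $\iso$ transposes to lifting $\sk_0\partial\Delta^n \to \sk_0\Delta^n$, which is an isomorphism. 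No model structure is needed.
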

\begin{proof}
The map $(i_0,i_1)\colon A \sqcup A \to A \times \iso$ is a monomorphism and hence a cofibration. To see that the projection is a trivial fibration, observe that it is a pullback of $\iso \to *$  as displayed below-left and hence by Lemma \ref{lem:closure} it suffices to prove that this latter map is a trivial fibration. To that end, we must show that there exist solutions to lifting problems displayed on the right
\[ 
\begin{tikzcd}
A \times \iso \arrow[d, "\pi"'] \arrow[r, "\pi"] \arrow[dr, phantom, "\lrcorner" very near start] & \iso \arrow[d] & &  \partial \Delta^n \arrow[d] \arrow[r] & \iso \arrow[d]  \\ A \arrow[r] & {\ast} & & \Delta^n \arrow[ur, dashed] \arrow[r] & {*}
 \end{tikzcd}
 \] 
When $n=0$ this is true because $\iso$ is non-empty. For larger $n$, we use the fact that $\iso \cong \cosk_0 \iso$. By adjunction, it suffices to show that $\iso$ lifts against $\sk_0 \partial\Delta^n \to \sk_0 \Delta^n$, but for $n >0$, the $0$-skeleton of $\Delta^n$ is isomorphic to that of its boundary.
\end{proof}

The proof of Joyal's model structure has been widely circulated in unpublished notes, and can also be found in the following sources \cite[2.2.5.1]{lurie-topos} or \cite[2.13]{ds-mapping}.

\begin{thm}[Joyal]\label{thm:quasimodel} There is a cartesian closed model structure on $\cat{sSet}$ whose \begin{itemize}
\item cofibrations are monomorphisms, 
\item weak equivalences are those maps $f\colon A \to B$ that induce bijections on the sets \[\Hom(B,X)_{/\sim_\ell} \to \Hom(A,X)_{/\sim_\ell}\] of maps into any quasi-category $X$ modulo the left homotopy relation relative to the cylinder just defined,
\item fibrant objects are precisely the quasi-categories,  and 
\item fibrations between fibrant objects are the \textbf{isofibrations}, those maps that lift against the inner horn inclusions and also the map $* \to \iso$. 
\end{itemize}
\end{thm}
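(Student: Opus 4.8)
The plan is to construct the model structure by recognizing it as a Bousfield-type localization, or more directly, by exhibiting the data explicitly and verifying the weak factorization system axioms via the machinery already developed in the excerpt. Since Exercise \ref{exc:model-given-by}\eqref{itm:model-iv} guarantees uniqueness once we specify the cofibrations (monomorphisms) and the fibrant objects (quasi-categories), the real content is \emph{existence}. First I would take the class $\we$ of weak equivalences to be the maps defined in the statement --- those inducing bijections $\Hom(B,X)_{/\sim_\ell}\to\Hom(A,X)_{/\sim_\ell}$ for every quasi-category $X$ --- and check it satisfies the two-of-three property and contains the isomorphisms; this is formal from the definition. The cofibrations $\cof$ are the monomorphisms; the trivial cofibrations $\cof\cap\we$ should be generated (as a saturated class in the sense of Lemma \ref{lem:closure}) by the inner horn inclusions together with the map $\{0\}\hookrightarrow\iso$, and a small-object-argument factorization produces $(\cof\cap\we,\fib)$ with $\fib$ the class of maps with the right lifting property against this generating set.

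Next I would verify the two weak factorization systems. For $(\cof,\fib\cap\we)$, factor any map via the small object argument applied to the boundary inclusions $\partial\Delta^n\hookrightarrow\Delta^n$; the right class consists of maps with the right lifting property against all monomorphisms, i.e.\ the trivial fibrations of the Quillen model structure, and one must check these are exactly $\fib\cap\we$. That a trivial fibration is a weak equivalence in the sense defined follows because it admits a section and a fiberwise deformation retraction onto that section (using that $\partial\Delta^1\hookrightarrow\Delta^1$, hence $A\sqcup A\to A\times\iso$, lifts against it), making it a homotopy equivalence for the cylinder $A\times\iso$ of Proposition \ref{prop:quasi-cyl}, hence in $\we$. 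For $(\cof\cap\we,\fib)$ the factorization is the one just produced by the small object argument on $\{\Lambda^n_k\hookrightarrow\Delta^n\}\cup\{\{0\}\hookrightarrow\iso\}$; the left factor is a cell complex on these maps, which by Lemma \ref{lem:closure} lies in the saturation, and one must show such cell complexes are weak equivalences. The key technical input here --- and the main obstacle --- is showing that pushouts of inner horn inclusions and of $\{0\}\hookrightarrow\iso$, and transfinite composites thereof, are weak equivalences in the sense defined: this amounts to the assertion that for any quasi-category $X$, the restriction maps $\Hom(\Delta^n,X)_{/\sim_\ell}\to\Hom(\Lambda^n_k,X)_{/\sim_\ell}$ and $\Hom(\iso,X)_{/\sim_\ell}\to\Hom(\{0\},X)_{/\sim_\ell}$ are bijections. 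Injectivity and surjectivity for the horn maps is essentially the statement that inner horns have fillers that are unique up to homotopy in a quasi-category, which requires a careful simplicial argument (this is where Joyal's original work, or the treatment in \cite[2.2.5.1]{lurie-topos}, does the heavy lifting); the $\iso$ case uses Definition \ref{defn:quasi-iso} and the fact that a coherent homotopy inverse, when it exists, is unique up to homotopy.

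Having established both weak factorization systems, Definition \ref{defn:model-cat} (in the Joyal--Tierney formulation) immediately yields the model structure, and the retract-closure of $\we$ comes for free by \cite[7.8]{JT}. It then remains to identify the fibrant objects and the fibrations between them. An object $X$ is fibrant iff $X\to *$ is a fibration iff $X\to *$ lifts against all of $\cof\cap\we$, equivalently against the generating trivial cofibrations; lifting against inner horns says $X$ is a quasi-category, and one checks that for a quasi-category the lift against $\{0\}\hookrightarrow\iso$ is automatic (every object is the source of an identity isomorphism, and $\iso$-lifting follows from inner-horn lifting by an argument filling the requisite cells). For fibrations between fibrant objects, a map $p\colon X\to Y$ of quasi-categories lies in $\fib$ iff it lifts against the generators; lifting against inner horns makes it an \emph{inner fibration}, and lifting against $\{0\}\hookrightarrow\iso$ is precisely the isofibration condition, so $\fib\cap(\text{fibrant objects})$ is the class of isofibrations as claimed. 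Finally, cartesian closure reduces to the pushout-product statement that if $i$ is a monomorphism and $p$ an isofibration of quasi-categories then the Leibniz hom $\widehat{\Map}(i,p)$ is again an isofibration, and a trivial one if either $i$ or $p$ is a weak equivalence; by Exercise \ref{exc:leibniz-lifting} this is equivalent to a pushout-product of cofibrations being a cofibration (clear) and to a pushout-product of a cofibration with a trivial cofibration being a trivial cofibration, which one checks on generators using Proposition \ref{prop:leibniz-facts}.
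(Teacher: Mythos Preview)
The paper does not give its own proof of this theorem; the sentence preceding the statement simply refers the reader to \cite[2.2.5.1]{lurie-topos} and \cite[2.13]{ds-mapping}. So there is no in-text argument to compare yours against, only the published constructions.

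Your outline has a genuine gap. You take the trivial cofibrations $\cof\cap\we$ to be generated, as a saturated class, by the inner horn inclusions together with $\{0\}\hookrightarrow\iso$, so that the fibrations would be exactly the isofibrations. But the theorem is careful to characterize fibrations as isofibrations only \emph{between fibrant objects}, and this restriction is not cosmetic. Your argument establishes only one inclusion: that relative cell complexes on those generators lie in $\cof\cap\we$ (this is the ``cell complexes are weak equivalences'' step you flag as the main obstacle). For the reverse inclusion you would invoke the retract argument, which requires that every isofibration lying in $\we$ have the right lifting property against all monomorphisms --- and this is precisely what is not available for maps between non-fibrant simplicial sets. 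The same lacuna appears in your treatment of the other factorization: you check that Quillen trivial fibrations lie in $\we$, but never that every isofibration in $\we$ is a Quillen trivial fibration, which is what ``one must check these are exactly $\fib\cap\we$'' would demand.

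The published constructions avoid this by not attempting to name an explicit small generating set for $\cof\cap\we$ at all. Joyal's argument, Lurie's, and Dugger--Spivak's each proceed more indirectly --- via Smith's recognition theorem, Cisinski-style machinery, or a bootstrap through an auxiliary model structure --- producing the model structure from accessibility and closure properties of $\we$ rather than from generators for the trivial cofibrations. Only after the model structure is in hand is the identification of fibrations between quasi-categories with isofibrations established as a separate, comparatively easy, lemma. Your instinct to package everything via the Joyal--Tierney formulation and the small object argument is sound, but the specific generating set you propose is not known to suffice, and your argument does not close that gap.
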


By Proposition \ref{prop:structured-we}, a map between quasi-categories is a weak equivalence, or we say simply \textbf{equivalence} of quasi-categories,  if and only if it admits an inverse equivalence $Y \to X$
 together with an ``invertible homotopy equivalence'' using the notion of homotopy defined with the interval $\iso$. A map between nerves of strict 1-categories is an equivalence of quasi-categories if and only if it is an equivalence of categories, as usually defined. In general, every categorical notion for quasi-categories restricts along the full inclusion $\cat{Cat} \subset \cat{qCat}$ to the classical notion. This gives another sense in which quasi-categories model the $(\infty,1)$-categories introduced at the start of this section. However, quasi-categories are not the only model of $(\infty,1)$-categories as we shall now discover.

\section{Models of $(\infty,1)$-categories}\label{sec:models}

An $(\infty,1)$-category should have a set of objects $X_0$, a space of morphisms $X_1$, together with composition and identities that are at least weakly associative and unital. One idea of how this might be presented, due to Segal  \cite{segal-classifying}, is to ask that $X \in \cat{sSet}^{\DDelta^\op}$ is a simplicial space
\[
\begin{tikzcd}
\cdots & X_3  \arrow[r, shift right=.5em] \arrow[r, shift right=1.5em] \arrow[r, shift left=.5em] \arrow[r, shift left=1.5em] & X_2 \arrow[l] \arrow[l, shift right=1em] \arrow[l, shift left=1em] \arrow[r] \arrow[r, shift left=1em] \arrow[r, shift right=1em] & X_1 \arrow[l, shift left=.5em] \arrow[l, shift right=.5em] \arrow[r, shift left=.5em] \arrow[r, shift right=.5em] & X_0 \arrow[l]
\end{tikzcd}
\]
with $X_0$ still a set, so that for all $n$ the map
\begin{equation}\label{eq:segal-map} X_n \to X_1 \times_{X_0} \cdots \times_{X_0} X_1\end{equation}
induced on weighted limits from the spine inclusion $\Delta^1 \vee \cdots \vee \Delta^1 \to \Delta^n$, is a weak equivalence in a suitable sense. Segal points out that Grothendieck has observed that in the case where the spaces $X_n$ are discrete, these so-called Segal maps are isomorphisms if and only if $X$ is isomorphic to the nerve of a category.

In this section, we introduce various models of $(\infty,1)$-categories many of which are inspired by this paradigm. Before these models make their appearance in \S\ref{ssec:models}, we begin in \S\ref{ssec:toen} with an abbreviated tour of an  axiomatization due to To\"{e}n that characterizes a homotopy theory of $(\infty,1)$-categories.  
In \S\ref{ssec:cosmoi}, we then restrict our attention to four of the six models that are better behaved in the sense of providing easy access to the $(\infty,1)$-category $\Fun(A,B)$ of functors between $(\infty,1)$-categories $A$ and $B$.  Each of these models satisfy a short list of axioms that we exploit in \S\ref{sec:indep} to sketch a natively ``model-independent'' development of the category theory of $(\infty,1)$-categories.

\subsection{An axiomatization of the homotopy theory of $(\infty,1)$-categories}\label{ssec:toen}

 The homotopy theory of $\infty$-groupoids is freely generated under homotopy colimits by the point. We might try to adopt a similar ``generators and relations'' approach to build the homotopy theory of $(\infty,1)$-categories, taking the generators to be the category $\DDelta$, which freely generates simplicial spaces. The relations assert that the natural maps 
\begin{equation}\label{eq:relations} \Delta^1 \vee \cdots \vee \Delta^1 \to \Delta^n \qquad\qquad \iso \to \Delta^0\end{equation} induces equivalences upon mapping into an $(\infty,1)$-category. This idea motivates Rezk's complete Segal space model, which is the conceptual center of the To\"{e}n axiomatization of a model category $\cM$ whose fibrant objects model $(\infty,1)$-categories.

For simplicity we assume that $\cM$ is a combinatorial simplicial model category. In practice, these assumptions are relatively mild: in particular, if $\cM$ fails to be simplicial it is possible to define a Quillen equivalent model structure on $\cM^{\DDelta^\op}$ that is simplicial \cite{dugger-replacing}. The model category $\cM$ should be equipped with a functor $C \colon \DDelta \to \cM$ so that $C(0)$ represents a free point in $\cM$ while $C(1)$ represents a free arrow. This cosimplicial object is required to be a \textbf{weak cocategory} meaning that the duals of the Segal maps are equivalences
\[ C(1) \cup_{C(0)} \cdots \cup_{C(0)}  C(1) \wto C(n).\]
We state To\"{e}n's seven axioms without defining all the terms because to do so would demand too long of an excursion, and refer the reader to  \cite{toen} for more details.

\begin{thm}[{To\"{e}n \cite{toen}}] Let $\cM$ be a combinatorial simplicial model category equipped with a functor $C \colon \DDelta \to \cM$ satisfying the following properties.
\begin{enumerate}
\item Homotopy colimits are universal over \textbf{0-local objects}, those $X$ so that \[\Map(*,X) \wto \Map(C(1),X).\]
\item Homotopy coproducts are disjoint and universal.
\item $C$ is an \textbf{interval}: meaning $C(0)$ and the $C$-geometric realization of $\iso$ are contractible. 
\item For any weak category $X \in \cM^{\DDelta^\op}$ so that $X_0$ and $X_1$ are 0-local, $X$ is equivalent to the \v{C}ech nerve of the map $X_0 \to |X|_{\mathrm{c}}$.
\item For any weak category $X \in \cM^{\DDelta^\op}$ so that $X_0$ and $X_1$ are 0-local, the homotopy fiber of $X \to \RR\Hom(C,|X|_{\mathrm{c}})$ is contractible.
\item The point  and interval define a \textbf{generator}: $f \colon X \to Y$ is a weak equivalence in $\cM$ if and only if $\Map(C(0),X) \wto \Map(C(0),Y)$ and $\Map(C(1),X) \wto \Map(C(1),Y)$.
\item $C$ is \textbf{homotopically fully faithful}: $\DDelta([n],[m]) \wto \Map(C(n),C(m))$
\end{enumerate}
Then the functor $X \mapsto \Map(C(-),X)$ defines a right Quillen equivalence from $\cM$ to the model structure for complete Segal spaces on the category of bisimplicial sets.
\end{thm}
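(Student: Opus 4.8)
The plan is to realize $R := \Map(C(-),-)$ as the right adjoint of a Quillen adjunction with the complete Segal space model structure and then verify that the derived unit and counit are weak equivalences, via condition (iv) of Definition~\ref{defn:quillen-equiv}. The left adjoint is the $C$-realization $L \colon Y \mapsto |Y|_C := \int^{[n]\in\DDelta} Y_n \otimes C(n)$, the coend formed with the simplicial tensoring of $\cM$; adjointness $\cM(|Y|_C, X) \cong \cat{sSet}^{\DDelta^\op}(Y, \Map(C(-),X))$ is formal. After replacing $C$ by a Reedy-cofibrant resolution if necessary, which affects neither the weak cocategory nor the interval hypotheses up to homotopy, standard realization--nerve arguments --- an instance of the weighted-colimit machinery of Section~\ref{ssec:weighted}, compare Theorem~\ref{thm:gambino} and Corollary~\ref{cor:quillen-strict-weighted} --- show that $L \dashv R$ is a Quillen adjunction for the Reedy, equivalently injective, model structure on bisimplicial sets. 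To promote this to the complete Segal space localization it suffices, by the universal property of left Bousfield localization, to check that $R$ carries fibrant objects of $\cM$ to complete Segal spaces. The Segal condition is the weak cocategory hypothesis probed by a fibrant object: for fibrant $X$, $\Map(-,X)$ sends the homotopy pushouts $C(1)\cup^{\mathrm h}_{C(0)}\cdots\cup^{\mathrm h}_{C(0)} C(1) \wto C(n)$ to the homotopy pullbacks computing the Segal maps $(RX)_n \to (RX)_1 \times^{\mathrm h}_{(RX)_0}\cdots\times^{\mathrm h}_{(RX)_0}(RX)_1$, which are therefore equivalences; completeness comes from the interval axiom (iii), since transposing the relevant localizing map across $L\dashv R$ turns it into $|{\iso}|_C \to |\Delta^0|_C = C(0)$, a map between contractible objects, hence a weak equivalence detected by mapping into the fibrant object $X$.

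It remains to prove the Quillen equivalence. Since every bisimplicial set is cofibrant and the complete Segal spaces are the fibrant objects, condition (iv) of Definition~\ref{defn:quillen-equiv} asks for (a) the derived unit $W \to R(\text{fibrant replacement of } LW)$ to be an equivalence for each complete Segal space $W$, and (b) the derived counit $L(RX) \to X$ to be an equivalence for each fibrant $X \in \cM$ --- here $RX$ is already a complete Segal space by the previous paragraph. For (a): $L$ preserves homotopy colimits and $C(0) = L\Delta^0$, $C(1) = L\Delta^1$ lie in its image, so the generator axiom (vi), which (given combinatoriality) implies $\cM$ is generated under homotopy colimits by $C(0)$ and $C(1)$, together with the fact that every complete Segal space is a homotopy colimit of representables, reduces (a) to checking the unit on the discrete bisimplicial sets $\Delta^n$, where it reads $\Delta^n \to R(C(n))$ and is an equivalence precisely because $C$ is homotopically fully faithful, axiom (vii). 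For (b): one reconstructs $X$ from the complete Segal space $W = RX$ by presenting $W$ as a homotopy colimit of its ``matrix of mapping spaces'' via the \v{C}ech-nerve axiom (iv), applying $L$ (which preserves this homotopy colimit) with the aid of the disjointness and universality of homotopy coproducts, axiom (ii), and the contractibility of the homotopy fibre of $W \to \mathbb{R}\Hom(C,|W|_C)$ from axiom (v), and then identifying the result with $X$; the universality of homotopy colimits over $0$-local objects, axiom (i), is what keeps these manipulations coherent.

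The hard part will be step (b): showing the derived counit is a \emph{pointwise} weak equivalence --- not merely an equivalence of associated homotopy categories --- demands orchestrating the four $\infty$-topos-flavored hypotheses (i), (ii), (iv), (v) into a single descent argument rebuilding an arbitrary fibrant object of $\cM$ from its $C$-probed mapping spaces. Each of those axioms is individually a mild-looking statement, but assembling them is the technical heart of To\"{e}n's theorem; everything else --- the realization--nerve adjunction, its Reedy-Quillenness, the Bousfield-localization criterion, and the fully-faithful input to (a) --- amounts to bookkeeping against results already in hand.
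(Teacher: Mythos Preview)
The paper does not prove this theorem. It is stated as a cited result attributed to To\"{e}n, and the surrounding text explicitly declines to supply the argument: ``We state To\"{e}n's seven axioms without defining all the terms because to do so would demand too long of an excursion, and refer the reader to \cite{toen} for more details.'' There is therefore no proof in the paper to compare your proposal against.

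Your sketch follows the expected architecture --- build the realization--nerve adjunction, verify it is Quillen for the Reedy structure, promote to the Bousfield localization by checking that $R$ lands in complete Segal spaces, then verify the derived unit and counit --- and you are candid that the counit step is where the real work lies. That is an accurate assessment, but note that as written your outline is still a plan rather than a proof: the crucial step (b) is gestured at rather than executed, and the roles you assign to axioms (i), (ii), (iv), (v) in that step would need to be made precise before this could be called a proof. If you intend to submit an actual proof rather than cite To\"{e}n, that descent argument must be filled in; the paper itself does not attempt it.
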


A similar axiomatization is given by Barwick and Schommer-Pries as a specialization of an axiomatization for $(\infty,n)$-categories \cite{BSP}.

\subsection{Models of $(\infty,1)$-categories}\label{ssec:models}

We now introduce six models of $(\infty,1)$-cat\-e\-gor\-ies each of which arise as the fibrant objects in a model category that is Quillen equivalent to all of the others. Two of these models --- the quasi-categories and the Kan complex enriched categories --- have been presented already in Theorems \ref{thm:quasimodel} and \ref{thm:bergner}. 

A \textbf{Segal category} is a Reedy fibrant bisimplicial set $X \in \cat{sSet}^{\DDelta^\op}$ so that the Segal maps \eqref{eq:segal-map} are trivial fibrations and $X_0$ is a set.\footnote{In \cite[\S 7]{DKS-homotopy} the Reedy fibrancy condition, which implies that the Segal maps are Kan fibrations, is dropped and the Segal maps are only required to be weak equivalences.}

\begin{thm}[{Hirschowitz-Simpson \cite{hirschowitz-simpson, simpson-homotopy}, Pellissier \cite{pellissier}, Bergner \cite{bergner-segal}}] There is a cartesian closed model structure on the category of bisimplicial sets with discrete set of objects whose
\begin{itemize}
\item cofibrations are the monomorphisms
\item fibrant objects are the Segal categories that are Reedy fibrant as simplicial spaces
\item weak equivalences are the DK-equivalences (in a suitable sense).
\end{itemize}
\end{thm}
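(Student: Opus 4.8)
The plan is to prove the Hirschowitz--Simpson--Pellissier--Bergner theorem by transferring the relevant structure from the Reedy model structure on bisimplicial sets and then localizing, using the already-established Quillen machinery of \S\ref{sec:derived}--\S\ref{sec:holim}. First I would fix the underlying category to be $\cat{sSet}^{\DDelta^\op}$ together with the (full, replete) subcategory of \emph{precategories} --- those $X$ whose $0$th row $X_0$ is a discrete simplicial set --- and observe this subcategory is closed under the relevant limits and colimits and is a localization-friendly full subcategory (it is reflective, via the functor that replaces $X_0$ by its set of connected components and pushes the identification up through the simplicial directions). The monomorphisms of precategories, being the cofibrations of the Reedy structure restricted to this subcategory, automatically give the left class of a weak factorization system; one checks the generating cofibrations are the boundary inclusions $\partial\Delta^n \times \Delta^m \cup \Delta^n \times \partial\Delta^m \to \Delta^n \times \Delta^m$ suitably modified in the $m=0$ column to respect discreteness of $X_0$. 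Having the cofibrations pinned down, by Exercise \ref{exc:model-given-by}\eqref{itm:model-iv} it suffices to identify the fibrant objects and produce the weak equivalences.

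Next I would construct the model structure by left Bousfield localization of the Reedy model structure on precategories at the set of maps
\[ S := \{\, (\Delta^1 \vee \cdots \vee \Delta^1) \times \Delta^0 \hookrightarrow \Delta^n \times \Delta^0 \,\}_{n \geq 2}, \]
the spine inclusions viewed as maps of discrete-in-one-direction bisimplicial sets. Because the Reedy structure on a combinatorial model category is combinatorial and left proper, such a localization exists by the standard Smith-theorem machinery (the details of which are outside the scope of this survey and can be cited from \cite{hirschowitz-simpson} or \cite{bergner-segal}). The $S$-local objects among Reedy fibrant precategories are, by definition of $S$ and the identification of the weighted-limit formula \eqref{eq:segal-map} with the spine inclusion (this is exactly the content connecting the Segal maps to the spine, as in the nerve discussion of \S\ref{ssec:quasi}), precisely the Reedy fibrant Segal categories: the locality condition says $\Map(\Delta^n \times \Delta^0, X) \to \Map((\Delta^1 \vee \cdots \vee \Delta^1)\times\Delta^0, X)$ is a weak equivalence, which unwinds to the statement that $X_n \to X_1 \times_{X_0} \cdots \times_{X_0} X_1$ is a weak equivalence, and Reedy fibrancy upgrades this to a trivial fibration. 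The weak equivalences of the localized structure are then the $S$-local equivalences, and one verifies these agree with the DK-equivalences (in the appropriate ``up to a fibrant model'' sense) by comparing with the hammock-localization characterization: an $S$-local equivalence between Segal categories induces an equivalence of underlying homotopy categories and local weak equivalences on mapping spaces, which is Definition \ref{defn:DK-equiv}.

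To obtain cartesian closure I would check that the localized model structure is \emph{monoidal} with respect to the cartesian product, using Proposition \ref{prop:leibniz-facts}: the Reedy structure on bisimplicial sets is cartesian closed (the product of monomorphisms is a monomorphism via the Leibniz/pushout-product formula \ref{prop:leibniz-facts}\eqref{itm:leib-cell}, and products of trivial cofibrations are trivial cofibrations by the Reedy analysis), and cartesian closure descends to the localization provided the localizing set $S$ is closed under Leibniz product with cofibrations up to $S$-local equivalence --- which holds because the spine inclusions satisfy the ``pushout-product of spines is a spine-type cell complex'' property analogous to the exercise comparing spine and inner-horn cell structures in \S\ref{ssec:quasi}. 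I anticipate the main obstacle is precisely this last point: verifying that the localization is compatible with the cartesian monoidal structure --- equivalently, that the class of Segal categories forms an exponential ideal --- is genuinely technical and is where the arguments of Hirschowitz--Simpson, Pellissier (who corrected a gap), and Bergner do their real work. For a survey of this kind the honest move is to set up the statement carefully, indicate that the localization exists by general combinatorial model category theory, identify fibrant objects and weak equivalences as above, and then cite \cite{pellissier} and \cite{bergner-segal} for the monoidality and the precise comparison of weak equivalences with DK-equivalences, rather than reproducing those verifications.
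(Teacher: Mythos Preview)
The paper does not prove this theorem: it is stated without proof and attributed to the cited works of Hirschowitz--Simpson, Pellissier, and Bergner. This is a survey paper, and the theorem appears in \S\ref{ssec:models} as one of a sequence of results (alongside the analogous theorems for complete Segal spaces, marked simplicial sets, and relative categories) that are simply recorded with citations in order to assemble the comparison diagram \eqref{eq:1-comparison}.

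Your sketch is therefore not being compared against any argument in the paper. That said, the strategy you outline --- restrict to precategories, take the Reedy model structure, left Bousfield localize at the spine inclusions, identify the local objects as Reedy fibrant Segal categories, and verify monoidality --- is essentially the approach of the cited literature, and your closing paragraph correctly identifies both where the real work lies (cartesian closure and the precise identification of weak equivalences with DK-equivalences) and that this is where one must defer to \cite{pellissier} and \cite{bergner-segal}. One small correction: the reflector for precategories does not replace $X_0$ by its set of path components; it collapses $X_0$ to a point in each simplicial degree in a more delicate way, and getting this right matters for the cofibrant generation. But since you ultimately cite out for the technical details anyway, this does not affect the overall shape of your sketch.
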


A \textbf{complete Segal space} is similarly a Reedy  fibrant bisimplicial set $X \in \cat{sSet}^{\DDelta^\op}$ so that the Segal maps \eqref{eq:segal-map} are trivial fibrations. In this model, the discreteness condition on $X_0$ is replaced with the so-called \textbf{completeness} condition, which is again most elegantly phrased using weighted limits: it asks either that the map $\{\iso, X\} \to \{ \Delta^0,X\} \cong X_0$ is a trivial fibration or that the map $X_0 \to \{\iso, X\}$ is an equivalence. Intuitively this says that the spacial structure of $X_0$ is recovered by the $\infty$-groupoid of $\{\iso,X\}$ of isomorphisms in $X$. 

\begin{thm}[{Rezk \cite{rezk}}] There is a cartesian closed model structure on the category of bisimplicial sets whose 
\begin{itemize}
\item cofibrations are the monomorphisms
\item fibrant objects are the complete Segal spaces
\item weak equivalences are those maps $u \colon A \to B$ so that for every complete Segal space $X$, the maps $X^B \to X^A$ are weak homotopy equivalences of simplicial sets upon evaluating at 0.
\end{itemize}
\end{thm}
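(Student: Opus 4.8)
The plan is to obtain this model structure as a \emph{left Bousfield localization} of a well-understood model structure on bisimplicial sets. First I would record that, since $\DDelta^\op$ is a Reedy category, the category $\cat{sSet}^{\DDelta^\op}$ of simplicial spaces carries the Reedy model structure built from Quillen's model structure on $\cat{sSet}$; a direct computation of the relative latching maps shows that its cofibrations are precisely the monomorphisms, so in particular every object is cofibrant. This Reedy model structure is combinatorial, left proper, and a cartesian closed simplicial model category, with internal hom $X^Y$ again a simplicial space.

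Next I would let $S$ be the set of monomorphisms of simplicial spaces (regarded as discrete simplicial spaces) consisting of the spine inclusions $\Delta^1 \vee \cdots \vee \Delta^1 \hookrightarrow \Delta^n$ for $n \geq 2$ together with the completeness map $\iso \to \Delta^0$, and form the left Bousfield localization $L_S\cat{sSet}^{\DDelta^\op}$. Its existence is guaranteed by the standard existence theorem for left Bousfield localizations of combinatorial, left proper model categories \cite{hirschhorn}. Since a left Bousfield localization has the same underlying category and the same cofibrations as the model structure being localized, the cofibrations of $L_S\cat{sSet}^{\DDelta^\op}$ are still exactly the monomorphisms, which settles the first bullet point.

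Then I would identify the new fibrant objects and weak equivalences using the general theory of Bousfield localizations. The fibrant objects of $L_S$ are the Reedy fibrant simplicial spaces $X$ that are $S$-\emph{local}, meaning that the map on homotopy function complexes induced by each $s \in S$ is a weak equivalence. Using the identification $\Map(\Delta^n,X) \simeq X_n$ and the weighted-limit description of the Segal maps, locality against the spine inclusions is exactly the assertion that the Segal maps $X_n \to X_1 \times^h_{X_0} \cdots \times^h_{X_0} X_1$ are weak equivalences --- that $X$ is a Segal space --- while locality against $\iso \to \Delta^0$ is precisely Rezk's completeness condition that $X_0 \to \{\iso,X\}$ be a weak equivalence. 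Hence the fibrant objects are the complete Segal spaces. Similarly, the general theory identifies the weak equivalences of $L_S$ as those $f \colon A \to B$ for which $\Map(B,X) \to \Map(A,X)$ is a weak homotopy equivalence for every $S$-local (equivalently fibrant) $X$; since all objects are cofibrant and such $X$ are fibrant, this homotopy function complex agrees with the simplicial mapping space, which is obtained by evaluating the internal hom at level $0$, so the condition becomes the stated one: $X^B \to X^A$ is a weak homotopy equivalence of simplicial sets upon evaluating at $0$, for every complete Segal space $X$.

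The hard part will be verifying that $L_S\cat{sSet}^{\DDelta^\op}$ is again cartesian closed, since left Bousfield localization does not preserve a cartesian model structure automatically. The criterion to check is that $S$ is stable under cartesian product up to $S$-local equivalence: for each $s \colon C \to D$ in $S$ and each simplicial space $Z$, the map $C \times Z \to D \times Z$ must be an $S$-local weak equivalence. Writing $Z$ as a colimit of representable bisimplicial sets and using closure of $S$-local trivial cofibrations under the relevant colimits, one reduces to the case where $Z$ is representable, and then to explicit combinatorial arguments: the product of a spine inclusion with a simplex stays in the saturation of the spine inclusions, and $\iso \times Z \to Z$ is handled by an anodyne deformation of $\iso$. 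This product-stability verification --- essentially Rezk's argument \cite{rezk} --- is the only step demanding genuine work; the existence of the model structure and the identification of its fibrant objects and weak equivalences are formal consequences of Bousfield localization together with the elementary translation of the Segal and completeness conditions into locality conditions.
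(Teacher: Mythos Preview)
The paper does not supply a proof of this theorem; it is simply stated with a citation to Rezk \cite{rezk}. So there is no ``paper's own proof'' to compare against.

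That said, your outline is the standard modern reconstruction of Rezk's argument: start from the Reedy (equivalently injective) model structure on $\cat{sSet}^{\DDelta^\op}$, left Bousfield localize at the Segal maps and the completeness map, and then read off cofibrations, fibrant objects, and weak equivalences from the general theory of localization. Your identification of the simplicial mapping space with $(X^A)_0$ is correct and matches the paper's phrasing of the weak equivalences. You are also right that the only nontrivial step is the cartesian closedness of the localized model structure, and your reduction to checking that products of the localizing maps with representables remain local equivalences is the correct criterion. One small caution: the reduction ``write $Z$ as a colimit of representables and use closure of $S$-local trivial cofibrations under colimits'' needs a bit more care, since arbitrary colimits of $S$-local equivalences need not be $S$-local equivalences; the argument that actually works passes through the Leibniz pushout-product formulation of the cartesian axiom and a skeletal/cell-complex presentation of cofibrations, so that one only needs closure under pushouts and transfinite composition. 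With that adjustment your sketch is sound and faithful to how Rezk's theorem is typically established.
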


A \textbf{marked simplicial set} is a simplicial set with a collection of marked edges containing the degeneracies; maps must then preserve the markings. A quasi-category is naturally a marked simplicial set whose marked edges are precisely the isomorphisms, described in Definition \ref{defn:quasi-iso}.

\begin{thm}[{Verity \cite{verity-weak-i}, Lurie \cite{lurie-topos}}] There is a cartesian closed model structure on the category of marked simplicial sets whose 
\begin{itemize}
\item cofibrations are the monomorphisms
\item fibrant objects are the naturally marked quasi-categories
\item weak equivalences are those maps $A \to B$ so that for all naturally marked quasi-categories $X$ the map $X^B \to X^A$ is a homotopy equivalence of maximal sub Kan complexes.
\end{itemize}
\end{thm}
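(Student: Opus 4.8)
The plan is to obtain this model structure as the case of a point in Lurie's construction of the cartesian model structure on marked simplicial sets over a base \cite[\S3.1]{lurie-topos}, which rests on Verity's analysis of weak complicial sets \cite{verity-weak-i}, and to organize the verification around the recognition theorem for combinatorial model categories: since the category $\cat{sSet}^+$ of marked simplicial sets is locally presentable, it suffices to exhibit a set $I$ of generating cofibrations together with a class $\we$ of weak equivalences so that $\we$ satisfies the two-of-three property, is closed under retracts, and forms an accessible subcategory of the arrow category, so that every map with the right lifting property against $I$ lies in $\we$, and so that the $I$-cofibrations in $\we$ are closed under pushout and transfinite composition. Writing $X^\flat$ for a simplicial set with only its degenerate edges marked and $X^\sharp$ for the same set with all edges marked, one takes $I$ to consist of the boundary inclusions $(\partial\Delta^n)^\flat \to (\Delta^n)^\flat$ for $n \geq 0$ together with the single map $(\Delta^1)^\flat \to (\Delta^1)^\sharp$; a routine cellular argument then identifies the $I$-cofibrations with precisely those maps of marked simplicial sets that are monomorphisms on underlying simplicial sets, as asserted. (Uniqueness of such a structure, granting existence, is then immediate from Exercise \ref{exc:model-given-by}\eqref{itm:model-iv}.)

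For the weak equivalences I would take $\we$ exactly as in the statement: $f \colon A \to B$ lies in $\we$ iff for every naturally marked quasi-category $X$ the map $X^B \to X^A$ of internal homs restricts to a homotopy equivalence between maximal sub-Kan-complexes. The two-of-three property and closure under retracts are inherited from the weak homotopy equivalences of Kan complexes. The first point requiring real work is the accessibility of $\we$: following \cite[\S A.2.6, \S A.3.7]{lurie-topos} one presents $\we$ as the class inverted by an accessible localization functor, using that every marked simplicial set is a sufficiently filtered colimit of its bounded-size subobjects and that it is enough to test against a fixed set of naturally marked quasi-categories. One then verifies that a map with the right lifting property against $I$ has underlying map a trivial fibration and detects markings, so that it induces a trivial fibration on each internal hom $X^{(-)}$ and hence belongs to $\we$.

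The heart of the argument, and the step I expect to be the main obstacle, is the combinatorial analysis of the \emph{marked anodyne} morphisms: the weakly saturated class generated by an explicit set $J$ comprising the inner horn inclusions $(\Lambda^n_k)^\flat \to (\Delta^n)^\flat$ for $0 < k < n$, certain outer horn inclusions in which the edge adjacent to the omitted face is marked, a map asserting that the remaining edge of a $2$-simplex is marked whenever a composable pair among its edges is, and maps built from the nerve $\iso$ of the free-living isomorphism that pin down the marked edges of a fibrant object to be exactly its equivalences. One must show that a marked simplicial set has the right lifting property against every element of $J$ precisely when it is a naturally marked quasi-category --- combining the inner-horn characterization of quasi-categories with Joyal's outer-horn criterion for equivalences --- and that the marked anodyne maps are exactly the $I$-cofibrations lying in $\we$, via a retract argument in the spirit of Lemma \ref{lem:retract} built on a fibrant replacement together with the fact that marked anodyne maps become isomorphisms after applying $X^{(-)}$ for fibrant $X$; closure of the intersection of $\we$ with the $I$-cofibrations under pushout and transfinite composition then follows from Lemma \ref{lem:closure} applied to $J$. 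Cartesian closure is finally a pushout-product (Leibniz) computation in the style of Proposition \ref{prop:leibniz-facts}: one checks that the pushout-product $i \leib\times j$ of two generating cofibrations is again an $I$-cofibration, and is marked anodyne as soon as one factor is, from which the corresponding assertions for arbitrary (trivial) cofibrations follow by cellular induction and the naturally marked quasi-categories form an exponential ideal. Verifying this pushout-product behaviour on the generators of $J$ --- above all the interaction between horn filling and the marking conditions on the outer horns --- is the part of \cite[\S\S3.1.1--3.1.4]{lurie-topos} that resists shortcuts, and is what makes this theorem considerably more involved than Theorem \ref{thm:quasimodel}.
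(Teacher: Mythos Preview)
The paper does not prove this theorem at all: it is stated as a black-box result with citations to Verity and Lurie, and the exposition immediately moves on to the next model structure. So there is no ``paper's own proof'' to compare against.

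That said, your outline is a reasonable sketch of how the argument in \cite[\S3.1]{lurie-topos} is organized, and the places you flag as the real work --- identifying the marked anodyne maps, characterizing the fibrant objects, and the pushout-product calculation --- are indeed where the effort lies. One caution: your claim that ``the marked anodyne maps are exactly the $I$-cofibrations lying in $\we$'' is not quite right as stated; the marked anodyne maps are contained in the trivial cofibrations but the containment is strict (Lurie notes this explicitly). What you actually need, and what suffices for the recognition theorem, is only that marked anodyne maps lie in $\we$ and that the fibrant objects are detected by $J$, not that $J$ generates all trivial cofibrations. The closure of $\cof \cap \we$ under pushout and transfinite composition then has to be argued separately, typically via the representable characterization of $\we$ together with the cartesian closure, rather than by reducing to $J$.
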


A \textbf{relative category} is a category equipped with a wide subcategory of weak equivalences. A morphism of relative categories is a homotopical functor. A weak equivalence of relative categories is a homotopical functor $F \colon (\cC,\we) \to (\cD,\we)$ that induces a DK-equivalence on hammock localizations $\sL^H(\cC,\we) \to L^H(\cD,\we)$. 

\begin{thm}[{Barwick-Kan \cite{BK-relative}}] There is a model structure for relative categories whose
\begin{itemize}
\item weak equivalences are the relative DK-equivalences just defined
\end{itemize}
and whose  cofibrations and fibrant objects are somewhat complicated to describe.
\end{thm}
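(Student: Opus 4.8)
The plan is to obtain this model structure by \emph{right-transferring} (lifting) the Rezk complete Segal space model structure introduced above along a suitable nerve-type right adjoint, following Barwick and Kan \cite{BK-relative}.

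First I would set up the comparison adjunction. The category $\cat{RelCat}$ of relative categories and homotopical functors is locally presentable, being the category of models for a limit sketch, so the small object argument is available. One then constructs a \textbf{relative nerve} $N \colon \cat{RelCat} \to \cat{ssSet}$ --- a variant of Rezk's classification diagram valued in bisimplicial sets --- together with its left adjoint $K$. The bisimplicial set $N(\cC, \we)$ is built so that its $(p,q)$-simplices record the commutative $p \times q$ grids in $\cC$ whose ``vertical'' edges lie in $\we$; this is arranged precisely so that $N$ of the free-living weak equivalence is already a complete Segal space and so that Rezk's completeness axiom corresponds to the requirement that the morphisms of $\we$ become invertible.

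Second, I would invoke the transfer theorem for cofibrantly generated model structures (Kan's recognition lemma): $\cat{RelCat}$ carries a model structure whose weak equivalences and fibrations are exactly the maps inverted, resp.\ turned into fibrations, by $N$, provided one verifies the \emph{acyclicity condition} --- that every transfinite composite of pushouts of maps $K(j)$, for $j$ a generating Rezk trivial cofibration, is sent by $N$ to a Rezk equivalence. This acyclicity condition is the genuine obstacle and the technical heart of \cite{BK-relative}: it amounts to showing that $K$ carries Rezk trivial cofibrations to DK-equivalences of relative categories, and verifying this requires a delicate analysis of the hammock localizations of the relative categories assembled from such pushouts and transfinite composites, controlled by a ``three-arrow calculus'' for computing the mapping complexes.

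Finally, I would identify the transferred weak equivalences with the relative DK-equivalences in the sense of Definition~\ref{defn:DK-equiv} applied to hammock localizations: a map $f$ in $\cat{RelCat}$ has $N f$ a Rezk equivalence exactly when $f$ induces a DK-equivalence on associated complete Segal spaces, which by the comparison results of \S\ref{ssec:hammock-loc} --- in particular Proposition~\ref{prop:hammock-to-fractions} and the refinements that follow it --- happens exactly when $\sL^H f$ is a DK-equivalence. By construction the adjunction $K \dashv N$ is then a Quillen equivalence with the Rezk model structure, so $\cat{RelCat}$ indeed presents the homotopy theory of homotopy theories. The cofibrations and fibrant objects are whatever this transfer forces them to be; they admit explicit but unenlightening combinatorial descriptions, which is why the statement leaves them unspecified.
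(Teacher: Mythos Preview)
The paper does not prove this theorem; it is stated with citation to \cite{BK-relative} and no argument is supplied, so there is no in-paper proof to compare your proposal against.

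Your sketch is broadly faithful to the Barwick--Kan strategy: the model structure is indeed produced by right transfer from Rezk's complete Segal space model structure along a nerve-type right adjoint, with the acyclicity condition as the main technical obstacle. Two points deserve adjustment. First, the Barwick--Kan comparison functor is not the naive classification diagram you describe but involves a two-fold subdivision (their $N_\xi$); this subdivision is precisely what makes the transfer go through, so it is not an incidental detail. Second, your final identification step over-cites this paper: Proposition~\ref{prop:hammock-to-fractions} only computes $\pi_0$ of the hammock mapping spaces, and nothing in \S\ref{ssec:hammock-loc} establishes that $N$-Rezk-equivalences coincide with $\sL^H$-DK-equivalences. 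That coincidence is itself a theorem of Barwick--Kan, not a corollary of the material recalled here.
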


Each of these model categories, represented in the diagram below by their subcategories of fibrant objects,  are Quillen equivalent, connected via right Quillen equivalences as displayed below:\footnote{The right Quillen equivalences from relative categories are in \cite{BK-relative}. The Quillen equivalences involving complete Segal spaces, Segal categories, and quasi-categories can all be found in \cite{JT}. Proofs that the homotopy coherent nerve defines a Quillen equivalence from simplicial categories to quasi-categories can be found in \cite{lurie-topos} and \cite{ds-mapping}. A zig-zag of Quillen equivalences between simplicial categories and Segal categories is constructed \cite{bergner-three}. The right Quillen equivalence from naturally marked quasi-categories to the Joyal model structure can be found in \cite{lurie-topos} and \cite{verity-weak-i}.
}
\begin{equation}\label{eq:1-comparison}
\begin{tikzcd}[row sep=small]
& \cat{CSS} \arrow[r] \arrow[ddr, shift left=.25em] & \cat{Segal} \arrow[dd, shift left=.25em] \\ \cat{RelCat} \arrow[drr] \arrow[ur] \arrow[dr] & & & \cat{Kan}\text{-}\cat{Cat} \arrow[dl] \\
& \cat{qCat}_\natural \arrow[r] & \cat{qCat} \arrow[uul, shift left=.25em] \arrow[uu, shift left=.25em]
\end{tikzcd}
\end{equation}

A nice feature of the simplicial category and relative category models is that their objects and morphisms are strictly-defined, as honest-to-goodness enriched categories in the former case and honest-to-goodness homotopical categories in the latter. From this vantage point it is quite surprising that they are Quillen equivalent to the weaker models. But there are some costs paid to obtain this extra strictness: neither model category is cartesian closed, so both contexts lack a suitable internal hom, whereas the other four models --- the quasi-categories, Segal categories, complete Segal spaces, and naturally marked quasi-categories --- all form cartesian closed model categories. Consequently, in each of these models the $(\infty,1)$-categories define an exponential ideal: if $A$ is fibrant and $X$ is cofibrant, then $A^X$ is again fibrant  and moreover the maps induced on exponentials by the maps \eqref{eq:relations} are weak equivalences.

\subsection{$\infty$-cosmoi of $(\infty,1)$-categories}\label{ssec:cosmoi}

From the cartesian closure of the model categories for quasi-categories, Segal categories, complete Segal spaces, and naturally marked quasi-categories, it is possible to induce a secondary enrichment, in the sense of Definition \ref{defn:Vmodelcat}, on these model categories:

\begin{thm}[{\cite[2.2.3]{RV4}}] The model structures for quasi-categories, complete Segal spaces, Segal categories, and naturally marked quasi-categories  are all enriched over the model structure for quasi-categories.
\end{thm}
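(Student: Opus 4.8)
The plan is to exploit the fact, recalled in Definition \ref{defn:Vmodelcat} and Theorem \ref{thm:hoVmonoidal}, that for a cartesian closed model category $\cV$ the data of a $\cV$-enrichment on a model category $\cM$ that makes it a $\cV$-model category is a \emph{Quillen two-variable adjunction} $(\otimes, \{,\}, \Map)$ satisfying the unit condition. For each of the four model categories $\cM$ in question --- quasi-categories, complete Segal spaces, Segal categories, naturally marked quasi-categories --- the cartesian closed structure already gives a Quillen two-variable adjunction $(\times, (-)^{(-)}, (-)^{(-)})$ on $\cM$ itself (in the sense of Definition \ref{defn:quillen-two-var}), since cartesian closure of a model structure is exactly the assertion that the cartesian product is a left Quillen bifunctor. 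The task is to transport the $\cM$-component of this structure along the left Quillen functor $\cat{sSet}_{\mathrm{Joyal}} \to \cM$ whose existence is part of the comparison diagram \eqref{eq:1-comparison}, obtaining a two-variable adjunction $\cat{sSet} \times \cM \to \cM$ that is Quillen when $\cat{sSet}$ carries the Joyal model structure.

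Concretely, first I would fix, for each such $\cM$, the right Quillen equivalence $U \colon \cM \to \cat{sSet}_{\mathrm{Joyal}}$ appearing in \eqref{eq:1-comparison} together with its left adjoint $c \colon \cat{sSet}_{\mathrm{Joyal}} \to \cM$, which is left Quillen and in each case preserves finite products (it is essentially a nerve-type or inclusion functor, or is built as a left adjoint from a product-preserving cosimplicial object, so this is checked model by model). Then I would \emph{define} the tensor of a simplicial set $A$ with an object $X \in \cM$ to be $A \otimes X := c(A) \times X$, the cotensor to be $\{A, Y\} := Y^{c(A)}$, and the enriched hom $\Map(X,Y)$ to be $U$ applied to the internal hom $Y^X$ of $\cM$. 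The two-variable adjunction isomorphisms $\cM(A \otimes X, Y) \cong \cM(X, \{A,Y\}) \cong \cat{sSet}(A, \Map(X,Y))$ then follow by composing the cartesian-closed adjunction of $\cM$ with the adjunction $c \dashv U$. The associativity, unit, and enrichment coherence conditions flagged in the footnote to Example \ref{ex:tensored-cotensored} follow from the corresponding coherences for the cartesian structure of $\cM$ together with the fact that $c$ is (strong) monoidal for the cartesian products, i.e.\ $c(A \times B) \cong c(A) \times c(B)$ and $c(\Delta^0) \cong *$.

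The Quillen two-variable adjunction condition of Definition \ref{defn:quillen-two-var} then reduces, via Proposition \ref{prop:leibniz-facts}\eqref{itm:leib-adj} and Exercise \ref{exc:leibniz-lifting}, to checking the Leibniz tensor condition: for a cofibration $i$ in $\cat{sSet}_{\mathrm{Joyal}}$ (equivalently a monomorphism) and a cofibration $j$ in $\cM$, the Leibniz product $c(i) \leib\times j$ is a cofibration in $\cM$, trivial if either $i$ or $j$ is. Since $c$ is left Quillen it carries the (trivial) cofibration $i$ to a (trivial) cofibration $c(i)$ in $\cM$, and since $c$ preserves the relevant pushouts one has $c(i) \leib\times j$ in place of any ``$c(i \leib\times j)$'' ambiguity; the desired conclusion is then precisely the cartesian-closed two-variable Quillen condition for $\cM$ applied to the pair $(c(i), j)$. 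Finally the unit condition $QI \otimes m \to I \otimes m \cong m$ of Definition \ref{defn:Vmodelcat} is automatic here because the monoidal unit $I = \Delta^0$ of the Joyal structure is already cofibrant, so $c(\Delta^0) \cong *$ is the terminal (hence cofibrant) object and the map in question is an isomorphism.

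The main obstacle I anticipate is the model-by-model verification that the chosen left adjoint $c \colon \cat{sSet}_{\mathrm{Joyal}} \to \cM$ genuinely preserves finite products --- this is what makes the transported structure monoidal rather than merely a two-variable adjunction of the wrong flavor. For quasi-categories $c$ is the identity and there is nothing to prove; for naturally marked quasi-categories $c$ is the ``natural marking'' functor and product-preservation is a direct check on marked edges; but for complete Segal spaces and Segal categories $c$ is a genuine ``discretization/completion'' type left adjoint and one must either appeal to the explicit construction in \cite{JT} or argue abstractly that the left adjoint of a product-preserving right adjoint between cartesian closed categories is oplax monoidal and then upgrade oplax to strong using the specific form of these functors. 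An alternative, cleaner route that sidesteps this is to observe that \emph{any} cartesian closed model category $\cN$ equipped with a left Quillen functor $c \colon \cat{sSet}_{\mathrm{Joyal}} \to \cN$ that is strong monoidal for the cartesian products is automatically enriched over $\cat{sSet}_{\mathrm{Joyal}}$, prove this once and for all, and then invoke it four times; the footnote-level bookkeeping about which of the coherence conditions implies the others (Example \ref{ex:tensored-cotensored}) then only needs to be run in the general lemma.
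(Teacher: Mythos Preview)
The paper does not supply a proof of this theorem; it is stated with a citation to \cite[2.2.3]{RV4}, and the preceding sentence offers only the hint that the enrichment is ``induced'' from the cartesian closure of each of the four model structures. Your proposal is a correct way to make that hint precise, and the general lemma you isolate at the end---that a strong monoidal left Quillen functor $c \colon \cV \to \cM$ from a monoidal model category into a cartesian closed model category renders $\cM$ a $\cV$-model category via $A \otimes X := c(A) \times X$---is valid and is the mechanism behind the cited result. The unit condition is automatic since $\Delta^0$ is cofibrant, as you observe, and the product-preservation of $c$ that you flag as the main obstacle is in each case a routine check once the correct $c$ is identified: for quasi-categories it is the identity, for marked simplicial sets it is the minimal-marking functor, and for the two bisimplicial models it is an inclusion-of-discrete-spaces functor rather than a completion-type left adjoint as your parenthetical suggests. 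There is thus no gap in your argument; there is simply nothing in the paper itself to compare against beyond the one-sentence gesture toward cartesian closure, which your proposal makes rigorous.
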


The following definition of an $\infty$-cosmos collects together the properties of the fibrant objects and fibrations and weak equivalences between them in any model category that is enriched over the Joyal model structure and in which the fibrant objects are also cofibrant:

\begin{defn}[$\infty$-cosmos]\label{qcat.ctxt.cof.def}
An $\infty$-\textbf{cosmos} is a simplicially enriched category $\cK$ whose 
\begin{itemize}
\item objects we refer to as the \textbf{$\infty$-categories} in the $\infty$-cosmos, whose
\item hom simplicial sets $\Fun(A,B)$ are all  quasi-categories, 
\end{itemize} and that is equipped with a specified subcategory of \textbf{isofibrations}, denoted by ``$\fto$'',
satisfying the following axioms:
 \begin{enumerate}[label=(\alph*)]
    \item\label{qcat.ctxt.cof:a} (completeness) As a simplicially enriched category,  $\cK$ possesses a terminal object $1$, cotensors $A^U$ of  objects $A$ by all\footnote{For most purposes, it suffices to require only cotensors with finitely presented simplicial sets (those with only finitely many non-degenerate simplices).} simplicial sets $U$, and pullbacks of isofibrations along any functor.\footnote{For the theory of homotopy coherent adjunctions and monads developed in \cite{RV2}, limits of towers of isofibrations are also required, with the accompanying stability properties of \ref{qcat.ctxt.cof:b}. These limits are present in all of the $\infty$-cosmoi we are aware of, but will not be required for any results discussed here.}
    \item\label{qcat.ctxt.cof:b} (isofibrations) The class of isofibrations contains the isomorphisms and all of the functors $!\colon A \fto 1$ with codomain $1$; is stable under pullback along all functors; and if $p\colon E\fto B$ is an isofibration in $\cK$ and $i\colon U\hookrightarrow V$ is an inclusion of  simplicial sets then the Leibniz cotensor $\widehat{\{i, p\}}\colon E^V\fto E^U\times_{B^U} B^V$ is an isofibration. Moreover, for any object $X$ and isofibration $p \colon E \fto B$, $\Fun(X,p) \colon \Fun(X,E) \fto \Fun(X,B)$ is an isofibration of quasi-categories.
\end{enumerate}
The underlying category of an $\infty$-cosmos $\cK$ has a canonical subcategory of equivalences, denoted by ``$\wto$'', satisfying the two-of-six property. A functor $f \colon A \to B$ is an \textbf{equivalence} just when the induced functor $\Fun(X,f) \colon \Fun(X,A) \to \Fun(X,B)$ is an equivalence of quasi-categories for all objects $X \in \cK$.  The  \textbf{trivial fibrations}, denoted by ``$\fwto$'', are those functors that are both equivalences and isofibrations.
It follows from \ref{qcat.ctxt.cof.def}\ref{qcat.ctxt.cof:a}-\ref{qcat.ctxt.cof:b} that: 
 \begin{enumerate}[label=(\alph*), resume]
    \item\label{qcat.ctxt.cof:c} (cofibrancy) All objects are \textbf{cofibrant}, in the sense that they enjoy the left lifting property with respect to all trivial fibrations in $\cK$. 
    \[
    \begin{tikzcd} & E \arrow[d, two heads, "\wr"] \\ A \arrow[ur, dashed, "\exists"] \arrow[r] & B
    \end{tikzcd}
    \]
    \item\label{qcat.ctxt.cof:d} (trivial fibrations) The trivial fibrations define a subcategory containing  the isomorphisms; are stable under pullback along all functors; and the Leibniz cotensor $\widehat{\{i,p\}}\colon E^V\fwto E^U\times_{B^U}  B^V$ of an isofibration $p\colon E\fto B$ in $\cK$ and a monomorphism $i\colon U\hookrightarrow V$ between presented simplicial sets   is a trivial fibration when $p$ is a trivial fibration in $\cK$ or $i$ is trivial cofibration in the Joyal model structure on $\cat{sSet}$. Moreover, for any object $X$ and trivial fibration $p \colon E \fwto B$, $\Fun(X,p) \colon \Fun(X,E) \fwto \Fun(X,B)$ is a trivial fibration of quasi-categories.
\item\label{qcat.ctxt.cof:e} (factorization) Any functor $f \colon A \to B$ may be factored as $f = p j$ 
\[
\begin{tikzcd}
& N_{\mathrm{f}} \arrow[dr, two heads, "p"] \arrow[dl, bend right, two heads, "q"', "\sim" sloped] & \\ A \arrow[rr, "f"'] \arrow[ur, "\sim" sloped, "j"'] & & B
\end{tikzcd}
\]
where $p \colon N_{\mathrm{f}} \fto B$ is an isofibration and $j \colon A \wto N_{\mathrm{f}}$ is right inverse to a trivial fibration $q \colon N_{\mathrm{f}} \fwto A$. % (see \refIV{lem:Brown.fact}).
\end{enumerate}
\end{defn}

It is a straightforward exercise in enriched model category theory to verify that these axioms are satisfied by the fibrant objects in any model category that is enriched over the Joyal model structure on simplicial sets, at least when all of these objects are cofibrant. Consequently:

\begin{thm}[{Joyal-Tierney, Verity, Lurie, Riehl-Verity \cite{RV4}}] \label{thm:cosmoi} The full subcategories $\cat{qCat}$, $\cat{CSS}$, $\cat{Segal}$, and $\cat{qCat}_\natural$ all define $\infty$-cosmoi.
\end{thm}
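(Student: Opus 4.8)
The plan is to deduce Theorem \ref{thm:cosmoi} from the remark immediately preceding it: namely, that the axioms of Definition \ref{qcat.ctxt.cof.def} are satisfied by the subcategory of fibrant objects in any model category $\cM$ that is enriched over the Joyal model structure of Theorem \ref{thm:quasimodel}, provided all fibrant objects of $\cM$ are cofibrant. So the argument has two halves: (1) verify this general ``fibrant-objects-of-a-Joyal-enriched-model-category'' principle, and (2) check that each of the four model categories $\cat{qCat}$, $\cat{CSS}$, $\cat{Segal}$, $\cat{qCat}_\natural$ satisfies the required hypotheses. For (2), the enrichment over the Joyal model structure is exactly the content of the preceding theorem \cite[2.2.3]{RV4}, and the fact that all objects in each of these model categories are cofibrant is built into the theorems of \S\ref{ssec:models}: in each case the cofibrations are the monomorphisms, so every object is cofibrant, a fortiori every fibrant object is cofibrant.

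First I would set up (1). Let $\cM$ be a $\cV$-model category in the sense of Definition \ref{defn:Vmodelcat} with $\cV$ the Joyal model structure, and let $\cK$ be the full simplicial subcategory of fibrant objects. The hom-objects $\Fun(A,B)$ are quasi-categories: this is the standard fact that in a $\cV$-enriched model category the enriched hom from a cofibrant object to a fibrant object is fibrant in $\cV$, applied using that every fibrant $A$ is cofibrant by hypothesis. Declare the isofibrations in $\cK$ to be the fibrations of $\cM$ between fibrant objects. For axiom \ref{qcat.ctxt.cof:a}, the terminal object is the terminal object of $\cM$ (fibrant since everything maps to it; more precisely $1$ is fibrant as the unique map $1 \to 1$ is an isomorphism, hence a fibration); cotensors $A^U$ are the enriched cotensors, which land among fibrant objects because cotensoring a fibrant object by any simplicial set preserves fibrancy (the map $A^U \to A^\emptyset = 1$ is a Leibniz cotensor of $\emptyset \hookrightarrow U$ against $A \fto 1$); pullbacks of isofibrations along arbitrary maps exist and are again isofibrations by right-properness-style stability, which follows from the weak factorization system axioms (Corollary \ref{cor:wfs-by-half} and Lemma \ref{lem:closure} dualized). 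For axiom \ref{qcat.ctxt.cof:b}, the Leibniz cotensor statements $\widehat{\{i,p\}}$ being an isofibration (or trivial fibration) when $i$ is a monomorphism (or trivial cofibration) of simplicial sets and $p$ an isofibration (or trivial fibration) is precisely the pushout-product/Leibniz form of the two-variable Quillen adjunction defining the enrichment, via Exercise \ref{exc:leibniz-lifting}; and $\Fun(X,p)$ being an isofibration of quasi-categories is the special case obtained by cotensoring against the generating (trivial) cofibrations and using that $\Fun(X,-)$ is right adjoint to $- \otimes X$. For axiom \ref{qcat.ctxt.cof:c}, cofibrancy of all objects in the $\infty$-cosmos sense---lifting against trivial fibrations of $\cK$---follows because trivial fibrations of $\cK$ are in particular trivial fibrations of $\cM$ (fibrations that are weak equivalences), and every object of $\cK$ is cofibrant in $\cM$, so it lifts against every trivial fibration of $\cM$ by Definition \ref{defn:model-cat}. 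Finally the derived properties \ref{qcat.ctxt.cof:d} and \ref{qcat.ctxt.cof:e} are stated in the definition as consequences; \ref{qcat.ctxt.cof:e} is just the $(\cof \cap \we, \fib)$ factorization of $\cM$ restricted to fibrant objects, noting that the middle object $N_{\mathrm f}$ is fibrant since it fibers over the fibrant $B$, and that $j$, being a trivial cofibration, admits a retraction $q$ by lifting which is then automatically a trivial fibration by two-of-three.

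I would then assemble (2): invoke \cite[2.2.3]{RV4} (the theorem just above) to know each of the four model categories is Joyal-enriched, observe that in each the cofibrations are monomorphisms so every fibrant object is cofibrant, and conclude by (1) that the subcategory of fibrant objects---which is exactly $\cat{qCat}$, $\cat{CSS}$, $\cat{Segal}$, or $\cat{qCat}_\natural$ respectively---forms an $\infty$-cosmos, with isofibrations the fibrations between fibrant objects. One should double-check that the ``isofibrations'' named in \S\ref{ssec:models} and \S\ref{ssec:joyal} (e.g.\ for $\cat{qCat}$, maps lifting against inner horns and against $* \to \iso$) really are the fibrations between fibrant objects in the respective model structures; for quasi-categories this is part of Theorem \ref{thm:quasimodel}, and for the other three it is built into the cited theorems. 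The main obstacle---really the only non-bookkeeping point---is the verification of the Leibniz cotensor stability in axiom \ref{qcat.ctxt.cof:b}, i.e.\ that these are genuinely consequences of the two-variable Quillen adjunction encoding the enrichment rather than extra hypotheses; this is where one must be careful that the enrichment of Definition \ref{defn:Vmodelcat} is by the Joyal (not Quillen) model structure, so that the relevant trivial cofibrations of simplicial sets are the Joyal ones, matching the statement of \ref{qcat.ctxt.cof:d}. Everything else is routine enriched model category theory, which is why the citation \cite{RV4} does the real work and the present statement is, as advertised, ``a straightforward exercise.''
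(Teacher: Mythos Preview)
Your proposal is correct and follows exactly the approach the paper indicates: the paper gives no detailed proof, merely noting that the axioms of Definition~\ref{qcat.ctxt.cof.def} hold for the fibrant objects in any Joyal-enriched model category with cofibrant fibrant objects, and then invoking the preceding theorem together with the fact that all objects are cofibrant in each of the four model structures. Your write-up simply unpacks this ``straightforward exercise,'' and the only quibble is in your parenthetical sketch of~\ref{qcat.ctxt.cof:e}: the retraction $q$ obtained by lifting $j$ against $A\fto 1$ is a weak equivalence by two-of-three but not obviously a fibration---however, as you correctly note, \ref{qcat.ctxt.cof:d} and~\ref{qcat.ctxt.cof:e} are stated as consequences of \ref{qcat.ctxt.cof:a}--\ref{qcat.ctxt.cof:c} and need not be verified separately.
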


Moreover, each of the model categories referenced in Theorem \ref{thm:cosmoi} are closed monoidal model categories with respect to the cartesian product. It follows that each of these four $\infty$-cosmoi are \textbf{cartesian closed} in the sense that they satisfy the extra axiom:
 \begin{enumerate}[label=(\alph*)]
 \setcounter{enumi}{5}
    \item\label{qcat.ctxt.cof:f} (cartesian closure) The product bifunctor $-\times - \colon \cK \times \cK \to \cK$ extends to a simplicially enriched two-variable adjunction
\[ \Fun(A \times B,C) \cong \Fun(A, C^B) \cong \Fun(B,C^A).\]
\end{enumerate}

A \textbf{cosmological functor} is a simplicial functor $F \colon \cK \to \cL$ preserving the class of isofibrations and all of the limits enumerated in Definition \ref{qcat.ctxt.cof.def}\ref{qcat.ctxt.cof:a}. A cosmological functor is a \textbf{biequivalence} when it is:
\begin{enumerate}[label=(\alph*)]
\item surjective on objects up to equivalence: i.e., if for every $C \in \cL$, there is some $A \in \cK$ so that $FA\simeq C \in \cL$.
\item a local equivalence of quasi-categories: i.e., if for every pair $A,B \in \cK$, the map  $\Fun(A,B) \wto \Fun(FA,FB)$ is an equivalence of quasi-categories.
\end{enumerate}
The inclusion $\cat{Cat}\hookrightarrow\cat{qCat}$ defines a cosmological functor but not a biequivalence, since it fails to be essentially surjective. Each right Quillen equivalence of 
\[
\begin{tikzcd}
 \cat{CSS} \arrow[r] \arrow[dr, shift left=.25em] & \cat{Segal} \arrow[d, shift left=.25em] \\ 
 \cat{qCat}_\natural \arrow[r, shift left=.25em] & \cat{qCat} \arrow[ul, shift left=.25em] \arrow[u, shift left=.25em] %\arrow[l, shift left=.25em, dashed]
\end{tikzcd}
\]
defines a cosmological biequivalence.

%The underlying quasi-category functor $\cat{qCat}_\natural \to \cat{qCat}$ is an isomorphism when restricted to these subcategories of fibrant objects, with inverse the ``natural marking'' functor, which accounts for the functor that does not appear in \eqref{eq:1-comparison}. The functor $\cat{CSS} \to \cat{Segal}$ commutes with the underlying quasi-category functors. Other composites are not displayed
%\end{proof}

As we shall discover in the next section, Theorem \ref{thm:cosmoi} together with additional observation ---  that the $\infty$-cosmoi of quasi-categories, Segal categories, complete Segal spaces, and naturally marked simplicial sets are biequivalent --- forms the lynchpin of an approach to develop the basic theory of $(\infty,1)$-categories in a model-independent fashion. In fact, most of that development takes places in a strict 2-category that we now introduce.

\begin{defn}[the homotopy 2-category of $\infty$-cosmos] The \textbf{homotopy 2-cat\-e\-gory} of an $\infty$-cosmos $\cK$ is a strict 2-category $\ho\cK$ so that 
\begin{itemize}
\item the objects of $\ho\cK$ are the objects of $\cK$, i.e., the $\infty$-\textbf{categories};
\item the 1-cells $f \colon A \to B$ of $\ho\cK$ are the vertices $f \in \Fun(A,B)$ in the mapping quasi-categories of $\cK$, i.e., the $\infty$-\textbf{functors};
\item a 2-cell  
$
\begin{tikzcd}
A \arrow[r, bend left, "f"] \arrow[r, bend right, "g"'] \arrow[r, phantom, "\scriptstyle\Downarrow\alpha"] & B
\end{tikzcd}$ in $\ho\cK$, which we call an $\infty$-\textbf{natural transformation}, is represented by a 1-simplex $\alpha \colon f \to g \in \Fun(A,B)$, where a parallel pair of 1-simplices in $\Fun(A,B)$ represent the same 2-cell if and only if they bound a 2-simplex whose remaining outer face is degenerate.
\end{itemize}
Put concisely, the homotopy 2-category is the 2-category $\ho\cK$ defined by applying the homotopy category functor $h \colon \cat{qCat} \to \cat{Cat}$ to the mapping quasi-categories of the $\infty$-cosmos; the hom-categories in $\ho\cK$  are defined by the formula  \[\Hom(A,B):= h\Fun(A,B)\] to be the homotopy categories of the mapping quasi-categories in $\cK$. 
\end{defn}

As we shall see in the next section, much of the theory of $(\infty,1)$-categories can be developed simply by considering them as objects in the homotopy 2-category using an appropriate weakening of standard 2-categorical techniques. A key to the feasibility of this approach is the fact that the standard 2-categorical notion of equivalence, reviewed in Definition \ref{defn:equivalence} below, coincides with the representably-defined notion of equivalence present in any $\infty$-cosmos. The proof of this result should be compared with Quillen's Proposition \ref{prop:structured-we}. 

\begin{prop}\label{prop:equiv-is-equiv}
An $\infty$-functor $ f\colon A \to B$ is an equivalence in the $\infty$-cosmos $\cK$ if and only if it is an equivalence in the homotopy 2-category $\ho\cK$.
\end{prop}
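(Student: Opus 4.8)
The plan is to route both implications through a single translation principle: an invertible $2$-cell in $\ho\cK$ is the same data as an $\iso$-homotopy in $\cK$. A $2$-cell $\alpha\colon h_0\To h_1$ between $h_0,h_1\colon A\rightrightarrows B$ is represented by a $1$-simplex of the quasi-category $\Fun(A,B)$, and it is invertible precisely when this $1$-simplex is an isomorphism in $h\Fun(A,B)$; by Definition~\ref{defn:quasi-iso} that happens if and only if it extends along $\2\hookrightarrow\iso$ to a map $\iso\to\Fun(A,B)$. Using the cotensor compatibility $\Fun(A,B^\iso)\cong\Fun(A,B)^\iso=\cat{sSet}(\iso,\Fun(A,B))$ supplied by the simplicial enrichment and axiom~\ref{qcat.ctxt.cof.def}\ref{qcat.ctxt.cof:a}, such a map transposes to a functor $\bar\alpha\colon A\to B^\iso$ whose composites with the two vertex-inclusions $B^\iso\rightrightarrows B$ are $h_0$ and $h_1$. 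I would first record this correspondence as a lemma.

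For ``$\ho\cK$-equivalence implies $\cK$-equivalence'': suppose $g\colon B\to A$ together with invertible $2$-cells $\id_A\cong gf$ and $fg\cong\id_B$ witness that $f$ is an equivalence in $\ho\cK$. By the translation principle these amount to functors $A\to A^\iso$ and $B\to B^\iso$. Applying the simplicial functor $\Fun(X,-)\colon\cK\to\cat{qCat}$ and again invoking $\Fun(X,A^\iso)\cong\Fun(X,A)^\iso$, I obtain, for every object $X$, $\iso$-homotopies $\id\sim\Fun(X,g)\Fun(X,f)$ and $\Fun(X,f)\Fun(X,g)\sim\id$. Thus $\Fun(X,f)$ is a homotopy equivalence of quasi-categories relative to the $\iso$-cylinder of Proposition~\ref{prop:quasi-cyl}; since every simplicial set is cofibrant and every quasi-category fibrant in the Joyal model structure, Proposition~\ref{prop:structured-we} promotes this to a weak equivalence, that is, an equivalence of quasi-categories. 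So $\Fun(X,f)$ is an equivalence of quasi-categories for all $X$, which is the definition of $f$ being an equivalence in $\cK$.

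For the converse: assume each $\Fun(X,f)$ is an equivalence of quasi-categories. Taking $X=B$, the map $\Fun(B,f)\colon\Fun(B,A)\to\Fun(B,B)$ is an equivalence of quasi-categories, hence essentially surjective on objects of the homotopy category, so $\id_B$ is isomorphic in $h\Fun(B,B)$ to $fg$ for some $g\colon B\to A$; this isomorphism is an invertible $2$-cell $\beta\colon fg\cong\id_B$ in $\ho\cK$, and left-whiskering with $f$ yields an invertible $2$-cell $\beta f\colon fgf\cong f$. Taking $X=A$, the postcomposition functor $\Fun(A,f)\colon\Fun(A,A)\to\Fun(A,B)$ is an equivalence of quasi-categories, so $h\Fun(A,f)\colon h\Fun(A,A)\to h\Fun(A,B)$ is an equivalence of categories, in particular full and conservative. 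Since $h\Fun(A,f)$ sends $gf\mapsto fgf$ and $\id_A\mapsto f$, fullness produces a $2$-cell $\alpha\colon gf\To\id_A$ with $f\alpha=\beta f$, and conservativity forces $\alpha$ to be invertible because $\beta f$ is. Then $g$ together with $\alpha$ and $\beta$ exhibits $f$ as an equivalence in $\ho\cK$ (upgrading this to an adjoint equivalence, should Definition~\ref{defn:equivalence} ask for one, is a routine $2$-categorical step).

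The main obstacle I anticipate is not a computation but a conceptual pitfall: ``equivalence of quasi-categories'' is strictly stronger than ``equivalence of homotopy categories,'' so the first implication cannot be obtained by merely applying the $2$-functor $h\Fun(X,-)\colon\ho\cK\to\cat{Cat}$ --- that would only detect an equivalence on homotopy categories. One genuinely needs to transport the $\iso$-homotopies along $\Fun(X,-)$ and feed the result into Proposition~\ref{prop:structured-we}. The secondary delicate point is the bookkeeping with the cotensor isomorphism $\Fun(X,A^\iso)\cong\Fun(X,A)^\iso$ and the naturality needed to see that $\Fun(X,-)$ carries a witnessing functor $\bar\alpha\colon A\to A^\iso$ to a witnessing functor for the corresponding $\iso$-homotopy of mapping quasi-categories; these are formal consequences of the enrichment axioms but deserve to be spelled out.
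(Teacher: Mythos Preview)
Your proof is correct, and for the implication ``$\cK$-equivalence $\Rightarrow$ $\ho\cK$-equivalence'' it coincides with the paper's: the paper simply invokes the Yoneda lemma in $\ho\cK$ after observing that each $h\Fun(X,f)$ is an equivalence of categories, and your construction of $g,\alpha,\beta$ from the cases $X=B$ and $X=A$ is exactly how that Yoneda argument is unpacked.

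For the converse implication your route differs from the paper's. Both arguments begin from the same translation principle --- the cotensor $B^\iso$ is a path object, so an invertible $2$-cell amounts to an $\iso$-homotopy --- but they finish differently. You push the $\iso$-homotopies through $\Fun(X,-)$, obtain an explicit homotopy inverse $\Fun(X,g)$ for each $\Fun(X,f)$, and then invoke Proposition~\ref{prop:structured-we} in the Joyal model structure. The paper instead extracts from the path object the lemma that any $\infty$-functor isomorphic in $\ho\cK$ to a $\cK$-equivalence is again a $\cK$-equivalence (a two-of-three consequence), applies it to the isomorphisms $gf\cong\id_A$ and $fg\cong\id_B$ to conclude that $gf$ and $fg$ are $\cK$-equivalences, and then finishes with the two-of-six property applied to the composable triple $f,g,f$. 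Your argument is more hands-on and verifies the defining condition directly; the paper's is more structural, isolating a reusable intermediate lemma and exploiting the closure properties of the class of equivalences rather than appealing to Proposition~\ref{prop:structured-we}.
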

\begin{proof}
By definition, any equivalence $f \colon A \wto B$ in the $\infty$-cosmos induces an equivalence $\Fun(X,A) \wto \Fun(X,B)$ of quasi-categories for any $X$, which becomes an equivalence of categories $\Hom(X,A) \wto \Hom(X,B)$ upon applying the homotopy category functor $\ho \colon \cat{qCat} \to \cat{Cat}$. Applying the Yoneda lemma in the homotopy 2-category $\ho\cK$, it follows easily that $f$ is an equivalence in the standard 2-categorical sense.

Conversely, as the map $\iso \to \Delta^0$ of simplicial sets is a weak equivalence in the Joyal model structure, an argument similar to that used to prove Proposition \ref{prop:quasi-cyl} demonstrates that the cotensor $ B^\iso$ defines a path object for the $\infty$-category $B$. 
\[
\begin{tikzcd} & B^\iso \arrow[dr, two heads, "{(p_1,p_0)}"] \\ B \arrow[rr, "\Delta"'] \arrow[ur, "\sim" sloped] & & B \times B
\end{tikzcd}
\]
It follows from the two-of-three property that any $\infty$-functor that is isomorphic in the homotopy 2-category to an equivalence in the $\infty$-cosmos is again an equivalence in the $\infty$-cosmos. Now it follows immediately from the two-of-six property for equivalences in the $\infty$-cosmos and the fact that the class of equivalences includes the identities, that any 2-categorical equivalence is an equivalence in the $\infty$-cosmos.
\end{proof}

A consequence of Proposition \ref{prop:equiv-is-equiv} is that any cosmological biequivalence in particular defines an biequivalence of homotopy 2-categories, which explains the choice of terminology.

\section{Model-independent $(\infty,1)$-category theory}\label{sec:indep}

We now develop a small portion of the theory of $\infty$-categories in any $\infty$-cosmos, thereby developing a theory of $(\infty,1)$-categories that applies equally to quasi-cate\-gories, Segal categories, complete Segal spaces, and naturally marked quasi-categories.  The definitions of the basic $(\infty,1)$-categorical notions presented here might be viewed as ``synthetic,'' in the sense that they are blind to which model is being considered, in contrast with the ``analytic'' theory of quasi-categories first outlined in Joyal's \cite{joyal-quasi} and later greatly expanded in his unpublished works and Lurie's \cite{lurie-topos, lurie-algebra}. In \S\ref{ssec:adj-equiv}, we introduce adjunctions and equivalences between $\infty$-categories, which generalize the notions of Quillen adjunction and Quillen equivalence between model categories from \S\ref{ssec:derived-adj} and \S\ref{ssec:quillen-equiv}. Then in \S\ref{ssec:inf-limits}, we develop the theory of limits and colimits in an $\infty$-category, which correspond to the homotopy limits and colimits of \S\ref{sec:holim}.

Our synthetic definitions specialize in the $\infty$-cosmos of quasi-categories to notions that precisely recapture the Joyal-Lurie analytic theory; the proofs that this is the case are not discussed here, but can be found in \cite{RV1, RV4}. Considerably more development along these lines can be found in \cite{RV-scratch}.

\subsection{Adjunctions and equivalences}\label{ssec:adj-equiv}

In any 2-category, in particular in the homotopy 2-category $\ho\cK$ of an $\infty$-cosmos, there are standard definitions of adjunction or equivalence, which allow us to define adjunctions and equivalences between $\infty$-categories.

\begin{defn}\label{defn:adjunction} An \textbf{adjunction} between $\infty$-categories consists of:
\begin{itemize}
\item a pair of $\infty$-categories $A$ and $B$;
\item a pair of $\infty$-functors $f \colon B \to A$ and $u \colon A \to B$; and
\item a pair of $\infty$-natural transformations $\eta \colon \id_B \To uf$ and $\epsilon \colon fu \To \id_A$
\end{itemize}
so that the triangle equalities hold:
\[
\begin{tikzcd}[column sep=small] & B \arrow[d, phantom, "\scriptstyle\Downarrow\epsilon"] \arrow[dr,  "f" description] \arrow[rr, equals] & \arrow[d, phantom, "\scriptstyle\Downarrow\eta"] & B \arrow[drr, phantom, "="] && B \arrow[d, phantom, "="] &  & B \arrow[dr, "f"'] \arrow[rr, equals] &\arrow[d, phantom, "\scriptstyle\Downarrow\eta"] & B \arrow[d, phantom, "\scriptstyle\Downarrow\epsilon"] \arrow[dr, "f"] & \arrow[drr, phantom, "="] &  & B \arrow[d, phantom, "="] \arrow[d, bend left, "f"] \arrow[d, bend right, "f"'] \\ 
A \arrow[ur, "u"] \arrow[rr, equals] & ~& A \arrow[ur, "u"']  & ~&~ & A \arrow[u, bend left, "u"] \arrow[u, bend right, "u"']  & & &  A \arrow[rr, equals] \arrow[ur,  "u" description] &~ & A &&  A
\end{tikzcd}
\]
\end{defn}%The left-hand equality of pasting diagrams asserts that $u\epsilon \cdot \eta u = \id_u$, while the right-hand equality asserts that $\epsilon f \cdot f\eta = \id_{\mathrm{f}}$.

We write $f \dashv u$ to assert that the $\infty$-functor $f \colon B \to A$ is \textbf{left adjoint} to the $\infty$-functor $u \colon A \to B$, its \textbf{right adjoint}.

\begin{defn}\label{defn:equivalence} An \textbf{equivalence} between $\infty$-categories consists of:
\begin{itemize}
\item a pair of $\infty$-categories $A$ and $B$;
\item a pair of $\infty$-functors $f \colon B \to A$ and $g \colon A \to B$; and
\item a pair of natural isomorphisms  $\eta \colon \id_B \cong gf$ and $\epsilon \colon fg \cong \id_A$.
\end{itemize}
An \textbf{$\infty$-natural isomorphism} is a 2-cell in the homotopy 2-category that admits a vertical inverse 2-cell.
\end{defn}

We write $A \simeq B$ and say that $A$ and $B$ are \textbf{equivalent} if there exists an equivalence between $A$ and $B$. The direction for the $\infty$-natural isomorphisms comprising an equivalence is immaterial. Our notation is chosen to suggest the connection with adjunctions conveyed by the following exercise.

\begin{exc}\label{exc:adjoint-equivalence} In any 2-category, prove that:
\begin{enumerate}
\item\label{itm:adj-comp} Adjunctions compose: given adjoint $\infty$-functors
\[
\begin{tikzcd}
C \arrow[r, bend left, "f'"] \arrow[r, phantom, "\perp"] & B \arrow[r, bend left, "f"] \arrow[r, phantom, "\perp"] \arrow[l, bend left, "u'"] & A \arrow[l, bend left, "u"] & \rightsquigarrow & C \arrow[r, bend left, "ff'"] \arrow[r, phantom, "\perp"] & A \arrow[l, bend left, "u'u"]
\end{tikzcd}
\]
the composite $\infty$-functors are adjoint.
\item\label{itm:adj-equiv} Any equivalence can always be promoted to an \textbf{adjoint equivalence} by modifying one of the $\infty$-natural isomorphisms. That is, show that the $\infty$-natural isomorphisms in an equivalence can be chosen so as to satisfy the triangle equalities. Conclude, that if $f$ and $g$ are inverse equivalences then $f \dashv g$ and $g \dashv f$.
\end{enumerate}
\end{exc}

The point of Exercise \ref{exc:adjoint-equivalence} is that there are various diagrammatic 2-categorical proofs that can be taken off the shelf and applied to the homotopy 2-category of an $\infty$-cosmos to prove theorems about adjunctions and equivalence between $(\infty,1)$-categories.

\subsection{Limits and colimits}\label{ssec:inf-limits}

We now introduce definitions of limits and colimits for diagrams valued inside an $\infty$-category. We begin by defining terminal objects, or as we shall call them ``terminal elements,'' to avoid an over proliferation of the generic name ``objects.''

\begin{defn}\label{defn:terminal-element} A \textbf{terminal element} in an $\infty$-category $A$ is a right adjoint $t \colon 1 \to A$ to the unique $\infty$-functor $! \colon A \to 1$. Explicitly, the data consists of:
\begin{itemize}
\item an element $t \colon 1 \to A$ and
\item a $\infty$-natural transformation $\eta \colon \id_A \Rightarrow t!$ whose component $\eta t$ at the element $t$ is an isomorphism.\footnote{If $\eta$ is the unit of the adjunction $! \dashv t$, then the triangle equalities demand that $\eta t =\id_t$.  However, by a 2-categorical trick, to show that such an adjunction exists, it suffices to find a 2-cell $\eta$ so that $\eta t$ is an isomorphism.} 
\end{itemize}
\end{defn}

Several basic facts about terminal elements can be deduced immediately from the general theory of adjunctions.

\begin{exc} $\quad$
\begin{enumerate}[label=(\roman*)]
\item\label{itm:term-ii} Terminal elements are preserved by right adjoints and by equivalences.
\item\label{itm:term-iii} If  $A' \simeq A$ then $A$ has a terminal element if and only if $A'$ does.
\end{enumerate}
\end{exc}

Terminal elements are limits of empty diagrams. We now turn to limits of generic diagrams whose indexing shapes are given by 1-categories. For any $\infty$-category $A$ in an $\infty$-cosmos $\cK$, there is a 2-functor $A^{(-)} \colon \cat{Cat}^\op \to \ho\cK$ defined by forming simplicial cotensors with nerves of categories. Using these simplicial cotensors, if $J$ is a 1-category and $A$ is an $\infty$-category, the \textbf{$\infty$-category of $J$-indexed diagrams in $A$} is simply the cotensor $A^J$.\footnote{More generally, this construction permits arbitrary simplicial sets as indexing shapes for diagrams in an $\infty$-category $A$. In either case, the elements of $A^J$ are to be regarded as homotopy coherent diagrams along the lines of Remark \ref{rmk:coherent-diagram-qcat}.}

%We have a 2-category $\cat{sSet}$ of simplicial sets, extending in the evident way the definition of the homotopy 2-category $\cat{qCat} \subset \cat{sSet}$ of quasi-categories. The 2-category of categories sits as a full subcategory $\cat{Cat}\subset\cat{qCat} \subset \cat{sSet}$, with categories identified with the simplicial sets defining their nerves. In this way, diagrams indexed by categories are among the diagrams indexed by simplicial sets. 

\begin{rmk}\label{rmk:diagram-inf-cat} In the cartesian closed $\infty$-cosmoi of  Definition \ref{qcat.ctxt.cof.def}\ref{qcat.ctxt.cof:f}, we also permit the indexing shape $J$ to be another $\infty$-category, in which case the internal hom $A^J$ defines the \textbf{$\infty$-category of $J$-indexed diagrams in $A$}. The development of the theory of limits indexed by an $\infty$-category in a cartesian closed $\infty$-cosmos entirely parallels the development for limits indexed by 1-categories, a parallelism we highlight by conflating the notation of \ref{qcat.ctxt.cof.def}\ref{qcat.ctxt.cof:a} and \ref{qcat.ctxt.cof.def}\ref{qcat.ctxt.cof:f}.
\end{rmk}

In analogy with Definition \ref{defn:terminal-element}, we have:

\begin{defn}\label{defn:all-limits} An $\infty$-category $A$ \textbf{admits all limits of shape $J$} if the constant diagram $\infty$-functor $\Delta \colon A \to A^J$, induced by the unique $\infty$-functor $!\colon  J \to 1$, has a right adjoint:
\[
\begin{tikzcd}
A \arrow[r, bend left, "\Delta"] \arrow[r, phantom, "\perp"] & A^J \arrow[l, bend left, "\lim"]
\end{tikzcd}
\]
\end{defn}

From the vantage point of Definition \ref{defn:all-limits}, the following result is easy:

\begin{exc} Using the general theory of adjunctions, show that a right adjoint $\infty$-functor $u \colon A \to B$ between $\infty$-categories that admit all limits of shape $J$  necessarily preserves them, in the sense that the $\infty$-functors
\[
\begin{tikzcd}
A^J \arrow[d, "\lim"'] \arrow[r, "u^J"] & B^J \arrow[d, "\lim"] \arrow[dl, phantom, "\cong"] \\ A \arrow[r, "u"'] & B
\end{tikzcd}
\]
commute up to isomorphism.
\end{exc}

The problem with Definition \ref{defn:all-limits} is that it is insufficiently general: many $\infty$-categories will have certain, but not all, limits of diagrams of a particular indexing shape. With this aim in mind, we will now re-express Definition \ref{defn:all-limits} in a form that permits its extension to cover this sort of situation. For this, we make use of the 2-categorical notion of an \textbf{absolute right lifting}, which is the ``op''-dual (reversing the 1-cells but not the 2-cells) of the notion of absolute right Kan extension introduced in Definition \ref{defn:Kanext}.

\begin{exc}\label{exs:adj-as-abs-lifting} Show that in any 2-category, a 2-cell $\epsilon \colon fu \To \id_A$ defines the counit of an adjunction $f \dashv u$ if and only if
\[
\begin{tikzcd} \arrow[dr, phantom, "\scriptstyle\Downarrow\epsilon"  pos=.9] & B \arrow[d, "f"] \\ A \arrow[ur, "u"] \arrow[r, equals] & A
\end{tikzcd}
\]
defines an absolute right lifting diagram.
\end{exc}

Applying Exercise \ref{exs:adj-as-abs-lifting}, Definition \ref{defn:all-limits} is equivalent to the assertion that the \textbf{limit cone}, our term for the counit of $\Delta \dashv \lim$,  defines an absolute right lifting diagram:
\begin{equation}\label{eq:all-limits-abs-lifting}
\begin{tikzcd} \arrow[dr, phantom, "\scriptstyle\Downarrow\epsilon" pos=.9] & A \arrow[d, "\Delta"] \\ A^J \arrow[ur, "\lim"] \arrow[r, equals] & A^J
\end{tikzcd}
\end{equation}
Recall that the appellation ``absolute'' means ``preserved by all functors,'' in this case by restriction along any $\infty$-functor $X \to A^J$. In particular, an absolute right lifting diagram \eqref{eq:all-limits-abs-lifting} restricts to define an absolute right lifting diagram on any subobject of the $\infty$-category of diagrams. This motivates the following definition.

\begin{defn}[limit]\label{defn:limit} A \textbf{limit} of a $J$-indexed diagram in $A$ is an absolute right lifting of the diagram $d$ through the constant diagram $\infty$-functor $\Delta \colon A \to A^J$
\begin{equation}\label{eq:lim-diagram-defn}
\begin{tikzcd} \arrow[dr, phantom, "\scriptstyle\Downarrow\lambda"  pos=.9] & A \arrow[d, "\Delta"] \\ 1\arrow[ur, "\lim d"] \arrow[r, "d"'] & A^J
\end{tikzcd}
\end{equation}
 the 2-cell component of which  defines the \textbf{limit cone} $\lambda \colon \Delta \lim d \To d$.
\end{defn}

If $A$ has all $J$-indexed limits, then the restriction of the absolute right lifting diagram \eqref{eq:all-limits-abs-lifting} along the element $d \colon 1\to A^J$ defines a limit for $d$. Interpolating between Definitions \ref{defn:limit} and \ref{defn:all-limits}, we can define a \textbf{limit of a family of diagrams} to be an absolute right lifting of the family $d \colon K \to A^J$ through $\Delta \colon A \to A^J$. For instance:

\begin{thm}[{\cite[5.3.1]{RV1}}]\label{thm:totalization} For every cosimplicial object in an $\infty$-category that admits an coaugmentation and a splitting, the coaugmentation defines its limit. That is, for every $\infty$-category $A$, the $\infty$-functors
\[
\begin{tikzcd} \arrow[dr, phantom, "\scriptstyle\Downarrow\lambda"  pos=.9] & A \arrow[d, "\Delta"] \\
A^{\DDelta_{\bot}} \arrow[r, "\mathrm{res}"'] \arrow[ur, "\ev_{[-1]}"] & A^{\DDelta}
\end{tikzcd}
\]
define an absolute right lifting diagram.
\end{thm}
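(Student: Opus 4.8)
The plan is to reduce the statement about $\infty$-categories to a purely formal 2-categorical fact, and then supply the one genuinely "higher" ingredient that makes it work. Here $\DDelta_\bot$ denotes the category of finite ordinals that are \emph{possibly empty and equipped with a bottom element} (so that $[-1]=\emptyset$ becomes an initial object, whence the coaugmentation/splitting data), and the restriction functor $\mathrm{res}\colon \cat{Cat}\to\cat{Cat}$ applied to $A^{(-)}$ comes from the inclusion $\DDelta\hookrightarrow\DDelta_\bot$, while $\ev_{[-1]}$ is cotensor with the inclusion of the object $[-1]$. The target of the proof is to show the displayed 2-cell $\lambda$ is an absolute right lifting diagram in $\ho\cK$.

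First I would record the purely 2-categorical skeleton. Since $A^{(-)}\colon \cat{Cat}\to \ho\cK$ is a 2-functor, it carries any adjunction in $\cat{Cat}$ to an adjunction in $\ho\cK$. The inclusion $i\colon \DDelta\hookrightarrow\DDelta_\bot$ admits a right adjoint $r\colon \DDelta_\bot\to\DDelta$ in $\cat{Cat}$ — concretely $r$ is given by ``shift and adjoin a new bottom'' — with the property that the composite $r\circ (\text{constant at }[-1])$-type data exhibits $\ev_{[-1]}$ as a right adjoint to a section of $\mathrm{res}$. More precisely, one checks that $\DDelta_\bot$ has a terminal object, no: the key categorical input is that $i\colon\DDelta\to\DDelta_\bot$ has a reflection such that $[-1]\in\DDelta_\bot$ is the value of the left adjoint on the terminal object, which is exactly the statement that the constant-at-$[-1]$ functor $\1\to\DDelta_\bot$ is left Kan extended from $\1\to\DDelta\to\DDelta_\bot$ along... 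I would phrase this cleanly as: $\DDelta\hookrightarrow\DDelta_\bot$ is a \emph{coreflective} inclusion, so cotensoring gives an adjunction $\mathrm{res}\dashv \mathrm{ran}$ between $A^{\DDelta_\bot}$ and $A^{\DDelta}$, and the composite with $\Delta\colon A\to A^{\DDelta_\bot}$ factors the desired diagram. Applying Exercise \ref{exs:adj-as-abs-lifting} to the resulting adjunction then produces an absolute right lifting diagram, and one identifies the right adjoint with $\ev_{[-1]}$ and the lifting 2-cell with $\lambda$.

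The main obstacle — and the step I would spend the most care on — is precisely the identification of the $\infty$-categorical cotensor $A^{\DDelta_\bot}$ together with $\ev_{[-1]}$ as literally a \emph{right adjoint} to $\Delta\colon A\to A^{\DDelta}$ in $\ho\cK$, i.e. verifying that the coreflection $\DDelta\hookrightarrow\DDelta_\bot$ has the right variance and that $[-1]$ is the object picked out. The subtlety is the op-duality bookkeeping: a coaugmented, split cosimplicial object is a diagram $\DDelta_\bot\to A$, its limit should be the coaugmentation $\ev_{[-1]}$, and one must confirm that $\DDelta_\bot$ is ``$\DDelta$ with an initial object freely adjoined through a coreflective (not reflective) inclusion,'' so that cotensoring — which is contravariant in the shape — turns this into the adjunction $\Delta\dashv\ev_{[-1]}$ rather than its opposite. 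Once this adjunction in $\cat{Cat}$ is pinned down with the correct handedness, the 2-functoriality of $A^{(-)}$ transports it to $\ho\cK$, and Exercise \ref{exs:adj-as-abs-lifting} gives absoluteness for free (the counit of an adjunction is always an absolute right lifting). I would then remark that restricting this absolute right lifting diagram along any $d\colon X\to A^{\DDelta_\bot}$ recovers the claim for an individual coaugmented split cosimplicial object, matching the format of Definition \ref{defn:limit} and the ``limit of a family of diagrams'' discussed after it, and cite \cite[5.3.1]{RV1} for the verification that this synthetic statement reproduces the expected analytic one in the quasi-categorical model.
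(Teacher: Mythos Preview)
Your overall strategy --- reduce to a statement in $\cat{Cat}$ and transport along the 2-functor $A^{(-)}\colon\cat{Cat}^\op\to\ho\cK$ --- matches the paper's exactly. The gap is in the execution. You try to invoke Exercise~\ref{exs:adj-as-abs-lifting} to produce the absolute right lifting, but that exercise only manufactures absolute right liftings whose base is an \emph{identity}; the diagram in the theorem has $\mathrm{res}$ along the base and $\Delta\colon A\to A^{\DDelta}$ (not $A\to A^{\DDelta_\bot}$) as the map being lifted through. There is indeed an adjunction $\Delta_\bot\dashv\ev_{[-1]}$ between $A$ and $A^{\DDelta_\bot}$ coming from the initial object, but its counit is an absolute right lifting through $\Delta_\bot$, and passing from there to a lifting through $\Delta$ by post-composing with $\mathrm{res}$ is precisely the step that requires justification --- this is where your hunt for a reflective or coreflective structure on $\DDelta\hookrightarrow\DDelta_\bot$ was headed, but it never converges (the mid-argument ``no:'' and the subsequent course corrections are the symptom).

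The paper's route avoids this tangle. Work entirely in $\cat{Cat}$: since $[-1]\in\DDelta_\bot$ is initial, there is a canonical 2-cell $\lambda\colon [-1]\cdot{!}\Rightarrow i$ (componentwise the unique maps $[-1]\to[n]$), and this exhibits $[-1]\colon\1\to\DDelta_\bot$ as an absolute right Kan \emph{extension} of the inclusion $i\colon\DDelta\hookrightarrow\DDelta_\bot$ along ${!}\colon\DDelta\to\1$. The universal property is witnessed by a pair of adjunctions in $\cat{Cat}$, hence is preserved by every 2-functor. Applying the contravariant $A^{(-)}$ then converts this absolute right \emph{extension} in $\cat{Cat}$ into the required absolute right \emph{lifting} in $\ho\cK$. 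The single clean fact you were circling but never stated is that $[-1]$ is initial in $\DDelta_\bot$; once you have that, the object to transport is the Kan extension diagram in $\cat{Cat}$, not a bare adjunction in $\ho\cK$.
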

Here $\DDelta$ is the usual simplex category of finite non-empty ordinals and order-preserving maps. It defines a full subcategory of $\DDelta_+$, which freely appends an initial object $[-1]$, and this in turn defines a  subcategory of $\DDelta_{\bot}$, which adds an ``extra degeneracy'' map between each pair of consecutive ordinals. Diagrams indexed by $\DDelta \subset \DDelta_+\subset \DDelta_{\bot}$ are, respectively, called \textbf{cosimplicial objects}, \textbf{coaugmented cosimplicial objects}, and \textbf{split cosimplicial objects}. The limit of a cosimplicial object is often called its \textbf{totalization}.

\begin{proof}[Proof sketch]
In $\cat{Cat}$, there is a canonical 2-cell
\[
\begin{tikzcd}
\DDelta \arrow[r, hook] \arrow[d, "!"'] \arrow[dr, phantom, "\scriptstyle\Uparrow\lambda" pos=.2] & \DDelta_{\bot} \\
\1 \arrow[ur, "{[-1]}"'] & ~
\end{tikzcd}
\]
 because $[-1] \in \DDelta_{\bot}$ is initial. This data defines an absolute right extension diagram that is moreover preserved by any 2-functor, because the universal property of the functor $[-1] \colon \1 \to \DDelta_{\bot}$ and the 2-cell $\lambda$ is witnessed by a pair of adjunctions. The 2-functor $A^{(-)} \colon \cat{Cat}^\op \to \ho\cK$ converts this into the absolute right lifting diagram of the statement.
\end{proof} 
  
The most important result relating adjunctions and limits is of course:

\begin{thm}[{\cite[5.2.13]{RV1}}]\label{thm:RAPL} Right adjoints preserve limits.
\end{thm}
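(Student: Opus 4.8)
The plan is to deduce the statement entirely from the $2$-categorical characterization of limits as absolute right lifting diagrams in Definition~\ref{defn:limit}, together with the characterization of adjunctions via absolute right liftings from Exercise~\ref{exs:adj-as-abs-lifting}, and the elementary fact that absolute right lifting diagrams are stable under restriction along any $\infty$-functor. The key observation is that ``absolute'' means ``preserved by all functors,'' so once we know a limit cone is an absolute right lifting diagram, pasting it against anything — in particular against a right adjoint — should again produce an absolute right lifting diagram, which is precisely what it means for the right adjoint to preserve that limit.

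First I would set up notation: suppose $u \colon A \to B$ is a right adjoint, with left adjoint $f \colon B \to A$, unit $\eta \colon \id_B \To uf$, and counit $\epsilon \colon fu \To \id_A$. Let $d \colon 1 \to A^J$ be a $J$-indexed diagram in $A$ admitting a limit, i.e.\ an absolute right lifting diagram with summit $\lim d \colon 1 \to A$ and limit cone $\lambda \colon \Delta \lim d \To d$ as in \eqref{eq:lim-diagram-defn}. Applying the $2$-functor $(-)^J \colon \cat{Cat} \to \ho\cK$ to the adjunction $f \dashv u$ yields an adjunction $f^J \dashv u^J$ between $B^J$ and $A^J$ in the homotopy $2$-category, with unit $\eta^J$ and counit $\epsilon^J$; moreover the constant-diagram functors fit into a commuting square $\Delta_A u = u^J \Delta_B$ (this is just naturality of $\Delta$ with respect to $u$, which holds because $\Delta = (-)^{!}$ for $!\colon J \to 1$ and $(-)^{(-)}$ is a $2$-functor). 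The goal is to show that the composite diagram
\[
\begin{tikzcd} \arrow[dr, phantom, "\scriptstyle\Downarrow u\lambda"  pos=.9] & A \arrow[r, "u"] & B \arrow[d, "\Delta"] \\ 1\arrow[ur, "\lim d"] \arrow[rr, "u^J d"'] & & B^J
\end{tikzcd}
\]
is an absolute right lifting diagram, so that $u \lim d$ is the limit of the diagram $u^J d \colon 1 \to B^J$, which is the statement ``$u$ preserves the limit.''

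The main step is the ``adjoint triangle'' manipulation. Given a competing cone $\mu \colon \Delta_B b \To u^J d$ for some element $b \colon 1 \to B$, I transpose it across the adjunction $f^J \dashv u^J$: using $u^J \Delta_B = \Delta_A f$ composed appropriately — more precisely, transposing $\mu$ and using $\Delta_B = \Delta_B$ together with $f^J \Delta_B \cong \Delta_A f$ — one obtains a cone $\Delta_A (fb) \To d$ in $A$, which by the absolute (hence ordinary) right lifting property of $\lambda$ factors uniquely through $\lambda$ via a $2$-cell $fb \To \lim d$ in $A$; transposing back across $f \dashv u$ gives a $2$-cell $b \To u \lim d$ in $B$, and one checks this is the required factorization of $\mu$ through $u\lambda$, with uniqueness following from the uniqueness in the lifting property of $\lambda$ together with the bijectivity of the adjunction transpositions. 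To upgrade this to an \emph{absolute} right lifting — i.e.\ to show it is preserved by restriction along an arbitrary $x \colon X \to 1$, equivalently by whiskering with any $\infty$-functor into $B^J$ — I would observe that the argument above is entirely $2$-categorical and natural, so it applies verbatim after whiskering; alternatively, since $\lambda$ is \emph{absolute} and the adjunction data $f \dashv u$, $f^J \dashv u^J$ whisker freely, the factorization we built is automatically compatible with further whiskering.

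The step I expect to be the main obstacle is bookkeeping the transposition-of-cones carefully: making precise the natural isomorphism $f^J \Delta_B \cong \Delta_A f$ (and its mate $\Delta_A u \cong u^J \Delta_B$), and verifying that transposing a cone $\Delta_B b \To u^J d$ under $f^J \dashv u^J$ and then under $f \dashv u$ really does land one in the situation to which the absolute right lifting property of $\lambda$ applies, with all triangle identities invoked in the right places. This is the classical ``right adjoints preserve limits'' argument, but one must be disciplined about which $2$-cells are being pasted where. Once that diagram chase is in hand, absoluteness is essentially free, and the theorem follows. (As a remark, I would note the dual statement — left adjoints preserve colimits — is the ``co''-dual and requires no separate argument; and in the cartesian closed setting of Remark~\ref{rmk:diagram-inf-cat} the same proof covers $\infty$-categorical indexing shapes $J$, since all that was used is that $(-)^J$ is a $2$-functor preserving the relevant data.)
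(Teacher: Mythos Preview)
Your proposal is correct and follows essentially the same strategy as the paper: transpose a competing cone across the adjunction $f^J \dashv u^J$, invoke the absolute right lifting property of the limit cone $\lambda$ to obtain a unique factorization, and transpose back across $f \dashv u$, using the triangle identities to verify the result. The only organizational difference is that the paper works from the outset with a cone whose summit is an arbitrary object $X$ rather than a single element $b \colon 1 \to B$, thereby establishing absoluteness directly instead of first proving the ordinary lifting property and then arguing that the proof whiskers; this is a cosmetic distinction rather than a substantive one.
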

Our proof will closely follow the classical one. Given a diagram $d\colon 1 \to A^J$ and a right adjoint $u \colon A \to B$ to some $\infty$-functor $f$, a cone with summit $b \colon 1 \to B$ over $u^J d$ transposes to define a cone with summit $fb$ over $d$, which factors uniquely through the limit cone. This factorization transposes back across the adjunction to show that $u$ carries the limit cone over $d$ to a limit cone over $u^Jd$.

\begin{proof}
Suppose that $A$ admits limits of a diagram $d\colon 1\to A^J$ as witnessed by an absolute right lifting diagram \eqref{eq:lim-diagram-defn}. Since adjunctions are preserved by all 2-functors, an adjunction $f \dashv u$ induces an adjunction $f^J \dashv u^J$. We must show that 
\[
\begin{tikzcd} \arrow[dr, phantom, "{\scriptstyle\Downarrow\lambda}"  pos=.9] & A \arrow[d, "\Delta"] \arrow[r, "u"] & B \arrow[d, "\Delta"] \\ 1\arrow[ur, "\lim d"] \arrow[r, "d"'] & A^J \arrow[r, "u^J"'] & B^J
\end{tikzcd}
\]
is again an absolute right lifting diagram. Given a square
\[
\begin{tikzcd}
X \arrow[rr, "b"] \arrow[d, "{!}"'] & \arrow[d, phantom, "\scriptstyle\Downarrow\chi"] & B \arrow[d, "\Delta"]\\
1 \arrow[r, "d"'] & A^J \arrow[r, "u^J"'] & B^J
\end{tikzcd}
\]
we first ``transpose across the adjunction,'' by composing with $f$ and the counit. 
\[
\begin{tikzcd}
X \arrow[rr, "b"] \arrow[d, "{!}"'] & \arrow[d, phantom, "\scriptstyle\Downarrow\chi"] & B \arrow[d, "\Delta"] \arrow[r, "f"] & A \arrow[d, "\Delta"] \arrow[dr, phantom, "="] & X \arrow[d, "{!}"'] \arrow[r, "b"] \arrow[drr, phantom, "{\scriptstyle\exists !\Downarrow\zeta}" pos=.2, "\scriptstyle\Downarrow\lambda"  pos=.8] & B \arrow[r, "f"] & A \arrow[d, "\Delta"] \\
1 \arrow[r, "d"'] & A^J \arrow[rr, bend right, equals, "\scriptstyle\Downarrow\epsilon^J"] \arrow[r, "u^J"] & B^J \arrow[r, "f^J"] & A^J & 1 \arrow[urr, "\lim d" description] \arrow[rr, "d"'] & & A^J
\end{tikzcd}
\]
The universal property of the absolute right lifting diagram $\lambda \colon \Delta \lim \To d$ induces a unique factorization $\zeta$, which may then be ``transposed back across the adjunction'' by composing with $u$ and the unit.
\[
\begin{tikzcd}[column sep=2.2em]
X \arrow[d, "{!}"'] \arrow[r, "b"] \arrow[drr, phantom, "{\scriptstyle\exists !\Downarrow\zeta}" pos=.2, "\scriptstyle\Downarrow\lambda" pos=.8] & B \arrow[rr, bend left, equals, "\scriptstyle\Downarrow\eta"'] \arrow[r, "f"] & A \arrow[d, "\Delta"] \arrow[r, "u"'] & B \arrow[d, "\Delta"] \arrow[dr, phantom, "="] & X \arrow[rr, "b"] \arrow[d, "{!}"'] & \arrow[d, phantom, "\scriptstyle\Downarrow\chi"] & B \arrow[d, "\Delta"] \arrow[r, "f"']  \arrow[rr, bend left, equals, "\scriptstyle\Downarrow\eta"'] & A \arrow[d, "\Delta"] \arrow[r, "u"'] & B \arrow[d, "\Delta"] \\ 1 \arrow[urr, "\lim d" description] \arrow[rr, "d"'] & & A^J \arrow[r, "u^J"'] & B^J & 1 \arrow[r, "d"'] & A^J \arrow[r, "u^J"] \arrow[rr, equals, bend right, "\scriptstyle\Downarrow\epsilon^J"] & B^J \arrow[r, "f^J"] & A^J \arrow[r, "u^J"'] & B^J
\end{tikzcd}
\]
\[
= \quad \begin{tikzcd}
X \arrow[d, "{!}"'] \arrow[rr, "b"] \arrow[drr, phantom, "\scriptstyle\Downarrow\chi"] & & B \arrow[d, "\Delta"] \arrow[rr, bend left, equals] & & B \arrow[d, "\Delta"] \arrow[dr, phantom, "="] & X \arrow[rr, "b"] \arrow[d, "{!}"'] & \arrow[d, phantom, "\scriptstyle\Downarrow\chi"] & B \arrow[d, "\Delta"]\\
1 \arrow[r, "d"'] & A^J \arrow[r, "u^J"] \arrow[rr, bend right, equals, "\scriptstyle\Downarrow\epsilon^J"] & B^J \arrow[rr, bend left, equals, "\scriptstyle\Downarrow\eta^J"'] \arrow[r, "f^J" description] & A^J \arrow[r, "u^J"'] & B^J &
1 \arrow[r, "d"'] & A^J \arrow[r, "u^J"'] & B^J
\end{tikzcd}
\]
Here the second equality is a consequence of the 2-functoriality of the simplicial cotensor, while the third is an application of a triangle equality for the adjunction $f^J \dashv u^J$. The pasted composite of $\zeta$ and $\eta$ is the desired factorization of $\chi$ through $\lambda$. 

The proof that this factorization is unique, which again parallels the classical argument, is left to the reader: the essential point is that the transposes defined via these pasting diagrams are unique.
\end{proof}

Colimits are defined ``co''-dually, by reversing the direction of the 2-cells but not the 1-cells. There is no need to repeat the proofs however: any $\infty$-cosmos $\cK$ has a co-dual $\infty$-cosmos $\cK^\co$ with the same objects but in which the mapping quasi-categories are defined to be the opposites of the mapping quasi-categories in $\cK$.

\section{Epilogue}\label{sec:epilogue}

A category $\cK$ equipped with a class of ``weak equivalences'' $\we$ --- perhaps saturated in the sense of containing all of the maps inverted by the Gabriel-Zisman localization functor or perhaps merely generating the class of maps to be inverted in the category of fractions --- defines a ``homotopy theory,'' a phrase generally used to refer to the associated homotopy category together with the homotopy types of the mapping spaces, as captured for instance by the Dwyer-Kan hammock localization.  We have studied two common axiomatizations of this abstract notion: Quillen's model categories, which present homotopy theories with all homotopy limits and homotopy colimits, and $(\infty,1)$-categories, which might be encoded using one of the models introduced in \S\ref{sec:models} or worked with ``model-independently'' in the sense outlined in \S\ref{sec:indep}.

From the point of view of comparing homotopy categories, the model-indepen\-dent theory of $(\infty,1)$-categories  has some clear advantages: equivalences between homotopy theories are directly definable (see Definition \ref{defn:equivalence}) instead of being presented as zig-zags of DK- or Quillen equivalences. The formation of diagram categories (see Remark \ref{rmk:diagram-inf-cat}) is straightforward and homotopy limit and colimit functors become genuine adjoints (see Definition \ref{defn:all-limits}) and homotopy limits and colimits become genuine limits and colimits --- at least in the sense appropriate to the theory of $(\infty,1)$-categories. So from this vantage point it is natural to ask: ``Do we still need model categories?''\footnote{See \href{https://mathoverflow.net/questions/78400/do-we-still-need-model-categories}{mathoverflow.net/questions/78400/do-we-still-need-model-categories}.} While some might find this sort of dialogue depressing in our view it does not hurt to ask.

Chris Schommer-Pries has suggested a useful analogy to contextualize the role played by model categories in the study of homotopy theories that are complete and cocomplete:
\[
\begin{cases} \text{model~category} :: (\infty,1)\text{-category} \\ \text{basis} :: \text{vector~space} \\ \text{local~coordinates} :: \text{manifold}
\end{cases}
\]
A precise statement is that combinatorial model categories present those $(\infty,1)$-cate\-gor\-ies that are complete and cocomplete and more generally (locally) presentable; this result is proven in \cite[A.3.7.6]{lurie-topos} by applying a theorem of Dugger \cite{dugger-combinatorial}.\footnote{Morally, in the sense discussed in \S\ref{ssec:functoriality}, all model categories are Quillen equivalent to locally presentable ones. More precisely, the result that every ``cofibrantly generated'' (in a suitable sense of this term) model category is Quillen equivalent to a combinatorial one has been proven by Raptis and Rosicky to be equivalent to a large cardinal axiom called Vop\v{e}nka's principle \cite{rosicky}.} In general having coordinates are helpful for calculations. In particular, when working inside  a particular homotopy theory as presented by a model category, you also have access to the non-bifibrant objects. For instance, the Bergner model structure of \S\ref{ssec:bergner} is a useful context to collect results about homotopy coherent diagrams, which are defined to be maps from the cofibrant (and not typically fibrant) objects to the fibrant ones (which are not typically cofibrant).

But Quillen himself was somewhat unsatisfied with the paradigm-shifting abstract framework that he introduced, writing:
\begin{quote}
This definition of the homotopy theory associated to a model category is obviously unsatisfactory. In effect, the loop and suspension functors are a kind of primary structure on $\Ho\cM$ and the families of fibration and cofibration sequences are a kind of secondary structure since they determine the Toda bracket \ldots. Presumably there is higher order structure on the homotopy category which forms part of the homotopy theory of a model category, but we have not been able to find an inclusive general definition of this structure with the property that this structure is preserved when there are adjoint functors which establish an equivalence of homotopy theories.
--- Quillen \cite{quillen} pp.~3-4.
\end{quote}
Quillen was referring to a model category that is pointed, in the sense of having a zero object, like the roll played by the singleton space in $\cat{Top}_*$. A more modern context for the sort of stable homotopy theory that Quillen is implicitly describing is the category of spectra, the $(\infty,1)$-category of which has many pleasant properties collected together in the notion of a \emph{stable $\infty$-category}. We posit that these notions, which are the subject of the next chapter, might fulfill Quillen's dream.

\end{document}